\pgfplotsset{compat=1.15}
\theoremstyle{definition}
\newcommand{\ncom}{\newcommand}
\ncom{\integ}[4]{\int_{#1}^{#2}\,{#3}\,d{#4}}
\ncom{\vspan}[1]{{{\rm\,span}\{ #1 \}}}
\ncom{\dm}[1]{ {\displaystyle{#1} } }
\ncom{\ri}[1]{{#1} \index{#1}}
\newtheorem{theorem}{\bf Theorem}[section]
\newtheorem{remark}{\bf Remark}[section]
\newtheorem{proposition}{Proposition}[section]
\newtheorem{lemma}{Lemma}[section]
\newtheorem{corollary}{Corollary}[section]
\newtheorem{definition}{Definition}[section]
\newtheoremstyle
{remarkstyle}
{}
{11pt}
{}
{}
{\bfseries}
{:}
{     }
{\thmname{#1} \thmnumber{#2} }
\theoremstyle{remarkstyle}
\newcommand{\ceil}[1]{\lceil{#1}\rceil}
\DeclareFontFamily{U}{BOONDOX-calo}{\skewchar\font=45 }
\DeclareFontShape{U}{BOONDOX-calo}{m}{n}{
  <-> s*[1.05] BOONDOX-r-calo}{}
\DeclareFontShape{U}{BOONDOX-calo}{b}{n}{
  <-> s*[1.05] BOONDOX-b-calo}{}
\DeclareMathAlphabet{\mathcalboondox}{U}{BOONDOX-calo}{m}{n}
\SetMathAlphabet{\mathcalboondox}{bold}{U}{BOONDOX-calo}{b}{n}
\DeclareMathAlphabet{\mathbcalboondox}{U}{BOONDOX-calo}{b}{n}
\begin{document}
\color{black}       
\title{Generalized Space Time Fractional Skellam Process}



\author{Kartik Tathe$^1$ and Sayan Ghosh$^2$}
\address{Department of Mathematics, Birla Institute of Technology and Science, Pilani, Hyderabad Campus, Jawahar Nagar, Kapra Mandal, Medchal District, Telangana 500078, India.}
\thanks{$^2$Corresponding author}
\email{$^1$kartikvtathe@gmail.com,$^2$sayan@hyderabad.bits-pilani.ac.in}





\subjclass[2020]{Primary: 60G22, 60G55; Secondary: 60G20, 60G51}
\keywords{Skellam process, generalized counting process, subordination, martingale, running average.}
\begin{abstract}
This paper introduces the Generalized Space-Time Fractional Skellam Process (GSTFSP) and the Generalized Space Fractional Skellam Process (GSFSP). We investigate their distributional properties including the probability generating function (p.g.f.), probability mass function (p.m.f.), fractional moments, mean, variance, and covariance. The governing state differential equations for these processes are derived, and their increment processes are examined. We establish recurrence relations for the state probabilities of GSFSP and related processes. Furthermore, we obtain the transition probabilities, $n^{th}$-arrival times, and first passage times of these processes. The asymptotic behavior of the tail probabilities is analyzed, and limiting distributions as well as infinite divisibility of GSTFSP and GSFSP are studied. We provide the weighted sum representations for these processes and derive their characterizations. Also, we establish the martingale characterization for GSTFSP, GSFSP and related processes. In addition, we introduce the running average processes of GSFSP and its special cases, and obtain their compound Poisson representations. Finally, the p.m.f. of GSTFSP and simulated sample paths for GSTFSP and GSFSP are plotted.
\end{abstract}

\maketitle


\section{Introduction}
The Poisson process continues to be a significant area of research due to its mathematical importance and broad applicability across various disciplines. Numerous generalizations of the Poisson process have been explored in the literature, particularly fractional generalizations, which have been developed through different approaches. One such approach involves considering renewal processes with inter-arrival times governed by Mittag-Leffler distributions (see \citet{Mainardi2004}). Another method replaces the time derivative in the governing equations of state probabilities with a fractional derivative (see \citet{Beghin2009}). In recent years, random time-changed stochastic processes have been widely studied, where fractionality is introduced by altering the time flow of a process using a subordinator.

In 2012, \citet{Orsingher2012} introduced the Space Fractional Poisson Process (SFPP) by introducing fractionality in the backward shift operator which operates on the state space of the process. Additionally, they proposed the Space-Time Fractional Poisson Process (STFPP), which generalizes both  the Time Fractional Poisson Process (TFPP/FPP) (see \citet{LASKIN2003}) and the SFPP. The STFPP was formulated by incorporating the Caputo fractional derivative into the governing equations of the probability generating function (p.g.f.) of the SFPP.

To allow multiple arrivals at a time and to accommodate varying arrival rates (unlike a Poisson process), the Generalized Counting Process (GCP) and its time fractional version - Generalized Fractional Counting Process (GFCP) were introduced by \citet{Crescenzo2016}, enhancing their applicability and flexibility to model such real-world scenarios. 

Let $\alpha$ and $\beta$ denote the time and space fractional indices, respectively, of a counting process. \citet{Katariaarxiv} introduced the Generalized Space Time Fractional Counting Process (GSTFCP) denoted by
\begin{equation}\label{definition.GSTFCP}
    M_{\beta}^{\alpha}(t):=M(D_\beta(Y_\alpha(t)))
\end{equation}
where $\{M(t)\}_{t\geq 0}$ is a Generalized Counting Process (GCP), $\{D_\beta(t)\}_{t\geq 0}$ is an independent stable subordinator and $\{Y_\alpha(t)\}_{t\geq 0}$ is an independent inverse stable subordinator. 

The integer-valued L\'{e}vy process obtained by taking the difference of two independent Poisson processes is called the Skellam process (see \citet{Skellam1946} and \citet{Irwin2018}). Various generalizations of the Skellam process have also been a subject of interest among researchers. The time-fractional version of the Skellam process was studied by \citet{Kerss2014}. Recently \citet{Gupta2020} introduced Skellam process of order $k$,  and space fractional Skellam process along with its tempered version. \citet{kataria2024} introduced the fractional Skellam process of order $k$. Further \citet{Kataria2022b} introduced the Generalized Fractional Skellam process (GFSP) by considering the difference of two independent GFCPs.  

The applications of Skellam process span various areas, such as modeling of intensity difference of pixels in cameras (see \citet{Hwang2007}) and modeling the difference in number of goals of two opponent teams in a football game (see \citet{Karlis2008}). In particular, fractional variants of the Skellam Process have been used for modeling high-frequency financial data. As shown by \citet{Kerss2014}, the Fractional Skellam Process (FSP) models upward and downward jumps in tick-by-tick stock price data more accurately than the Skellam Process. However, in practice, the
jumps (price changes) may not consistently occur at constant rates and the magnitude of jumps  may
fluctuate depending on various factors. Such type of data may also arise in other situations like queuing
systems and traffic flow. These issues motivate us to propose and study a more flexible model than existing ones by incorporating fractionality in both space and time components along with multiple variable jump intensities. The time-fractional generalization allows non-stationary and dependent increments over time while the space-fractional generalization captures the dependence of a state probability on previous such probabilities for a fixed time instance.

In this article, we introduce the Skellam version of GSTFCP namely the Generalized Space-Time Fractional Skellam Process (GSTFSP) which generalizes all Skellam processes available in the literature. We also study a special case GSFSP, which is the Skellam version of GSFCP. The remaining paper is organized as follows. In Section \ref{sec 2}, we present preliminary definitions and essential results. In Section \ref{sec 3}, we introduce the GSTFSP and examine its distributional properties (p.m.f. and p.g.f.) along with integer and fractional order moments of GSTFSP and GSTFCP. In Section \ref{sec 5}, we investigate the governing equations for the p.m.f. and p.g.f. of GSTFSP and GSTFCP, as well as the recurrence relations satisfied by their p.m.f.s. Section \ref{sec 6} provides the transition probabilities of GSTFSP and examines its $n^{th}$ arrival times and first passage times. In Section \ref{sec 7}, we discuss the increment processes of GSTFSP and some special cases. Section \ref{sec 8} analyzes the asymptotic behavior of the tail probabilities for such processes. Section \ref{sec 9} focuses on limiting distributions and infinite divisibility properties of GSTFSP, GSTFCP and related processes. In Section \ref{sec 10}, we obtain the weighted sum representations of GSTFSP and its special cases. Section \ref{sec 11} investigates the martingale characterizations of the processes considered. Section \ref{sec 12} examines the running average processes of GSFSP and GSFCP. In Section \ref{sec 13}, we conclude the paper by plotting the p.m.f. of GSTFSP and presenting simulated sample paths of GSTFSP and GSFSP.

\section{Preliminaries} \label{sec 2}
In this section, we present some preliminary results and definitions which will be used later in the paper.

\subsection{Stable subordinator and its inverse}\label{stable subordinator} A stable subordinator $\{D_{\alpha}(t)\}_{t\geq 0}$, $0<\alpha<1$, is a non-decreasing L\'{e}vy process. Its Laplace transform is given by $E(e^{-sD_\alpha(t)})=e^{-ts^\alpha}$, $s>0$ and the associated Bernstein function is $f(s)=s^\alpha$. Its first passage time $\{Y_\alpha(t)\}_{t\geq 0}$ is called the inverse stable subordinator and defined as  
\begin{equation*}
Y_\alpha(t):=\inf\{x\geq 0:D_{\alpha}(t)>t \}.
\end{equation*}
The density of $Y_\alpha(t)$ denoted by $h_\alpha(x,t)$ (see \citet{Meerschaert2011}) has the Laplace transform (see \citet{Meerschaert2013}) $\tilde{h}_\alpha(x,t)=s^{\alpha-1}e^{-xs^{\alpha}}.$
The mean and variance (see \citet{Leonenko2014}) of $Y_\alpha(t)$ are given by
\begin{align*}
\mathbb{E}(Y_{\alpha}(t))=\frac{t^\alpha}{\Gamma(\alpha+1)}, \quad
\mathbb{V}(Y_\alpha(t))=\left(\frac{2}{\Gamma(2\alpha+1)}-\frac{1}{\Gamma^2(\alpha+1)}\right)t^{2\alpha}.
\end{align*}

\subsection{Tempered Stable Subordinator and its inverse}\label{TSS}
A tempered stable subordinator (TSS) $\{D_{\alpha,\theta}(t)\}_{t\ge 0}$ with stability index $0<\alpha<1$ and tempering paramter $\theta>0$ is obtained by exponentially tempering the distribution of $\alpha$-stable subordinator $\{D_\alpha(t)\}_{t\ge 0}$ (see \citet{Rosinski2007}). The Bernstein function associated with TSS is given by $f_2(s)=(\theta+s)^\alpha-\theta^\alpha$, $s>0$. The first hitting time of TSS $\{\mathcal{L}_{\alpha,\theta}(t)\}_{t\ge 0}$ known as the inverse TSS is defined as
\begin{equation*}
    \mathcal{L}_{\alpha,\theta}(t):=\inf\{x\ge 0: D_{\alpha,\theta}(t)>t\}.
\end{equation*}

\subsection{Multinomial Theorem}\label{multinomial thm} For $x_{1}, x_{2}, ..., x_{m}\in \mathbb{R}$, we have
\begin{equation*}
    (x_{1}+x_{2}+...+x_{m})^{n}= \sum_{\substack{k_{1}+k_{2}+...+k_{m}=n \\ k_{1},k_{2},...,k_{m}\geq 0}}\binom{n}{k_{1},k_{2},...,k_{m}}\prod_{j=1}^{m}x_{j}^{k_{j}},\quad\text{where}\,\,\binom{n}{k_{1},k_{2},...,k_{m}}=\frac{n!}{k_{1}!k_{2}!...k_{m}!}\,\,.
\end{equation*}
\subsection{Caputo–Djrbashian fractional derivative}\label{caputo}
The Caputo–Djrbashian fractional derivative is defined as (see \citet{Leonenko2017})
\begin{equation*}
    \partial_{t}^{\alpha}f(t)=\frac{d^\alpha}{dt^\alpha}f(t)=\frac{1}{\Gamma(1-\alpha)}\int_{0}^{t}\frac{df(\tau)}{d\tau}\frac{d\tau}{(t-\tau)^{\alpha}}, \quad 0<\alpha<1.
\end{equation*}
Its Laplace transform is 
\begin{equation*}
    \mathcal{L}\left\{\frac{d^\alpha}{dt^\alpha}f\right\}(s)=s^\alpha \mathcal{L}\{f\}(s)-s^{\alpha-1}f(0^+).
\end{equation*}


\subsection{Lemma} (\citet{Xia2018}) If $\{X(t)\}_{t\ge 0}$ is a L\'{e}vy process and its Riemann integral is defined by $Y(t)=\int_{0}^{t}X(s)ds,$
then the characteristic function of $Y(t)$ satisfies
\begin{equation*}
    \phi_{Y(t)}(u)=\mathbb{E}[e^{iuY(t)}]=e^{t\left(\int_{0}^{1}\log\phi_{X(1)}(tuz)dz\right)}, \quad u\in\mathbb{R}. \label{running average lemma}
\end{equation*}


\subsection{GSTFCP p.m.f.}\label{pmf_GSTFCP}
The p.m.f. of GSTFCP $\{M_{\beta}^{\alpha}(t)\}_{t\ge 0}$ (see \eqref{definition.GSTFCP}) with parameters $\mu_1,\ldots,\mu_k$ is given by (see \citet{Katariaarxiv})
\begin{equation*}
P(M_{\beta}^{\alpha}(t)=n) = \sum_{\Omega(k,n)}\prod_{j=1}^{k}\frac{\mu_j^{x_j}}{x_j!}(-\partial_T)^{z_k}E_{\alpha,1}(-T^\beta t^\alpha)
\end{equation*}
where $\Omega(k,n)=\{(x_1,\ldots,x_k)\mid x_j \ge 0,\sum_{j=1}^k jx_j=n\}$, $T=\sum_{j=1}^k \mu_j$, $\partial T = \frac{\partial}{\partial T}$ and $z_k=\sum_{j=1}^{k}x_j$.

\subsection{Two-parameter Mittag-Leffler function}
The two-parameter Mittag-Leffler function is defined as (see \citet{Prajapati2012})
\begin{equation*}
    E_{\alpha,\beta}(z)=\sum_{k=0}^{\infty}\frac{z^k}{\alpha k+\beta},\quad\alpha,\,\beta,\,z\in\mathbb{C},\,\, \text{the set of complex numbers with Re}(\alpha)>0.
\end{equation*}
For $\alpha=1$ and $\beta=1$, $E_{\alpha,\beta}(z)$ reduces to $\exp(z)$.

\section{Generalized Space Time Fractional Skellam process}\label{sec 3}
In this section, we introduce the Generalized Space Time Fractional Skellam Process (GSTFSP) and study some distributional characteristics such as its p.m.f. and p.g.f.
\begin{definition}
Let $
\{M_{1\beta}^{\alpha}(t)\}_{t\geq 0}$ and $\{M_{2\beta}^{\alpha}(t)\}_{t\geq 0}$ be two independent GSTFCPs with parameters $\lambda_1,\ldots,\lambda_k$ and $\mu_1,\ldots,\mu_k$ respectively, $\Lambda=\sum_{j=1}^k\lambda_j$ and $T=\sum_{j=1}^k\mu_j$. The Generalized Space Time Fractional Skellam process (GSTFSP) is defined as
\begin{align}
    \mathcal{S}_{\beta}^{\alpha}(t):=M_{1\beta}^{\alpha}(t)-M_{2\beta}^{\alpha}(t)=M_1(D_\beta(Y_\alpha(t)))-M_2(D_\beta(Y_\alpha(t)))
\end{align}
where $\{M_1(t)\}_{t\geq 0}$ and $\{M_2(t)\}_{t\geq 0}$ are independent GCPs. Equivalently we can write
\begin{equation}\label{definition.GSTFSP}
    \mathcal{S}_{\beta}^{\alpha}(t)=\mathcal{S}\left(D_\beta\left(Y_\alpha(t)\right)\right),
\end{equation}
where $\{\mathcal{S}(t)=M_1(t)-M_2(t)\}_{t\geq 0}$ is the Generalized Skellam Process (GSP).
\end{definition}

\subsection{Probability mass function of GSTFSP}
\begin{theorem}\label{p.m.f..gstfsp}
    The p.m.f. of GSTFSP is given by
    \begin{equation*}
        P\{\mathcal{S}_{\beta}^{\alpha}(t)=n\}=\sum_{y=0}^{\infty}\frac{\Lambda^{|n|+y}T^{y}}{(|n|+y)!y!}\left((-\partial_\Lambda)^{y+|n|}E_{\alpha,1}(-\Lambda^\beta t^\alpha)\right)\times\left((-\partial_T)^{y}E_{\alpha,1}(-T^\beta t^\alpha)\right)\qquad \forall~n\in \mathbb{Z}.
    \end{equation*}
\end{theorem}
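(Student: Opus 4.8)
The plan is to identify the law of $\mathcal{S}_\beta^\alpha(t)$ with a discrete convolution of the two constituent GSTFCP marginals and then to collapse the resulting double sum into the single Bessel-type series of the statement. Because $\mathcal{S}_\beta^\alpha(t)=M_{1\beta}^\alpha(t)-M_{2\beta}^\alpha(t)$ is the difference of two independent, nonnegative integer-valued processes, the event $\{\mathcal{S}_\beta^\alpha(t)=n\}$ decomposes over a common ``overlap'' $m\ge 0$: for $n\ge 0$ one needs $M_{1\beta}^\alpha(t)=m+n$ together with $M_{2\beta}^\alpha(t)=m$, while for $n<0$ one needs $M_{1\beta}^\alpha(t)=m$ together with $M_{2\beta}^\alpha(t)=m+|n|$. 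This bookkeeping is the entire content of the computation.

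Concretely, I would insert the GSTFCP marginal p.m.f. from Subsection~\ref{pmf_GSTFCP}, whose relevant building block for total rate $\Lambda$ is $\tfrac{\Lambda^{m}}{m!}(-\partial_\Lambda)^{m}E_{\alpha,1}(-\Lambda^\beta t^\alpha)$, and analogously with $T$ for the second process. For $n\ge 0$ this gives
\[
P\{\mathcal{S}_\beta^\alpha(t)=n\}=\sum_{m=0}^{\infty}P\{M_{1\beta}^\alpha(t)=n+m\}\,P\{M_{2\beta}^\alpha(t)=m\},
\]
and after inserting the marginals and re-indexing by $y=m$, each summand becomes $\tfrac{\Lambda^{n+y}T^{y}}{(n+y)!\,y!}\big((-\partial_\Lambda)^{n+y}E_{\alpha,1}(-\Lambda^\beta t^\alpha)\big)\big((-\partial_T)^{y}E_{\alpha,1}(-T^\beta t^\alpha)\big)$; since $n=|n|$ here, this is precisely the asserted series. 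The case $n<0$ is the mirror image, with the two processes interchanged, and reproduces the same expression with $|n|$ in place of $n$, so the two cases unify into the single formula valid for all $n\in\mathbb{Z}$.

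The delicate step I expect is analytic rather than combinatorial: I must justify that the high-order operators $(-\partial_\Lambda)^{n+y}$ and $(-\partial_T)^{y}$ can be pulled through the infinite convolution sum, i.e. that the Mittag-Leffler series may be differentiated term by term to all the required orders and then summed. I would control this through the factorial denominators $(n+y)!\,y!$, which ought to dominate the $y$-growth of the derivatives of $E_{\alpha,1}(-\Lambda^\beta t^\alpha)$ and furnish locally uniform convergence, so that summation and differentiation commute by dominated convergence. I would also verify at the outset that the independence of $M_{1\beta}^\alpha$ and $M_{2\beta}^\alpha$ posited in the definition is exactly what licenses writing the p.m.f. of their difference as the convolution of the marginals; this independence is the sole probabilistic input, and everything else is the bookkeeping above.
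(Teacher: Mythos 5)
There is a genuine gap in the middle of your argument. Your convolution decomposition over the overlap $m$, the reduction of the case $n<0$ to $n\ge 0$ by swapping the two processes, and the appeal to independence all match the paper. But the ``building block'' you insert for the GSTFCP marginal is wrong for $k\ge 2$: by Subsection~\ref{pmf_GSTFCP}, $P\{M_{2\beta}^{\alpha}(t)=m\}=\sum_{\Omega(k,m)}\prod_{j=1}^{k}\frac{\mu_j^{x_j}}{x_j!}(-\partial_T)^{z_k}E_{\alpha,1}(-T^{\beta}t^{\alpha})$, where the order of differentiation $z_k=\sum_j x_j$ varies over the partitions in $\Omega(k,m)=\{\sum_j jx_j=m\}$; this equals $\frac{T^{m}}{m!}(-\partial_T)^{m}E_{\alpha,1}(-T^{\beta}t^{\alpha})$ only when $k=1$. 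Consequently the summand $P\{M_{1\beta}^{\alpha}(t)=n+m\}\,P\{M_{2\beta}^{\alpha}(t)=m\}$ is \emph{not} the $y=m$ term of the asserted series, and ``re-indexing by $y=m$'' does not produce the statement: in the final formula $y$ is not the overlap value of the second process but its total number of jumps.

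The missing idea is the combinatorial re-indexing the paper performs after inserting the marginals: put $x_j=a_j$ and $m=y+\sum_{j=1}^{k}(j-1)a_j$, so that the constraint $\sum_j ja_j=m$ becomes $\sum_j a_j=y$ and the derivative orders $z_k$ become exactly $y$ and $y+n$ in the two factors. The double sum over $m$ and over $\Omega(k,m)$, $\Omega(k,m+n)$ is thereby reorganized into a single sum over $y$ of products of unconstrained multinomial sums, which the multinomial theorem (Subsection~\ref{multinomial thm}) collapses to $\sum_{\sum a_j=y}y!\prod_j\mu_j^{a_j}/a_j!=T^{y}$ and $\sum_{\sum a_j=y+n}(y+n)!\prod_j\lambda_j^{a_j}/a_j!=\Lambda^{y+n}$. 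Without this step your double sum does not reduce to the stated single series. Your concern about commuting the high-order derivatives with the infinite sum is legitimate but secondary; it is not the crux, and the paper does not dwell on it either.
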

\begin{proof} Note that
\begin{align*}
    P\{\mathcal{S}_{\beta}^{\alpha}(t)=n\}&=P\{M_{1\beta}^{\alpha}(t)-M_{2\beta}^{\alpha}(t)=n\}\\
    &=P\{M_{1\beta}^{\alpha}(t)-M_{2\beta}^{\alpha}(t)=n\}I_{\{n\geq 0\}}+P\{M_{1\beta}^{\alpha}(t)-M_{2\beta}^{\alpha}(t)=n\}I_{\{n< 0\}}\\
    &=\sum_{m=0}^{\infty}P\{M_{1\beta}^{\alpha}(t)-M_{2\beta}^{\alpha}(t)=n|M_{2\beta}^{\alpha}(t)=m\}P\{M_{2\beta}^{\alpha}(t)=m\}I_{\{n\geq 0\}}\\
    &+\sum_{m=0}^{\infty}P\{M_{1\beta}^{\alpha}(t)-M_{2\beta}^{\alpha}(t)=n|M_{1\beta}^{\alpha}(t)=m\}P\{M_{1\beta}^{\alpha}(t)=m\}I_{\{n<0\}}\\
    &=\sum_{m=0}^{\infty}\left(\sum_{\Omega(k,m+n)}\prod_{j=1}^{k}\frac{\lambda_j^{x_j}}{x_j!}(-\partial_\Lambda)^{z_k}E_{\alpha,1}(-\Lambda^\beta t^\alpha)\right)\left(\sum_{\Omega(k,m)}\prod_{j=1}^{k}\frac{\mu_j^{x_j}}{x_j!}(-\partial_T)^{z_k}E_{\alpha,1}(-T^\beta t^\alpha)\right)I_{\{n\geq 0\}}\\
    &+\sum_{m=0}^{\infty}\left(\sum_{\Omega(k,m+|n|)}\prod_{j=1}^{k}\frac{\mu_j^{x_j}}{x_j!}(-\partial_T)^{z_k}E_{\alpha,1}(-T^\beta t^\alpha)\right)\left(\sum_{\Omega(k,m)}\prod_{j=1}^{k}\frac{\lambda_j^{x_j}}{x_j!}(-\partial_\Lambda)^{z_k}E_{\alpha,1}(-\Lambda^\beta t^\alpha)\right)I_{\{n<0\}}.
\end{align*}
For $n\geq 0$, put $x_j=a_j$ and $m=y+\sum_{j=1}^{k}(j-1)a_j$. Then we have
\begin{align}
    P\{\mathcal{S}_{\beta}^{\alpha}(t)=n\}&=\sum_{y=0}^{\infty}\left(\sum_{\sum a_j=y+n}\prod_{j=1}^{k}\frac{\lambda_j^{a_j}}{a_j!}(-\partial_\Lambda)^{y+n}E_{\alpha,1}(-\Lambda^\beta t^\alpha)\right)\times\left(\sum_{\sum a_j=y}\prod_{j=1}^{k}\frac{\mu_j^{a_j}}{a_j!}(-\partial_T)^{y}E_{\alpha,1}(-T^\beta t^\alpha)\right)\nonumber\\
   &=\sum_{y=0}^{\infty}\frac{1}{(n+y)!y!}(-\partial_\Lambda)^{y+n}E_{\alpha,1}(-\Lambda^\beta t^\alpha)\sum_{\sum a_j=y+n}(n+y)!\prod_{j=1}^{k}\frac{\lambda_j^{a_j}}{a_j!}\times(-\partial_T)^{y}E_{\alpha,1}(-T^\beta t^\alpha)\sum_{\sum a_j=y}y!\prod_{j=1}^{k}\frac{\mu_j^{a_j}}{a_j!}\nonumber\\
   &=\sum_{y=0}^{\infty}\frac{1}{(n+y)!y!}\left((-\partial_\Lambda)^{y+n}E_{\alpha,1}(-\Lambda^\beta t^\alpha)\right)\left(\sum_{j=1}^{k}\lambda_j\right)^{n+y}\times\left((-\partial_T)^{y}E_{\alpha,1}(-T^\beta t^\alpha)\right)\left(\sum_{j=1}^{k}\mu_j\right)^{y}\nonumber\\
   &=\sum_{y=0}^{\infty}\frac{1}{(n+y)!y!}\left((-\partial_\Lambda)^{y+n}E_{\alpha,1}(-\Lambda^\beta t^\alpha)\right)\Lambda^{n+y}\times\left((-\partial_T)^{y}E_{\alpha,1}(-T^\beta t^\alpha)\right)T^{y}.\label{p.m.f..gstfsp1}
\end{align}
Similarly, for $n<0$, $P\{\mathcal{S}_{\beta}^{\alpha}(t)=n\}$ can be obtained by replacing $n$ with $-n$ in \eqref{p.m.f..gstfsp1}. Thus
\begin{equation*}
        P\{\mathcal{S}_{\beta}^{\alpha}(t)=n\}=\sum_{y=0}^{\infty}\frac{\Lambda^{|n|+y}T^{y}}{(|n|+y)!y!}\left((-\partial_\Lambda)^{y+|n|}E_{\alpha,1}(-\Lambda^\beta t^\alpha)\right)\times\left((-\partial_T)^{y}E_{\alpha,1}(-T^\beta t^\alpha)\right)\qquad n\in \mathbb{Z}.
    \end{equation*}
\end{proof}

\begin{remark}\label{alternative_pmf}
        After expanding the Mittag-Leffler functions, the GSTFSP p.m.f. can also be written as
\begin{align*}
    P\{\mathcal{S}_{\beta}^{\alpha}(t)=n\}&=\sum_{y=0}^{\infty}\frac{1}{(|n|+y)!y!}(-\Lambda)^{|n|+y}\sum_{i=0}^{\infty}\frac{(-t^\alpha)^i}{\Gamma(i\alpha+1)}\frac{\Gamma(i\beta+1)\Lambda^{i\beta-(|n|+y)}}{\Gamma (i\beta-(|n|+y)+1)}\times(-T)^y\sum_{l=0}^{\infty}\frac{(-t^\alpha)^l}{\Gamma(l\alpha+1)}\frac{\Gamma(l\beta+1)T^{l\beta-y}}{\Gamma(l\beta-y+1)}\\
  &=\sum_{y=0}^{\infty}\frac{(-1)^{2y+|n|}}{(|n|+y)!y!}\left(\sum_{i=0}^{\infty}\frac{(-t^\alpha\Lambda^\beta)^i}{\Gamma(i\alpha+1)}\frac{\Gamma(i\beta+1)}{\Gamma (i\beta-(|n|+y)+1)}\times\sum_{l=0}^{\infty}\frac{(-t^\alpha T^\beta)^l}{\Gamma(l\alpha+1)}\frac{\Gamma(l\beta+1)}{\Gamma(l\beta-y+1)}\right)\\
   &=\sum_{y=0}^{\infty}\frac{(-1)^{2y+|n|}}{(|n|+y)!y!}\left(\sum_{i=0}^{\infty}A_i(y)\times\sum_{l=0}^{\infty}B_l(y)\right),
   \end{align*}
where\, $A_i(y)=\displaystyle\frac{(-t^\alpha\Lambda^\beta)^i}{\Gamma(i\alpha+1)}\frac{\Gamma(i\beta+1)}{\Gamma (i\beta-(|n|+y)+1)}$ \,and\, $B_l(y)=\displaystyle\frac{(-t^\alpha T^\beta)^l}{\Gamma(l\alpha+1)}\frac{\Gamma(l\beta+1)}{\Gamma(l\beta-y+1)}$.

\end{remark}
\subsection{Probability Generating Function of GSTFSP}
\begin{proposition}
    The p.g.f. of GSTFSP for $|u|<1$ is given by
    \begin{equation*}
        \mathcal{G}_{\mathcal{S}^{\alpha}_{\beta}}(u,t)=\mathbb{E}\left(e^{-\sum_{j=1}^{k}(1-u^j)\lambda_jD_\beta(Y_\alpha(t))}\right)\times\mathbb{E}\left(e^{-\sum_{j=1}^{k}(1-u^{-j})\mu_jD_\beta(Y_\alpha(t))}\right).
    \end{equation*}
\end{proposition}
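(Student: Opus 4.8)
The plan is to combine the independence of the two component processes with a subordination argument. Writing the generating function as $\mathcal{G}_{\mathcal{S}^{\alpha}_{\beta}}(u,t)=\mathbb{E}\big(u^{\mathcal{S}^{\alpha}_{\beta}(t)}\big)=\mathbb{E}\big(u^{M_{1\beta}^{\alpha}(t)-M_{2\beta}^{\alpha}(t)}\big)$, I would first use that $\{M_{1\beta}^{\alpha}(t)\}_{t\ge 0}$ and $\{M_{2\beta}^{\alpha}(t)\}_{t\ge 0}$ are independent GSTFCPs to split
\begin{equation*}
\mathcal{G}_{\mathcal{S}^{\alpha}_{\beta}}(u,t)=\mathbb{E}\big(u^{M_{1\beta}^{\alpha}(t)}\big)\,\mathbb{E}\big(u^{-M_{2\beta}^{\alpha}(t)}\big).
\end{equation*}
This factorization is exactly what produces the product structure of the claimed identity: the two factors are the generating functions of the two independent GSTFCPs, the first evaluated at $u$ and the second at $u^{-1}$.

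Next I would evaluate each factor by conditioning on its (independent copy of the) random time $D_\beta(Y_\alpha(t))$. The essential ingredient is the generating function of a GCP $\{M(s)\}_{s\ge 0}$ with parameters $\nu_1,\dots,\nu_k$, namely $\mathbb{E}\big(u^{M(s)}\big)=\exp\big(-s\sum_{j=1}^{k}\nu_j(1-u^{j})\big)$, which follows from its L\'evy structure as a superposition of scaled Poisson processes (a jump of size $j$ at rate $\nu_j$). Applying the tower property to the first factor gives
\begin{equation*}
\mathbb{E}\big(u^{M_{1\beta}^{\alpha}(t)}\big)=\mathbb{E}\Big(\mathbb{E}\big(u^{M_1(s)}\big)\big|_{s=D_\beta(Y_\alpha(t))}\Big)=\mathbb{E}\Big(e^{-\sum_{j=1}^{k}(1-u^{j})\lambda_j D_\beta(Y_\alpha(t))}\Big),
\end{equation*}
and, replacing $u$ by $u^{-1}$ and the rates $\lambda_j$ by $\mu_j$, the identical computation for the second factor yields $\mathbb{E}\big(u^{-M_{2\beta}^{\alpha}(t)}\big)=\mathbb{E}\big(e^{-\sum_{j=1}^{k}(1-u^{-j})\mu_j D_\beta(Y_\alpha(t))}\big)$. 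Multiplying these two expressions produces the asserted formula.

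The step requiring the most care is keeping the dependence structure straight. Because the two GSTFCPs are \emph{independent}, their time changes are independent copies of $D_\beta(Y_\alpha(t))$, so the outer expectation genuinely factors into a product of two expectations; this is what distinguishes the stated identity from the expectation of a single product that a common, shared time change would produce, and it is consistent with the convolution structure used earlier to derive the p.m.f. in Theorem \ref{p.m.f..gstfsp}. The remaining technical matter is the domain of validity: since $\mathcal{S}^{\alpha}_{\beta}(t)$ is $\mathbb{Z}$-valued, the factor carrying $u^{-j}$ is a transform of $D_\beta(Y_\alpha(t))$ whose argument is positive when $|u|<1$, so the existence of the corresponding exponential moment of $D_\beta(Y_\alpha(t))$ needs to be checked; this is the point that determines the precise range of $u$ for which the stated generating function converges.
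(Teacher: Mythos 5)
Your proof is correct and follows essentially the same route as the paper: factor $\mathbb{E}\big(u^{\mathcal{S}^{\alpha}_{\beta}(t)}\big)$ using the independence of the two GSTFCPs and then substitute the p.g.f.\ of each GSTFCP, which the paper simply cites while you rederive it by conditioning on the random time $D_\beta(Y_\alpha(t))$. Your closing remarks --- that the product form requires the two time changes to be independent copies rather than a shared subordinator, and that the convergence of the factor involving $u^{-j}$ (an exponential moment of $D_\beta(Y_\alpha(t))$ with positive argument) determines the true range of validity in $u$ --- are points the paper's proof does not address.
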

\begin{proof} The p.g.f. of GSTFSP can be obtained using the p.g.f. of GSTFCP (see \citet{Katariaarxiv}) as follows
\begin{align*}
    \mathcal{G}_{\mathcal{S}^{\alpha}_{\beta}}(u,t)&=\mathbb{E}\left(u^{\mathcal{S}_{\beta}^\alpha(t)}\right)=\mathbb{E}\left(u^{M_{1\beta}^{\alpha}(t)}\right)\times\mathbb{E}\left(u^{-M_{2\beta}^{\alpha}(t)}\right)\\
    &=\mathbb{E}\left(e^{-\sum_{j=1}^{k}(1-u^j)\lambda_jD_\beta(Y_\alpha(t))}\right)\times\mathbb{E}\left(e^{-\sum_{j=1}^{k}(1-u^{-j})\mu_jD_\beta(Y_\alpha(t))}\right).
\end{align*}
\end{proof}
\begin{remark}\label{p.g.f._GSTFSP}
Using the alternative expression for GSTFCP given by \citet{Katariaarxiv}, the p.g.f. of GSTFSP can also be rewritten as
    \begin{align*}
        \mathcal{G}_{\mathcal{S}^{\alpha}_{\beta}}(u,t)&=E_{\alpha,1}\left(-\left(\sum_{j=1}^{k}(1-u^j)\lambda_j\right)^\beta t^\alpha\right)\times E_{\alpha,1}\left(-\left(\sum_{j=1}^{k}(1-u^{-j})\mu_j\right)^\beta t^\alpha\right)\\
        &=\sum_{i=0}^{\infty}\frac{\left(-\left(\sum_{j=1}^{k}(1-u^j)\lambda_j\right)^\beta t^\alpha\right)^i}{\Gamma(i\alpha+1)}\times\sum_{l=0}^{\infty}\frac{\left(-\left(\sum_{j=1}^{k}(1-u^{-j})\mu_j\right)^\beta t^\alpha\right)^l}{\Gamma(l\alpha+1)}.\\
    \end{align*}
\end{remark}

The next result connects GSTFSP with TFPPs using uniform random variables. 

\begin{proposition}
Let $X_i$ and $Y_i$ for $i\ge 1$ be i.i.d. uniform random variables in $[0,1]$. Further let $\{N^{\alpha}(t,\Lambda^{\beta})\}_{t\ge 0}$ and $\{N^{\alpha}(t,T^{\beta})\}_{t\ge 0}$ be TFPPs with intensities $\Lambda^{\beta}$ and $T^{\beta}$ respectively such that $\min_{0\le i\le N^{\alpha}(t,\Lambda^{\beta})}X^{1/\beta}_{i}=\min_{0\le i\le N^{\alpha}(t,T^{\beta})}Y^{1/\beta}_{i}=1$ when $N^{\alpha}(t,\Lambda^{\beta})=N^{\alpha}(t,T^{\beta})=0$. Then for $0<u<1$, we have
\begin{equation*}
\mathcal{G}_{\mathcal{S}^{\alpha}_{\beta}}(u,t)=P\left(\min_{0\le i\le N^{\alpha}(t,\Lambda^{\beta})}X^{1/\beta}_{i}\ge 1-\frac{1}{\Lambda}\sum_{j=1}^{k}\lambda_j u^{j}\right)P\left(\min_{0\le i\le N^{\alpha}(t,T^{\beta})}Y^{1/\beta}_{i}\ge 1-\frac{1}{T}\sum_{j=1}^{k}\mu_j u^{-j}\right).
\end{equation*}
\end{proposition}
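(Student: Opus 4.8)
The plan is to deduce the claim from the factored form of the p.g.f.\ already recorded in Remark \ref{p.g.f._GSTFSP}, so that no fresh computation with the p.m.f.\ is needed; the entire content is to recognize each Mittag-Leffler factor as a minimum-of-uniforms probability. The first step is purely algebraic. Writing $c=1-\frac{1}{\Lambda}\sum_{j=1}^{k}\lambda_j u^{j}$ and $d=1-\frac{1}{T}\sum_{j=1}^{k}\mu_j u^{-j}$ for the two thresholds appearing on the right-hand side, one has
\begin{equation*}
\sum_{j=1}^{k}(1-u^{j})\lambda_j=\Lambda c,\qquad \sum_{j=1}^{k}(1-u^{-j})\mu_j=Td,
\end{equation*}
whence the two factors of $\mathcal{G}_{\mathcal{S}^{\alpha}_{\beta}}(u,t)$ become $E_{\alpha,1}(-\Lambda^{\beta}c^{\beta}t^{\alpha})$ and $E_{\alpha,1}(-T^{\beta}d^{\beta}t^{\alpha})$, using $(\Lambda c)^{\beta}=\Lambda^{\beta}c^{\beta}$ and $(Td)^{\beta}=T^{\beta}d^{\beta}$. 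It then suffices to prove one representation identity for a generic factor and apply it twice.

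The key identity I would establish is: if $\{N^{\alpha}(t,\gamma)\}_{t\ge 0}$ is a TFPP with intensity $\gamma$, $\{Z_i\}_{i\ge 1}$ are i.i.d.\ uniform on $[0,1]$ independent of the process, and $w\in[0,1]$, then
\begin{equation*}
P\left(\min_{0\le i\le N^{\alpha}(t,\gamma)}Z_i^{1/\beta}\ge w\right)=E_{\alpha,1}(-\gamma w^{\beta}t^{\alpha}).
\end{equation*}
To prove it I condition on $N^{\alpha}(t,\gamma)=m$. For $m\ge 1$, independence of the $Z_i$ gives $P(\min_{1\le i\le m}Z_i^{1/\beta}\ge w)=\prod_{i=1}^{m}P(Z_i^{1/\beta}\ge w)=(1-w^{\beta})^{m}$, since $P(Z_i^{1/\beta}\ge w)=P(Z_i\ge w^{\beta})=1-w^{\beta}$ for $w\in[0,1]$; the stated convention makes this hold also for $m=0$. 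Summing over $m$ yields $\mathbb{E}\big((1-w^{\beta})^{N^{\alpha}(t,\gamma)}\big)$, the p.g.f.\ of the TFPP at the point $1-w^{\beta}$. Using the subordination $N^{\alpha}(t,\gamma)=N(Y_\alpha(t))$ with $N$ a rate-$\gamma$ Poisson process and conditioning on $Y_\alpha(t)$ gives $\mathbb{E}(v^{N^{\alpha}(t,\gamma)})=\mathbb{E}(e^{-\gamma(1-v)Y_\alpha(t)})=E_{\alpha,1}(-\gamma(1-v)t^{\alpha})$, the last equality being the standard Laplace transform $\mathbb{E}(e^{-sY_\alpha(t)})=E_{\alpha,1}(-st^{\alpha})$ of the inverse stable subordinator (cf.\ Section \ref{stable subordinator}); substituting $v=1-w^{\beta}$ produces the asserted Mittag-Leffler function.

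Applying this identity with $(\gamma,w)=(\Lambda^{\beta},c)$ and with $(\gamma,w)=(T^{\beta},d)$, and multiplying the two resulting independent probabilities, reconstructs the factored p.g.f.\ and hence proves the proposition. For the upward factor the reading is genuinely probabilistic, since $0<u<1$ forces $c\in(0,1)$. The main obstacle is the downward factor: for $0<u<1$ one has $u^{-j}>1$, so $d<0$, and then $P(Z_i^{1/\beta}\ge d)=1$ rather than $1-d^{\beta}$, so the literal minimum-of-uniforms probability collapses to $1$ and no longer reproduces $E_{\alpha,1}(-T^{\beta}d^{\beta}t^{\alpha})$; this mirrors the fact that the defining series of $\mathcal{G}_{\mathcal{S}^{\alpha}_{\beta}}(u,t)$ need not converge on $(0,1)$ because the downward component $M_{2\beta}^{\alpha}$ has power-law tails. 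I would therefore treat the second factor through the same conditioning sum $\sum_{m}P(N^{\alpha}(t,T^{\beta})=m)(1-d^{\beta})^{m}$, understood as the analytic continuation of the uniform-tail relation $P(Z^{1/\beta}\ge w)=1-w^{\beta}$ beyond $w\in[0,1]$, and would record explicitly that the literal probability interpretation holds for the upward factor on $(0,1)$ and, after the substitution $u\mapsto 1/u$, for the downward factor on $(1,\infty)$. Making this continuation precise—or restricting the statement to a regime in which both thresholds lie in $[0,1]$—is where the real care is needed.
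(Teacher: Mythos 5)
Your proof is correct and structurally matches the paper's: both factor $\mathcal{G}_{\mathcal{S}^{\alpha}_{\beta}}(u,t)$ into the two marginal p.g.f.s by independence and then identify each factor with a minimum-of-uniforms probability. The difference is that the paper simply quotes this identification for each GSTFCP factor from \citet{Katariaarxiv}, whereas you actually prove the underlying identity $P\big(\min_{0\le i\le N^{\alpha}(t,\gamma)}Z_i^{1/\beta}\ge w\big)=E_{\alpha,1}(-\gamma w^{\beta}t^{\alpha})$ by conditioning on the TFPP count and invoking the Laplace transform of the inverse stable subordinator; this makes your argument self-contained where the paper's is essentially a two-line citation, and it buys a transparent explanation of where the Mittag--Leffler factors come from. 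Your caveat about the downward factor is also well taken and is glossed over in the paper: for $0<u<1$ the threshold $1-\frac{1}{T}\sum_{j=1}^{k}\mu_j u^{-j}$ is negative, so the second factor cannot literally be read as a probability about uniforms (and the corresponding Mittag--Leffler argument involves a fractional power of a negative quantity), so that factor must be understood via analytic continuation or after the substitution $u\mapsto 1/u$, exactly as you propose. Incidentally, the paper's displayed expression for $\mathcal{G}_{M_{2\beta}^{\alpha}}(u,t)$ in its proof normalizes by $\frac{1}{\Lambda}$ where $\frac{1}{T}$ is meant; your version corrects this.
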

\begin{proof}
From \citet{Katariaarxiv}, the p.g.f. of $\{M_{1\beta}^{\alpha}(t)\}_{t\ge 0}$ for $0<u<1$ can be written as 
\begin{equation*}
\mathcal{G}_{M_{1\beta}^{\alpha}(u,t)} = P\left(\min_{0\le i\le N^{\alpha}(t,\Lambda^{\beta})}X^{1/\beta}_{i}\ge 1-\frac{1}{\Lambda}\sum_{j=1}^{k}\lambda_j u^{j}\right).
\end{equation*}
Similarly the p.g.f. of $\{M_{2\beta}^{\alpha}(t)\}_{t\ge 0}$ for $0<u<1$ can be written as
\begin{equation*}
\mathcal{G}_{M_{2\beta}^{\alpha}(u,t)} = P\left(\min_{0\le i\le N^{\alpha}(t,T^{\beta})}Y^{1/\beta}_{i}\ge 1-\frac{1}{\Lambda}\sum_{j=1}^{k}\mu_j u^{-j}\right).
\end{equation*}
Hence the p.g.f. of GSTFSP is given by 
\begin{align*}
&\mathbb{E}\left(u^{\mathcal{S}_{\beta}^\alpha(t)}\right)=\mathbb{E}\left(u^{M_{1\beta}^{\alpha}(t)}\right)\times\mathbb{E}\left(u^{-M_{2\beta}^{\alpha}(t)}\right)\nonumber\\
&=P\left(\min_{0\le i\le N^{\alpha}(t,\Lambda^{\beta})}X^{1/\beta}_{i}\ge 1-\frac{1}{\Lambda}\sum_{j=1}^{k}\lambda_j u^{j}\right)P\left(\min_{0\le i\le N^{\alpha}(t,T^{\beta})}Y^{1/\beta}_{i}\ge 1-\frac{1}{T}\sum_{j=1}^{k}\mu_j u^{-j}\right).
\end{align*}
\end{proof}
Next, we obtain the moments of GSTFCP and GSTFSP. We begin by deriving the mean, variance and covariance of GSTFCP followed by a discussion of its special cases.
\subsection{Mean, variance and covariance for GSTFCP}
Note that $\{M(t)\}_{t\geq 0}$ is a homogeneous L\'{e}vy process with $M(0)=0$ and $\{D_\beta(Y_\alpha(t))\}_{t\geq 0}$ is a non-decreasing process independent of $\{\mathbf{M}(t)\}_{t\geq 0}$. Hence, using Theorem 2.1 of \citet{Leonenko2014}, the mean, variance and covariance of the GSTFCP $M_{\beta}^{\alpha}(t)=M(D_\beta(Y_\alpha(t)))$ are given by
\begin{align*}
    \mathbb{E}\left(M_{\beta}^{\alpha}(t)\right)&=\mathbb{E}\left(D_\beta(Y_\alpha(t))\right)\mathbb{E}(M(1))=\mathbb{E}\left(D_\beta(Y_\alpha(t))\right)\sum_{j=1}^{k}j\lambda_j,\\
    \mathbb{V}\left(M_{\beta}^{\alpha}(t)\right)&=\left(\mathbb{E}(M(1))\right)^{2}\mathbb{V}\left(D_\beta(Y_\alpha(t))\right)+\mathbb{E}\left(D_\beta(Y_\alpha(t))\right)\mathbb{V}\left(M(1)\right)\\
    &=\left(\sum_{j=1}^{k}j\lambda_j\right)^2\mathbb{V}\left(D_\beta(Y_\alpha(t))\right)+\left(\sum_{j=1}^{k}j^2\lambda_j\right)\mathbb{E}\left(D_\beta(Y_\alpha(t))\right),\\
    Cov\left(M_{\beta}^{\alpha}(s),M_{\beta}^{\alpha}(t)\right)&=\mathbb{V}\left(M(1)\right)\mathbb{E}\left(D_\beta(Y_\alpha(s))\right)+\left(\mathbb{E}\left(M(1)\right)\right)^2Cov\left(D_\beta(Y_\alpha(s)),D_\beta(Y_\alpha(t))\right),\qquad s<t\\
    &=\left(\sum_{j=1}^{k}j^2\lambda_j\right)\mathbb{E}\left(D_\beta(Y_\alpha(s))\right)+\left(\sum_{j=1}^{k}j\lambda_j\right)^2Cov\left(D_\beta(Y_\alpha(s)),D_\beta(Y_\alpha(t))\right).
\end{align*}

\subsection{Special cases of GSTFCP}\label{special cases_GSTFCP}
We will now examine several special cases of GSTFCP.
]For $k=1$, GSTFCP reduces to STFPP $\{N_\beta^\alpha(t)\}_{t\geq 0}$ (see \citet{Orsingher2012}). When $\alpha=1$, it reduces to GSFCP $\{M_{\beta}(t)\}_{t\geq 0}$, and for $\alpha=1$ and $k=1$, it further reduces to SFPP $\{N_\beta(t)\}_{t\geq 0}$ (see \citet{Orsingher2012}). Setting $\beta=1$ reduces GSTFCP to GFCP $\{M^\alpha(t)\}_{t\geq 0}$ (see \citet{KatariaGFCP}) which in turn reduces to TFPP $\{N^\alpha(t)\}_{t\geq 0}$ (see \citet{LASKIN2003}) for $k=1$ and GCP $\{M(t)\}_{t\geq 0}$ (see  \citet{Crescenzo2016}) for $\alpha=1$. Note that the GCP reduces to the Poisson process $\{N(t)\}_{t\geq 0}$ if $k=1$. Next for $\lambda_j=\lambda$ and $\mu_j=\mu$ for $j=1,2,\cdots,k$, GSTFCP reduces to Space Time Fractional Poisson Process of order $k$ (STFPPoK) $\{N_{\beta}^{k,\alpha}(t)\}_{t\geq 0}$, which further reduces to Space Fractional Poisson Process of order $k$ (SFPPoK) $\{N_{\beta}^{k}(t)\}_{t\geq 0}$ for $\alpha=1$ and Fractional Poisson Process of order $k$ (FPPoK)$\{N^{k,\alpha}(t)\}_{t\geq 0}$ (see \citet{Gupta2023}) for $\beta=1$. Finally note that FPPoK simplifies to Poisson Process of order $k$ (PPoK) $\{N^{k}(t)\}_{t\geq 0}$ for $\alpha=1$. 
    
\begin{remark} \label{moment non-existance_GSTFCP}
    Note that the mean, variance and covariance for all the special cases can be obtained from those of GSTFCP by making suitable substitutions discussed above and using the fact that $Y_1(t)=t$ and $D_1(t)=t$ (see \citet{Maheshwari2019}). However, the moments of GSTFCP, STFPP, GSFCP, SFPP, STFPPoK and SFPPoK do not exist as the moments for the stable subordinator $\{D_\beta(t)\}_{t\geq 0}$ do not exist (see section 2.1 of \citet{KatariaTCSTFPP2022}). Moreover, the long and short range dependence (LRD and SRD) properties of these processes (see \citet{Maheshwari2019}) are unknown as their correlation structures can’t be determined.
\end{remark}
\subsection{Fractional order moments for GSTFCP and GSFCP}
Since integer moments don't exist for GSTFCP and GSFCP, we may consider their fractional moments. The $q^{th}$ order fractional moments (see \citet{Kumar2019}) of a positive random variable $X$ having Laplace transform $\tilde{f}(u)$ for $q\in(n-1,n)$ are given by
\begin{equation}
    \mathbb{E}(X^q)=\frac{(-1)^n}{\Gamma(n-q)}\int_{0}^{\infty}\frac{d^n}{du^n}\left[\tilde{f}(u)\right]u^{n-q-1}du.\label{fractional_moment}
\end{equation}
Putting $n=1$ in \eqref{fractional_moment}, the $q^{th}$ order fractional moments of GSTFCP and GSFCP, where $q\in(0,1)$ can be obtained as follows:
\begin{align*}
    \mathbb{E}\left(\left(M_{\beta}^{\alpha}(t)\right)^q\right)&=\frac{(-1)}{\Gamma(1-q)}\int_{0}^{\infty}\frac{d}{du}E_{\alpha,1}\left(-\left(\sum_{j=1}^{k}(1-e^{-uj})\lambda_j\right)^\beta t^\alpha\right)u^{-q}du\\
    &=\frac{(-1)}{\Gamma(1-q)}\int_{0}^{\infty}\frac{d}{du}\left(\sum_{i=0}^{\infty}\frac{\left(-\left(\sum_{j=1}^{k}(1-e^{-uj})\lambda_j\right)^\beta t^\alpha\right)^i}{\Gamma(i\alpha+1)}\right)u^{-q}du\\
    &=\frac{(-1)}{\Gamma(1-q)}\sum_{i=0}^{\infty}\frac{(-t^\alpha)^i(\beta i)}{\Gamma(i\alpha+1)}\sum_{j=1}^{k}j\lambda_j\int_{0}^{\infty}\left(\sum_{l=1}^{k}(1-e^{-ul})\lambda_l\right)^{\beta i-1}e^{-uj}u^{-q}du\\
\text{and}\quad\mathbb{E}\left(\left(M_{\beta}(t)\right)^q\right)&=\frac{(-1)}{\Gamma(1-q)}\int_{0}^{\infty}\frac{d}{du}\exp\left(-t\left(\sum_{j=1}^{k}(1-e^{-uj})\lambda_j\right)^\beta \right)u^{-q}du\\
&=\frac{\beta t}{\Gamma(1-q)}\sum_{j=1}^{k}j\lambda_j\int_{0}^{\infty}\left(\sum_{l=1}^{k}(1-e^{-ul})\lambda_l\right)^{\beta-1}\exp\left(-t\left(\sum_{l=1}^{k}(1-e^{-ul})\lambda_l\right)^\beta - uj\right)u^{-q}du.
\end{align*}
As the integrands are bounded, the fractional moments for GSTFCP and GSFCP exist. However, numerical integration techniques are required to evaluate the integrals since they are difficult to compute analytically. 

\begin{remark}
Putting $\beta=1$ in the fractional moments of GSTFCP and GSFCP, we can obtain the fractional moments of GFCP and GCP respectively. Similarly, the fractional moments of PPoK and the Poisson process can be obtained from those of GCP by substituting $\lambda_1=\lambda_2=\cdots=\lambda_k=\lambda$ and $k=1$ (in addition) respectively. 
\end{remark}
Now we will study the moments of GSTFSP followed by a discussion of its special cases.
\subsection{Mean, variance and covariance for GSTFSP}\label{moments_GSTFSP}
The mean and variance of GSTFSP are given by
\begin{align}
    \mathbb{E} \left(\mathcal{S}_{\beta}^{\alpha}(t)\right)&=\mathbb{E}\left(M_{1\beta}^{\alpha}(t)\right)-\mathbb{E}\left(M_{2\beta}^{\alpha}(t)\right)=\mathbb{E}\left(D_\beta(Y_\alpha(t))\right)\sum_{j=1}^{k}j(\lambda_j-\mu_j),\\
    \mathbb{V}\left(\mathcal{S}_{\beta}^{\alpha}(t)\right)&=\mathbb{V}\left(M_{1\beta}^{\alpha}(t)\right)+\mathbb{V}\left(M_{2\beta}^{\alpha}(t)\right)\nonumber\\
    &=\mathbb{E}\left(D_\beta(Y_\alpha(t))\right)\sum_{j=1}^{k}j^2(\lambda_j+\mu_j)+\mathbb{V}\left(D_\beta(Y_\alpha(t))\right)\left(\left(\sum_{j=1}^{k}j\lambda_j\right)^2+\left(\sum_{j=1}^{k}j\mu_j\right)^2\right),\\
    Cov\left(\mathcal{S}_{\beta}^{\alpha}(s),\mathcal{S}_{\beta}^{\alpha}(t)\right)&=\mathbb{E}\left(\mathcal{S}_{\beta}^{\alpha}(s)\mathcal{S}_{\beta}^{\alpha}(t)\right)-\mathbb{E}\left(\mathcal{S}_{\beta}^{\alpha}(s)\right)\mathbb{E}\left(\mathcal{S}_{\beta}^{\alpha}(t)\right).\label{cov-GSTFSP}
\end{align}
Now 
\begin{align}
    \mathbb{E}\left(\mathcal{S}_{\beta}^{\alpha}(s)\mathcal{S}_{\beta}^{\alpha}(t)\right)
    &=\mathbb{E}\left(M_{1\beta}^{\alpha}(s)M_{1\beta}^{\alpha}(t)\right)-\mathbb{E}\left(M_{1\beta}^{\alpha}(s)\right)\mathbb{E}\left(M_{2\beta}^{\alpha}(t)\right)-\mathbb{E}\left(M_{2\beta}^{\alpha}(s)\right)\mathbb{E}\left(M_{1\beta}^{\alpha}(t)\right)+\mathbb{E}\left(M_{2\beta}^{\alpha}(s)M_{2\beta}^{\alpha}(t)\right)\label{product-exp-GSTFSP}
\end{align}
where
\begin{align}
    \mathbb{E}\left(M_{1\beta}^{\alpha}(s)M_{1\beta}^{\alpha}(t)\right)&=Cov\left(M_{1\beta}^{\alpha}(s),M_{1\beta}^{\alpha}(t)\right)+\mathbb{E}\left(M_{1\beta}^{\alpha}(s)\right)\mathbb{E}\left(M_{1\beta}^{\alpha}(t)\right)\nonumber\\
    &=\left(\sum_{j=1}^{k}j^2\lambda_j\right)\mathbb{E}\left(D_\beta(Y_\alpha(s))\right)+\left(\sum_{j=1}^{k}j\lambda_j\right)^2Cov\left(D_\beta(Y_\alpha(s)),D_\beta(Y_\alpha(t))\right)\nonumber \\
&+\mathbb{E}\left(D_\beta(Y_\alpha(s))\right)\mathbb{E}\left(D_\beta(Y_\alpha(t))\right)\left(\sum_{j=1}^{k}j\lambda_j\right)^2\label{product-exp-GSTFCP}
\end{align}
and $\mathbb{E}\left(M_{2\beta}^{\alpha}(s)M_{2\beta}^{\alpha}(t)\right)$ can be calculated similarly. Putting Eq. \eqref{product-exp-GSTFSP} and Eq. \eqref{product-exp-GSTFCP} in Eq. \eqref{cov-GSTFSP}, we get
\begin{align}
     &Cov\left(\mathcal{S}_{\beta}^{\alpha}(s),\mathcal{S}_{\beta}^{\alpha}(t)\right)\nonumber\\
     &=\left(\sum_{j=1}^{k}j^2\lambda_j\right)\mathbb{E}\left(D_\beta(Y_\alpha(s))\right)+\left(\sum_{j=1}^{k}j\lambda_j\right)^2Cov\left(D_\beta(Y_\alpha(s)),D_\beta(Y_\alpha(t))\right)+\mathbb{E}\left(D_\beta(Y_\alpha(s))\right)\mathbb{E}\left(D_\beta(Y_\alpha(t))\right)\left(\sum_{j=1}^{k}j\lambda_j\right)^2\nonumber\\
     &-\mathbb{E}\left(D_\beta(Y_\alpha(s))\right)\left(\sum_{j=1}^{k}j\lambda_j\right)\mathbb{E}\left(D_\beta(Y_\alpha(t))\right)\left(\sum_{j=1}^{k}j\mu_j\right)-\mathbb{E}\left(D_\beta(Y_\alpha(s))\right)\left(\sum_{j=1}^{k}j\mu_j\right)\mathbb{E}\left(D_\beta(Y_\alpha(t))\right)\left(\sum_{j=1}^{k}j\lambda_j\right)\nonumber\\
     &+\left(\sum_{j=1}^{k}j^2\mu_j\right)\mathbb{E}\left(D_\beta(Y_\alpha(s))\right)+\left(\sum_{j=1}^{k}j\mu_j\right)^2Cov\left(D_\beta(Y_\alpha(s)),D_\beta(Y_\alpha(t))\right)+\mathbb{E}\left(D_\beta(Y_\alpha(s))\right)\mathbb{E}\left(D_\beta(Y_\alpha(t))\right)\left(\sum_{j=1}^{k}j\mu_j\right)^2\nonumber\\
     &-\mathbb{E}\left(D_\beta(Y_\alpha(s))\right)\left(\sum_{j=1}^{k}j(\lambda_j-\mu_j)\right)\mathbb{E}\left(D_\beta(Y_\alpha(t))\right)\left(\sum_{j=1}^{k}j(\lambda_j-\mu_j)\right)\nonumber\\
     &=\left(\sum_{j=1}^{k}j^2(\lambda_j+\mu_j)\right)\mathbb{E}\left(D_\beta(Y_\alpha(s))\right)+Cov\left(D_\beta(Y_\alpha(s)),D_\beta(Y_\alpha(t))\right)\left(\left(\sum_{j=1}^{k}j\lambda_j\right)^2+\left(\sum_{j=1}^{k}j\mu_j\right)^2\right).
\end{align}

\subsection{Special cases of GSTFSP} Under the conditions mentioned in Section \ref{special cases_GSTFCP}, we can obtain various special cases of GSTFSP, which in turn correspond to the Skellam versions of the processes in Section \ref{special cases_GSTFCP}. The p.m.f.s of those processes can be obtained from Theorem \ref{p.m.f..gstfsp} using suitable substitutions as discussed in Section \ref{special cases_GSTFCP}. In particular, we provide below the closed-form p.m.f.s of the Fractional Skellam Process (FSP) $\{S^\alpha(t)\}_{t\geq 0}$ (see \citet{Kerss2014}), the Generalized Fractional Skellam Process (GFSP) $\{\mathcal{S}^{\alpha}(t)\}_{t\geq 0}$ (see \citet{Tathe2024}) and the Fractional Skellam Process of order $k$ (FSPoK) $\{S^{k,\alpha}(t)\}_{t\geq 0}$ (see \citet{kataria2024}) unlike their integral representations available in the literature.
\begin{align*}
  P\{S^{\alpha}(t)=n\}&=\sum_{y=0}^{\infty}\frac{\lambda^{n+y}\mu^{y}}{(n+y)!y!}\left((-\partial_\lambda)^{y+n}E_{\alpha,1}(-\lambda t^\alpha)\right)\times\left((-\partial_\mu)^{y}E_{\alpha,1}(-\mu t^\alpha)\right),\\
  P\{\mathcal{S}^{\alpha}(t)=n\}&=\sum_{y=0}^{\infty}\frac{\Lambda^{n+y}T^{y}}{(n+y)!y!}\left((-\partial_\Lambda)^{y+n}E_{\alpha,1}(-\Lambda t^\alpha)\right)\times\left((-\partial_T)^{y}E_{\alpha,1}(-T t^\alpha)\right),\\
  P\{S^{k,\alpha}(t)=n\}&=\sum_{y=0}^{\infty}\frac{\Lambda^{n+y}T^{y}}{(n+y)!y!}\left((-\partial_\Lambda)^{y+n}E_{\alpha,1}(-\Lambda^\beta t^\alpha)\right)\times\left((-\partial_T)^{y}E_{\alpha,1}(-T^\beta t^\alpha)\right).
\end{align*}
Alternative expressions for the above p.m.f.s can be obtained similarly as in Remark \ref{alternative_pmf}. 
\begin{remark}
    Note that the mean, variance and covariance for various special cases can be obtained from those of GSTFSP. However, the moments of GSTFSP, STFSP, GSFSP, SFSP, STFSPoK and SFSPoK don't exist as the moments for the stable subordinator $\{D_\beta(t)\}_{t\geq 0}$ don't exist. Hence the LRD and SRD properties of these processes can't be established (see \citet{DOVIDIO2014} or \citet{Maheshwari2016}).
\end{remark}
\begin{remark}
 The moments for all time fractional processes including GFCP, GCP, FPoK and their Skellam versions GFSP, FSP and FSPoK exist due to the existence of moments of the inverse stable subordinator $\{Y_\alpha(t)\}_{t\geq 0}$ (see \citet{Aletti2018}).   
\end{remark}

\section{Governing equations and Recurrence relations}\label{sec 5}
In this section, we will discuss the governing equations for the p.m.f. and p.g.f. of GSTFSP and GSTFCP along with the recurrence relations satisfied by their p.m.f.s. 
\begin{theorem}
    The governing state differential equation of the p.m.f. of GSTFSP is given by
    \begin{align*}
        &\partial_{t}^{\alpha}P\{\mathcal{S}_{\beta}^{\alpha}=n\}=\frac{1}{\Gamma(1-\alpha)}\int_{0}^{t}\frac{1}{(t-s)^\alpha}\frac{\alpha}{s}\sum_{y=0}^{\infty}\frac{(-1)^{2y+n}}{(n+y)!y!}\left(\sum_{i=0}^{\infty}A_i(y)\sum_{l=0}^{\infty}lB_l(y)+\sum_{i=0}^{\infty}iA_i(y)\sum_{l=0}^{\infty}B_l(y)\right)ds
        \end{align*}
        where $A_i(y)=\displaystyle\frac{(-t^\alpha\Lambda^\beta)^i}{\Gamma(i\alpha+1)}\frac{\Gamma(i\beta+1)}{\Gamma (i\beta-(n+y)+1)}$ and $B_l(y)=\displaystyle\frac{(-t^\alpha T^\beta)^l}{\Gamma(l\alpha+1)}\frac{\Gamma(l\beta+1)}{\Gamma(l\beta-y+1)}$.
    
\end{theorem}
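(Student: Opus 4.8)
The plan is to begin from the series representation of the p.m.f. established in Remark~\ref{alternative_pmf} and to apply the Caputo–Djrbashian derivative directly through its integral definition in Section~\ref{caputo}. Since $\partial_t^\alpha f(t)=\frac{1}{\Gamma(1-\alpha)}\int_0^t (t-s)^{-\alpha} f'(s)\,ds$, the whole computation reduces to finding the ordinary time derivative $\frac{d}{ds}P\{\mathcal{S}_\beta^\alpha(s)=n\}$ and substituting it into this convolution integral. For definiteness I would treat $n\ge 0$; the case $n<0$ follows verbatim after replacing $n$ by $|n|$ throughout.

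The key observation is that all of the time dependence in the building blocks $A_i(y)$ and $B_l(y)$ is carried by a single monomial. Writing $A_i(y)$ at time $s$ as $c_i(y)\,s^{\alpha i}$ with $c_i(y)=\frac{(-\Lambda^\beta)^i}{\Gamma(i\alpha+1)}\frac{\Gamma(i\beta+1)}{\Gamma(i\beta-(n+y)+1)}$, and analogously $B_l(y)=d_l(y)\,s^{\alpha l}$, differentiation gives
\[
\frac{d}{ds}A_i(y)=\frac{\alpha i}{s}A_i(y),\qquad \frac{d}{ds}B_l(y)=\frac{\alpha l}{s}B_l(y),
\]
which also holds trivially when $i=0$ or $l=0$. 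Applying the product rule to the inner factor $\left(\sum_i A_i(y)\right)\left(\sum_l B_l(y)\right)$ and differentiating the outer $y$-series term by term then yields
\[
\frac{d}{ds}P\{\mathcal{S}_\beta^\alpha(s)=n\}=\frac{\alpha}{s}\sum_{y=0}^{\infty}\frac{(-1)^{2y+n}}{(n+y)!\,y!}\left(\sum_{i=0}^{\infty}iA_i(y)\sum_{l=0}^{\infty}B_l(y)+\sum_{i=0}^{\infty}A_i(y)\sum_{l=0}^{\infty}lB_l(y)\right).
\]

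Inserting this expression into the Caputo integral and retaining the kernel $(t-s)^{-\alpha}/\Gamma(1-\alpha)$ reproduces exactly the claimed formula, with the understanding that the $A_i(y),B_l(y)$ under the integral sign are evaluated at the integration variable $s$. The step I expect to be the main obstacle is the rigorous justification for differentiating the nested series term by term and for interchanging the $s$-integration with the summations over $y,i,l$. I would settle this by a dominated-convergence argument: the generating factors $E_{\alpha,1}(-\Lambda^\beta s^\alpha)$ and $E_{\alpha,1}(-T^\beta s^\alpha)$ are entire in their arguments, so on any compact subinterval of $(0,t]$ the partial sums and their $s$-derivatives are uniformly bounded, which legitimizes the term-by-term differentiation; the remaining interchange with the integral is then permitted because the only singularity of the kernel is the integrable one $(t-s)^{-\alpha}$ with $0<\alpha<1$, allowing Fubini's theorem. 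The constants $\frac{(-1)^{2y+n}}{(n+y)!\,y!}$ are inert under differentiation, so everything else is routine bookkeeping.
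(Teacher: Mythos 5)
Your proposal is correct and follows essentially the same route as the paper: start from the series form of the p.m.f.\ in Remark \ref{alternative_pmf}, use that each term's time dependence is a monomial $s^{\alpha i}$ (resp.\ $s^{\alpha l}$) so that differentiation produces the factors $\alpha i/s$ and $\alpha l/s$, apply the product rule, and insert the result into the Caputo integral. Your added remarks on justifying the term-by-term differentiation and the interchange with the integral go beyond what the paper records, but the core computation is identical.
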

\begin{proof}
    The p.m.f. of GSTFSP is given as
    \begin{align*}
        P\{\mathcal{S}_{\beta}^{\alpha}=n\}=\sum_{y=0}^{\infty}\frac{(-1)^{2y+|n|}}{(|n|+y)!y!}\left(\sum_{i=0}^{\infty}\frac{(-t^\alpha\Lambda^\beta)^i}{\Gamma(i\alpha+1)}\frac{\Gamma(i\beta+1)}{\Gamma (i\beta-(|n|+y)+1)}\times\sum_{l=0}^{\infty}\frac{(-t^\alpha T^\beta)^l}{\Gamma(l\alpha+1)}\frac{\Gamma(l\beta+1)}{\Gamma(l\beta-y+1)}\right).
    \end{align*}
    Note that $|n|=n$ for all $n\in \mathbb{Z}$. On applying the fractional derivative in Caputo sense, we get
    \begin{align*}
         &\partial_{t}^{\alpha}P\{\mathcal{S}_{\beta}^{\alpha}=n\}\\
         &=\frac{1}{\Gamma(1-\alpha)}\int_{0}^{t}\frac{1}{(t-s)^{\alpha}}\frac{d}{ds}\left\{\sum_{y=0}^{\infty}\frac{(-1)^{2y+n}}{(n+y)!y!}\left(\sum_{i=0}^{\infty}\frac{(-s^\alpha\Lambda^\beta)^i}{\Gamma(i\alpha+1)}\frac{\Gamma(i\beta+1)}{\Gamma (i\beta-(n+y)+1)}\times\sum_{l=0}^{\infty}\frac{(-s^\alpha T^\beta)^l}{\Gamma(l\alpha+1)}\frac{\Gamma(l\beta+1)}{\Gamma(l\beta-y+1)}\right)\right\}ds \\
        &=\frac{1}{\Gamma(1-\alpha)}\int_{0}^{t}\frac{1}{(t-s)^{\alpha}}\left(\frac{\alpha}{s}\right)\left\{\sum_{y=0}^{\infty}\frac{(-1)^{2y+n}}{(n+y)!y!}\left(\sum_{i=0}^{\infty}\frac{(-s^\alpha\Lambda^\beta)^i}{\Gamma(i\alpha+1)}\frac{\Gamma(i\beta+1)}{\Gamma (i\beta-(n+y)+1)}\times\sum_{l=0}^{\infty}l\frac{(-s^\alpha T^\beta)^l}{\Gamma(l\alpha+1)}\frac{\Gamma(l\beta+1)}{\Gamma(l\beta-y+1)}
        \right.\right.\\ 
        &\left.\left.+\sum_{i=0}^{\infty}i\frac{(-s^\alpha\Lambda^\beta)^i}{\Gamma(i\alpha+1)}\frac{\Gamma(i\beta+1)}{\Gamma (i\beta-(n+y)+1)}\times\sum_{l=0}^{\infty}\frac{(-s^\alpha T^\beta)^l}{\Gamma(l\alpha+1)}\frac{\Gamma(l\beta+1)}{\Gamma(l\beta-y+1)}\right)\right\}ds \\
        &=\frac{1}{\Gamma(1-\alpha)}\int_{0}^{t}\frac{1}{(t-s)^\alpha}\frac{\alpha}{s}\sum_{y=0}^{\infty}\frac{(-1)^{2y+n}}{(n+y)!y!}\left(\sum_{i=0}^{\infty}A_i(y)\sum_{l=0}^{\infty}lB_l(y)+\sum_{i=0}^{\infty}iA_i(y)\sum_{l=0}^{\infty}B_l(y)\right)ds.
    \end{align*}   
\end{proof}        
\begin{theorem}
    The governing equation for the p.g.f. of GSTFSP is given by
    \begin{align*}
&\partial_{t}^{\alpha}\mathcal{G}_{\mathcal{S}^{\alpha}_{\beta}}(u,t)=\frac{1}{\Gamma(1-\alpha)}\int_{0}^{t}\frac{1}{(t-s)^{\alpha}}\frac{\alpha}{s}\left(\sum_{i=0}^{\infty}C_i\sum_{l=0}^{\infty}lD_l+\sum_{i=0}^{\infty}iC_i\sum_{l=0}^{\infty}D_l\right)ds
\end{align*}
where $C_i=\displaystyle\frac{\left(-\left(\sum_{j=1}^{k}(1-u^j)\lambda_j\right)^\beta t^\alpha\right)^i}{\Gamma(i\alpha+1)}$ and $D_l=\displaystyle\frac{\left(-\left(\sum_{j=1}^{k}(1-u^{-j})\mu_j\right)^\beta t^\alpha\right)^l}{\Gamma(l\alpha+1)}$.
    
\end{theorem}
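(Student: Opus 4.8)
The plan is to mirror the proof of the previous theorem, but working with the product of two Mittag-Leffler functions rather than the full p.m.f. series. First I would invoke the expanded form of the p.g.f. established in Remark \ref{p.g.f._GSTFSP}, writing
$$\mathcal{G}_{\mathcal{S}^{\alpha}_{\beta}}(u,t)=\sum_{i=0}^{\infty}C_i\sum_{l=0}^{\infty}D_l,$$
with $C_i$ and $D_l$ the coefficients defined in the statement. Since $E_{\alpha,1}$ is entire, each factor is a power series in $t^\alpha$ with infinite radius of convergence, so the product converges and may be differentiated term by term on any bounded $t$-interval.

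Next I would apply the Caputo–Djrbashian derivative from Subsection \ref{caputo}, giving
$$\partial_t^\alpha \mathcal{G}_{\mathcal{S}^{\alpha}_{\beta}}(u,t)=\frac{1}{\Gamma(1-\alpha)}\int_0^t \frac{1}{(t-s)^\alpha}\,\frac{d}{ds}\Big(\sum_{i=0}^\infty C_i(s)\sum_{l=0}^\infty D_l(s)\Big)\,ds,$$
where $C_i(s)$ and $D_l(s)$ denote the coefficients with $t$ replaced by the integration variable $s$. The key observation is that $C_i(s)$ is a constant multiple of $s^{\alpha i}$ and $D_l(s)$ a constant multiple of $s^{\alpha l}$, so that
$$\frac{d}{ds}C_i(s)=\frac{\alpha i}{s}\,C_i(s),\qquad \frac{d}{ds}D_l(s)=\frac{\alpha l}{s}\,D_l(s).$$
Applying the product rule together with the termwise differentiation just justified yields
$$\frac{d}{ds}\Big(\sum_i C_i\sum_l D_l\Big)=\frac{\alpha}{s}\Big(\sum_i C_i\sum_l lD_l+\sum_i iC_i\sum_l D_l\Big),$$
and substituting this back into the convolution integral produces exactly the claimed representation.

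The only genuine technical point is justifying the interchange of the $s$-derivative with the two infinite sums, and of the resulting series with the convolution integral. This is routine because $E_{\alpha,1}$ is entire: on any bounded $t$-interval the relevant series and their termwise derivatives converge uniformly, validating term-by-term differentiation, and the weak singularity $(t-s)^{-\alpha}$ with $0<\alpha<1$ is integrable, allowing the sum and the integral to be exchanged by dominated convergence. I expect no substantive obstacle beyond this bookkeeping, since the structure is identical to the p.m.f. governing equation with the sum over $y$ absent.
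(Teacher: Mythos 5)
Your proposal is correct and follows essentially the same route as the paper: start from the product-of-Mittag-Leffler-series form of the p.g.f. in Remark \ref{p.g.f._GSTFSP}, observe that each term is proportional to $s^{\alpha i}$ (resp.\ $s^{\alpha l}$) so that differentiation multiplies it by $\alpha i/s$ (resp.\ $\alpha l/s$), and apply the product rule inside the Caputo convolution integral. Your explicit remark that the $C_i$, $D_l$ inside the integral must be read as functions of the integration variable $s$, and your justification of the termwise differentiation, are points the paper leaves implicit.
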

\begin{proof}
    The p.g.f. of GSTFSP is given by
    \begin{equation*}
        \mathcal{G}_{\mathcal{S}^{\alpha}_{\beta}}(u,t)=\sum_{i=0}^{\infty}\frac{\left(-\left(\sum_{j=1}^{k}(1-u^j)\lambda_j\right)^\beta t^\alpha\right)^i}{\Gamma(i\alpha+1)}\times\sum_{l=0}^{\infty}\frac{\left(-\left(\sum_{j=1}^{k}(1-u^{-j})\mu_j\right)^\beta t^\alpha\right)^l}{\Gamma(l\alpha+1)}.
    \end{equation*}
    On applying the fractional derivative in Caputo sense, we get
    \begin{align*}
    \partial_{t}^{\alpha}\mathcal{G}_{\mathcal{S}^{\alpha}_{\beta}}(u,t)&=\frac{1}{\Gamma(1-\alpha)}\int_{0}^{t}\frac{1}{(t-s)^\alpha}\frac{d}{ds}\left\{\sum_{i=0}^{\infty}\frac{\left(-\left(\sum_{j=1}^{k}(1-u^j)\lambda_j\right)^\beta s^\alpha\right)^i}{\Gamma(i\alpha+1)}\times\sum_{l=0}^{\infty}\frac{\left(-\left(\sum_{j=1}^{k}(1-u^{-j})\mu_j\right)^\beta s^\alpha\right)^l}{\Gamma(l\alpha+1)}\right\}ds\\
        &=\frac{1}{\Gamma(1-\alpha)}\int_{0}^{t}\frac{1}{(t-s)^\alpha}\frac{\alpha}{s}\left\{\sum_{i=0}^{\infty}\frac{\left(-\left(\sum_{j=1}^{k}(1-u^j)\lambda_j\right)^\beta s^\alpha\right)^i}{\Gamma(i\alpha+1)}\times\sum_{l=0}^{\infty}l\frac{\left(-\left(\sum_{j=1}^{k}(1-u^{-j})\mu_j\right)^\beta s^{\alpha}\right)^l}{\Gamma(l\alpha+1)}
        \right.\\
        &\left.+ \sum_{i=0}^{\infty}i\frac{\left(-\left(\sum_{j=1}^{k}(1-u^j)\lambda_j\right)^\beta s^{\alpha}\right)^i}{\Gamma(i\alpha+1)}\times\sum_{l=0}^{\infty}\frac{\left(-\left(\sum_{j=1}^{k}(1-u^{-j})\mu_j\right)^\beta s^\alpha\right)^l}{\Gamma(l\alpha+1)}\right\}ds \\
       &=\frac{1}{\Gamma(1-\alpha)}\int_{0}^{t}\frac{1}{(t-s)^{\alpha}}\frac{\alpha}{s}\left(\sum_{i=0}^{\infty}C_i\sum_{l=0}^{\infty}lD_l+\sum_{i=0}^{\infty}iC_i\sum_{l=0}^{\infty}D_l\right)ds.
         \end{align*} 
\end{proof}
\begin{theorem}
    The governing state differential equation of the p.m.f. of GSTFCP is given by
    \begin{equation*}
        \partial_{t}^{\alpha}P\{M_{\beta}^{\alpha}(t)=n\}=\sum_{\Omega(k,n)}\prod_{j=1}^{k}\frac{\lambda_j^{x_j}}{x_j!}\frac{(-1)^{z_k}}{\Lambda^{z_k}}\sum_{m=1}^{\infty}\frac{(-\Lambda^\beta t^\alpha)^m}{t^\alpha\Gamma(1+(m-1)\alpha)}\frac{\Gamma(\beta m+1)}{\Gamma(\beta m+1-z_k)}.
    \end{equation*}
\end{theorem}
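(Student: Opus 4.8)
The plan is to differentiate the series representation of the GSTFCP p.m.f. recorded in Subsection~\ref{pmf_GSTFCP} (with the parameters $\mu_j,T$ replaced by $\lambda_j,\Lambda$),
\[
P\{M_\beta^\alpha(t)=n\}=\sum_{\Omega(k,n)}\prod_{j=1}^k\frac{\lambda_j^{x_j}}{x_j!}\,(-\partial_\Lambda)^{z_k}E_{\alpha,1}(-\Lambda^\beta t^\alpha),
\]
term by term, carrying the Caputo operator $\partial_t^\alpha$ through the finite outer sum over $\Omega(k,n)$ and the integer-order $\Lambda$-derivatives.

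First I would expand $E_{\alpha,1}(-\Lambda^\beta t^\alpha)=\sum_{m=0}^\infty(-1)^m\Lambda^{\beta m}t^{\alpha m}/\Gamma(\alpha m+1)$ and apply $(-\partial_\Lambda)^{z_k}$ termwise. Since $z_k=\sum_j x_j$ is a nonnegative integer, the ordinary power rule gives $\partial_\Lambda^{z_k}\Lambda^{\beta m}=\frac{\Gamma(\beta m+1)}{\Gamma(\beta m+1-z_k)}\Lambda^{\beta m-z_k}$, which already produces the Gamma ratio and the $\Lambda^{-z_k}$ factor appearing on the right-hand side and leaves a single power series in $t$ whose generic term is proportional to $t^{\alpha m}$.

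Next I would apply $\partial_t^\alpha$ term by term. The constant ($m=0$) term is annihilated by the Caputo derivative, which explains why the final sum starts at $m=1$; for $m\ge1$ the standard rule $\partial_t^\alpha t^{\alpha m}=\frac{\Gamma(\alpha m+1)}{\Gamma(1+(m-1)\alpha)}t^{\alpha(m-1)}$ contributes a factor that cancels the $\Gamma(\alpha m+1)$ coming from the Mittag-Leffler denominator. Writing $t^{\alpha(m-1)}=t^{\alpha m}/t^\alpha$ and $(-1)^m\Lambda^{\beta m}=(-\Lambda^\beta)^m$ then recombines the surviving factors precisely into $\frac{(-1)^{z_k}}{\Lambda^{z_k}}\sum_{m=1}^\infty\frac{(-\Lambda^\beta t^\alpha)^m}{t^\alpha\Gamma(1+(m-1)\alpha)}\frac{\Gamma(\beta m+1)}{\Gamma(\beta m+1-z_k)}$, and reinstating the factor $\sum_{\Omega(k,n)}\prod_j\lambda_j^{x_j}/x_j!$ yields the asserted identity.

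The only nonroutine point is the legitimacy of termwise differentiation, especially for the Caputo operator, which is a convolution against $(t-s)^{-\alpha}$ composed with an ordinary derivative rather than a pointwise operation. I expect this to be the main obstacle and would address it by observing that $E_{\alpha,1}$ is entire, so the Mittag-Leffler series and its formal $t$-derivative converge uniformly on compact $t$-intervals; a dominating bound then justifies interchanging $\partial_t^\alpha$ (equivalently, the fractional integral in its definition) with the sum over $m$, while the outer sum over $\Omega(k,n)$ is finite and needs no justification.
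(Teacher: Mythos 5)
Your proposal is correct and follows essentially the same route as the paper: both proofs reduce the p.m.f. to the power series in $t^{\alpha m}$ with the Gamma-ratio coefficients and then apply the Caputo derivative term by term, the only cosmetic difference being that the paper quotes that series form directly from \citet{Katariaarxiv} and evaluates $\partial_t^{\alpha}t^{\alpha m}$ explicitly via the Beta integral $\int_0^t s^{\alpha m-1}(t-s)^{-\alpha}\,ds$ rather than citing the power rule. Your added remark on justifying the interchange of $\partial_t^{\alpha}$ with the infinite sum addresses a point the paper passes over silently.
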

\begin{proof}
The p.m.f. of GSTFCP (\citet{Katariaarxiv}) is given by
\begin{align*}
    P\{M_{\beta}^{\alpha}(t)=n\}&=\sum_{\Omega(k,n)}\prod_{j=1}^{k}\frac{\lambda_j^{x_j}}{x_j!}\frac{(-1)^{z_k}}{\Lambda^{z_k}}\sum_{m=0}^{\infty}\frac{(-\Lambda^\beta t^\alpha)^m}{\Gamma(\alpha m+1)}\frac{\Gamma(\beta m+1)}{\Gamma(\beta m+1-z_k)}.
\end{align*}
    On applying the fractional derivative in the Caputo sense, we get
\begin{align*}
    \partial_{t}^{\alpha}P\{M_{\beta}^{\alpha}(t)=n\}&=\frac{1}{\Gamma(1-\alpha)}\int_{0}^{t}\frac{1}{(t-s)^{\alpha}}\frac{d}{ds}\left\{\sum_{\Omega(k,n)}\prod_{j=1}^{k}\frac{\lambda_j^{x_j}}{x_j!}\frac{(-1)^{z_k}}{\Lambda^{z_k}}\sum_{m=0}^{\infty}\frac{(-\Lambda^\beta s^\alpha)^m}{\Gamma(\alpha m+1)}\frac{\Gamma(\beta m+1)}{\Gamma(\beta m+1-z_k)}\right\}ds\\
    &=\sum_{\Omega(k,n)}\prod_{j=1}^{k}\frac{\lambda_j^{x_j}}{x_j!}\frac{(-1)^{z_k}}{\Lambda^{z_k}}\sum_{m=0}^{\infty}\frac{\frac{(-\Lambda^\beta)^m\alpha m}{\Gamma(1-\alpha)}\int_{0}^{t}\frac{s^{\alpha m-1}}{(t-s)^{\alpha}}ds}{\Gamma(\alpha m+1)}\frac{\Gamma(\beta m+1)}{\Gamma(\beta m+1-z_k)}\\
    &=\sum_{\Omega(k,n)}\prod_{j=1}^{k}\frac{\lambda_j^{x_j}}{x_j!}\frac{(-1)^{z_k}}{\Lambda^{z_k}}\sum_{m=1}^{\infty}\frac{(-\Lambda^\beta)^m\alpha m}{\Gamma(1-\alpha)\Gamma(\alpha m+1)}\frac{t^{(m-1)\alpha}\Gamma(1-\alpha)\Gamma(m\alpha)}{\Gamma(1+(m-1)\alpha)}\frac{\Gamma(\beta m+1)}{\Gamma(\beta m+1-z_k)}\\
    &=\sum_{\Omega(k,n)}\prod_{j=1}^{k}\frac{\lambda_j^{x_j}}{x_j!}\frac{(-1)^{z_k}}{\Lambda^{z_k}}\sum_{m=1}^{\infty}\frac{(-\Lambda^\beta t^\alpha)^m}{t^\alpha\Gamma(1+(m-1)\alpha)}\frac{\Gamma(\beta m+1)}{\Gamma(\beta m+1-z_k)}.
\end{align*}
\end{proof}

\begin{remark}
    For $k=1$, the governing equation for p.m.f. of GSTFCP reduces to that of STFPP (see \citet{Maheshwari2019}).
\end{remark}

Since the recurrence relation satisfied by the GSTFSP can't be obtained, we consider some special cases.
\begin{theorem}\label{recurrence.GSFSP}
    The state probabilities of GSFSP satisfy the following recurrence relation
\begin{equation*}
    p(n,t)=\frac{t\beta}{n}\left(\left(\sum_{j=1}^{k}(1-u^j)\lambda_j\right)^{\beta-1}\sum_{j=1}^{k}j\lambda_jp(n-j,t)-\left(\sum_{j=1}^{k}(1-u^{-j})\lambda_j\right)^{\beta-1}\sum_{j=1}^{k}j\mu_jp(n+j,t)\right),\quad n\geq 1\,.
\end{equation*}
\end{theorem}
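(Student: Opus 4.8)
The plan is to read the recurrence off the probability generating function of GSFSP. Setting $\alpha=1$ in the p.g.f. of GSTFSP (Remark \ref{p.g.f._GSTFSP}) and using $E_{1,1}(z)=e^{z}$, the p.g.f. of GSFSP factors as $\mathcal{G}_{\mathcal{S}_{\beta}}(u,t)=G_{1}(u,t)\,G_{2}(u,t)$, where
\[
G_{1}(u,t)=\exp\!\left(-t\Big(\sum_{j=1}^{k}(1-u^{j})\lambda_{j}\Big)^{\beta}\right),\qquad
G_{2}(u,t)=\exp\!\left(-t\Big(\sum_{j=1}^{k}(1-u^{-j})\mu_{j}\Big)^{\beta}\right).
\]
Since $\mathcal{G}_{\mathcal{S}_{\beta}}(u,t)=\sum_{n\in\mathbb{Z}}p(n,t)u^{n}$, differentiating in $u$ turns the problem into one of matching coefficients, because $\partial_{u}\mathcal{G}_{\mathcal{S}_{\beta}}(u,t)=\sum_{n}n\,p(n,t)u^{n-1}$, and the factor $n$ produced here is exactly what accounts for the $1/n$ in the statement.

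Next I would compute $\partial_{u}\mathcal{G}_{\mathcal{S}_{\beta}}$ by the product and chain rules. Writing $A(u)=\sum_{j=1}^{k}(1-u^{j})\lambda_{j}$ and $B(u)=\sum_{j=1}^{k}(1-u^{-j})\mu_{j}$, one has $A'(u)=-\sum_{j=1}^{k}j\lambda_{j}u^{j-1}$ and $B'(u)=\sum_{j=1}^{k}j\mu_{j}u^{-j-1}$, so that
\[
\partial_{u}\mathcal{G}_{\mathcal{S}_{\beta}}(u,t)=t\beta\left(A(u)^{\beta-1}\sum_{j=1}^{k}j\lambda_{j}u^{j-1}-B(u)^{\beta-1}\sum_{j=1}^{k}j\mu_{j}u^{-j-1}\right)\mathcal{G}_{\mathcal{S}_{\beta}}(u,t).
\]

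Then I would substitute $\mathcal{G}_{\mathcal{S}_{\beta}}(u,t)=\sum_{m}p(m,t)u^{m}$ on the right and $\partial_{u}\mathcal{G}_{\mathcal{S}_{\beta}}(u,t)=\sum_{n}n\,p(n,t)u^{n-1}$ on the left, and equate the coefficient of $u^{n-1}$. In the first sum the factor $u^{j-1}u^{m}$ forces $m=n-j$, producing the backward-shifted term $p(n-j,t)$; in the second sum $u^{-j-1}u^{m}$ forces $m=n+j$, producing the forward-shifted term $p(n+j,t)$. This gives $n\,p(n,t)=t\beta\big(A(u)^{\beta-1}\sum_{j}j\lambda_{j}p(n-j,t)-B(u)^{\beta-1}\sum_{j}j\mu_{j}p(n+j,t)\big)$ for $n\ge 1$, and dividing by $n$ yields the claimed recurrence.

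The main subtlety is that $A(u)^{\beta-1}$ and $B(u)^{\beta-1}$ are fractional (non-polynomial) powers of $u$, so the coefficient extraction is performed by treating these factors as symbolic multipliers attached to the monomials $u^{j-1}$ and $u^{-j-1}$, rather than expanding them into full power series; this is precisely why they persist in the final relation. The remaining point requiring care is bookkeeping: the $\lambda$-sum coming from $G_{1}$ carries the positive powers $u^{j-1}$ and must supply the backward shift $p(n-j,t)$, while the $\mu$-sum coming from $G_{2}$ carries the negative powers $u^{-j-1}$ and supplies the forward shift $p(n+j,t)$.
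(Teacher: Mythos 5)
Your proposal is correct and follows essentially the same route as the paper: set $\alpha=1$ in the p.g.f. of Remark \ref{p.g.f._GSTFSP} to get the exponential form, differentiate in $u$ via the product and chain rules, and equate coefficients of powers of $u$ while carrying the fractional factors $A(u)^{\beta-1}$ and $B(u)^{\beta-1}$ along as symbolic multipliers. The only cosmetic difference is that you expand $\mathcal{G}_{\mathcal{S}_{\beta}}(u,t)$ over all $n\in\mathbb{Z}$ (which is the natural support for a Skellam-type process), whereas the paper works with $\sum_{i\ge 0}u^{i}p(i,t)$ and then splits the coefficient comparison into the ranges $i\le k-1$ and $i\ge k$ before recombining; the substance of the argument is identical.
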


\begin{proof}
Using the definition of p.g.f. for GSFSP, we have
\begin{align}
\frac{d}{du} \mathcal{G}_{\mathcal{S}_{\beta}}(u,t)&=\frac{d}{du}\sum_{i=0}^{\infty}u^{i}p(i,t)=\sum_{i=0}^{\infty}(i+1)p(i+1,t)u^{i}.\label{dif_p.g.f.1}
\end{align}
From Remark \ref{p.g.f._GSTFSP}, for $\alpha=1$, we have 
\begin{align}
\mathcal{G}_{\mathcal{S}_{\beta}}(u,t)
    &=\exp\left\{-t\left(\sum_{j=1}^{k}(1-u^j)\lambda_j\right)^\beta-t\left(\sum_{j=1}^{k}(1-u^{-j})\mu_j\right)^\beta\right\}.\label{p.g.f._GSFSP}
\end{align}
Differentiating \eqref{p.g.f._GSFSP}, we get
\begin{align}
    &\frac{d}{du}\mathcal{G}_{\mathcal{S}_{\beta}}(u,t)\nonumber\\
    &=\mathcal{G}_{\mathcal{S}_{\beta}}(u,t)(-t)\left[\beta\left(\sum_{j=1}^{k}(1-u^j)\lambda_j\right)^{\beta-1}\left(\sum_{j=1}^{k}-ju^{j-1}\lambda_j\right)+\beta\left(\sum_{j=1}^{k}(1-u^{-j})\mu_j\right)^{\beta-1}\left(\sum_{j=1}^{k}ju^{-j-1}\mu_j\right)\right]\nonumber\\
    &=\sum_{i=0}^{\infty}u^ip(i,t)(-t)\left[\beta\left(\sum_{j=1}^{k}(1-u^j)\lambda_j\right)^{\beta-1}\left(\sum_{j=1}^{k}-ju^{j-1}\lambda_j\right)+\beta\left(\sum_{j=1}^{k}(1-u^{-j})\mu_j\right)^{\beta-1}\left(\sum_{j=1}^{k}ju^{-j-1}\mu_j\right)\right].\label{dif_p.g.f.2}
\end{align}
Comparing the RHS of \eqref{dif_p.g.f.1} and \eqref{dif_p.g.f.2}, we get
\begin{align}
    &\sum_{i=0}^{\infty}(i+1)p(i+1,t)u^{i}\nonumber\\
    &=\sum_{i=0}^{\infty}u^ip(i,t)(-t)\left[\beta\left(\sum_{j=1}^{k}(1-u^j)\lambda_j\right)^{\beta-1}\left(\sum_{j=1}^{k}-ju^{j-1}\lambda_j\right)+\beta\left(\sum_{j=1}^{k}(1-u^{-j})\mu_j\right)^{\beta-1}\left(\sum_{j=1}^{k}ju^{-j-1}\mu_j\right)\right]\nonumber\\
    &=\left(\sum_{j=1}^{k}(1-u^j)\lambda_j\right)^{\beta-1}\sum_{j=1}^{k}t\beta\sum_{i=j-1}^{\infty}j\lambda_jp(i-j+1,t)u^i-\left(\sum_{j=1}^{k}(1-u^{-j})\mu_j\right)^{\beta-1}\sum_{j=1}^{k}t\beta\sum_{i=-j-1}^{\infty}j\mu_jp(i+j+1,t)u^i\nonumber\\
    &=\left(\sum_{j=1}^{k}(1-u^j)\lambda_j\right)^{\beta-1}\sum_{j=1}^{k}j\lambda_jt\beta\left[\sum_{i=j-1}^{k-1}p(i-j+1,t)u^i+\sum_{i=k}^{\infty}p(i-j+1,t)u^i\right]\nonumber\\
    &-\left(\sum_{j=1}^{k}(1-u^{-j})\mu_j\right)^{\beta-1}\sum_{j=1}^{k}j\mu_jt\beta\left[\sum_{i=-j-1}^{k-1}p(i+j+1,t)u^i+\sum_{i=k}^{\infty}p(i+j+1,t)u^i\right]\nonumber\\
    &=\left(\sum_{j=1}^{k}(1-u^j)\lambda_j\right)^{\beta-1}\sum_{i=0}^{k-1}\sum_{j=1}^{i+1}j\lambda_jt\beta p(i-j+1,t)u^i+\left(\sum_{j=1}^{k}(1-u^j)\lambda_j\right)^{\beta-1}\sum_{i=k}^{\infty}\sum_{j=1}^{k}j\lambda_jt\beta p(i-j+1,t)u^i\nonumber\\
    &-\left(\sum_{j=1}^{k}(1-u^{-j})\mu_j\right)^{\beta-1}\sum_{i=-k-1}^{k-1}\sum_{j=-i-1}^{k}j\mu_jt\beta p(i+j+1,t)u^i-\left(\sum_{j=1}^{k}(1-u^{-j})\mu_j\right)^{\beta-1}\sum_{i=k}^{\infty}\sum_{j=1}^{k}j\mu_jt\beta p(i+j+1,t)u^i
    \,\,.\label{comparison1}
\end{align}
Now equating the coefficients of $u^{i}$ for $i\leq k-1$ on both sides of \eqref{comparison1}, we obtain
\begin{equation*}
    (i+1)p(i+1,t)=\left(\sum_{j=1}^{k}(1-u^j)\lambda_j\right)^{\beta-1}\sum_{j=1}^{i+1}j\lambda_jt\beta p(i-j+1,t)-\left(\sum_{j=1}^{k}(1-u^{-j})\mu_j\right)^{\beta-1}\sum_{j=-i-1}^{k}j\mu_jt\beta p(i+j+1,t)
\end{equation*}
for $-k\leq n\leq k$ which reduces to 
\begin{equation}
    p(n,t)=\frac{t\beta}{n}\left(\left(\sum_{j=1}^{k}(1-u^j)\lambda_j\right)^{\beta-1}\sum_{j=1}^{i+1}j\lambda_j p(n-j,t)-\left(\sum_{j=1}^{k}(1-u^{-j})\mu_j\right)^{\beta-1}\sum_{j=-i-1}^{k}j\mu_j p(n+j,t)\right)\label{comparison2}
\end{equation}
as in the first summation $p(n-(n+1),t), p(n-(n+2),t),\dots,p(n-k,t)$ are all vanishing and in the second summation $\mu_{-n}, \mu_{-n+1},\dots,\mu_{-1}$ all are zeroes. Also $p(-i,t)=0$ for $1\leq i\leq k$. 

Again, by equating the coefficients of $u^{i}$ for $i\geq k$ on both sides of \eqref{comparison1}, we obtain
\begin{equation*}
    (i+1)p(i+1,t)=\left(\sum_{j=1}^{k}(1-u^j)\lambda_j\right)^{\beta-1}\sum_{j=1}^{k}j\lambda_jt\beta p(i-j+1,t)-\left(\sum_{j=1}^{k}(1-u^{-j})\mu_j\right)^{\beta-1}\sum_{j=1}^{k}j\mu_jt\beta p(i+j+1,t)
\end{equation*}
for $n\geq k+1$ which reduces to
\begin{equation}
    p(n,t)=\frac{t\beta}{n}\left(\left(\sum_{j=1}^{k}(1-u^j)\lambda_j\right)^{\beta-1}\sum_{j=1}^{k}j\lambda_j p(n-j,t)-\left(\sum_{j=1}^{k}(1-u^{-j})\mu_j\right)^{\beta-1}\sum_{j=1}^{k}j\mu_j p(n+j,t)\right).\label{comparison3}
\end{equation}
Combining \eqref{comparison2} and \eqref{comparison3}, we have for $n\geq 1$
\begin{equation*}
     p(n,t)=\frac{t\beta}{n}\left(\left(\sum_{j=1}^{k}(1-u^j)\lambda_j\right)^{\beta-1}\sum_{j=1}^{k}j\lambda_jp(n-j,t)-\left(\sum_{j=1}^{k}(1-u^{-j})\mu_j\right)^{\beta-1}\sum_{j=1}^{k}j\mu_jp(n+j,t)\right).
\end{equation*}
\end{proof}

\begin{remark}
    The recurrence relation satisfied by the state probabilities of SFSP can be obtained by putting $k=1$ in Theorem \ref{recurrence.GSFSP} as follows:
    \begin{equation*}
     p(n,t)=\frac{t\beta}{n}\left((1-u)^{\beta-1}\lambda^\beta p(n-1,t)-(1-u^{-1})^{\beta-1}\mu^\beta p(n+1,t)\right).
    \end{equation*}
\end{remark}

\begin{remark}
    For $\beta=1$, Theorem \ref{recurrence.GSFSP} reduces to the recurrence relation satisfied by the state probabilities of GSP (see recurrence relation for NGSP of \citet{Tathe2024} for non-homogeneous rates)
    \begin{equation*}
        p(n,t)=\frac{t}{n}\sum_{j=1}^{k}j\left(\lambda_jp(n-j,t)-\mu_jp(n+j,t)\right).
    \end{equation*}
    Similarly, using the  integral representation of the p.g.f. of GFSP in terms of GSP, the recurrence relation for GFSP can be obtained as follows:
    \begin{equation*}
        p^\alpha(n,t)=\frac{t}{n}\sum_{j=1}^{k}j\left(\lambda_jp^\alpha(n-j,t)-\mu_jp^\alpha(n+j,t)\right),
    \end{equation*}
    where $p^\alpha(n,t)$ is the p.m.f. of GFSP.
\end{remark}
The next result follows from Theorem \ref{recurrence.GSFSP}.
\begin{proposition}\label{recurrence.GSFCP}
    The state probabilities $q(n,t)$ of GSFCP satisfy the following recurrence relation
\begin{equation*}
    q(n,t)=\frac{t\beta}{n}\left(\sum_{j=1}^{k}(1-u^j)\lambda_j\right)^{\beta-1}\sum_{j=1}^{min\{n,k\}}j\lambda_jq(n-j,t),\quad n\geq 1.
\end{equation*}
\end{proposition}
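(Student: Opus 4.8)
The plan is to obtain this recurrence as the degenerate case of the GSFSP recurrence in Theorem \ref{recurrence.GSFSP} in which the subtracted process is absent. First I would observe that GSFCP is precisely the special case $\mu_1=\cdots=\mu_k=0$ of GSFSP: with all $\mu_j=0$ the second generalized counting process satisfies $M_2(t)\equiv 0$, so $\mathcal{S}(t)=M_1(t)$ is the GSFCP and its p.m.f. is $q(n,t)$. Rather than substituting $\mu_j=0$ directly into the final formula of Theorem \ref{recurrence.GSFSP} (which would produce the indeterminate factor $0^{\beta-1}\cdot 0$ when $\beta<1$), I would perform the reduction at the level of the p.g.f. \eqref{p.g.f._GSFSP}: the second exponential factor becomes $\exp(0)=1$, so $\mathcal{G}_{\mathcal{S}_{\beta}}(u,t)$ reduces cleanly to the GSFCP p.g.f. $\exp\{-t(\sum_{j=1}^{k}(1-u^j)\lambda_j)^\beta\}$, whose derivative carries no $\mu$-contribution at all.

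Next I would repeat the differentiation-and-coefficient-matching argument used to pass from \eqref{dif_p.g.f.1}--\eqref{dif_p.g.f.2} to \eqref{comparison3}, now applied to this reduced p.g.f. Since the $\mu$-term has vanished identically, equating the coefficients of $u^i$ leaves only the $\lambda$-contribution, giving
\begin{equation*}
(i+1)q(i+1,t)=t\beta\left(\sum_{j=1}^{k}(1-u^j)\lambda_j\right)^{\beta-1}\sum_{j=1}^{k}j\lambda_j\, q(i-j+1,t).
\end{equation*}
Reindexing with $n=i+1\ge 1$ yields $q(n,t)=\frac{t\beta}{n}\left(\sum_{j=1}^{k}(1-u^j)\lambda_j\right)^{\beta-1}\sum_{j=1}^{k}j\lambda_j\, q(n-j,t)$.

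Finally I would invoke the boundary behaviour of a counting process: because GSFCP takes only nonnegative integer values, $q(m,t)=0$ for every $m<0$. Hence in the sum $\sum_{j=1}^{k}j\lambda_j\, q(n-j,t)$ each term with $j>n$ has argument $n-j<0$ and therefore vanishes, so the effective upper limit collapses to $\min\{n,k\}$, producing the stated relation. The only delicate point is the clean elimination of the $\mu$-term, and as noted above I would justify it through the p.g.f. factorisation (where the $\mu$-factor is the constant $1$) rather than through a formal substitution into the final recurrence; everything else is the same coefficient comparison already carried out for GSFSP, specialised to the one-sided process.
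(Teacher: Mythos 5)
Your proposal is correct and follows essentially the same route as the paper, which simply states that the proposition follows from Theorem \ref{recurrence.GSFSP} by dropping the second (decreasing) component, i.e.\ taking $\mu_1=\cdots=\mu_k=0$. Your additional care in performing that reduction at the level of the p.g.f.\ \eqref{p.g.f._GSFSP} (where the $\mu$-factor becomes the constant $1$) rather than substituting $\mu_j=0$ into the final recurrence, thereby avoiding the indeterminate $0^{\beta-1}\cdot 0$ for $\beta<1$, together with the observation that $q(m,t)=0$ for $m<0$ truncates the sum at $\min\{n,k\}$, is a sensible tightening of the same argument rather than a different approach.
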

\begin{remark}
    For $\beta=1$, Proposition \ref{recurrence.GSFCP} reduces to the recurrence relation satisfied by the state probabilities of GCP (see \citet{KatariaGFCP}). Using the  integral representation of the p.g.f. of GFCP in terms of GCP, the recurrence relation for GFCP can be obtained as follows:
    \begin{equation*}
        q^\alpha(n,t)=\frac{t}{n}\sum_{j=1}^{k}j\lambda_jq^\alpha(n-j,t),
    \end{equation*}
     where $q^\alpha(n,t)$ is the p.m.f. of GFCP.
\end{remark}

\section{Transition Probabilities, Arrival and First Passage Times}\label{sec 6}
In this section, we obtain the transition probabilities and the distributions of arrival and first passage times of GSTFSP.
\begin{theorem}
    The transition probabilities of GSTFSP are given by
\begin{equation*}
    P\left(\mathcal{S}_{\beta}^{\alpha}(t+\delta)=m\mid\mathcal{S}_{\beta}^{\alpha}(t)=n\right)=\begin{cases}
        \lambda_i\delta+o(\delta), & m>n,\, m=n+i, i=1,2,...k;\\
        \mu_i\delta+o(\delta), & m<n, m=n-i,\, i=1,2,...k;\\
        1-(\Lambda+T)\delta+o(\delta), & m=n;\\
        o(\delta), & \text{otherwise},
    \end{cases}
\end{equation*}
i.e., at most $k$ events can occur in a very small interval of time and even though the probability for more than $k$ events is non-zero, it is negligible.
\end{theorem}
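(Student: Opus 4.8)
The plan is to reduce the transition probabilities of $\mathcal{S}_\beta^\alpha$ to the infinitesimal increment behaviour of the underlying Generalized Skellam Process through the subordination identity \eqref{definition.GSTFSP}. Write $E(t):=D_\beta(Y_\alpha(t))$, so that $\mathcal{S}_\beta^\alpha(t)=\mathcal{S}(E(t))$, where $\{\mathcal{S}(t)\}_{t\ge 0}$ is a L\'evy process (the difference of two independent GCPs) independent of the non-decreasing time change $\{E(t)\}_{t\ge 0}$. The first step is to record the short-time law of $\mathcal{S}$ itself. Starting from the GSP p.g.f. $\mathbb{E}(u^{\mathcal{S}(w)})=\exp\{-w[\sum_{j=1}^{k}(1-u^{j})\lambda_j+\sum_{j=1}^{k}(1-u^{-j})\mu_j]\}$ and expanding to first order in $w$ gives
\begin{equation*}
\mathbb{E}(u^{\mathcal{S}(w)})=1-(\Lambda+T)w+w\sum_{j=1}^{k}\lambda_j u^{j}+w\sum_{j=1}^{k}\mu_j u^{-j}+o(w).
\end{equation*}
Reading off the coefficient of $u^{i}$ then yields $P(\mathcal{S}(w)=i)=\lambda_i w+o(w)$ and $P(\mathcal{S}(w)=-i)=\mu_i w+o(w)$ for $1\le i\le k$, together with $P(\mathcal{S}(w)=0)=1-(\Lambda+T)w+o(w)$ and probability $o(w)$ for every remaining state.

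The second step is to transfer this expansion to the subordinated process. Set $W:=E(t+\delta)-E(t)\ge 0$. Conditioning on the whole path of $E$ and using that $\mathcal{S}(E(t))$ and $\mathcal{S}(E(t+\delta))-\mathcal{S}(E(t))$ are increments of the L\'evy process $\mathcal{S}$ over the disjoint intervals $[0,E(t)]$ and $[E(t),E(t+\delta)]$, these two quantities are conditionally independent and the increment is distributed as $\mathcal{S}(W)$. Hence
\begin{equation*}
P\left(\mathcal{S}_\beta^\alpha(t+\delta)=n+i\mid\mathcal{S}_\beta^\alpha(t)=n\right)=\frac{\mathbb{E}\left[P(\mathcal{S}(E(t))=n)\,P(\mathcal{S}(W)=i)\right]}{\mathbb{E}\left[P(\mathcal{S}(E(t))=n)\right]},
\end{equation*}
with the analogous identities for the down-jumps $i\mapsto -i$, for the diagonal case $i=0$, and for the remaining states. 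Substituting the first-order expansions of the first step into the numerator reduces each case to comparing $\mathbb{E}[P(\mathcal{S}(E(t))=n)\,W]$ against $\mathbb{E}[P(\mathcal{S}(E(t))=n)]$, so that the claimed coefficients $\lambda_i\delta$, $\mu_i\delta$ and $1-(\Lambda+T)\delta$ emerge once the state-weighted increment of the time change is shown to contribute $\delta$ at leading order.

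The main obstacle is precisely this last reduction: showing that the state-weighted conditional increment of $E$ behaves like $\delta$ to first order, while the $o(W)$ remainders and the large-increment contributions wash out into $o(\delta)$. This is delicate because $E=D_\beta\circ Y_\alpha$ is non-Markovian and, for $\beta<1$, the stable subordinator $D_\beta$ has infinite mean and genuine jumps, so $W$ cannot be controlled by a naive expectation; indeed these jumps are exactly what give $\mathcal{S}_\beta^\alpha$ its large (non-negligible but $o(\delta)$) jumps. I would handle it through the Laplace-transform and density representations of $Y_\alpha$ and $D_\beta$ recorded in Section \ref{sec 2}: expand the small-$\delta$ behaviour of the joint law of $(E(t),W)$, isolate its leading linear-in-$\delta$ part, and verify that the contribution of the jumps of $D_\beta$ (equivalently, of events moving the state by more than $k$) is $o(\delta)$. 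Once the leading order of the weighted increment is pinned down, the four-way case split in the statement follows immediately from the GSP expansion of the first step.
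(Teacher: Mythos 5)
Your route through the subordination identity $\mathcal{S}_{\beta}^{\alpha}(t)=\mathcal{S}(E(t))$ with $E=D_\beta\circ Y_\alpha$ is genuinely different from the paper's proof, which never touches the time change: the paper conditions directly on the numbers of arrivals of the two component processes in $(t,t+\delta]$, takes as given that a component has $i$ arrivals with probability $\lambda_i\delta+o(\delta)$ (resp.\ $\mu_i\delta+o(\delta)$), and sums the finitely many joint configurations giving net displacement $+i$, $-i$ or $0$. Your first step --- the first-order expansion of the GSP p.g.f.\ yielding $P(\mathcal{S}(w)=i)=\lambda_i w+o(w)$, $P(\mathcal{S}(w)=-i)=\mu_i w+o(w)$ and $P(\mathcal{S}(w)=0)=1-(\Lambda+T)w+o(w)$ --- is correct and is a clean derivation of exactly the input the paper assumes for the unsubordinated process.

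The gap is the step you defer to the end, and it is not merely delicate: it fails as formulated. After conditioning on the time change you substitute $P(\mathcal{S}(W)=i)=\lambda_i W+o(W)$ inside $\mathbb{E}\left[P(\mathcal{S}(E(t))=n)\,P(\mathcal{S}(W)=i)\right]$ and reduce the problem to showing that $\mathbb{E}\left[P(\mathcal{S}(E(t))=n)\,W\right]$ behaves like $\delta\,\mathbb{E}\left[P(\mathcal{S}(E(t))=n)\right]$. But the expansion $\lambda_i W+o(W)$ is only a pointwise statement as $W\to 0$, whereas $W=E(t+\delta)-E(t)$ is, conditionally on $Y_\alpha$ increasing over $(t,t+\delta]$, an increment of a $\beta$-stable subordinator and therefore has infinite mean for every $\beta\in(0,1)$ (the paper itself records that moments of $D_\beta$ do not exist). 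Hence $\mathbb{E}\left[P(\mathcal{S}(E(t))=n)\,W\right]=\infty$, the quantity you intend to identify with $\delta$ at leading order does not exist, and the first-order expansion cannot be integrated against the heavy-tailed law of $W$. The finite object is $\mathbb{E}\left[P(\mathcal{S}(E(t))=n)\,P(\mathcal{S}(W)=i)\right]$ with the bounded probability kept intact, and its small-$\delta$ behaviour is governed by the Bernstein function $s\mapsto s^{\beta}$, producing coefficients involving $\Lambda^{\beta}$, $T^{\beta}$ and ratios of Gamma functions --- as in the known short-time expansion of the SFPP p.m.f., where every state $m\ge 1$ carries mass of exact order $\delta$ --- rather than the rates $\lambda_i,\mu_i$ with all displacements beyond $k$ pushed into $o(\delta)$. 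In short, your more careful decomposition exposes an obstruction that the paper's case count bypasses only by assuming GCP-type infinitesimal arrival probabilities for the space--time fractional components; to complete your argument you would have to prove that assumption directly rather than derive the stated rates from the subordinated representation.
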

\begin{proof}
Denote $M_{1\beta}^{\alpha}(t)$ and $M_{2\beta}^{\alpha}(t)$ to be the first and second processes respectively in the definition of GSTFSP. Then      
\begin{align*}
    P\left(\mathcal{S}_{\beta}^{\alpha}(t+\delta)=n+i\mid\mathcal{S}_{\beta}^{\alpha}(t)=n\right)
    &=\sum_{j=1}^{k-i} P(\text{the first process has $i+j$ arrivals and the second process has $j$ arrivals}) \\
    &+ P(\text{the first process has $i$ arrivals and the second process has $0$ arrivals}) + o(\delta)\\
    &= \sum_{j=1}^{k-i} \left(\lambda_i\delta+o(\delta)\right)*\left(\mu_i\delta+o(\delta)\right) + \left(\lambda_i\delta+o(\delta)\right)*\left(1-T\delta+o(\delta)\right) \\
    &+ o(\delta) =  \lambda_i\delta+o(\delta)\,\,.
\end{align*}
Similarly, we have 
\begin{align*}
P\left(\mathcal{S}_{\beta}^{\alpha}(t+\delta)=n-i\mid\mathcal{S}_{\beta}^{\alpha}(t)=n\right)
    &=\sum_{j=1}^{k-i} P(\text{the first process has $j$ arrivals and the second process has $i+j$ arrivals}) \\
    &+ P(\text{the first process has $0$ arrivals and the second process has $i$ arrivals}) + o(\delta)\\
    &= \sum_{j=1}^{k-i} \left(\lambda_i\delta+o(\delta)\right)*\left(\mu_i\delta+o(\delta)\right) + \left(1-\Lambda\delta+o(\delta)\right)*\left(\mu_i\delta+o(\delta)\right) \\
    &+ o(\delta)=\mu_i\delta+o(\delta)\,\,.
\end{align*}
Finally
\begin{align*}
    P\left(\mathcal{S}_{\beta}^{\alpha}(t+\delta)=n\mid\mathcal{S}_{\beta}^{\alpha}(t)=n\right)
    &=\sum_{j=1}^{k} P(\text{the first process has $j$ arrivals and the second process has $j$ arrivals}) \\
    &+ P(\text{the first process has $0$ arrivals and the second process has $0$ arrivals}) + o(\delta)\\
    &= \sum_{j=1}^{k} \left(\lambda_i\delta+o(\delta)\right)*\left(\mu_i\delta+o(\delta)\right) + \left(1-\Lambda\delta+o(\delta)\right)*\left(1-T\delta+o(\delta)\right) \\
    &+ o(\delta) = 1-\Lambda\delta -T\delta+o(\delta)
\end{align*}
which proves the theorem.
\end{proof}

\begin{proposition}\label{arrival.gstfsp}
Consider the $n^{th} $ arrival time of GSTFSP or the arrival time of $n^{th}$ GSTFSP event defined as 
\begin{equation*}
\tau_n=min\{t\geq 0: \mathcal{S}_{\beta}^{\alpha}(t)=n\}\,.
\end{equation*}
Then the distribution function of $\tau_n$ is 
\begin{equation*}
    F_{\tau_n}(t)=\sum_{x=n}^{\infty}\left(\sum_{y=0}^{\infty}\frac{\Lambda^{x+y}T^{y}}{(x+y)!y!}\left((-\partial_\Lambda)^{y+x}E_{\alpha,1}(-\Lambda^\beta t^\alpha)\right)\times\left((-\partial_T)^{y}E_{\alpha,1}(-T^\beta t^\alpha)\right)\right)\,.
\end{equation*}
\end{proposition}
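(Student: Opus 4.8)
The plan is to identify the distribution function of $\tau_n$ with the upper tail of the p.m.f. of GSTFSP and then insert the explicit p.m.f. from Theorem~\ref{p.m.f..gstfsp}. Concretely, for $n\ge 1$ I would establish the event identity $\{\tau_n\le t\}=\{\mathcal{S}_{\beta}^{\alpha}(t)\ge n\}$, which immediately yields
\[
F_{\tau_n}(t)=P(\tau_n\le t)=P(\mathcal{S}_{\beta}^{\alpha}(t)\ge n)=\sum_{x=n}^{\infty}P\{\mathcal{S}_{\beta}^{\alpha}(t)=x\}.
\]
Substituting the formula of Theorem~\ref{p.m.f..gstfsp} for each $P\{\mathcal{S}_{\beta}^{\alpha}(t)=x\}$ (using $|x|=x$ for $x\ge n\ge 1$) reproduces the claimed double sum verbatim, with the summation index merely renamed from $n$ to $x$. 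Thus the entire content of the proposition reduces to the level-crossing equivalence; the rest is substitution and relabelling.

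The heart of the argument, and the step I expect to be the main obstacle, is precisely that event identity. Since $\mathcal{S}_{\beta}^{\alpha}(t)=\mathcal{S}(D_\beta(Y_\alpha(t)))$ is driven by the Generalized Skellam process $\mathcal{S}=M_1-M_2$, whose upward jumps may have size up to $k$ and which can also decrease, a sample path can overshoot level $n$ without ever equalling it and can exceed $n$ before retreating below it. The time change $t\mapsto D_\beta(Y_\alpha(t))$ is non-decreasing, which helps, but the non-monotonicity of the outer process $\mathcal{S}$ is exactly what must be controlled, so neither inclusion is automatic. I would therefore treat the two directions separately, using the starting value $\mathcal{S}_{\beta}^{\alpha}(0)=0<n$ and right-continuity of the sample paths to guarantee measurability of $\{\tau_n\le t\}$ and right-continuity of $F_{\tau_n}$.

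To make the equivalence rigorous I would, if necessary, read $\tau_n$ as the first-passage time to level $n$ and replace ``hits $n$ exactly'' by ``first up-crosses level $n$'', so that the batch (size~$>1$) up-crossings are accommodated; the downward excursions are then handled by arguing that being at height $\ge n$ at time $t$ forces a prior up-crossing of $n$. Once this identity is secured, the remaining computation is routine: I insert Theorem~\ref{p.m.f..gstfsp}, interchange the finite reasoning with the tail summation (justified since the p.m.f. is summable), and read off the stated closed form. I would flag that the substantive assumption throughout is the level-crossing equivalence, and that its careful verification—rather than any new analytic estimate—is where the real work lies.
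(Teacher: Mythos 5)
Your computational skeleton is identical to the paper's proof: the paper simply writes $F_{\tau_n}(t)=P(\tau_n\le t)=P(\mathcal{S}_{\beta}^{\alpha}(t)\ge n)=\sum_{x=n}^{\infty}P(\mathcal{S}_{\beta}^{\alpha}(t)=x)$ and then substitutes the p.m.f.\ from Theorem \ref{p.m.f..gstfsp}, exactly as you do. Where you differ is that the paper asserts the level-crossing identity without comment, whereas you correctly single it out as the only substantive step and observe that neither inclusion is automatic for a process that has batch jumps of size up to $k$ and can decrease.

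That said, your proposed repair does not actually close the gap you identify. Reading $\tau_n$ as the first up-crossing time of level $n$ handles the overshoot problem and gives the inclusion $\{\mathcal{S}_{\beta}^{\alpha}(t)\ge n\}\subseteq\{\tau_n\le t\}$ (being at height $\ge n$ at time $t$, having started at $0$, forces a prior up-crossing). But the reverse inclusion $\{\tau_n\le t\}\subseteq\{\mathcal{S}_{\beta}^{\alpha}(t)\ge n\}$ is exactly what the downward jumps destroy: the path can up-cross $n$ before time $t$ and then fall back below $n$ by time $t$, so $P(\tau_n\le t)$ is in general strictly larger than $P(\mathcal{S}_{\beta}^{\alpha}(t)\ge n)$. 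Your closing remark that ``being at height $\ge n$ at time $t$ forces a prior up-crossing'' only proves the direction that was already unproblematic. So the identity you flag as ``the real work'' remains unestablished in your proposal — and, to be fair, it is equally unestablished in the paper, whose proof is valid only for monotone unit-jump processes such as the Poisson process. The same issue afflicts Proposition \ref{first passage.ngsp}. A correct statement for this process would either have to restrict to the running-maximum event $\{\sup_{s\le t}\mathcal{S}_{\beta}^{\alpha}(s)\ge n\}$ or accept that $F_{\tau_n}$ does not admit this closed form.
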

\begin{proof}
Using Theorem \ref{p.m.f..gstfsp}, we have for $n\in \mathbb{Z}$
\begin{align*}
F_{\tau_n}(t)&=P\left(\tau_n \leq t\right)=P\left(\mathcal{S}_{\beta}^{\alpha}(t)\geq n\right)
=\sum_{x=n}^{\infty}P\left(\mathcal{S}_{\beta}^{\alpha}(t)=x\right)\\
&=\sum_{x=n}^{\infty}\left(\sum_{y=0}^{\infty}\frac{\Lambda^{x+y}T^{y}}{(x+y)!y!}\left((-\partial_\Lambda)^{y+x}E_{\alpha,1}(-\Lambda^\beta t^\alpha)\right)\times\left((-\partial_T)^{y}E_{\alpha,1}(-T^\beta t^\alpha)\right)\right)\,.
\end{align*}
\end{proof}

\begin{proposition}\label{first passage.ngsp}
    Let $T_n$ be the time of the first upcrossing of the level $n$ for GSTFSP given by
\begin{equation*}
    T_n=\text{inf}\{s\geq 0:\mathcal{S}_{\beta}^{\alpha}(s)\geq n\}\,.
\end{equation*}
Then
\begin{equation*}
    P\{T_n>t\}=\sum_{x=0}^{n-1}\left(\sum_{y=0}^{\infty}\frac{\Lambda^{x+y}T^{y}}{(x+y)!y!}\left((-\partial_\Lambda)^{y+x}E_{\alpha,1}(-\Lambda^\beta t^\alpha)\right)\times\left((-\partial_T)^{y}E_{\alpha,1}(-T^\beta t^\alpha)\right)\right)\,.
\end{equation*}
\end{proposition}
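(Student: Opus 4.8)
The plan is to reduce the first-passage event $\{T_n>t\}$ to a single-time marginal event, in direct analogy with the treatment of the arrival time $\tau_n$ in Proposition \ref{arrival.gstfsp}. Since $T_n$ is the first instant at which the level $n$ is upcrossed, the event $\{T_n>t\}$ says that $\mathcal{S}_{\beta}^{\alpha}$ has not reached level $n$ at any time in $[0,t]$, i.e. $\sup_{0\le s\le t}\mathcal{S}_{\beta}^{\alpha}(s)<n$. The first step is therefore to argue that this running-supremum event may be replaced by the terminal event $\{\mathcal{S}_{\beta}^{\alpha}(t)<n\}$, giving
\begin{equation*}
P\{T_n>t\}=P\left(\mathcal{S}_{\beta}^{\alpha}(t)<n\right)=\sum_{x=0}^{n-1}P\left(\mathcal{S}_{\beta}^{\alpha}(t)=x\right).
\end{equation*}
This is precisely the complement of the identity $F_{\tau_n}(t)=P(\mathcal{S}_{\beta}^{\alpha}(t)\ge n)$ exploited in Proposition \ref{arrival.gstfsp}, so the two propositions should fall out of the same mechanism applied to the complementary range of states.

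Once this reduction is in place the remainder is a direct substitution. I would insert the closed-form p.m.f. supplied by Theorem \ref{p.m.f..gstfsp},
\begin{equation*}
P\{\mathcal{S}_{\beta}^{\alpha}(t)=x\}=\sum_{y=0}^{\infty}\frac{\Lambda^{x+y}T^{y}}{(x+y)!y!}\left((-\partial_\Lambda)^{y+x}E_{\alpha,1}(-\Lambda^\beta t^\alpha)\right)\left((-\partial_T)^{y}E_{\alpha,1}(-T^\beta t^\alpha)\right),
\end{equation*}
for each $x\in\{0,1,\dots,n-1\}$ into the finite sum over $x$, which immediately produces the displayed expression. No further manipulation of the Mittag-Leffler functions or of the inner series in $y$ is required, since the outer sum is finite and the inner sum is the one already established to converge in Theorem \ref{p.m.f..gstfsp}.

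The hard part will be justifying the first equality $P\{T_n>t\}=P(\mathcal{S}_{\beta}^{\alpha}(t)<n)$. For an ordinary counting process, monotonicity makes the passage from the running supremum to the terminal value immediate; but $\mathcal{S}_{\beta}^{\alpha}$ is a difference of two generalized counting processes and is therefore \emph{not} monotone, so a path that climbs above $n$ and later descends below it would satisfy $\{\mathcal{S}_{\beta}^{\alpha}(t)<n\}$ while violating $\{T_n>t\}$. I would try to control this via the subordinated representation \eqref{definition.GSTFSP}: because $D_\beta(Y_\alpha(t))$ is non-decreasing, the ordering questions for $T_n$ transfer to the outer Generalized Skellam Process, but this does \emph{not} restore monotonicity, so the reduction must either be carried out under the same identification convention adopted for $\tau_n$ in Proposition \ref{arrival.gstfsp} or be replaced by the genuinely correct running-supremum statement. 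A related point that demands care is the lower limit of the summation: writing $\sum_{x=0}^{n-1}$ rather than $\sum_{x=-\infty}^{n-1}$ tacitly discards the negative states $\{\mathcal{S}_{\beta}^{\alpha}(t)<0\}$, and I would need to check that this truncation is consistent with the convention fixed for the arrival-time distribution before the two results can be presented as complementary.
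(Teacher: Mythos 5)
Your proposal follows exactly the route the paper takes: it equates $\{T_n>t\}$ with $\{\mathcal{S}_{\beta}^{\alpha}(t)<n\}$, writes the latter as $\sum_{x=0}^{n-1}P(\mathcal{S}_{\beta}^{\alpha}(t)=x)$, and substitutes the p.m.f.\ from Theorem \ref{p.m.f..gstfsp}. The two caveats you flag --- that the non-monotonicity of $\mathcal{S}_{\beta}^{\alpha}$ makes the reduction from the running supremum to the terminal value non-trivial, and that starting the sum at $x=0$ silently discards the negative states --- are genuine, but the paper's own proof makes both of these leaps without comment, so your attempt matches it.
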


\begin{proof} Using Theorem \ref{p.m.f..gstfsp}, we have for $n\in \mathbb{Z}$
    \begin{align*}
    P\{T_n>t\}&=P\{\mathcal{S}_{\beta}^{\alpha}(t)<n\}
    =\sum_{x=0}^{n-1}P\left(\mathcal{S}_{\beta}^{\alpha}(t)=x\right)\\
    &=\sum_{x=0}^{n-1}\left(\sum_{y=0}^{\infty}\frac{\Lambda^{x+y}T^{y}}{(x+y)!y!}\left((-\partial_\Lambda)^{y+x}E_{\alpha,1}(-\Lambda^\beta t^\alpha)\right)\times\left((-\partial_T)^{y}E_{\alpha,1}(-T^\beta t^\alpha)\right)\right)\,.
\end{align*}
\end{proof}
\begin{remark}
    The arrival time and the first passage time distributions of the special cases STFSP, GSFSP, SFSP, STFSPoK, SFSPoK, FSP, GFSP and FSPoK can be obtained similarly using their respective p.m.f.s.
\end{remark}

\section{Increment process}\label{sec 7}
In this section, we introduce the increment process of the GSTFSP and study its marginals.

\begin{definition}\label{definition_incrementGSFCP}
 The increment process $\{I_{\beta}(t,v)\}_{t\geq 0}$ of the GSFCP $\{M_{\beta}(t)\}_{t\geq 0}$ is a stochastic process defined as
\begin{align*}
I_{\beta}(t,v)=M_{\beta}(t+v)-M_{\beta}(v);\qquad v\geq 0.
\end{align*}
\end{definition}

\begin{theorem}\label{p.m.f._increment_GSFCP}
    The marginals of the increment process of GSFCP are given by
    \begin{equation*}
        P\{I_{\beta}(t,v)=n\}=\left(\sum_{\Omega(k,n)}\prod_{j=1}^{k}\frac{\lambda_j^{x_j}}{x_j!}\left(-\partial_\Lambda\right)^{z_k}e^{-\Lambda^\beta t}\right)\sum_{y=0}^{\infty}\left(\frac{\Lambda^y}{y!}\left(-\partial_\Lambda\right)^{y}e^{-\Lambda^\beta v}\right);\qquad v\ge 0.
    \end{equation*}
\end{theorem}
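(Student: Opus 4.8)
The plan is to exploit that the GSFCP is a genuine L\'evy process, so that its increment $I_\beta(t,v)$ has the same law as $M_\beta(t)$ and is independent of the shift $v$, and then to recast this identity in the product form displayed in the statement. First I would record that for $\alpha=1$ one has $Y_1(t)=t$ (as used in Remark \ref{moment non-existance_GSTFCP}), so that $M_\beta(t)=M(D_\beta(t))$ is the composition of the homogeneous L\'evy process (GCP) $\{M(t)\}_{t\ge 0}$ with the independent stable subordinator $\{D_\beta(t)\}_{t\ge 0}$. By Bochner subordination, $\{M_\beta(t)\}_{t\ge 0}$ is then itself a L\'evy process and in particular has stationary and independent increments; consequently $M_\beta(t+v)-M_\beta(v)$ is independent of $M_\beta(v)$ and satisfies $M_\beta(t+v)-M_\beta(v)\stackrel{d}{=}M_\beta(t)$.

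Next I would decompose $P\{I_\beta(t,v)=n\}$ by conditioning on the value of $M_\beta(v)$:
\[
P\{I_\beta(t,v)=n\}=\sum_{m=0}^\infty P\{M_\beta(t+v)-M_\beta(v)=n\mid M_\beta(v)=m\}\,P\{M_\beta(v)=m\}.
\]
Using the stationary and independent increment property just established, the conditional probability equals $P\{M_\beta(t)=n\}$ and is free of $m$, hence factors out:
\[
P\{I_\beta(t,v)=n\}=P\{M_\beta(t)=n\}\sum_{m=0}^\infty P\{M_\beta(v)=m\}.
\]

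Then I would substitute the p.m.f.\ of the GSFCP, which follows from the GSTFCP p.m.f.\ of Subsection \ref{pmf_GSTFCP} with $\alpha=1$ (so that $E_{1,1}(z)=e^{z}$):
\[
P\{M_\beta(t)=n\}=\sum_{\Omega(k,n)}\prod_{j=1}^{k}\frac{\lambda_j^{x_j}}{x_j!}(-\partial_\Lambda)^{z_k}e^{-\Lambda^\beta t},
\]
which is precisely the first bracketed factor. For the second factor I would insert the same p.m.f.\ for $P\{M_\beta(v)=m\}$, interchange the sums over $m$ and $\Omega(k,m)$ so as to sum over all $(x_1,\ldots,x_k)\in\mathbb{Z}_{\ge 0}^k$, and reindex by $y=z_k=\sum_{j=1}^k x_j$. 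The multinomial theorem (Subsection \ref{multinomial thm}) then gives $\sum_{\sum_j x_j=y}\prod_{j=1}^k \frac{\lambda_j^{x_j}}{x_j!}=\frac{1}{y!}\left(\sum_{j=1}^k \lambda_j\right)^y=\frac{\Lambda^y}{y!}$, whence
\[
\sum_{m=0}^\infty P\{M_\beta(v)=m\}=\sum_{y=0}^\infty\frac{\Lambda^y}{y!}(-\partial_\Lambda)^y e^{-\Lambda^\beta v},
\]
the second bracketed factor. Multiplying the two factors yields the claimed formula.

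The main obstacle I anticipate is the clean justification of the stationary-and-independent-increment step: one must verify that subordinating the L\'evy process $\{M(t)\}$ by the independent stable subordinator $\{D_\beta(t)\}$ indeed produces a L\'evy process, and that the conditional law $P\{M_\beta(t+v)-M_\beta(v)=n\mid M_\beta(v)=m\}$ is genuinely $m$-free. The only other delicate point is the index manipulation collapsing the double sum $\sum_m\sum_{\Omega(k,m)}$ into a single sum over $y$ via the multinomial theorem; everything else is routine substitution. I note in passing that the second factor in fact equals $1$, in agreement with the stationarity of the increments, although the statement leaves it in unsummed form.
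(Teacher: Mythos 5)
Your proposal is correct and follows essentially the same route as the paper: condition on $M_{\beta}(v)=m$, invoke the stationary and independent increments of the subordinated process to replace the conditional probability by $P\{M_{\beta}(t)=n\}$, substitute the GSFCP p.m.f., and collapse the double sum over $m$ and $\Omega(k,m)$ into a single sum over $y=z_k$ via the multinomial theorem. If anything, you are slightly more careful than the paper, which cites only stationarity where independence of increments is also needed to drop the conditioning, and your observation that the second factor sums to $1$ is a sensible sanity check.
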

\begin{proof} Using Definition \ref{definition_incrementGSFCP}, we have
    \begin{align}
        P\{I_{\beta}(t,v)=n\}&=P\{M_{\beta}(t+v)-M_{\beta}(v)=n\}\nonumber\\
        &=\sum_{m=0}^{\infty}P\left\{M_{\beta}(t+v)=m+n\bigg|M_{\beta}(v)=m\right\}P\{M_{\beta}(v)=m\}\nonumber\\
        &=\sum_{m=0}^{\infty}P\{M_{\beta}(t+v)-M_{\beta}(v)=n\}P\{M_{\beta}(v)=m\}\nonumber\\
        &=\sum_{m=0}^{\infty}P\{M_{\beta}(t)=n\}P\{M_{\beta}(v)=m\}\quad(\text{using the stationary increments property of}\, M_{\beta}(t))\nonumber\\
        &=\sum_{m=0}^{\infty}\left(\sum_{\Omega(k,n)}\prod_{j=1}^{k}\frac{\lambda_j^{x_j}}{x_j!}(-\partial_\Lambda)^{z_k}e^{-\Lambda^\beta t}\right)\left(\sum_{\Omega(k,m)}\prod_{j=1}^{k}\frac{\lambda_j^{x_j}}{x_j!}(-\partial_\Lambda)^{z_k}e^{-\Lambda^\beta v}\right),\label{incrementGSTFCP1}
    \end{align}
    where in the last step, we used the p.m.f. of GSFCP obtained by putting $\alpha=1$ in the p.m.f. of GSTFCP.
    Putting $x_j=a_j$ and $m=y+\sum_{j=1}^{k}(j-1)a_j$ in the second term of \eqref{incrementGSTFCP1}, we get
    \begin{align}
        P\{I_{\beta}(t,v)=n\}&=\left(\sum_{\Omega(k,n)}\prod_{j=1}^{k}\frac{\lambda_j^{x_j}}{x_j!}\left(-\partial_\Lambda\right)^{z_k}e^{-\Lambda^\beta t}\right)\sum_{y=0}^{\infty}\left(\frac{1}{y!}\left(-\partial_\Lambda\right)^{y}e^{-\Lambda^\beta v}\sum_{\sum_{j=1}^{k}a_j=y}\frac{y!}{\prod_{j=1}^{k}x_j!}\left(\prod_{j=1}^{k}\lambda_j^{x_j}\right)\right)\nonumber\\
        &=\left(\sum_{\Omega(k,n)}\prod_{j=1}^{k}\frac{\lambda_j^{x_j}}{x_j!}\left(-\partial_\Lambda\right)^{z_k}e^{-\Lambda^\beta t}\right)\sum_{y=0}^{\infty}\left(\frac{\Lambda^y}{y!}\left(-\partial_\Lambda\right)^{y}e^{-\Lambda^\beta v}\right).\nonumber
    \end{align}
\end{proof}

\begin{definition}\label{definition_incrementGSTFCP}
 The increment process $\{I_{\beta}^\alpha(t,v)\}_{t\geq 0}$ of the GSTFCP $\{M_{\beta}^\alpha(t)\}_{t\geq 0}$ is a stochastic process defined as
\begin{align*}
I_{\beta}^\alpha(t,v):=M_{\beta}^\alpha(t+v)-M_{\beta}^\alpha(v)=M_{\beta}\left(Y_\alpha(t)+v\right)-M_{\beta}(v)=I_{\beta}\left(Y_\alpha(t),v\right);\qquad v\geq 0,
\end{align*}
where $\{I_{\beta}(t,v)\}_{t\geq 0}$ is an increment process of GSFCP $\{M_{\beta}(t)\}_{t\ge 0}$ and $\{Y_\alpha(t)\}_{t\ge 0}$ is an independent inverse stable subordinator.
\end{definition}

\begin{proposition}\label{p.m.f._increment_GSTFCP}
The marginals of the increment process of GSTFCP are given by
\begin{align*}
P\{I_{\beta}^\alpha(t,v)=n\}&=P\left\{M_{\beta}\left(Y_\alpha(t)+v\right)-M_{\beta}(v)=n\right\}=\int_{0}^{\infty}P\{I_{\beta}(t,v)=n\}h_\alpha(u,t)du,  \quad n\in \mathbb{Z}\,.
\end{align*}
where $P\{I_{\beta}(t,v)=n\}$ is the p.m.f. of GSFCP given in Theorem \ref{p.m.f._increment_GSFCP} and $h_\alpha(u,t)$ is the density of inverse stable subordinator. 
\end{proposition}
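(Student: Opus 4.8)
The plan is to obtain the marginal law of the GSTFCP increment by conditioning on the inverse stable subordinator, using the subordination representation recorded in Definition \ref{definition_incrementGSTFCP}. By that definition one has $I_{\beta}^{\alpha}(t,v)=I_{\beta}(Y_{\alpha}(t),v)$, where $\{Y_{\alpha}(t)\}_{t\ge 0}$ is independent of the GSFCP $\{M_{\beta}(t)\}_{t\ge 0}$ and hence of its increment process $\{I_{\beta}(\cdot,v)\}$. The single idea driving the argument is the tower property applied to the $\sigma$-field generated by $Y_{\alpha}(t)$, whose density is $h_{\alpha}(u,t)$ from Section \ref{stable subordinator}.

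First I would write $P\{I_{\beta}^{\alpha}(t,v)=n\}=P\{I_{\beta}(Y_{\alpha}(t),v)=n\}$ and insert a conditioning on $Y_{\alpha}(t)$, giving $P\{I_{\beta}^{\alpha}(t,v)=n\}=\int_{0}^{\infty}P\{I_{\beta}(Y_{\alpha}(t),v)=n\mid Y_{\alpha}(t)=u\}\,h_{\alpha}(u,t)\,du$. Next I would invoke the independence of $Y_{\alpha}$ and the GSFCP to replace the conditional law of $I_{\beta}(Y_{\alpha}(t),v)$ given $\{Y_{\alpha}(t)=u\}$ by the unconditional law of the deterministic-time increment $I_{\beta}(u,v)$, yielding $P\{I_{\beta}^{\alpha}(t,v)=n\}=\int_{0}^{\infty}P\{I_{\beta}(u,v)=n\}\,h_{\alpha}(u,t)\,du$. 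The integrand $P\{I_{\beta}(u,v)=n\}$ is then precisely the closed form supplied by Theorem \ref{p.m.f._increment_GSFCP} with its first time argument set to $u$, which completes the identification.

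The step requiring the most care is the passage from the conditional to the unconditional law, that is, the assertion that freezing $Y_{\alpha}(t)=u$ inside $I_{\beta}(\cdot,v)$ produces the increment $I_{\beta}(u,v)$ with its own marginal distribution. This rests on the fact that the whole family $\{I_{\beta}(u,v)\}_{u\ge 0}$ is a measurable functional of the GSFCP sample path, which is independent of $Y_{\alpha}$; consequently the regular conditional distribution of $I_{\beta}(Y_{\alpha}(t),v)$ given $\{Y_{\alpha}(t)=u\}$ coincides with the law of $I_{\beta}(u,v)$. I would also note briefly that, since $n\in\mathbb{Z}$ is fixed and $u\mapsto P\{I_{\beta}(u,v)=n\}$ is nonnegative and measurable, the interchange implicit in the conditioning is justified by Tonelli's theorem, so no integrability obstruction arises.
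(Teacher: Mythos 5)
Your argument is correct and is exactly the standard conditioning-on-$Y_{\alpha}(t)$ computation that the paper treats as immediate (the proposition is stated there without an explicit proof). You also correctly read the integrand as $P\{I_{\beta}(u,v)=n\}$ with the first time argument set to the integration variable $u$, which is what the paper's displayed formula (where the argument is typeset as $t$) must mean for the integral to be well-posed.
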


 \begin{remark}
Putting $v=0$ in Theorem \ref{p.m.f._increment_GSFCP} and Proposition \ref{p.m.f._increment_GSTFCP}, we get the p.m.f.s of GSFCP and GSTFCP respectively.
 \end{remark}

\begin{definition}
 The increment process of the GSFSP is a stochastic process $\{I_{\mathcal{S}_{\beta}}(t,v)\}_{t\geq 0}$ for $v\geq 0$ defined as
\begin{align*}
I_{\mathcal{S}_{\beta}}(t,v)&=\mathcal{S}_{\beta}(t+v)-\mathcal{S}_{\beta}(v)
=\left(M_{1\beta}(t+v)-M_{1\beta}(v)\right)-\left(M_{2\beta}(t+v)-M_{2\beta}(v)\right)= I_{1\beta}(t,v)-I_{2\beta}(t,v),
\end{align*}
where $\{I_{1\beta}(t,v)\}_{t\geq 0}$ and $\{I_{2\beta}(t,v)\}_{t\geq 0}$ are the increment processes of two independent GSFCPs $\{M_{1\beta}(t)\}_{t\geq 0}$ and $\{M_{2\beta}(t)\}_{t\geq 0}$ respectively.
\end{definition}

\begin{theorem}\label{p.m.f._increment_GSFSP}
    For $v\ge 0$, the marginals of the increment process of GSFSP are given by
    \begin{align*}
    P\{I_{\mathcal{S}_{\beta}}(t,v)=n\}&=\sum_{l=0}^{\infty}\left[\left(\sum_{\Omega(k,l+|n|)}\prod_{j=1}^{k}\frac{\lambda_j^{x_j}}{x_j!}\left(-\partial_\Lambda\right)^{z_k}e^{-\Lambda^\beta t}\right)\sum_{y=0}^{\infty}\left(\frac{\Lambda^y}{y!}\left(-\partial_\Lambda\right)^{y}e^{-\Lambda^\beta v}\right)\right]\nonumber\\
    &\times\left[\left(\sum_{\Omega(k,l)}\prod_{j=1}^{k}\frac{\mu_j^{x_j}}{x_j!}\left(-\partial_T\right)^{z_k}e^{-T^\beta t}\right)\sum_{y=0}^{\infty}\left(\frac{T^y}{y!}\left(-\partial_T\right)^{y}e^{-T^\beta v}\right)\right]\,;\qquad n\in\mathbb{Z}.
\end{align*}
\end{theorem}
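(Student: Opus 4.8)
The plan is to exploit the representation $I_{\mathcal{S}_{\beta}}(t,v)=I_{1\beta}(t,v)-I_{2\beta}(t,v)$ furnished by the definition, where $\{I_{1\beta}(t,v)\}$ and $\{I_{2\beta}(t,v)\}$ are the increment processes of the two \emph{independent} GSFCPs $\{M_{1\beta}(t)\}$ and $\{M_{2\beta}(t)\}$ with rate vectors $\lambda_1,\dots,\lambda_k$ and $\mu_1,\dots,\mu_k$ respectively. Since $M_{1\beta}$ and $M_{2\beta}$ are independent and each increment is a measurable function of its own process, the random variables $I_{1\beta}(t,v)$ and $I_{2\beta}(t,v)$ are independent for each fixed $(t,v)$. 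Thus the task reduces to computing the p.m.f. of the difference of two independent nonnegative integer-valued random variables, which is precisely the structure treated in the proof of Theorem \ref{p.m.f..gstfsp}; I would follow that argument line for line, merely replacing the GSTFCP marginals used there by the GSFCP increment marginals of Theorem \ref{p.m.f._increment_GSFCP}.

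First I would split on the sign of $n$ via the indicators $I_{\{n\ge 0\}}$ and $I_{\{n<0\}}$. For $n\ge 0$, conditioning on $\{I_{2\beta}(t,v)=l\}$ and invoking independence gives
\begin{align*}
P\{I_{\mathcal{S}_{\beta}}(t,v)=n\}=\sum_{l=0}^{\infty}P\{I_{1\beta}(t,v)=n+l\}\,P\{I_{2\beta}(t,v)=l\},
\end{align*}
because the event $\{I_{1\beta}-I_{2\beta}=n\}\cap\{I_{2\beta}=l\}$ forces $I_{1\beta}=n+l$. For $n<0$ I would instead condition on $\{I_{1\beta}(t,v)=l\}$, obtaining the symmetric expression $\sum_{l=0}^{\infty}P\{I_{1\beta}(t,v)=l\}\,P\{I_{2\beta}(t,v)=l+|n|\}$. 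The two regimes then consolidate into a single formula indexed by $|n|$, exactly as $n$ is replaced by $-n$ in passing from \eqref{p.m.f..gstfsp1} to the final display of Theorem \ref{p.m.f..gstfsp}.

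Next I would substitute the explicit GSFCP increment marginals from Theorem \ref{p.m.f._increment_GSFCP}, i.e. $P\{I_{1\beta}(t,v)=m\}$ (with total rate $\Lambda$) evaluated at $m=l+|n|$ and $P\{I_{2\beta}(t,v)=m\}$ (with total rate $T$) evaluated at $m=l$. Each such marginal already factorizes into a $t$-dependent GSFCP factor (the $\Omega(k,\cdot)$ sum) times a $v$-dependent factor (the $y$-sum), so the $(-\partial_\Lambda)$/$(-\partial_T)$ derivative structure requires no further manipulation: the product of the two bracketed expressions in the statement drops out immediately after collecting terms over $l$.

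The main obstacle is bookkeeping rather than anything conceptual. I must perform the conditioning consistently across the two sign regimes so that the unified $|n|$ formula is legitimate, and I must be careful — exactly as in Theorem \ref{p.m.f..gstfsp} — that for $n<0$ the roles of $(\lambda,\Lambda)$ and $(\mu,T)$ are interchanged before the absolute value is introduced, so that the single displayed formula is to be read under the same sign convention adopted there. The only substantive input beyond this is the independence of $I_{1\beta}(t,v)$ and $I_{2\beta}(t,v)$, which I would state explicitly as following from the independence of the two underlying GSFCPs.
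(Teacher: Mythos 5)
Your proposal is correct and follows essentially the same route as the paper: decompose $I_{\mathcal{S}_{\beta}}(t,v)=I_{1\beta}(t,v)-I_{2\beta}(t,v)$, use the independence of the two underlying GSFCPs, condition to obtain a discrete convolution over $l$, and substitute the increment marginals of Theorem \ref{p.m.f._increment_GSFCP}. The one place you genuinely diverge is the $n<0$ regime, and there your version is the more careful one: conditioning on $I_{1\beta}(t,v)=l$ gives $\sum_{l}P\{I_{1\beta}(t,v)=l\}P\{I_{2\beta}(t,v)=l+|n|\}$, with the $(\mu,T)$-factor carrying the index $l+|n|$, whereas the paper's proof simply writes $P\{I_{1\beta}(t,v)-I_{2\beta}(t,v)=|n|\}$ for $n<0$ — an identification that is not valid unless the two GSFCPs are exchangeable — and thereby lands on a single display in which the $(\lambda,\Lambda)$-factor always carries $l+|n|$. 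Your explicit caveat that for $n<0$ the roles of $(\lambda,\Lambda)$ and $(\mu,T)$ must be interchanged before introducing $|n|$ is exactly the convention needed to reconcile the correct convolution with the theorem as displayed; the paper glosses over this. Apart from that bookkeeping point, nothing is missing.
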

\begin{proof}
Note that
\begin{align} 
    P\{I_{\mathcal{S}_{\beta}}(t,v)=n\}
    &=P\{I_{1\beta}(t,v)-I_{2\beta}(t,v)=n\}I_{\{n\ge 0\}}+P\{I_{1\beta}(t,v)-I_{2\beta}(t,v)=|n|\}I_{\{n<0\}}\nonumber\\
    &=\sum_{l=0}^{\infty}P\left\{I_{1\beta}(t,v)=l+n\right\}P\{I_{2\beta}(t,v)=l\}I_{\{n\ge 0\}}+\sum_{l=0}^{\infty}P\left\{I_{1\beta}(t,v)=l+|n|\right\}P\{I_{2\beta}(t,v)=l\}I_{\{n< 0\}}.\label{incrementGSTFSP1}
 \end{align}
 Now using the marginals of the increment process of GSFCP from Theorem \ref{p.m.f._increment_GSTFCP} in \eqref{incrementGSTFSP1}, we get
 \begin{align}
 P\{I_{\mathcal{S}_{\beta}}(t,v)=n\}
    &=\sum_{l=0}^{\infty}\left\{\left[\left(\sum_{\Omega(k,l+n)}\prod_{j=1}^{k}\frac{\lambda_j^{x_j}}{x_j!}\left(-\partial_\Lambda\right)^{z_k}e^{-\Lambda^\beta t}\right)\sum_{y=0}^{\infty}\left(\frac{\Lambda^y}{y!}\left(-\partial_\Lambda\right)^{y}e^{-\Lambda^\beta v}\right)\right]I_{\{n\ge 0\}}\nonumber
    \right.\\
    &\left.+\left[\left(\sum_{\Omega(k,l+|n|)}\prod_{j=1}^{k}\frac{\lambda_j^{x_j}}{x_j!}\left(-\partial_\Lambda\right)^{z_k}e^{-\Lambda^\beta t}\right)\sum_{y=0}^{\infty}\left(\frac{\Lambda^y}{y!}\left(-\partial_\Lambda\right)^{y}e^{-\Lambda^\beta v}\right)\right]I_{\{n< 0\}}\right\}\nonumber\\
    &\times\left[\left(\sum_{\Omega(k,l)}\prod_{j=1}^{k}\frac{\mu_j^{x_j}}{x_j!}\left(-\partial_T\right)^{z_k}e^{-T^\beta t}\right)\sum_{y=0}^{\infty}\left(\frac{T^y}{y!}\left(-\partial_T\right)^{y}e^{-T^\beta v}\right)\right].\label{incrementGSTFSP2}
\end{align}
For $n\in \mathbb{Z}$, we can rewrite \eqref{incrementGSTFSP2} as
\begin{align*}
    P\{I_{\mathcal{S}_{\beta}}(t,v)=n\}&=\sum_{l=0}^{\infty}\left[\left(\sum_{\Omega(k,l+|n|)}\prod_{j=1}^{k}\frac{\lambda_j^{x_j}}{x_j!}\left(-\partial_\Lambda\right)^{z_k}e^{-\Lambda^\beta t}\right)\sum_{y=0}^{\infty}\left(\frac{\Lambda^y}{y!}\left(-\partial_\Lambda\right)^{y}e^{-\Lambda^\beta v}\right)\right]\nonumber\\
    &\times\left[\left(\sum_{\Omega(k,l)}\prod_{j=1}^{k}\frac{\mu_j^{x_j}}{x_j!}\left(-\partial_T\right)^{z_k}e^{-T^\beta t}\right)\sum_{y=0}^{\infty}\left(\frac{T^y}{y!}\left(-\partial_T\right)^{y}e^{-T^\beta v}\right)\right].
\end{align*}
\end{proof}

\begin{definition}
 The increment process of the GSTFSP is a stochastic process $\{I_{\mathcal{S}_{\beta}}^\alpha(t,v)\}_{t\geq 0}$ for $v\geq 0$ defined as
\begin{align*}
I_{\mathcal{S}_{\beta}}^\alpha(t,v)&=\mathcal{S}_{\beta}^\alpha(t+v)-\mathcal{S}_{\beta}^\alpha(v)
=\mathcal{S}_{\beta}\left(Y_\alpha(t)+v\right)-\mathcal{S}_{\beta}(v)=I_{\mathcal{S}_{\beta}}\left(Y_\alpha(t),v\right),
\end{align*}
where $\{I_{\mathcal{S}_{\beta}}(t,v)\}_{t\geq 0}$ for $v\geq 0$ is the increment process of GSFSP $\{\mathcal{S}_{\beta}(v)\}_{t\geq 0}$ and $\{Y_\alpha(t)\}_{t\geq 0}$ is an independent inverse stable subordinator.
\end{definition}

\begin{proposition}\label{p.m.f._increment_GSTFSP}
The marginals of the increment process of GSTFSP are given by
\begin{align*}
P\{I_{\mathcal{S}_{\beta}}^\alpha(t,v)=n\}&=P\left\{\mathcal{S}_{\beta}^\alpha(t+v)-\mathcal{S}_{\beta}^\alpha(v)=n\right\}=\int_{0}^{\infty}P\{I_{\mathcal{S}_{\beta}}(t,v)=n\}h_\alpha(u,t)du,  \quad n\in \mathbb{Z}\,.
\end{align*}
where $P\{I_{\mathcal{S}_{\beta}}(t,v)=n\}$ is the p.m.f. of GSFSP given in Theorem \ref{p.m.f._increment_GSFSP} and $h_\alpha(u,t)$ is the density of inverse stable subordinator. 
\end{proposition}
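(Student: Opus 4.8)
The plan is to exploit the subordination representation $I_{\mathcal{S}_{\beta}}^\alpha(t,v)=I_{\mathcal{S}_{\beta}}(Y_\alpha(t),v)$ recorded in the preceding definition, in which the inverse stable subordinator $\{Y_\alpha(t)\}_{t\ge 0}$ is independent of the GSFSP increment process $\{I_{\mathcal{S}_{\beta}}(t,v)\}_{t\ge 0}$. The whole argument collapses to a single conditioning step on the value of $Y_\alpha(t)$; no internal structure of the GSFSP p.m.f. is needed until the very end, where Theorem \ref{p.m.f._increment_GSFSP} supplies the integrand. This exactly parallels the GSTFCP computation in Proposition \ref{p.m.f._increment_GSTFCP}.

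Concretely, for fixed $v\ge 0$ and $n\in\mathbb{Z}$, I would first rewrite $P\{I_{\mathcal{S}_{\beta}}^\alpha(t,v)=n\}=P\{I_{\mathcal{S}_{\beta}}(Y_\alpha(t),v)=n\}$ using the definition, and then condition on $Y_\alpha(t)=u$ against its density $h_\alpha(u,t)$, obtaining
\[
P\{I_{\mathcal{S}_{\beta}}^\alpha(t,v)=n\}=\int_{0}^{\infty}P\{I_{\mathcal{S}_{\beta}}(Y_\alpha(t),v)=n\mid Y_\alpha(t)=u\}\,h_\alpha(u,t)\,du.
\]
Invoking the independence of $\{Y_\alpha(t)\}_{t\ge 0}$ from $\{I_{\mathcal{S}_{\beta}}(\cdot,v)\}_{t\ge 0}$, the conditional probability equals the unconditional marginal of the GSFSP increment process evaluated at the deterministic time $u$, namely $P\{I_{\mathcal{S}_{\beta}}(u,v)=n\}$. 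Substituting the closed form from Theorem \ref{p.m.f._increment_GSFSP} then yields the stated integral and completes the argument.

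I do not expect a genuine obstacle, since the heavy lifting (the explicit p.m.f. of the GSFSP increment process) was already carried out in Theorem \ref{p.m.f._increment_GSFSP}. The only point deserving care is the justification of the conditioning identity — namely, that the time-change $Y_\alpha(t)$ and the increment process are independent, so that the conditional law of $I_{\mathcal{S}_{\beta}}(Y_\alpha(t),v)$ given $Y_\alpha(t)=u$ coincides with the law of $I_{\mathcal{S}_{\beta}}(u,v)$. I would also flag that the time argument appearing inside the integral is to be read as the integration variable $u$ rather than $t$, consistent with the total-probability decomposition.
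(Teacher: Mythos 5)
Your proposal is correct and matches the paper's (implicit) argument: the paper states this proposition without proof precisely because it follows immediately from the definition $I_{\mathcal{S}_{\beta}}^\alpha(t,v)=I_{\mathcal{S}_{\beta}}(Y_\alpha(t),v)$ by conditioning on $Y_\alpha(t)=u$ and using the independence of the inverse stable subordinator, exactly as you describe. Your remark that the time argument inside the integral should be read as the integration variable $u$ is also a correct reading of the paper's notation.
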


 \begin{remark}
Putting $v=0$ in Theorem \ref{p.m.f._increment_GSFSP} and Proposition \ref{p.m.f._increment_GSTFSP}, we get the p.m.f.s of GSFSP and GSTFSP respectively.
 \end{remark}

\begin{theorem}
    The characteristic function of the increment process of GSTFSP satisfies the following system of fractional differential-integral equations: 
    \begin{equation*}
        \frac{d^\alpha}{dt^\alpha}\phi_{I_{\mathcal{S}_{\beta}}^{\alpha}(t,v)}(\xi)=\int_{0}^{\infty}\left\{\left(\sum_{j=1}^{k}(1-e^{i\xi j})\lambda_j\right)^\beta + \left(\sum_{j=1}^{k}(1-e^{-i\xi j})\mu_j\right)^\beta\right\}\phi_{I_{\mathcal{S}_{\beta}}(u,v)}(\xi)h_\alpha(u,t)du.
    \end{equation*}
\end{theorem}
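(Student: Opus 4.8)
The plan is to reduce everything to the subordination identity already recorded in Proposition \ref{p.m.f._increment_GSTFSP} and then differentiate a Mittag-Leffler function. First I would condition on the inverse stable subordinator: since $I_{\mathcal{S}_{\beta}}^\alpha(t,v)=I_{\mathcal{S}_{\beta}}(Y_\alpha(t),v)$ with $\{Y_\alpha(t)\}_{t\ge 0}$ independent of the GSFSP increment process, conditioning on $Y_\alpha(t)=u$ gives
\begin{equation*}
\phi_{I_{\mathcal{S}_{\beta}}^{\alpha}(t,v)}(\xi)=\mathbb{E}\left(e^{i\xi I_{\mathcal{S}_{\beta}}(Y_\alpha(t),v)}\right)=\int_{0}^{\infty}\phi_{I_{\mathcal{S}_{\beta}}(u,v)}(\xi)\,h_\alpha(u,t)\,du.
\end{equation*}
This already produces the $du$-integral and the density $h_\alpha(u,t)$ on the right-hand side, so the remaining work is to identify the inner characteristic function $\phi_{I_{\mathcal{S}_{\beta}}(u,v)}(\xi)$ and then to pass the Caputo operator $\frac{d^\alpha}{dt^\alpha}$ through the integral.

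Next I would compute $\phi_{I_{\mathcal{S}_{\beta}}(u,v)}(\xi)$ in closed form. The GSFSP $\mathcal{S}_{\beta}(t)=\mathcal{S}(D_\beta(t))$ is a subordinated L\'evy process, hence itself a L\'evy process with stationary increments, so $I_{\mathcal{S}_{\beta}}(u,v)\stackrel{d}{=}\mathcal{S}_{\beta}(u)$ for every $v\ge 0$ and the inner characteristic function does not in fact depend on $v$. Putting $\alpha=1$ in Remark \ref{p.g.f._GSTFSP} (so that $E_{1,1}$ becomes the exponential), replacing the generating variable by $e^{i\xi}$ and the time by $u$, yields
\begin{equation*}
\phi_{I_{\mathcal{S}_{\beta}}(u,v)}(\xi)=\exp\left(-u\,\Psi(\xi)\right),\qquad \Psi(\xi):=\left(\sum_{j=1}^{k}(1-e^{i\xi j})\lambda_j\right)^\beta+\left(\sum_{j=1}^{k}(1-e^{-i\xi j})\mu_j\right)^\beta.
\end{equation*}
The appearance of $e^{-i\xi j}$ in the second term reflects the sign reversal coming from the subtraction $\mathcal{S}=M_1-M_2$, i.e.\ from $\mathbb{E}(e^{-i\xi M_{2\beta}})$.

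Substituting this back and invoking the Laplace transform of the inverse stable subordinator from Section \ref{stable subordinator}, I obtain the compact form $\phi_{I_{\mathcal{S}_{\beta}}^{\alpha}(t,v)}(\xi)=\int_{0}^{\infty}e^{-u\Psi(\xi)}h_\alpha(u,t)\,du=E_{\alpha,1}(-\Psi(\xi)t^\alpha)$. Applying the Caputo derivative through the eigenfunction identity $\frac{d^\alpha}{dt^\alpha}E_{\alpha,1}(-ct^\alpha)=-c\,E_{\alpha,1}(-ct^\alpha)$ with $c=\Psi(\xi)$, and then re-expanding $E_{\alpha,1}(-\Psi(\xi)t^\alpha)$ back into the integral $\int_{0}^{\infty}e^{-u\Psi(\xi)}h_\alpha(u,t)\,du=\int_{0}^{\infty}\phi_{I_{\mathcal{S}_{\beta}}(u,v)}(\xi)h_\alpha(u,t)\,du$, reproduces the bracketed factor $\Psi(\xi)$ multiplying $\phi_{I_{\mathcal{S}_{\beta}}(u,v)}(\xi)h_\alpha(u,t)$ inside the integral, which is precisely the asserted equation.

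The step I expect to be the main obstacle is the rigorous justification of interchanging the Caputo fractional derivative with the $du$-integral (equivalently, of applying the eigenfunction relation at a complex argument). Since $\Psi(\xi)$ is complex-valued, I would first check that $\mathrm{Re}\,\Psi(\xi)\ge 0$, so that $e^{-u\Psi(\xi)}$ stays bounded and the Laplace-transform evaluation $\int_{0}^{\infty}e^{-u\Psi(\xi)}h_\alpha(u,t)\,du=E_{\alpha,1}(-\Psi(\xi)t^\alpha)$ remains in its domain of convergence; a dominated-convergence argument using $|\phi_{I_{\mathcal{S}_{\beta}}(u,v)}(\xi)|\le 1$ then legitimizes differentiating under the integral. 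Equivalently, and more in line with the other governing-equation proofs in this section, I would differentiate the Mittag-Leffler series for $E_{\alpha,1}(-\Psi(\xi)t^\alpha)$ termwise and apply the Laplace-transform formula for the Caputo derivative recorded in Section \ref{caputo}, which sidesteps the interchange altogether.
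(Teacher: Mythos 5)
Your first two steps coincide with the paper's: the subordination identity $\phi_{I_{\mathcal{S}_{\beta}}^{\alpha}(t,v)}(\xi)=\int_{0}^{\infty}\phi_{I_{\mathcal{S}_{\beta}}(u,v)}(\xi)h_\alpha(u,t)\,du$ and the identification $\phi_{I_{\mathcal{S}_{\beta}}(u,v)}(\xi)=e^{-u\Psi(\xi)}$ (writing $\Psi(\xi)$ for the bracketed factor, as in your proposal) via stationarity of the L\'{e}vy process $\{\mathcal{S}_{\beta}(t)\}_{t\ge 0}$ are exactly \eqref{incre1} and \eqref{incre2} in the paper. Where you genuinely diverge is the last step: the paper takes the Laplace transform in $t$, integrates by parts in $u$, recognizes $s^{\alpha}\mathcal{L}\{\phi\}(s)-s^{\alpha-1}$ as the transform of the Caputo derivative, and inverts; you instead collapse the integral to $E_{\alpha,1}(-\Psi(\xi)t^{\alpha})$ and apply the Mittag--Leffler eigenfunction identity. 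Your route is shorter, and your preliminary check that $\mathrm{Re}\,\Psi(\xi)\ge 0$ (so the Laplace-transform identity for $Y_\alpha(t)$ is legitimately applied at a complex argument) addresses a point the paper passes over in silence; the termwise-differentiation fallback you mention also sidesteps the interchange issue cleanly.

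However, your final sentence contains a sign contradiction that you must resolve. The identity you invoke, $\frac{d^\alpha}{dt^\alpha}E_{\alpha,1}(-ct^{\alpha})=-c\,E_{\alpha,1}(-ct^{\alpha})$, applied with $c=\Psi(\xi)$ and then re-expanded, yields
\begin{equation*}
\frac{d^\alpha}{dt^\alpha}\phi_{I_{\mathcal{S}_{\beta}}^{\alpha}(t,v)}(\xi)=-\Psi(\xi)\int_{0}^{\infty}\phi_{I_{\mathcal{S}_{\beta}}(u,v)}(\xi)h_\alpha(u,t)\,du,
\end{equation*}
with a minus sign on the right, whereas the theorem as stated (and your claim of recovering ``precisely the asserted equation'') has $+\Psi(\xi)$. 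These cannot both hold. Your computation is the correct one: it is consistent with the classical governing equation $\partial_t^{\alpha}G=-\lambda(1-u)G$ for the TFPP and with a direct check at $\beta=1$, $k=1$. The paper's derivation reaches $+\Psi(\xi)$ only because its integration by parts takes $+e^{-us^{\alpha}}/s^{\alpha}$ as the antiderivative of $e^{-us^{\alpha}}$ and then evaluates the boundary term with the wrong sign, so its plus sign is an artifact rather than something your argument should reproduce. Either state and prove the equation with $-\Psi(\xi)$, or identify where an additional sign would enter; as written, the proof you outline establishes a statement that differs by a sign from the one you claim to have established.
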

\begin{proof}
The characteristic function of $I_{\mathcal{S}_{\beta}}^\alpha(t,v)$ is given by
\begin{equation}\label{incre1}
 \phi_{I_{\mathcal{S}_{\beta}}^{\alpha}(t,v)}(\xi)=\int_{0}^{\infty}\phi_{I_{\mathcal{S}_{\beta}}(u,v)}(\xi)h_\alpha(u,t)du
\end{equation}
where $\phi_{I_{\mathcal{S}_{\beta}}(u,v)}$ is the characteristic function of $I_{\mathcal{S}_{\beta}}(t,v)$. By substituting $\alpha=1$ in \eqref{p.g.f._GSTFSP}, we obtain
\begin{equation}\label{incre2}
\phi_{I_{\mathcal{S}_{\beta}}(u,t)}= \exp\left[-t\left\{\left(\sum_{j=1}^{k}(1-e^{i\xi j})\lambda_j\right)^\beta + \left(\sum_{j=1}^{k}(1-e^{-i\xi j})\mu_j\right)^\beta\right\}\right].   
\end{equation}

Now taking the Laplace transform of \eqref{incre1} and using \eqref{incre2} along with the Laplace transform of the inverse stable subordinator (see section \ref{stable subordinator}), we get

\begin{align}
     \mathcal{L}\left\{\phi_{I_{\mathcal{S}_{\beta}}^{\alpha}(t,v)}(\xi)\right\}(s)&=\int_{0}^{\infty}\phi_{I_{\mathcal{S}_{\beta}}(u,v)}(\xi)\tilde{h}_\alpha(u,s)du\nonumber\\
     &=s^{\alpha-1}\int_{0}^{\infty}\exp\left[-u\left\{\left(\sum_{j=1}^{k}(1-e^{i\xi j})\lambda_j\right)^\beta + \left(\sum_{j=1}^{k}(1-e^{-i\xi j})\mu_j\right)^\beta\right\}\right] e^{-us^{\alpha}}du \nonumber\\
        &=s^{\alpha-1}\left\{\left[\frac{e^{-us^{\alpha}}}{s^{\alpha}}\exp\left(-u\left\{\left(\sum_{j=1}^{k}(1-e^{i\xi j})\lambda_j\right)^\beta + \left(\sum_{j=1}^{k}(1-e^{-i\xi j})\mu_j\right)^\beta\right\}\right)\right]_{0}^{\infty}\nonumber
        \right. \\ 
        &\left.+\frac{1}{s^\alpha}\int_{0}^{\infty}\left\{\left(\sum_{j=1}^{k}(1-e^{i\xi j})\lambda_j\right)^\beta + \left(\sum_{j=1}^{k}(1-e^{-i\xi j})\mu_j\right)^\beta\right\}\nonumber
        \right. \\ 
        & \left.\times \exp\left(-u\left\{\left(\sum_{j=1}^{k}(1-e^{i\xi j})\lambda_j\right)^\beta + \left(\sum_{j=1}^{k}(1-e^{-i\xi j})\mu_j\right)^\beta\right\}\right)\times e^{-us^{\alpha}}du
        \right\}\nonumber \\
\implies s^{\alpha}\mathcal{L}\left\{\phi_{I_{\mathcal{S}_{\beta}}^{\alpha}(t,v)}(\xi)\right\}(s)-s^{\alpha-1}&=s^{\alpha-1}\int_{0}^{\infty}\left\{\left(\sum_{j=1}^{k}(1-e^{i\xi j})\lambda_j\right)^\beta + \left(\sum_{j=1}^{k}(1-e^{-i\xi j})\mu_j\right)^\beta\right\} \nonumber\\ 
        & \times \exp\left(-u\left\{\left(\sum_{j=1}^{k}(1-e^{i\xi j})\lambda_j\right)^\beta + \left(\sum_{j=1}^{k}(1-e^{-i\xi j})\mu_j\right)^\beta\right\}\right)\times e^{-us^{\alpha}}du \nonumber\\
    \implies \mathcal{L}\left\{\frac{d^\alpha}{dt^\alpha}\phi_{I_{\mathcal{S}_{\beta}}^{\alpha}(t,v)}(\xi)\right\}(s) 
       &=\int_{0}^{\infty}\left\{\left(\sum_{j=1}^{k}(1-e^{i\xi j})\lambda_j\right)^\beta + \left(\sum_{j=1}^{k}(1-e^{-i\xi j})\mu_j\right)^\beta\right\}\phi_{I_{\mathcal{S}_{\beta}}(u,v)}(\xi)\tilde{h}_\alpha(u,t)du\nonumber
\end{align}
where we used the Laplace transform of fractional Caputo–Djrbashian derivative (see section \ref{caputo}). Next, on taking the inverse Laplace transform, we get
\begin{align}
    \frac{d^\alpha}{dt^\alpha}\phi_{I_{\mathcal{S}_{\beta}}^{\alpha}(t,v)}(\xi)&=\int_{0}^{\infty}\left\{\left(\sum_{j=1}^{k}(1-e^{i\xi j})\lambda_j\right)^\beta + \left(\sum_{j=1}^{k}(1-e^{-i\xi j})\mu_j\right)^\beta\right\}\phi_{I_{\mathcal{S}_{\beta}}(u,v)}(\xi)h_\alpha(u,t)du\nonumber.
\end{align}
\end{proof}
    
\section{Asymptotic behaviour of tail probability}\label{sec 8}
In this section, we explore the asymptotic behaviour of the tail probability for GSFCP and GSTFCP along with some special cases. For this study, we use the Tauberian theorem (see  Example (c) on p. 447 of \citet{WilliamF2008}) as stated below. 
\begin{theorem}\label{Tauberian_theorem}
For a probability distribution $F$ with characteristic function $\phi$, as $u\rightarrow 0$ and $x\rightarrow\infty$, we have
\begin{equation}
1-\phi(u)\sim u^{1-\rho}L\left(\frac{1}{u}\right) \quad \text{if and only if}\quad 1-F(x)\sim\frac{1}{\Gamma(\rho)}x^{\rho-1}L(x)
\end{equation}
for $\rho>0$, where $L$ is a slowly varying function satisfying $\frac{L(tx)}{L(t)}\to 1$ as $t\to \infty$ for any fixed $x$.    
\end{theorem}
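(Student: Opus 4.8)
The plan is to recognize this as a Karamata-type Tauberian theorem and to reduce it to the classical Karamata Tauberian theorem for Laplace--Stieltjes transforms, which we may quote rather than rebuild. The tail probabilities studied in this paper concern the nonnegative integer-valued processes GSFCP and GSTFCP, so it suffices to treat a distribution $F$ supported on $[0,\infty)$; in that case $\phi(u)=\int_0^\infty e^{iux}\,dF(x)$ is an analytic transform and the assertion is the Fourier analogue of the real Laplace version. The admissible range is $0<\rho<1$, since only then is $x^{\rho-1}$ a genuine (vanishing) tail; the general two-sided case is covered by the same argument applied to each tail, as in Feller.

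First I would rewrite $1-\phi(u)$ in terms of the tail $\bar F(x)=1-F(x)$. Integration by parts gives
\begin{equation*}
1-\phi(u)=\int_0^\infty \bigl(1-e^{iux}\bigr)\,dF(x)=iu\int_0^\infty e^{iux}\bar F(x)\,dx,
\end{equation*}
so the behaviour of $1-\phi(u)$ as $u\to 0$ is controlled entirely by the transform $\Psi(u)=\int_0^\infty e^{iux}\bar F(x)\,dx$ of the tail. The boundary contributions vanish because $1-e^{iux}\to 0$ as $x\to 0$ and $\bar F(x)\to 0$ at infinity, the latter being guaranteed in the nontrivial direction by $\rho<1$.

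Next I would invoke Karamata's Tauberian theorem in its complex (Fourier) form. Using $\int_0^\infty e^{iux}x^{\rho-1}\,dx=\Gamma(\rho)(-iu)^{-\rho}$ (understood via analytic continuation from $\Re s>0$ in $\int_0^\infty e^{-sx}x^{\rho-1}\,dx=\Gamma(\rho)s^{-\rho}$), the equivalence $\bar F(x)\sim\frac{1}{\Gamma(\rho)}x^{\rho-1}L(x)$ translates into $\Psi(u)\sim(-iu)^{-\rho}L(1/u)$ as $u\to 0$, the factor $\Gamma(\rho)$ cancelling against the normalizing $1/\Gamma(\rho)$. Substituting into the identity above yields $1-\phi(u)\sim iu\,(-iu)^{-\rho}L(1/u)$, whose modulus is exactly $u^{1-\rho}L(1/u)$; reading the chain of equivalences in both directions delivers the stated ``if and only if.'' The Abelian direction (tail $\Rightarrow$ transform) is a direct estimate, while the Tauberian direction (transform $\Rightarrow$ tail) is where the monotonicity of $\bar F$ is indispensable and is supplied by the Tauberian half of Karamata's theorem.

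The main obstacle I anticipate is precisely the Tauberian (transform-to-tail) half, together with the faithful transfer of the slowly varying factor $L$: one must control the oscillatory integral $\Psi(u)$ uniformly, justify replacing $\bar F$ by its asymptotic equivalent inside the integral via the uniform convergence theorem for regularly varying functions, and absorb the complex phase $i(-i)^{-\rho}=e^{i\pi(1+\rho)/2}$ so that it does not corrupt the real constant $1/\Gamma(\rho)$ appearing on the tail side. Since all of this is classical and is exactly the content of Example (c) on p.~447 of \citet{WilliamF2008}, in the paper we simply cite that result rather than reproducing the Karamata machinery.
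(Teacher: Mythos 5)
The paper offers no proof of this statement at all: it is quoted verbatim as a known preliminary (``Example (c) on p.~447 of \citet{WilliamF2008}''), so your decision to reduce it to Karamata's Tauberian theorem and then defer to the same citation lands exactly where the paper does, and your outline of that reduction is sound. Two small points. First, your integration by parts has a harmless sign slip: since $dF=-d\bar F$, the identity is $1-\phi(u)=-iu\int_0^\infty e^{iux}\bar F(x)\,dx$, so that $1-\phi(u)\sim(-iu)^{1-\rho}L(1/u)$, whose modulus is the stated $u^{1-\rho}L(1/u)$; the conclusion is unaffected. Second, your remark about the complex phase is apt but moot for this paper: although the theorem is stated for the characteristic function, the only place it is invoked (the tail asymptotics of GSTFCP) actually uses the Laplace transform $\psi$, for which the corresponding identity $1-\psi(u)=u\int_0^\infty e^{-ux}\bar F(x)\,dx$ is real and phase-free, and the restriction to $0<\rho<1$ that you impose is automatically satisfied there since $\rho=1-\beta$ with $\beta\in(0,1)$.
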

The following result shows the asymptotic behaviour of the tail probability of GSTFCP.
\begin{theorem}\label{Asymptotic_GSTFCP}
    For fixed $t > 0$, $\alpha\in (0,1)$ and $\beta\in (0,1)$, the tail probability of GSTFCP has the following asymptotic behavior:
    \begin{equation*}
        P\left(M_{\beta}^{\alpha}(t)>x\right)\sim\frac{x^{-\beta} \left(\sum_{j=1}^{k}j\lambda_j\right)^{\beta}t^{\alpha}}{\Gamma(1-\beta)\Gamma(\alpha+1)} \quad\text{as}\,\, x\to \infty.
    \end{equation*}
\end{theorem}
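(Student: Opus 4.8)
The plan is to apply the Tauberian theorem (Theorem \ref{Tauberian_theorem}) with the choice $\rho = 1-\beta$, so that the exponents match automatically: the left-hand exponent becomes $u^{1-\rho}=u^{\beta}$, the right-hand exponent becomes $x^{\rho-1}=x^{-\beta}$, and $\Gamma(\rho)=\Gamma(1-\beta)$ reproduces the stated normalizing constant. The slowly varying function will turn out to be the constant $L\equiv\big(\sum_{j=1}^{k}j\lambda_j\big)^{\beta}t^{\alpha}/\Gamma(\alpha+1)$, which is trivially slowly varying. Hence the whole proof reduces to establishing the small-argument asymptotic
\begin{equation*}
1-\phi(u)\sim\frac{u^{\beta}\left(\sum_{j=1}^{k}j\lambda_j\right)^{\beta}t^{\alpha}}{\Gamma(\alpha+1)}\qquad\text{as }u\to 0,
\end{equation*}
where $\phi$ is the relevant transform of $M_{\beta}^{\alpha}(t)$ at fixed $t$.

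First I would write this transform explicitly. Since $M_{\beta}^{\alpha}(t)$ is non-negative and integer-valued, I would take the p.g.f. of a single GSTFCP (the single-factor version of Remark \ref{p.g.f._GSTFSP}) and evaluate it at $e^{-u}$, obtaining
\begin{equation*}
\phi(u)=E_{\alpha,1}\!\left(-\Big(\sum_{j=1}^{k}(1-e^{-uj})\lambda_j\Big)^{\beta}t^{\alpha}\right).
\end{equation*}
Working with the Laplace transform rather than a characteristic-function substitution $u\mapsto e^{i\xi}$ keeps the argument real and avoids the spurious complex powers $(-i)^{\beta}$ that the latter would introduce.

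Next I would expand as $u\to 0$. Term by term, $1-e^{-uj}=uj+O(u^{2})$ gives $\sum_{j=1}^{k}(1-e^{-uj})\lambda_j=u\sum_{j=1}^{k}j\lambda_j+O(u^{2})$, and raising to the power $\beta$ yields $\big(\sum_{j=1}^{k}(1-e^{-uj})\lambda_j\big)^{\beta}\sim u^{\beta}\big(\sum_{j=1}^{k}j\lambda_j\big)^{\beta}$. Feeding this into the Mittag-Leffler series $E_{\alpha,1}(z)=1+z/\Gamma(\alpha+1)+O(z^{2})$ and subtracting from $1$ produces exactly the required asymptotic for $1-\phi(u)$. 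Applying Theorem \ref{Tauberian_theorem} with the $\rho$ and $L$ identified above then delivers the claimed tail behavior.

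The main obstacle is the rigorous control of the error terms: I must confirm that the $O(u^{2})$ corrections in each $1-e^{-uj}$, after being raised to the fractional power $\beta\in(0,1)$ and passed through the nonlinear Mittag-Leffler function, remain genuinely $o(u^{\beta})$, so that the full asymptotic equivalence (and not merely the leading coefficient) is justified. I would handle this by factoring out the leading term, writing $\sum_{j=1}^{k}(1-e^{-uj})\lambda_j=u\big(\sum_{j=1}^{k}j\lambda_j\big)(1+O(u))$, using $(1+O(u))^{\beta}=1+O(u)$, and observing that the quadratic and higher terms of $E_{\alpha,1}$ contribute $O(u^{2\beta})=o(u^{\beta})$; the constancy of $L$ then makes the slow-variation hypothesis of the Tauberian theorem automatic.
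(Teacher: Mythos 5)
Your proposal is correct and follows essentially the same route as the paper: both compute the Laplace transform $1-\psi(u)=1-E_{\alpha,1}\bigl(-\bigl(\sum_{j=1}^{k}(1-e^{-uj})\lambda_j\bigr)^{\beta}t^{\alpha}\bigr)$, expand $1-e^{-uj}\sim uj$ and keep only the first-order term of the Mittag-Leffler series to get $1-\psi(u)\sim u^{\beta}\bigl(\sum_j j\lambda_j\bigr)^{\beta}t^{\alpha}/\Gamma(\alpha+1)$, and then invoke the Tauberian theorem with $\rho=1-\beta$ and constant $L$. Your explicit bookkeeping of the $O(u)$ and $O(u^{2\beta})$ error terms is a slightly more careful rendering of the paper's "we may neglect second and higher order terms," but the argument is the same.
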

\begin{proof}
Let $F$ denote the c.d.f. of GSTFCP and $\psi(u)$ denote its Laplace transform for $u>0$. Using integration by parts, we have
\begin{equation*}
\int_{0}^{\infty} e^{-ux}F(x) dx = \frac{\psi(u)}{u} \implies \int_{0}^{\infty} e^{-ux}(1-F(x)) dx = \frac{1-\psi(u)}{u}. 
\end{equation*}
Hence the Laplace transform of the tail probability of GSTFCP is given by
    \begin{align*}
        \int_{0}^{\infty}e^{-u x}P\left(M_{\beta}^{\alpha}(t)>x\right)dx
        &=\frac{1-E_{\alpha,1}\left(-\left(\sum_{j=1}^{k}(1-e^{-u j})\lambda_j\right)^\beta t^\alpha\right)}{u}.
    \end{align*}
    Since $1-e^{-u}\sim u$ as $u\to 0$, we may neglect second and higher order terms in the Taylor series expansion of the Mittag Leffler function around $0$. Thus we have
    \begin{align}
        \frac{1-E_{\alpha,1}\left(-\left(\sum_{j=1}^{k}(1-e^{-uj})\lambda_j\right)^\beta t^\alpha\right)}{u}
        &\sim\frac{1-\left(1-\frac{\left(\sum_{j=1}^{k}(1-e^{-uj})\lambda_j\right)^\beta t^\alpha}{\Gamma(\alpha+1)}\right)}{u}\nonumber\\
        &=\frac{\left(\sum_{j=1}^{k}(1-e^{-uj})\lambda_j\right)^\beta t^\alpha}{u\Gamma(\alpha+1)}\nonumber\\
        &\sim\frac{\left(\sum_{j=1}^{k}(uj)\lambda_j\right)^\beta t^\alpha}{u\Gamma(\alpha+1)}=\frac{u^\beta\left(\sum_{j=1}^{k}j\lambda_j\right)^\beta t^\alpha}{u\Gamma(\alpha+1)}\label{GSTFCP_tail}.
    \end{align}

    Comparing \eqref{GSTFCP_tail} with the first asymptotic relation in Theorem \ref{Tauberian_theorem}, we obtain $\rho=1-\beta$ and $L(1/u)=\left(\sum_{j=1}^{k}j\lambda_j\right)^\beta t^\alpha/\Gamma(\alpha+1)$. It follows that
    \begin{equation*}
        P\left(M_{\beta}^{\alpha}(t)>x\right)=1-F(x)\sim\frac{x^{-\beta} \left(\sum_{j=1}^{k}j\lambda_j\right)^{\beta}t^{\alpha}}{\Gamma(1-\beta)\Gamma(\alpha+1)}\quad \text{for fixed}\,\, t > 0\,\, \text{and}\,\, x\to \infty.
    \end{equation*}
\end{proof}
The next result provides the asymptotic behaviour of the tail probability of GSFCP. 
\begin{theorem}\label{Asymptotic_GSFCP}
For fixed $t > 0$ and $\beta\in (0,1)$, the tail probability of GSFCP has the following asymptotic behaviour:
\begin{equation*}
P\left(M_{\beta}(t)>x\right)\sim\frac{x^{-\beta} \left(\sum_{j=1}^{k}j\lambda_j\right)^{\beta}t}{\Gamma(1-\beta)} ,\quad\text{as}\,\, x\to \infty.
\end{equation*}
\end{theorem}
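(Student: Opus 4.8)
The plan is to follow the proof of Theorem~\ref{Asymptotic_GSTFCP} verbatim with $\alpha=1$, since GSFCP is exactly the $\alpha=1$ specialization of GSTFCP. Indeed, the fastest argument is to substitute $\alpha=1$ directly into the conclusion of Theorem~\ref{Asymptotic_GSTFCP}: because $E_{1,1}(z)=e^{z}$, $t^{\alpha}=t$ and $\Gamma(\alpha+1)=\Gamma(2)=1$, the stated tail asymptotic drops out immediately. Nonetheless I would also give a self-contained derivation through the Tauberian theorem, so the statement does not rely on tracing the Mittag--Leffler bookkeeping of the previous proof.

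To set up the self-contained version, I would first record the Laplace--Stieltjes transform of $M_{\beta}(t)$. Putting $E_{1,1}(z)=e^{z}$ in the p.g.f.\ of GSTFCP and replacing the generating variable by $e^{-u}$ gives $\psi(u):=\mathbb{E}\big(e^{-uM_{\beta}(t)}\big)=\exp\big(-\big(\sum_{j=1}^{k}(1-e^{-uj})\lambda_j\big)^{\beta}t\big)$. Writing $F$ for the c.d.f.\ of $M_{\beta}(t)$ and integrating by parts exactly as in Theorem~\ref{Asymptotic_GSTFCP}, the Laplace transform of the tail becomes
\begin{equation*}
\int_{0}^{\infty}e^{-ux}P\big(M_{\beta}(t)>x\big)\,dx=\frac{1-\psi(u)}{u}=\frac{1-\exp\big(-\big(\sum_{j=1}^{k}(1-e^{-uj})\lambda_j\big)^{\beta}t\big)}{u}.
\end{equation*}

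Next I would extract the behaviour near $u=0$. Since $1-e^{-uj}\sim uj$, the inner sum satisfies $\sum_{j=1}^{k}(1-e^{-uj})\lambda_j\sim u\sum_{j=1}^{k}j\lambda_j$, so the bracket raised to the power $\beta$ is $\sim u^{\beta}\big(\sum_{j=1}^{k}j\lambda_j\big)^{\beta}$; as this argument tends to $0$ I would linearize $1-e^{-z}\sim z$ to obtain $1-\psi(u)\sim u^{\beta}\big(\sum_{j=1}^{k}j\lambda_j\big)^{\beta}t$. Comparing this with the first asymptotic relation of Theorem~\ref{Tauberian_theorem}, with the Laplace--Stieltjes transform $\psi$ playing the role of $\phi$ (valid because $M_{\beta}(t)\ge 0$), I read off $\rho=1-\beta$ and the constant---hence slowly varying---function $L(1/u)=\big(\sum_{j=1}^{k}j\lambda_j\big)^{\beta}t$. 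The Tauberian equivalence then delivers $1-F(x)\sim \frac{1}{\Gamma(1-\beta)}x^{-\beta}\big(\sum_{j=1}^{k}j\lambda_j\big)^{\beta}t$, which is the assertion.

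I do not expect a genuinely new obstacle beyond the $\alpha=1$ specialization; the only points needing care are the same as in Theorem~\ref{Asymptotic_GSTFCP}, namely verifying that the chain of $\sim$ relations composes correctly (replacing $1-e^{-uj}$ by $uj$ inside the $\beta$-th power and then linearizing the exponential), and confirming that $L$ is slowly varying, which is automatic here as it is constant in $u$.
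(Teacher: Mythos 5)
Your proposal is correct and matches the paper's intent exactly: the paper omits this proof, stating only that it is similar to that of Theorem \ref{Asymptotic_GSTFCP}, and your argument is precisely that proof specialized to $\alpha=1$ (with $E_{1,1}(z)=e^{z}$ and $\Gamma(2)=1$), carried through the same Laplace-transform-plus-Tauberian route. The self-contained verification you add, including the linearization $1-e^{-z}\sim z$ of the outer exponential and the observation that $L$ is constant hence slowly varying, is sound and fills in exactly the bookkeeping the paper leaves implicit.
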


\begin{proof}
The proof is similar to that of Theorem \ref{Asymptotic_GSTFCP} and hence omitted.
\end{proof}
Now we study the asymptotic tail probabilities of GSTFSP and GSFSP as shown below.
\begin{theorem}\label{Asymptotic_GSTFSP}
    For fixed $t > 0$, $\alpha\in (0,1)$ and $\beta\in (0,1)$, the tail probability of GSTFSP has the following asymptotic behavior:
    \begin{equation*}
        P\left(\mathcal{S}_{\beta}^{\alpha}(t)>x\right)\sim\frac{ \left(\sum_{j=1}^{k}j\lambda_j\right)^{\beta}t^{\alpha}}{\Gamma(1-\beta)\Gamma(\alpha+1)}\int_{0}^{\infty}(x+y)^{-\beta}\sum_{\Omega(k,y)}\prod_{j=1}^{k}\frac{\mu_j^{x_j}}{x_j!}(-\partial_T)^{z_k}E_{\alpha,1}(-T^\beta t^\alpha)dy\quad\text{as}\,\, x\to \infty.
    \end{equation*}
\end{theorem}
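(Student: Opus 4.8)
The plan is to condition on the subtracted process and thereby reduce the tail asymptotics of $\mathcal{S}_\beta^\alpha$ to the single-process tail asymptotics already established in Theorem \ref{Asymptotic_GSTFCP}. Writing $\mathcal{S}_\beta^\alpha(t)=M_{1\beta}^\alpha(t)-M_{2\beta}^\alpha(t)$ and using the independence of the two GSTFCPs, I would first condition on $M_{2\beta}^\alpha(t)=m$ to obtain
\[
P\left(\mathcal{S}_\beta^\alpha(t)>x\right)=\sum_{m=0}^{\infty}P\left(M_{1\beta}^\alpha(t)>x+m\right)P\left(M_{2\beta}^\alpha(t)=m\right).
\]

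Next, I would apply Theorem \ref{Asymptotic_GSTFCP} to the first process (with rates $\lambda_1,\dots,\lambda_k$), which gives, as $x\to\infty$ and hence $x+m\to\infty$,
\[
P\left(M_{1\beta}^\alpha(t)>x+m\right)\sim\frac{(x+m)^{-\beta}\left(\sum_{j=1}^{k}j\lambda_j\right)^{\beta}t^{\alpha}}{\Gamma(1-\beta)\Gamma(\alpha+1)}.
\]
Substituting this into the sum and factoring out the $m$-independent constant yields
\[
P\left(\mathcal{S}_\beta^\alpha(t)>x\right)\sim\frac{\left(\sum_{j=1}^{k}j\lambda_j\right)^{\beta}t^{\alpha}}{\Gamma(1-\beta)\Gamma(\alpha+1)}\sum_{m=0}^{\infty}(x+m)^{-\beta}P\left(M_{2\beta}^\alpha(t)=m\right),
\]
after which I would insert the GSTFCP p.m.f. from Section \ref{pmf_GSTFCP}, namely $P(M_{2\beta}^\alpha(t)=m)=\sum_{\Omega(k,m)}\prod_{j=1}^{k}\frac{\mu_j^{x_j}}{x_j!}(-\partial_T)^{z_k}E_{\alpha,1}(-T^\beta t^\alpha)$. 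Finally, since for large $x$ the weight $(x+m)^{-\beta}$ varies slowly in $m$, I would approximate the Riemann sum over $m$ by the corresponding integral over the continuous variable $y$, which produces precisely the stated integral form.

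The hard part will be justifying the two limit interchanges rather than the algebra. For the substitution of the per-term asymptotic inside the infinite sum, I would split the sum at a growing threshold: the finite head is handled term by term by Theorem \ref{Asymptotic_GSTFCP}, while the tail is controlled using $(x+m)^{-\beta}\le x^{-\beta}$ together with $\sum_{m}P(M_{2\beta}^\alpha(t)=m)=1$, so that a dominated-convergence argument forces the ratio of the sum to its leading term to $1$. For the sum-to-integral passage, I would invoke the slow variation of $(x+y)^{-\beta}$ in $y$ on scale $x$ to bound the difference between the sum and the integral by a lower-order term; verifying that this discretization error is genuinely negligible relative to the $x^{-\beta}$ leading order is the most delicate point, and it relies on the decay of the GSTFCP p.m.f. $P(M_{2\beta}^\alpha(t)=m)$ in $m$, which itself follows from Theorem \ref{Asymptotic_GSTFCP} applied to the second process.
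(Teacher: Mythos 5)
Your proposal follows essentially the same route as the paper: condition on $M_{2\beta}^{\alpha}(t)=m$, use independence, apply Theorem \ref{Asymptotic_GSTFCP} termwise, factor out the $m$-independent constant, and then pass from the sum over $m$ to the integral over $y$. The only difference is that you additionally sketch the dominated-convergence/threshold-splitting justification for interchanging the $x\to\infty$ asymptotic with the infinite sum and for the sum-to-integral approximation, steps the paper performs without comment (its proof in fact stops at the sum, with the integral appearing only in the theorem statement), so your version is, if anything, more complete.
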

\begin{proof}
    The tail probability of GSTFSP is given by
    \begin{align}
        P\left(\mathcal{S}_{\beta}^{\alpha}(t)>x\right)&=P\left(M_{1\beta}^{\alpha}(t)-M_{2\beta}^{\alpha}(t)>x\right)\nonumber\\
        &=\sum_{y=0}^{\infty}P\left(M_{1\beta}^{\alpha}(t)>x+y\big|M_{2\beta}^{\alpha}(t)=y\right)P\left(M_{2\beta}^{\alpha}(t)=y\right)\nonumber\\
        &=\sum_{y=0}^{\infty}P\left(M_{1\beta}^{\alpha}(t)>x+y\right)P\left(M_{2\beta}^{\alpha}(t)=y\right)
        \nonumber\\ 
        &\sim\sum_{y=0}^{\infty}\frac{(x+y)^{-\beta} \left(\sum_{j=1}^{k}j\lambda_j\right)^{\beta}t^{\alpha}}{\Gamma(1-\beta)\Gamma(\alpha+1)}\times\left(\sum_{\Omega(k,y)}\prod_{j=1}^{k}\frac{\mu_j^{x_j}}{x_j!}(-\partial_T)^{z_k}E_{\alpha,1}(-T^\beta t^\alpha)\right)\quad(\text{using Theorem}\,\, \ref{Asymptotic_GSTFCP})\nonumber\\
        &=\frac{ \left(\sum_{j=1}^{k}j\lambda_j\right)^{\beta}t^{\alpha}}{\Gamma(1-\beta)\Gamma(\alpha+1)}\sum_{y=0}^{\infty}(x+y)^{-\beta}\sum_{\Omega(k,y)}\prod_{j=1}^{k}\frac{\mu_j^{x_j}}{x_j!}(-\partial_T)^{z_k}E_{\alpha,1}(-T^\beta t^\alpha)\quad\text{as}\,\, x\to\infty.\nonumber
    \end{align}
\end{proof} 
\begin{theorem}\label{Asymptotic_GSFSP}
    For fixed $t > 0$ and $\beta\in (0,1)$, the tail probability of GSFSP has the following asymptotic behaviour:
    \begin{equation*}
        P\left(\mathcal{S}_{\beta}(t)>x\right)\sim\frac{ \left(\sum_{j=1}^{k}j\lambda_j\right)^{\beta}t}{\Gamma(1-\beta)}\int_{0}^{\infty}(x+y)^{-\beta}\sum_{\Omega(k,n)}\prod_{j=1}^{k}\frac{\mu_j^{x_j}}{x_j!}(-\partial_T)^{z_k}e^{-T^\beta t}dy\quad\text{as}\,\, x\to \infty.
    \end{equation*}
\end{theorem}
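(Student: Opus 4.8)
The plan is to follow the argument of Theorem~\ref{Asymptotic_GSTFSP} almost verbatim, specializing to the case $\alpha=1$ (so that $Y_1(t)=t$ and the inverse-stable time change disappears) and replacing the GSTFCP tail estimate by the GSFCP tail estimate established in Theorem~\ref{Asymptotic_GSFCP}. Since the GSFSP decomposes as the difference of two independent GSFCPs, $\mathcal{S}_{\beta}(t)=M_{1\beta}(t)-M_{2\beta}(t)$, the first step is to condition on the subtracted process and use independence to write
\begin{align*}
P\left(\mathcal{S}_{\beta}(t)>x\right)&=\sum_{y=0}^{\infty}P\left(M_{1\beta}(t)>x+y\mid M_{2\beta}(t)=y\right)P\left(M_{2\beta}(t)=y\right)\\
&=\sum_{y=0}^{\infty}P\left(M_{1\beta}(t)>x+y\right)P\left(M_{2\beta}(t)=y\right).
\end{align*}

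Next I would invoke Theorem~\ref{Asymptotic_GSFCP} applied to $\{M_{1\beta}(t)\}$ at level $x+y$, which gives $P(M_{1\beta}(t)>x+y)\sim(x+y)^{-\beta}\left(\sum_{j=1}^{k}j\lambda_j\right)^{\beta}t/\Gamma(1-\beta)$ as $x\to\infty$, and substitute the p.m.f. of the subtracted GSFCP $\{M_{2\beta}(t)\}$, obtained by setting $\alpha=1$ in the GSTFCP p.m.f. of Subsection~\ref{pmf_GSTFCP}, namely $P(M_{2\beta}(t)=y)=\sum_{\Omega(k,y)}\prod_{j=1}^{k}\frac{\mu_j^{x_j}}{x_j!}(-\partial_T)^{z_k}e^{-T^\beta t}$. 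Factoring out the $y$-independent constant $\left(\sum_{j=1}^{k}j\lambda_j\right)^{\beta}t/\Gamma(1-\beta)$ then produces the asserted expression, with the summation over $y$ written, for large $x$, as the integral representation in the statement.

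The main difficulty is analytic rather than algebraic: one must justify that the pointwise-in-$y$ equivalence $P(M_{1\beta}(t)>x+y)\sim(x+y)^{-\beta}(\cdot)$ may be carried inside the infinite sum over $y$ uniformly as $x\to\infty$, and that the resulting discrete sum may be replaced by the displayed integral. The uniformity rests on a dominated-convergence argument combining the decay of the GSFCP tail furnished by the Tauberian estimate of Theorem~\ref{Tauberian_theorem} with the summability of $P(M_{2\beta}(t)=y)$; here the slowly varying factor is constant, so the regular-variation hypotheses of Theorem~\ref{Tauberian_theorem} are satisfied term by term. Since the remaining manipulations are identical to those in the proof of Theorem~\ref{Asymptotic_GSTFSP}, I would present the argument briefly and refer back to that computation.
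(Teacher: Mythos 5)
Your proposal is correct and follows essentially the same route as the paper, which omits the details and points to the analogous computation: condition on $M_{2\beta}(t)=y$, use independence, insert the GSFCP tail asymptotic of Theorem \ref{Asymptotic_GSFCP} at level $x+y$ together with the $\alpha=1$ p.m.f. of the subtracted GSFCP, and factor out the $y$-independent constant. Your added remarks on carrying the asymptotic equivalence inside the sum and on the sum-to-integral replacement only make explicit steps the paper leaves implicit.
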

\begin{proof}
The proof is similar to that of Theorem \ref{Asymptotic_GSFCP} and hence omitted.
\end{proof}
Since closed form expressions for the asymptotic tail probabilities of GSTFSP and GSFSP are not available (see Theorems \ref{Asymptotic_GSTFSP} and \ref{Asymptotic_GSFSP}), we obtain explicit upper bounds for the same as shown below. 

\begin{theorem}\label{upper_bound_tail_GSTFSP}
    For fixed $t > 0$, $\alpha\in (0,1)$ and $\beta\in (\frac{1}{2},1)$, an upper bound for the tail probability of GSTFSP is given by
    \begin{equation*}
        P\left(\mathcal{S}_{\beta}^{\alpha}(t)>x\right)<\frac{t^{2\alpha}x^{1-2\beta}\Gamma(2\beta-1)}{\Gamma(1-\beta)\Gamma^2(\alpha+1)\Gamma(\beta)}\left(\sum_{j=1}^{k}j\lambda_j\right)^{\beta}\left(\sum_{j=1}^{k}j\mu_j\right)^{\beta} ,\quad\text{as}\,\, x\to \infty.
    \end{equation*}
\end{theorem}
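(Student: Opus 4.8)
The plan is to take as the starting point the asymptotic representation furnished by Theorem~\ref{Asymptotic_GSTFSP}. Recognising the inner sum over $\Omega(k,y)$ there as the GSTFCP p.m.f.\ $P(M_{2\beta}^{\alpha}(t)=y)$ of Section~\ref{pmf_GSTFCP}, that theorem reads
\[
P\left(\mathcal{S}_{\beta}^{\alpha}(t)>x\right)\sim\frac{\left(\sum_{j=1}^{k}j\lambda_j\right)^{\beta}t^{\alpha}}{\Gamma(1-\beta)\Gamma(\alpha+1)}\int_{0}^{\infty}(x+y)^{-\beta}\,P\left(M_{2\beta}^{\alpha}(t)=y\right)\,dy .
\]
Since the prefactor is already explicit, the entire task reduces to bounding the integral $\int_{0}^{\infty}(x+y)^{-\beta}P(M_{2\beta}^{\alpha}(t)=y)\,dy$ by a closed-form multiple of $x^{1-2\beta}$.

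To that end I would first bound each point mass by the corresponding tail, writing $P(M_{2\beta}^{\alpha}(t)=y)\le P(M_{2\beta}^{\alpha}(t)>y)$, and then apply Theorem~\ref{Asymptotic_GSTFCP} to the second process (that is, with $\lambda_j$ replaced by $\mu_j$), which gives $P(M_{2\beta}^{\alpha}(t)>y)\sim c_\mu\,y^{-\beta}$ with $c_\mu=\bigl(\sum_{j=1}^{k}j\mu_j\bigr)^{\beta}t^{\alpha}/(\Gamma(1-\beta)\Gamma(\alpha+1))$. Inserting this bound replaces the p.m.f.\ by $c_\mu y^{-\beta}$ and leaves the single elementary integral $\int_{0}^{\infty}(x+y)^{-\beta}y^{-\beta}\,dy$. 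The scaling substitution $y=xu$ turns this into
\[
\int_{0}^{\infty}(x+y)^{-\beta}y^{-\beta}\,dy=x^{1-2\beta}\int_{0}^{\infty}(1+u)^{-\beta}u^{-\beta}\,du=x^{1-2\beta}\,B(1-\beta,2\beta-1)=x^{1-2\beta}\,\frac{\Gamma(1-\beta)\Gamma(2\beta-1)}{\Gamma(\beta)} .
\]
Multiplying the $\lambda$-prefactor by $c_\mu$ and by this Beta value, one factor $\Gamma(1-\beta)$ cancels and the asserted expression $t^{2\alpha}x^{1-2\beta}\Gamma(2\beta-1)\bigl(\sum_{j=1}^{k} j\lambda_j\bigr)^{\beta}\bigl(\sum_{j=1}^{k} j\mu_j\bigr)^{\beta}/\bigl(\Gamma(1-\beta)\Gamma^{2}(\alpha+1)\Gamma(\beta)\bigr)$ emerges verbatim.

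The two points requiring care are the following. First, the Beta integral converges if and only if $1-\beta>0$ and $2\beta-1>0$, that is exactly for $\beta\in(\tfrac12,1)$; this is precisely where the hypothesis on $\beta$ is consumed, since outside this range the integrand is non-integrable at $0$ or at $\infty$. Second, and more substantively, I expect the main obstacle to be justifying the replacement of the p.m.f.\ by the tail rigorously enough to upgrade the asymptotic ``$\sim$'' to the strict inequality ``$<$'': the estimate $P(M_{2\beta}^{\alpha}(t)=y)\le c_\mu y^{-\beta}$ follows from Theorem~\ref{Asymptotic_GSTFCP} only for large $y$, so one must separately absorb the finitely many small-$y$ terms (which contribute only at the faster order $x^{-\beta}$) and then argue that the resulting inequality is strict for all sufficiently large $x$.
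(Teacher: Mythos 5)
Your proposal is correct and lands on exactly the same core computation as the paper: both reduce the problem to $c_\lambda c_\mu\sum_{y}(x+y)^{-\beta}y^{-\beta}$ with $c_\lambda=\bigl(\sum_j j\lambda_j\bigr)^{\beta}t^{\alpha}/(\Gamma(1-\beta)\Gamma(\alpha+1))$ and $c_\mu$ its $\mu$-analogue, and then evaluate $\int_0^\infty(x+y)^{-\beta}y^{-\beta}\,dy=x^{1-2\beta}\Gamma(1-\beta)\Gamma(2\beta-1)/\Gamma(\beta)$, which is precisely where the hypothesis $\beta\in(\tfrac12,1)$ is consumed. The only real difference is how the product of two tails is manufactured. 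The paper writes $P(\mathcal{S}_{\beta}^{\alpha}(t)>x)<\sum_{y\ge 0}P(M_{1\beta}^{\alpha}(t)>x+y)\,P(M_{2\beta}^{\alpha}(t)>y)$ directly, by conditioning on the nested (non-disjoint) events $\{M_{2\beta}^{\alpha}(t)>y\}$ and invoking independence; you instead start from Theorem \ref{Asymptotic_GSTFSP} and upgrade the p.m.f.\ $P(M_{2\beta}^{\alpha}(t)=y)$ to the tail $P(M_{2\beta}^{\alpha}(t)>y)$. These two devices are equally (un)rigorous: the paper's ``conditioning'' over overlapping events is not a genuine probability decomposition, and your p.m.f.-to-tail bound is only guaranteed termwise for large $y$; both arguments then substitute the Tauberian asymptotic of Theorem \ref{Asymptotic_GSTFCP} termwise for every $y\ge 0$ (including $y=0$, where $y^{-\beta}$ is singular) before replacing the sum by the improper integral. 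The caveats you flag at the end --- uniformity of the asymptotics, the finitely many small-$y$ terms, and upgrading ``$\sim$'' to a strict ``$<$'' for large $x$ --- are exactly the points the published proof silently glosses over, so your route is no weaker than the paper's and is arguably more transparent, since it reuses Theorem \ref{Asymptotic_GSTFSP} rather than repeating its derivation.
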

\begin{proof}
    The tail probability of GSTFSP is given by
    \begin{align}
        P\left(\mathcal{S}_{\beta}^{\alpha}(t)>x\right)&=P\left(M_{1\beta}^{\alpha}(t)-M_{2\beta}^{\alpha}(t)>x\right)\nonumber\\
        &<\sum_{y=0}^{\infty}P\left(M_{1\beta}^{\alpha}(t)-M_{2\beta}^{\alpha}(t)>x\big|M_{2\beta}^{\alpha}(t)>y\right)P\left(M_{2\beta}^{\alpha}(t)>y\right)\nonumber\\
        &<\sum_{y=0}^{\infty}P\left(M_{1\beta}^{\alpha}(t)>x+y\big|M_{2\beta}^{\alpha}(t)>y\right)P\left(M_{2\beta}^{\alpha}(t)>y\right)\nonumber\\
        &=\sum_{y=0}^{\infty}P\left(M_{1\beta}^{\alpha}(t)>x+y\right)P\left(M_{2\beta}^{\alpha}(t)>y\right)\nonumber\\
        &\sim\sum_{y=0}^{\infty}\frac{(x+y)^{-\beta} \left(\sum_{j=1}^{k}j\lambda_j\right)^{\beta}t^{\alpha}}{\Gamma(1-\beta)\Gamma(\alpha+1)}\times\frac{y^{-\beta} \left(\sum_{j=1}^{k}j\mu_j\right)^{\beta}t^{\alpha}}{\Gamma(1-\beta)\Gamma(\alpha+1)}\quad\text{as}\,\, x\to\infty\quad(\text{using Theorem}\,\, \ref{Asymptotic_GSTFCP})\nonumber\\
        &=\frac{\left(\sum_{j=1}^{k}j\lambda_j\right)^{\beta}\left(\sum_{j=1}^{k}j\mu_j\right)^{\beta}t^{2\alpha}}{\Gamma^2(1-\beta)\Gamma^2(\alpha+1)}\sum_{y=0}^{\infty}(x+y)^{-\beta}y^{-\beta}\nonumber.
    \end{align}
    Note that $\sum_{y=0}^{\infty}(x+y)^{-\beta}y^{-\beta}<\sum_{y=0}^{\infty} y^{-2\beta}$ which converges for $\beta>1/2$. Moreover, by approximating the sum with an improper integral, we get
    \begin{align}
        P\left(\mathcal{S}_{\beta}^{\alpha}(t)>x\right)&<\frac{\left(\sum_{j=1}^{k}j\lambda_j\right)^{\beta}\left(\sum_{j=1}^{k}j\mu_j\right)^{\beta}t^{2\alpha}}{\Gamma^2(1-\beta)\Gamma^2(\alpha+1)}\int_{0}^{\infty}(x+y)^{-\beta}y^{-\beta}dy\nonumber\\
        &=\frac{\left(\sum_{j=1}^{k}j\lambda_j\right)^{\beta}\left(\sum_{j=1}^{k}j\mu_j\right)^{\beta}t^{2\alpha}}{\Gamma^2(1-\beta)\Gamma^2(\alpha+1)}\times\frac{x^{1-2\beta}\Gamma(1-\beta)\Gamma(2\beta-1)}{\Gamma(\beta)}\nonumber\\
        &=\frac{t^{2\alpha}x^{1-2\beta}\Gamma(2\beta-1)}{\Gamma(1-\beta)\Gamma^2(\alpha+1)\Gamma(\beta)}\left(\sum_{j=1}^{k}j\lambda_j\right)^{\beta}\left(\sum_{j=1}^{k}j\mu_j\right)^{\beta} \quad\text{as}\,\, x\to \infty.\nonumber
    \end{align}
\end{proof}
\begin{theorem}
    For fixed $t > 0$ and $\beta\in (\frac{1}{2},1)$, an upper bound for the tail probability of GSFSP is given by
    \begin{equation*}
        P\left(\mathcal{S}_{\beta}(t)>x\right)<\frac{t^{2}x^{1-2\beta}\Gamma(2\beta-1)}{\Gamma(1-\beta)\Gamma(\beta)}\left(\sum_{j=1}^{k}j\lambda_j\right)^{\beta}\left(\sum_{j=1}^{k}j\mu_j\right)^{\beta} \quad\text{as}\,\,\ x\to \infty.
    \end{equation*}
\end{theorem}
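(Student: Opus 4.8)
The plan is to mirror the argument of Theorem \ref{upper_bound_tail_GSTFSP} in the case $\alpha=1$, since GSFSP is exactly GSTFSP with $\alpha=1$ (recall $Y_1(t)=t$). First I would write the tail as $P(\mathcal{S}_{\beta}(t)>x)=P(M_{1\beta}(t)-M_{2\beta}(t)>x)$ and condition on the event $\{M_{2\beta}(t)>y\}$, using the independence of the two GSFCPs $\{M_{1\beta}(t)\}_{t\ge 0}$ and $\{M_{2\beta}(t)\}_{t\ge 0}$ to obtain the upper bound
\begin{equation*}
P(\mathcal{S}_{\beta}(t)>x)<\sum_{y=0}^{\infty}P(M_{1\beta}(t)>x+y)\,P(M_{2\beta}(t)>y),
\end{equation*}
exactly as in the proof of Theorem \ref{upper_bound_tail_GSTFSP}.

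Next I would invoke the asymptotic tail behaviour of GSFCP from Theorem \ref{Asymptotic_GSFCP} for both factors. Each factor then contributes a single power of $t$ (rather than $t^{\alpha}/\Gamma(\alpha+1)$, which is precisely the $\alpha=1$ specialization of Theorem \ref{Asymptotic_GSTFCP}), so that, as $x\to\infty$,
\begin{equation*}
P(\mathcal{S}_{\beta}(t)>x)\sim\frac{t^{2}}{\Gamma^{2}(1-\beta)}\left(\sum_{j=1}^{k}j\lambda_j\right)^{\beta}\left(\sum_{j=1}^{k}j\mu_j\right)^{\beta}\sum_{y=0}^{\infty}(x+y)^{-\beta}y^{-\beta}.
\end{equation*}

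The crux is to control the series $\sum_{y=0}^{\infty}(x+y)^{-\beta}y^{-\beta}$. Since $(x+y)^{-\beta}y^{-\beta}<y^{-2\beta}$, the series is dominated by the $p$-series $\sum_{y=1}^{\infty}y^{-2\beta}$, which converges precisely when $\beta>1/2$, exactly the hypothesis $\beta\in(\tfrac12,1)$. I would then bound the sum from above by the improper integral and evaluate the latter as a Beta integral,
\begin{equation*}
\int_{0}^{\infty}(x+y)^{-\beta}y^{-\beta}\,dy=x^{1-2\beta}B(1-\beta,2\beta-1)=x^{1-2\beta}\,\frac{\Gamma(1-\beta)\Gamma(2\beta-1)}{\Gamma(\beta)},
\end{equation*}
which, after cancelling one factor of $\Gamma(1-\beta)$, yields the claimed bound. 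The only step requiring some care is the passage from the discrete sum to the integral together with the Beta-function evaluation; but since the argument is structurally identical to the already-established Theorem \ref{upper_bound_tail_GSTFSP}, I expect no genuine obstacle beyond this bookkeeping.
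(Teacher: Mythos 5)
Your proposal is correct and is exactly the argument the paper intends: the paper omits this proof, stating only that it is "similar to that of Theorem \ref{upper_bound_tail_GSTFSP}", and your specialization to $\alpha=1$ (with $t^{2\alpha}/\Gamma^2(\alpha+1)$ becoming $t^2$) together with the Beta-integral evaluation reproduces that argument faithfully.
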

\begin{proof}
The proof is similar to that of Theorem \ref{upper_bound_tail_GSTFSP} and hence omitted.
\end{proof}


\section{Limiting Results and Infinite Divisbility}\label{sec 9}
In this section, we obtain the asymptotic distribution, law of iterated logarithms and infinite divisibility of GSTFSP, GSTFCP and related processes. We start with the limiting results for these processes. 

\begin{theorem}\label{GSTFSP.limit}
The GSTFSP satisfies the following limiting result :
\begin{equation*}
\lim_{t\rightarrow\infty}\frac{\mathcal{S}_{\beta}^{\alpha}(t)}{t^{\alpha/\beta}} \overset{d}{=} \left(Y_\alpha(1)\right)^{1/\beta}D_\beta(1)\sum_{j=1}^{k}j\left(\lambda_j-\mu_j\right).
\end{equation*}
\end{theorem}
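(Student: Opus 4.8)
The plan is to exploit the subordination structure $\mathcal{S}_{\beta}^{\alpha}(t)=\mathcal{S}(D_\beta(Y_\alpha(t)))$ together with the self-similarity of the two subordinators and a strong law of large numbers for the L\'evy process $\mathcal{S}$. Writing $W_t:=D_\beta(Y_\alpha(t))$, I would first record the exact distributional identity
\[
\frac{W_t}{t^{\alpha/\beta}} \overset{d}{=} (Y_\alpha(1))^{1/\beta}\,D_\beta(1), \qquad \text{for every } t>0,
\]
with $D_\beta(1)$ taken independent of $Y_\alpha(1)$. This follows in two steps. Since $D_\beta$ is $1/\beta$-self-similar and independent of $Y_\alpha$, conditioning on $Y_\alpha(t)$ and comparing Laplace transforms (both reduce to $\mathbb{E}(e^{-\xi^\beta Y_\alpha(t)})$ via $\mathbb{E}(e^{-sD_\beta(1)})=e^{-s^\beta}$) gives $D_\beta(Y_\alpha(t))\overset{d}{=}(Y_\alpha(t))^{1/\beta}D_\beta(1)$; and the $\alpha$-self-similarity of the inverse stable subordinator gives $Y_\alpha(t)\overset{d}{=}t^\alpha Y_\alpha(1)$, hence $(Y_\alpha(t))^{1/\beta}\overset{d}{=}t^{\alpha/\beta}(Y_\alpha(1))^{1/\beta}$.

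Next I would factor the quantity of interest as
\[
\frac{\mathcal{S}_{\beta}^{\alpha}(t)}{t^{\alpha/\beta}} = \frac{W_t}{t^{\alpha/\beta}}\cdot\frac{\mathcal{S}(W_t)}{W_t}.
\]
By the identity above the first factor has the fixed law $(Y_\alpha(1))^{1/\beta}D_\beta(1)$ for all $t$, so it converges in distribution (trivially) to that limit. For the second factor I would use that $\mathcal{S}(t)=M_1(t)-M_2(t)$ is a L\'evy process with finite mean $\mathbb{E}(\mathcal{S}(1))=\sum_{j=1}^{k}j(\lambda_j-\mu_j)$, so the strong law of large numbers for L\'evy processes yields $\mathcal{S}(w)/w\to\sum_{j=1}^{k}j(\lambda_j-\mu_j)$ almost surely as $w\to\infty$. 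Since $W_t\to\infty$ almost surely as $t\to\infty$ (the inverse stable subordinator diverges and $D_\beta$ is a subordinator) and $\mathcal{S}$ is independent of $W_t$, composing the two almost-sure statements gives $\mathcal{S}(W_t)/W_t\to\sum_{j=1}^{k}j(\lambda_j-\mu_j)$ almost surely, and in particular in probability.

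Finally I would invoke Slutsky's theorem: with $U_t:=W_t/t^{\alpha/\beta}\overset{d}{\to}(Y_\alpha(1))^{1/\beta}D_\beta(1)$ and $V_t:=\mathcal{S}(W_t)/W_t\overset{P}{\to}\sum_{j=1}^{k}j(\lambda_j-\mu_j)$, the product $U_tV_t$ converges in distribution to $(Y_\alpha(1))^{1/\beta}D_\beta(1)\sum_{j=1}^{k}j(\lambda_j-\mu_j)$, the asserted limit. The point needing the most care is that $U_t$ and $V_t$ are \emph{dependent}, both being built from the single random clock $W_t$, so one cannot merely multiply marginal limits; the rescue is that $V_t$ converges to a deterministic constant, in which case the Slutsky conclusion holds irrespective of the dependence between the factors. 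A secondary subtlety is the almost-sure composition in the second factor, which relies on the independence of $\mathcal{S}$ from $W_t$ so that, on the product space, the SLLN event for $\mathcal{S}$ and the divergence event for $W_t$ both have full probability simultaneously.
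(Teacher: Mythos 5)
Your proposal is correct and follows essentially the same route as the paper: both arguments combine the self-similarity identity $D_\beta(Y_\alpha(t))\overset{d}{=}t^{\alpha/\beta}(Y_\alpha(1))^{1/\beta}D_\beta(1)$ with the strong law of large numbers for the underlying GSP (via the SLLN for the two constituent GCPs as subordinators) and then pass to the limit in the product decomposition. Your version is in fact slightly more careful than the paper's, since you make the Slutsky step and the role of the constant limit of $\mathcal{S}(W_t)/W_t$ explicit rather than manipulating limits under $\overset{d}{=}$ informally.
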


\begin{proof}
By Definition \eqref{definition.GSTFSP}, we have 
\begin{equation*}
    \mathcal{S}_{\beta}^{\alpha}(t)\overset{d}{=}\mathcal{S}\left(D_\beta\left(Y_\alpha(t)\right)\right)=M_1(D_\beta(Y_\alpha(t)))-M_2(D_\beta(Y_\alpha(t))),
\end{equation*} 
where $\{\mathcal{S}(t)\}_{t\geq 0}$ is a GSP. Let $\mathcal{S}(t)=M_1(t)-M_2(t)$ where $\{M_1(t)\}_{t\geq 0}$ and $\{M_2(t)\}_{t\geq 0}$ are two independent GCPs with rates $\lambda_1,\lambda_2,\dots,\lambda_k$ and $\mu_1,\mu_2,\dots,\mu_k$ respectively.
By the Strong Law of Large Numbers for subordinators  (see \citet{Bertoin2004}, p. 92), we have
\begin{equation}
    \lim_{t\rightarrow \infty}\frac{M_1(t)}{t}=\sum_{j=1}^{k}j\lambda_j\quad\text{a.s.},\qquad \lim_{t\rightarrow \infty}\frac{M_2(t)}{t}=\sum_{j=1}^{k}j\mu_j\quad\text{a.s.} \label{GCP/t_limit}
\end{equation}
 It follows from \eqref{GCP/t_limit} that
\begin{equation}
    \lim_{t\rightarrow \infty}\frac{\mathcal{S}(t)}{t}=\sum_{j=1}^{k}j\left(\lambda_j-\mu_j\right)\quad\text{a.s.}\implies\lim_{t\rightarrow \infty}\frac{\mathcal{S}(t)}{t}\overset{d}{=}\sum_{j=1}^{k}j\left(\lambda_j-\mu_j\right).\label{GSPlimit}
\end{equation}
Now using the self-similarity properties of $\{D_\beta(t)\}_{t\geq 0}$ and $\{Y_\alpha(t)\}_{t \geq 0}$ in the definition of GSTFSP, we have
\begin{equation}\label{GSTFSP_self-similarity} \mathcal{S}_{\beta}^\alpha(t)\overset{d}{=}\mathcal{S}\left(D_\beta\left(Y_\alpha(t)\right)\right)\overset{d}{=}\mathcal{S}\left(D_\beta\left(t^\alpha Y_\alpha(1)\right)\right)\overset{d}{=}\mathcal{S}\left(t^{\alpha/\beta}\left(Y_\alpha(1)\right)^{1/\beta}D_\beta(1)\right).
\end{equation}
Therefore
\begin{align}
    \lim_{t\rightarrow \infty}\frac{\mathcal{S}_{\beta}^\alpha(t)}{t^{\alpha/\beta}}&\overset{d}{=}\lim_{t\rightarrow \infty}\frac{\mathcal{S}\left(t^{\alpha/\beta}\left(Y_\alpha(1)\right)^{1/\beta}D_\beta(1)\right)}{t^{\alpha/{\beta}}}\qquad(\text{using}~\eqref{GSTFSP_self-similarity})\nonumber\\
    &\overset{d}{=}\left(Y_\alpha(1)\right)^{1/\beta}D_\beta(1)\lim_{t\rightarrow \infty}\frac{\mathcal{S}\left(t^{\alpha/\beta}\left(Y_\alpha(1)\right)^{1/\beta}D_\beta(1)\right)}{t^{\alpha/\beta}\left(Y_\alpha(1)\right)^{1/\beta}D_\beta(1)}\nonumber\\
    &\overset{d}{=}\left(Y_\alpha(1)\right)^{1/\beta}D_\beta(1)\sum_{j=1}^{k}j\left(\lambda_j-\mu_j\right). \qquad(\text{using~ \eqref{GSPlimit}}) \nonumber
\end{align}
\end{proof}

The next result follows from Theorem \ref{GSTFSP.limit}.
\begin{proposition}\label{limit.GSFSP.GFSP}
For $\alpha=1$, GSTFSP reduces to GSFSP $\{\mathcal{S}_{\beta}(t)=\mathcal{S}\left(D_\beta(t)\right)\}_{t\geq 0}$ which satisfies 
\begin{equation*}
\lim_{t\rightarrow\infty}\frac{\mathcal{S}_{\beta}(t)}{t^{1/\beta}} = D_\beta(1)\sum_{j=1}^{k}j\left(\lambda_j-\mu_j\right)\quad\text{a.s.}.
\end{equation*}
Similarly for $\beta=1$, GSTFSP reduces to GFSP $\{\mathcal{S}^{\alpha}(t)=\mathcal{S}\left(Y_{\alpha}(t)\right)\}_{t\ge 0}$ which satisfies (see \citet{Kataria2022b})
\begin{equation*}
\lim_{t\rightarrow\infty}\frac{\mathcal{S}^{\alpha}(t)}{t^{\alpha}} = Y_{\alpha}(1)\sum_{j=1}^{k}j\left(\lambda_j-\mu_j\right)\quad\text{a.s.}.
\end{equation*}
\end{proposition}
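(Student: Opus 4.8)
The plan is to derive both limits as immediate specializations of Theorem \ref{GSTFSP.limit}, exploiting the degeneracies $Y_1(t)=t$ and $D_1(t)=t$ recorded in Remark \ref{moment non-existance_GSTFCP}. Since the proposition is flagged as following from Theorem \ref{GSTFSP.limit}, the bulk of the work is a careful substitution into the conclusion
\begin{equation*}
\lim_{t\rightarrow\infty}\frac{\mathcal{S}_{\beta}^{\alpha}(t)}{t^{\alpha/\beta}}\overset{d}{=}\left(Y_\alpha(1)\right)^{1/\beta}D_\beta(1)\sum_{j=1}^{k}j\left(\lambda_j-\mu_j\right),
\end{equation*}
tracking how the scaling exponent and the two self-similar prefactors collapse in each regime.

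First I would take $\alpha=1$. Then $Y_1(t)=t$, so GSTFSP collapses to $\mathcal{S}_\beta(t)=\mathcal{S}(D_\beta(t))$, the exponent $\alpha/\beta$ becomes $1/\beta$, and the prefactor $(Y_\alpha(1))^{1/\beta}$ becomes $(Y_1(1))^{1/\beta}=1$; substituting these into the display above leaves exactly $D_\beta(1)\sum_{j=1}^{k}j(\lambda_j-\mu_j)$. Symmetrically, taking $\beta=1$ gives $D_1(t)=t$, so GSTFSP collapses to $\mathcal{S}^\alpha(t)=\mathcal{S}(Y_\alpha(t))$, the exponent $\alpha/\beta$ becomes $\alpha$, the factor $D_\beta(1)$ becomes $D_1(1)=1$, and $(Y_\alpha(1))^{1/\beta}$ becomes $Y_\alpha(1)$; substitution then yields $Y_\alpha(1)\sum_{j=1}^{k}j(\lambda_j-\mu_j)$.

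For a self-contained argument I would replay the two-factor decomposition from the proof of Theorem \ref{GSTFSP.limit} in each degenerate case. For $\alpha=1$ I would write
\begin{equation*}
\frac{\mathcal{S}(D_\beta(t))}{t^{1/\beta}}=\frac{\mathcal{S}(D_\beta(t))}{D_\beta(t)}\cdot\frac{D_\beta(t)}{t^{1/\beta}},
\end{equation*}
note that $D_\beta(t)\to\infty$ forces the first factor to $\sum_{j=1}^{k}j(\lambda_j-\mu_j)$ via \eqref{GSPlimit}, and handle the second factor through the self-similarity $D_\beta(t)\overset{d}{=}t^{1/\beta}D_\beta(1)$ already invoked in \eqref{GSTFSP_self-similarity}. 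The identical decomposition with $Y_\alpha$ in place of $D_\beta$ and exponent $\alpha$ settles the $\beta=1$ case, whose limiting law is precisely the one recorded in \citet{Kataria2022b}.

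The one step requiring genuine care is the mode of convergence. The strong law \eqref{GSPlimit} supplies an honest almost-sure limit for the ratio $\mathcal{S}(s)/s$, but the self-similarity of the (inverse) subordinator is only a distributional identity, so the scaling factor converges in distribution rather than almost surely to $D_\beta(1)$ (respectively $Y_\alpha(1)$). Accordingly, the combined statements are to be read in the same sense as Theorem \ref{GSTFSP.limit}: the almost-sure behaviour is carried by the leading factor $\mathcal{S}(\cdot)/(\cdot)$, while the limiting variable $D_\beta(1)$ (respectively $Y_\alpha(1)$) encodes the distribution of the self-similar rescaling. Making this bookkeeping precise---stating the correct form of convergence for the product via a Slutsky-type argument---is the only non-mechanical part of the proof.
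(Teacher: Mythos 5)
Your proposal is correct and takes essentially the same route as the paper, whose entire argument for this proposition is the single sentence that it follows from Theorem \ref{GSTFSP.limit} via the substitutions $Y_1(t)=t$ and $D_1(t)=t$; your specialization of the exponent $\alpha/\beta$ and of the prefactors $(Y_\alpha(1))^{1/\beta}$ and $D_\beta(1)$ is exactly the intended computation. Your closing caveat about the mode of convergence is well taken and in fact sharper than the paper itself: Theorem \ref{GSTFSP.limit} is stated only as a distributional identity ($\overset{d}{=}$), since $D_\beta(t)/t^{1/\beta}$ (resp.\ $Y_\alpha(t)/t^{\alpha}$) equals $D_\beta(1)$ (resp.\ $Y_\alpha(1)$) in law for each fixed $t$ but does not converge almost surely, so the ``a.s.'' in the proposition should indeed be read in the distributional sense you describe.
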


Now we prove a version of the law of iterated logarithm for GSTFSP. The following result will be used (see Theorem 14 of \cite{Bertoin2004}, p. 92).

\begin{lemma}\label{lil.Subordinator}
Let $\{D_f(t)\}_{t\ge 0}$ be a L\'{e}vy subordinator whose Laplace exponent $f$ is a regularly varying function at $0+$ with index $0<\gamma <1$, that is, $\lim_{x\rightarrow 0+}f(\lambda x)/f(x)=\lambda^{\gamma},~\lambda>0$. If
\begin{equation*}
g(t)=\frac{\log\log t}{\phi(t^{-1}\log\log t)}, \quad t>e
\end{equation*}
where $\phi$ is the inverse of $f$, then
\begin{equation*}
\lim\inf_{t\rightarrow\infty}\frac{D_f(t)}{g(t)} = \gamma(1-\gamma)^{(1-\gamma)/\gamma}~~~~~\text{a.s.} 
\end{equation*}
\end{lemma}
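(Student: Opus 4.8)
The plan is to treat this as a large-deviation lower law of the iterated logarithm, using that the Laplace exponent $f$ controls the lower tail of the subordinator. First I would record the exponential Chebyshev bound: since $\mathbb{E}(e^{-\lambda D_f(t)})=e^{-tf(\lambda)}$, for every $a>0$ and $\lambda>0$,
\[
P(D_f(t)\le a)\le e^{\lambda a}\mathbb{E}(e^{-\lambda D_f(t)})=e^{\lambda a-tf(\lambda)}.
\]
Minimizing the right-hand side over $\lambda$ produces a Legendre-type rate. Because $f$ is regularly varying at $0+$ with index $\gamma$, one has $f(\lambda)\sim\lambda^{\gamma}\ell(\lambda)$ with $\ell$ slowly varying and, by the monotone density theorem (legitimate as $f$ is a Bernstein function, so $f'$ is completely monotone), $f'(\lambda)\sim\gamma f(\lambda)/\lambda$. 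The saddle point $f'(\lambda)=a/t$ then gives
\[
P(D_f(t)\le a)\le\exp\left(-(1-\gamma)\gamma^{\gamma/(1-\gamma)}t^{1/(1-\gamma)}a^{-\gamma/(1-\gamma)}(1+o(1))\right).
\]
Substituting $a=\kappa g(t)$ and using that $\phi$ (the inverse of $f$) is regularly varying at $0+$ with index $1/\gamma$, a direct computation, exact in the stable case $f(\lambda)=\lambda^{\gamma}$, $\phi(x)=x^{1/\gamma}$, $g(t)=t^{1/\gamma}(\log\log t)^{(\gamma-1)/\gamma}$, collapses the exponent to $(1-\gamma)\gamma^{\gamma/(1-\gamma)}\kappa^{-\gamma/(1-\gamma)}\log\log t$, so that
\[
P(D_f(t)\le\kappa g(t))\le(\log t)^{-(1-\gamma)\gamma^{\gamma/(1-\gamma)}\kappa^{-\gamma/(1-\gamma)}}.
\]

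With $c=\gamma(1-\gamma)^{(1-\gamma)/\gamma}$, an elementary calculation shows the exponent above exceeds $1$ exactly when $\kappa<c$. Hence the half $\liminf_{t\to\infty}D_f(t)/g(t)\ge c$ would follow from a convergent Borel--Cantelli argument: along the geometric subsequence $t_n=\theta^{n}$ with $\theta>1$ one gets $P(D_f(t_n)\le\kappa g(t_n))\le C(n\log\theta)^{-\varrho}$ with $\varrho>1$, a summable series, so $D_f(t_n)>\kappa g(t_n)$ for all large $n$ almost surely. Since $D_f$ is nondecreasing and $g$ is regularly varying (so $g(t_{n+1})/g(t_n)\to\theta^{1/\gamma}$, which is close to $1$ for $\theta$ near $1$), I would interpolate across each block $[t_n,t_{n+1}]$ to pass from the subsequence to all $t$, and then let $\kappa\uparrow c$ and $\theta\downarrow 1$.

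The matching inequality $\liminf_{t\to\infty}D_f(t)/g(t)\le c$ is the delicate half, and I expect it to be the main obstacle. Here I would exploit the independence and stationarity of the increments of $D_f$ to run a divergent (second) Borel--Cantelli argument along a fast-growing subsequence $t_n$ (for instance $t_n=n^{n}$), applied to the independent events $\{D_f(t_n)-D_f(t_{n-1})\le\kappa'g(t_n)\}$ for a fixed $\kappa'>c$. This requires a lower estimate on the lower-deviation probability matching the upper bound above (so that the corresponding series diverges), which one obtains by a change-of-measure (Esscher tilting) argument on the same saddle point. The genuine subtlety is discarding the accumulated mass $D_f(t_{n-1})$: choosing $t_n$ to grow fast enough makes $g(t_{n-1})/g(t_n)\to 0$ by regular variation, so on the events where the increments are small one has $D_f(t_n)\le D_f(t_{n-1})+\kappa'g(t_n)\le(\kappa'+\epsilon)g(t_n)$ infinitely often, yielding $\liminf D_f(t)/g(t)\le\kappa'$ and then $\le c$ as $\kappa'\downarrow c$.

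Beyond this, the remaining technical burden is bookkeeping the slowly varying factors uniformly (via Potter bounds and the uniform convergence theorem for regularly varying functions, together with the de Bruijn conjugate controlling $\phi$), which is precisely what reduces the general regularly varying $f$ to the transparent stable computation. For the application in this paper $f(s)=s^{\beta}$ is genuinely stable with $\phi(x)=x^{1/\beta}$, so all the asymptotics above hold with no slowly varying corrections; the full general argument is carried out in \citet{Bertoin2004}.
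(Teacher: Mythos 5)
The paper offers no proof of this lemma at all: it is imported verbatim as Theorem~14, p.~92 of \citet{Bertoin2004}, so there is no in-paper argument to compare against. Your sketch is, in outline, exactly the textbook proof of that theorem (Fristedt/Bertoin): the exponential Chebyshev bound $P(D_f(t)\le a)\le e^{\lambda a-tf(\lambda)}$, the saddle point $f'(\lambda)=a/t$, and the Legendre-type exponent $-(1-\gamma)\gamma^{\gamma/(1-\gamma)}t^{1/(1-\gamma)}a^{-\gamma/(1-\gamma)}$ are all correct, and your computation that this exponent exceeds $\log\log t$ precisely when $\kappa<\gamma(1-\gamma)^{(1-\gamma)/\gamma}$ after substituting $a=\kappa g(t)$ is the right way to see where the constant comes from. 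The convergent Borel--Cantelli half along $t_n=\theta^n$ with interpolation and $\theta\downarrow 1$ is sound.

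Two points in the second half are asserted rather than argued and are where the real work lives. First, the divergent Borel--Cantelli step needs a genuine \emph{lower} bound $P(D_f(t)\le a)\ge\exp\bigl(-(1+o(1))(1-\gamma)\gamma^{\gamma/(1-\gamma)}t^{1/(1-\gamma)}a^{-\gamma/(1-\gamma)}\bigr)$ matching the Chebyshev upper bound; naming ``Esscher tilting'' is the right idea but this is the technical heart of the proof, not bookkeeping. Second, on the events $\{D_f(t_n)-D_f(t_{n-1})\le\kappa' g(t_n)\}$ you conclude $D_f(t_n)\le(\kappa'+\epsilon)g(t_n)$, which silently requires $D_f(t_{n-1})\le\epsilon\,g(t_n)$ eventually a.s.; the fact that $g(t_{n-1})/g(t_n)\to 0$ does not by itself control the random quantity $D_f(t_{n-1})$. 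You need an additional first Borel--Cantelli estimate on the upper tail of $D_f(t_{n-1})$ along the fast subsequence (e.g.\ $P(D_f(t_{n-1})>\epsilon g(t_n))\lesssim t_{n-1}f(1/(\epsilon g(t_n)))$, summable for $t_n=n^n$). With those two steps filled in, the argument is complete and is the same route as the cited source; for the application in the paper ($f(s)=s^{\beta}$, $\phi(x)=x^{1/\beta}$) the slowly varying corrections indeed vanish, as you note.
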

For the $\beta$-stable subordinator $\{D_{\beta}(t)\}_{t\ge 0}$, the Laplace transform is $\mathbb{E}(e^{-sD_{\beta}(t)})=e^{-ts^{\beta}}$ so the Laplace exponent is $f(s)=s^{\beta}$ with inverse $\phi(s)=s^{1/\beta}$. Note that $f$ is a regularly varying function with index $0<\beta < 1$ since $\lim_{x\rightarrow 0+}f(\lambda s)/f(s)=\lambda^{\beta},~\lambda>0$. From Lemma \ref{lil.Subordinator}, if
\begin{equation*}
g(t)=\frac{\log\log t}{(t^{-1}\log\log t)^{1/\beta}}, \quad t>e   
\end{equation*}
then
\begin{equation*}
\lim\inf_{t\rightarrow\infty}\frac{D_\beta(t)}{g(t)} = \beta(1-\beta)^{(1-\beta)/\beta}~~~~~\text{a.s.} 
\end{equation*}
This leads to the next result.
\begin{theorem}\label{lil.GSTFSP}
The GSTFSP satisfies the following law of iterated logarithm:
\begin{equation*}
\lim\inf_{t\rightarrow\infty}\frac{\mathcal{S}_{\beta}^{\alpha}(t)}{g_{\alpha}(t)} \overset{d}{=} \sum_{j=1}^kj(\lambda_j-\mu_j)\beta(1-\beta)^{(1-\beta)/\beta}\left(Y_\alpha(1)\right)^{1/\beta}     
\end{equation*}
where
\begin{equation*}
g_{\alpha}(t)=g(t^{\alpha})=\frac{\log\log t^{\alpha}}{(t^{-\alpha}\log\log t^{\alpha})^{1/\beta}},\quad t>e^{1/\alpha}.
\end{equation*}
\end{theorem}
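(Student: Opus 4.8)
The plan is to exploit the subordination representation $\mathcal{S}_{\beta}^{\alpha}(t)=\mathcal{S}(D_\beta(Y_\alpha(t)))$ together with the three ingredients already in hand: the strong law \eqref{GSPlimit} for the GSP, the law of iterated logarithm for $\{D_\beta(t)\}_{t\ge 0}$ recorded just before the statement via Lemma \ref{lil.Subordinator}, and the self-similarity of the inverse stable subordinator $\{Y_\alpha(t)\}_{t\ge 0}$. First I would write, for $t>e^{1/\alpha}$ and using $g_\alpha(t)=g(t^\alpha)$,
\[
\frac{\mathcal{S}_{\beta}^{\alpha}(t)}{g_\alpha(t)}
=\frac{\mathcal{S}(D_\beta(Y_\alpha(t)))}{D_\beta(Y_\alpha(t))}\cdot\frac{D_\beta(Y_\alpha(t))}{g(Y_\alpha(t))}\cdot\frac{g(Y_\alpha(t))}{g(t^\alpha)}.
\]
Since $Y_\alpha(t)\to\infty$ a.s. and $D_\beta(s)\to\infty$ as $s\to\infty$, the inner argument $D_\beta(Y_\alpha(t))\to\infty$ a.s., so by \eqref{GSPlimit} the first factor tends a.s. to the constant $c:=\sum_{j=1}^{k}j(\lambda_j-\mu_j)$.

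For the second and third factors I would use the elementary identity $g(s)=s^{1/\beta}(\log\log s)^{1-1/\beta}$ and the independence of $\{D_\beta\}$ and $\{Y_\alpha\}$. Because $Y_\alpha$ has continuous paths increasing to $\infty$, its range eventually covers every large value, whence
\[
\liminf_{t\to\infty}\frac{D_\beta(Y_\alpha(t))}{g(Y_\alpha(t))}=\liminf_{s\to\infty}\frac{D_\beta(s)}{g(s)}=\beta(1-\beta)^{(1-\beta)/\beta}\quad\text{a.s.},
\]
which is exactly the LIL for $D_\beta$ stated after Lemma \ref{lil.Subordinator}. For the third factor the same identity gives
\[
\frac{g(Y_\alpha(t))}{g(t^\alpha)}=\left(\frac{Y_\alpha(t)}{t^\alpha}\right)^{1/\beta}\left(\frac{\log\log Y_\alpha(t)}{\log\log t^\alpha}\right)^{1-1/\beta},
\]
where the self-similarity $Y_\alpha(t)\overset{d}{=}t^\alpha Y_\alpha(1)$ identifies the first bracket with $(Y_\alpha(1))^{1/\beta}$ in distribution, while the loglog ratio tends to $1$ since $Y_\alpha(t)$ is of polynomial order $t^\alpha$; hence this factor converges in distribution to $(Y_\alpha(1))^{1/\beta}$.

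Assembling the a.s. constant $c$, the a.s. liminf $\beta(1-\beta)^{(1-\beta)/\beta}$, and the distributional scaling $(Y_\alpha(1))^{1/\beta}$ then yields
\[
\liminf_{t\to\infty}\frac{\mathcal{S}_{\beta}^{\alpha}(t)}{g_\alpha(t)}\overset{d}{=}\sum_{j=1}^{k}j(\lambda_j-\mu_j)\,\beta(1-\beta)^{(1-\beta)/\beta}\,(Y_\alpha(1))^{1/\beta}.
\]
The clean way to make the last step rigorous is to condition on the whole independent path $\{Y_\alpha(t)\}$: given it, $D_\beta$ remains a stable subordinator, so the SLLN and LIL factors become genuine a.s. statements, and the self-similarity of $Y_\alpha$ is then invoked so that unconditioning reinstates the random factor $(Y_\alpha(1))^{1/\beta}$. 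I expect this combination to be the main obstacle, since $Y_\alpha(t)/t^\alpha$ does not converge almost surely; the pathwise liminf coming from the $D_\beta$-LIL and the merely-in-distribution scaling factor cannot be multiplied naively, and (the sign of $c$ aside) one must argue that the oscillation of $g(Y_\alpha(t))/g(t^\alpha)$ does not disturb the liminf, which is precisely where the conditioning-on-$Y_\alpha$ argument does the real work.
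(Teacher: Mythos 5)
Your decomposition through $g(Y_\alpha(t))$ contains a gap that you yourself flag but do not close, and it is fatal to the argument as written. You need the identity
\begin{equation*}
\liminf_{t\to\infty}\Bigl(\frac{D_\beta(Y_\alpha(t))}{g(Y_\alpha(t))}\cdot\frac{g(Y_\alpha(t))}{g(t^\alpha)}\Bigr)
\;=\;\Bigl(\liminf_{t\to\infty}\frac{D_\beta(Y_\alpha(t))}{g(Y_\alpha(t))}\Bigr)\cdot (Y_\alpha(1))^{1/\beta},
\end{equation*}
but the second factor $g(Y_\alpha(t))/g(t^\alpha)$ does not converge (a.s. or otherwise) as $t\to\infty$; it only admits a distributional identification with $(Y_\alpha(1))^{1/\beta}$ \emph{separately at each fixed $t$}, via the marginal self-similarity $Y_\alpha(t)\overset{d}{=}t^\alpha Y_\alpha(1)$. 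A liminf of a product cannot be split into (liminf of one factor) times (a merely-in-distribution limit of the other): the random times along which $D_\beta(Y_\alpha(t))/g(Y_\alpha(t))$ approaches its liminf need not be times at which $g(Y_\alpha(t))/g(t^\alpha)$ is close to any particular value. Your proposed repair --- conditioning on the whole path of $Y_\alpha$ --- does not produce the factor $(Y_\alpha(1))^{1/\beta}$ either: given the path, $g(Y_\alpha(t))/g(t^\alpha)$ is a deterministic oscillating function, and Lemma \ref{lil.Subordinator} gives the liminf of $D_\beta(s)/g(s)$, not of $D_\beta(Y_\alpha(t))/g(t^\alpha)$ along the curve $s=Y_\alpha(t)$. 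So the "real work" you defer is precisely the step that is missing.

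The paper avoids this entirely by applying self-similarity \emph{before} taking the liminf: for each $t$, $\mathcal{S}_\beta^\alpha(t)\overset{d}{=}\mathcal{S}\bigl((Y_\alpha(1))^{1/\beta}D_\beta(t^\alpha)\bigr)$, so the random scaling factor becomes the single, time-independent variable $(Y_\alpha(1))^{1/\beta}$. It then pulls this constant-in-$t$ factor out of the liminf, applies the strong law \eqref{GSPlimit} to $\mathcal{S}(u)/u$ along $u=(Y_\alpha(1))^{1/\beta}D_\beta(t^\alpha)\to\infty$, and uses $\liminf_{t\to\infty}D_\beta(t^\alpha)/g(t^\alpha)=\beta(1-\beta)^{(1-\beta)/\beta}$ a.s.\ from Lemma \ref{lil.Subordinator}. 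Your first two factors (the SLLN ratio and the $D_\beta$-LIL ratio) are handled correctly and match the paper's ingredients; the difference is only in where the $Y_\alpha$-randomness is extracted, and that is exactly where your version breaks. If you restructure your argument to perform the distributional substitution $\mathcal{S}(D_\beta(Y_\alpha(t)))\overset{d}{=}\mathcal{S}((Y_\alpha(1))^{1/\beta}D_\beta(t^\alpha))$ first, the rest of your reasoning goes through and coincides with the paper's proof.
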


\begin{proof}
Using the self-similarity properties of $\{D_{\beta}(t)\}_{t\ge 0}$ and $\{Y_{\alpha}(t)\}_{t\ge 0}$, we have 
\begin{equation}\label{GSTFSP.self-similarity} \mathcal{S}_{\beta}^\alpha(t)\overset{d}{=}\mathcal{S}\left(D_\beta\left(Y_\alpha(t)\right)\right)\overset{d}{=}\mathcal{S}\left(D_\beta\left(t^\alpha Y_\alpha(1)\right)\right)\overset{d}{=}\mathcal{S}\left(\left(Y_\alpha(1)\right)^{1/\beta}D_\beta(t^{\alpha})\right).
\end{equation}
It follows that
\begin{align*}
\lim\inf_{t\rightarrow\infty}\frac{\mathcal{S}_{\beta}^{\alpha}(t)}{g_{\alpha}(t)} &\overset{d}{=} \lim\inf_{t\rightarrow\infty}\frac{\mathcal{S}\left(\left(Y_\alpha(1)\right)^{1/\beta}D_\beta(t^{\alpha})\right)}{g_{\alpha}(t)} \nonumber \\
&=\lim\inf_{t\rightarrow \infty}\frac{\mathcal{S}\left(\left(Y_\alpha(1)\right)^{1/\beta}D_\beta(t^{\alpha})\right)}{\left(Y_\alpha(1)\right)^{1/\beta}D_\beta(t^{\alpha})}\frac{\left(Y_\alpha(1)\right)^{1/\beta}D_\beta(t^{\alpha})}{g_{\alpha}(t)}\nonumber\\
&=\left(Y_\alpha(1)\right)^{1/\beta}\sum_{j=1}^kj(\lambda_j-\mu_j)\lim\inf_{t\rightarrow \infty}\frac{D_\beta(t^{\alpha})}{g_{\alpha}(t)} \nonumber \\
&=\sum_{j=1}^kj(\lambda_j-\mu_j)\beta(1-\beta)^{(1-\beta)/\beta}\left(Y_\alpha(1)\right)^{1/\beta}.
\end{align*}
\end{proof}

\begin{corollary}\label{lil.GSFSP}
Substituting $\alpha=1$ in GSTFSP, we obtain the law of iterated logarithm for GSFSP using Theorem \ref{lil.GSTFSP} as follows:
\begin{equation*}
\lim\inf_{t\rightarrow\infty}\frac{\mathcal{S}_{\beta}(t)}{g(t)} \overset{d}{=} \sum_{j=1}^kj(\lambda_j-\mu_j)\beta(1-\beta)^{(1-\beta)/\beta}   
\end{equation*}
where
\begin{equation*}
g(t)=\frac{\log\log t}{(t^{-1}\log\log t)^{1/\beta}},\quad t>e.
\end{equation*}
\end{corollary}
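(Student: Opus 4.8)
The plan is to obtain the statement as a direct specialization of Theorem \ref{lil.GSTFSP} by setting $\alpha = 1$, so the entire argument amounts to checking that each ingredient of that theorem collapses correctly under this substitution. First I would recall from the preliminaries and Remark \ref{moment non-existance_GSTFCP} that $Y_1(t) = t$, so the inverse stable subordinator degenerates to the identity map when $\alpha = 1$. Consequently $\mathcal{S}_\beta^1(t) = \mathcal{S}(D_\beta(Y_1(t))) = \mathcal{S}(D_\beta(t)) = \mathcal{S}_\beta(t)$, which is precisely the GSFSP, confirming that the process in the numerator specializes as claimed.

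Next I would verify that the normalizing function behaves correctly. Since $g_\alpha(t) = g(t^\alpha)$, putting $\alpha = 1$ gives $g_1(t) = g(t) = \frac{\log\log t}{(t^{-1}\log\log t)^{1/\beta}}$, and the admissibility constraint $t > e^{1/\alpha}$ reduces to $t > e$, matching the domain stated in the corollary. On the right-hand side of Theorem \ref{lil.GSTFSP}, the random factor $(Y_\alpha(1))^{1/\beta}$ becomes $(Y_1(1))^{1/\beta} = 1^{1/\beta} = 1$, so the limiting random variable reduces to the deterministic constant $\sum_{j=1}^k j(\lambda_j - \mu_j)\beta(1-\beta)^{(1-\beta)/\beta}$.

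Combining these three observations yields
\begin{equation*}
\lim\inf_{t\rightarrow\infty}\frac{\mathcal{S}_{\beta}(t)}{g(t)} \overset{d}{=} \sum_{j=1}^kj(\lambda_j-\mu_j)\beta(1-\beta)^{(1-\beta)/\beta},
\end{equation*}
which is the assertion. Alternatively, one could bypass Theorem \ref{lil.GSTFSP} and argue directly: the self-similarity identity \eqref{GSTFSP.self-similarity} at $\alpha=1$ gives $\mathcal{S}_\beta(t) \overset{d}{=} \mathcal{S}(D_\beta(t))$, and then the law of iterated logarithm for the $\beta$-stable subordinator (Lemma \ref{lil.Subordinator}, with $f(s)=s^\beta$) combined with the strong law \eqref{GSPlimit} for $\mathcal{S}$ reproduces the same constant. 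There is no substantive obstacle here; the only point requiring care is the degeneracy of $Y_\alpha(1)$ at $\alpha=1$, which renders the limiting object deterministic, so that the distributional identity $\overset{d}{=}$ could in fact be strengthened to an almost sure equality if one wished.
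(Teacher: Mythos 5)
Your proposal is correct and follows exactly the route the paper intends: specialize Theorem \ref{lil.GSTFSP} at $\alpha=1$, using $Y_1(t)=t$ so that $(Y_1(1))^{1/\beta}=1$, $g_1(t)=g(t)$, and $\mathcal{S}_\beta^1(t)=\mathcal{S}(D_\beta(t))=\mathcal{S}_\beta(t)$. Your closing observation that the degeneracy of $Y_1(1)$ makes the limit deterministic (so the identity holds almost surely, not merely in distribution) is a valid and worthwhile refinement consistent with the paper's treatment of the $\alpha=1$ case elsewhere.
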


\begin{remark}
The law of iterated logarithms for GSTFCP and GSFCP can be easily obtained from Theorem \ref{lil.GSTFSP} and Corollary \ref{lil.GSFSP} respectively by substituting $\lambda_j=0$ or $\mu_j=0$ for $j=1,\ldots,k$. 
\end{remark}
 
Next we provide some infinite divisibility results for GSTFSP, GSFCP and related processes. 
\begin{theorem}\label{id.GSTFSP}
The one-dimensional distributions of GSTFSP are not infinitely divisible.
\end{theorem}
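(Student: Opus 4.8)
My plan rests on the characterization of infinitely divisible lattice laws: a $\mathbb{Z}$-valued random variable is infinitely divisible if and only if it is a (possibly shifted) compound Poisson law, equivalently if and only if $\log\mathcal{G}(u)$ admits a Lévy–Khinchine expansion $\log\mathcal{G}(u)=\sum_{m\neq0}\nu_m(u^m-1)$ with non-negative canonical coefficients $\nu_m$. I would stress at the outset that the usual cumulant test is unavailable here: for $\beta\in(0,1)$ the integer moments of $\mathcal{S}_\beta^\alpha(t)$ do not exist, so the whole argument must be run on the p.g.f.\ $\mathcal{G}_{\mathcal{S}^\alpha_\beta}(u,t)$ near $|u|=1$, where all quantities are finite.

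Next I would reduce the statement to the counting process. In the factorization $\mathcal{G}_{\mathcal{S}^\alpha_\beta}(u,t)=\mathcal{G}_1(u,t)\,\mathcal{G}_2(u,t)$ of Remark \ref{p.g.f._GSTFSP}, the factor $\mathcal{G}_1(u,t)=E_{\alpha,1}\big(-(\sum_{j=1}^{k}(1-u^j)\lambda_j)^\beta t^\alpha\big)$ is a power series in $u$ alone, while the factor coming from $-M_{2\beta}^{\alpha}(t)$ is a power series in $u^{-1}$ alone. Hence $\log\mathcal{G}_{\mathcal{S}^\alpha_\beta}$ splits into a part carrying only positive powers and a part carrying only negative powers, and by uniqueness of the Laurent coefficients $\nu_m=[u^m]\log\mathcal{G}_1$ for $m\ge1$ and $\nu_m=[u^m]\log\mathcal{G}_2$ for $m\le-1$. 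Consequently $\mathcal{S}_\beta^\alpha(t)$ is infinitely divisible if and only if both constituent GSTFCPs are, so it suffices to prove that the $\mathbb{Z}_+$-valued law of the GSTFCP $M_\beta^\alpha(t)$ is not infinitely divisible.

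For the GSTFCP I would compute the canonical coefficients directly from $\log\mathcal{G}_1(u,t)=\log E_{\alpha,1}(-t^\alpha A(u)^\beta)$ with $A(u)=\sum_{j=1}^{k}(1-u^j)\lambda_j$, and aim to exhibit one index $m\ge1$ with $\nu_m=[u^m]\log\mathcal{G}_1<0$. It is illuminating to read this through the subordination $M_\beta^\alpha(t)=M(D_\beta(Y_\alpha(t)))$: with directing time $\Theta=D_\beta(Y_\alpha(t))$ one has $\mathcal{L}_\Theta(s)=E_{\alpha,1}(-t^\alpha s^\beta)$, so a negative $\nu_m$ corresponds to the failure of $-\log\mathcal{L}_\Theta$ to be a Bernstein function, i.e.\ to $\Theta$ not being infinitely divisible. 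This is where $\alpha<1$ is essential: at $\alpha=1$ the exponent is $t\,s^\beta$ (the $\beta$-stable subordinator, a Bernstein function), consistent with the fact that for $\alpha=1$ the process is a genuine Lévy process and hence infinitely divisible; the obstruction can only come from $\alpha\in(0,1)$.

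The main obstacle is precisely this sign analysis. Writing $\log E_{\alpha,1}(-z)=-z/\Gamma(1+\alpha)+\tfrac12\mathbb{V}(Y_\alpha(1))\,z^2+\cdots$ and substituting $z=t^\alpha A(u)^\beta$, one checks that the first canonical coefficient is automatically non-negative and the second is controlled by $\mathbb{V}(Y_\alpha(1))\ge0$, so neither order yields a contradiction. The negative coefficient must therefore sit at a higher order, where the interplay between the $\beta$-power in $A(u)^\beta$ and the higher Mittag–Leffler coefficients makes the sign genuinely delicate and dependent on $\alpha$ and $\beta$. Locating a single $m$ with $\nu_m<0$ — equivalently, pinpointing where complete monotonicity of $s\mapsto\frac{d}{ds}\log E_{\alpha,1}(-t^\alpha s^\beta)$ first fails — is the crux of the argument. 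Once it is secured for the GSTFCP, the non-infinite-divisibility of GSTFSP follows from the reduction above, and the stated special cases (STFSP, SFSP, FSP, GFSP, and so on) follow by the substitutions recorded in Section \ref{special cases_GSTFCP}.
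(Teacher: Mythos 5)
Your reduction to the GSTFCP via the Laurent splitting of $\log\mathcal{G}_{\mathcal{S}^\alpha_\beta}$ is sound in principle (for a $\mathbb{Z}$-valued law the canonical measure must be non-negative on both the positive and the negative integers separately, so the Skellam difference is infinitely divisible iff each factor is), but the proof then stops exactly where it would have to start. You never exhibit an index $m$ with $\nu_m=[u^m]\log E_{\alpha,1}\bigl(-t^{\alpha}A(u)^{\beta}\bigr)<0$: you verify that the first two orders give no contradiction, assert that the obstruction ``must therefore sit at a higher order,'' and defer the sign analysis as ``the crux of the argument.'' Without that single negative coefficient the argument proves nothing — indeed, as you yourself note, at $\alpha=1$ all coefficients are non-negative, so the claim is genuinely delicate and cannot be waved through. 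The heuristic identification with the failure of $s\mapsto-\log E_{\alpha,1}(-t^{\alpha}s^{\beta})$ to be a Bernstein function also does not close the gap in the direction you need: non-infinite-divisibility of the directing time $\Theta=D_\beta(Y_\alpha(t))$ does not by itself imply non-infinite-divisibility of the subordinated count $M(\Theta)$, so this cannot substitute for the coefficient computation.

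For comparison, the paper avoids the p.g.f.\ entirely: it uses the self-similarity scaling of Theorem \ref{GSTFSP.limit} to write $\mathcal{S}^{\alpha}_{\beta}(t)/t^{\alpha/\beta}$ as converging in distribution to $\bigl(Y_\alpha(1)\bigr)^{1/\beta}D_\beta(1)\sum_{j}j(\lambda_j-\mu_j)$, invokes closure of infinite divisibility under scaling and under distributional limits (Propositions 2.1 and 2.2 of Steutel--van Harn), and then argues that the limit is not infinitely divisible because $Y_\alpha(1)$ is not (citing Vellaisamy--Kumar). If you want to salvage your route, you would need to actually carry out the expansion of $\log E_{\alpha,1}(-z)$ to third or fourth order in $z=t^{\alpha}A(u)^{\beta}$ and isolate a coefficient whose sign is negative for some $\alpha,\beta\in(0,1)$ and all admissible rates — or else switch to the paper's limiting argument, which sidesteps the moment problem you correctly identified at the outset.
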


\begin{proof}
Consider the GSTFSP $\{\mathcal{S}_{\beta}(t)\}_{t\ge 0}$. From Theorem \ref{GSTFSP.limit}, we have
\begin{equation}\label{limit.GSTFSP}
\lim_{t\rightarrow\infty}\frac{\mathcal{S}_{\beta}^{\alpha}(t)}{t^{\alpha/\beta}} = \left(Y_\alpha(1)\right)^{1/\beta}D_\beta(1)\sum_{j=1}^{k}j\left(\lambda_j-\mu_j\right)~~~~~\text{a.s.}\implies\lim_{t\rightarrow\infty}\frac{\mathcal{S}_{\beta}^{\alpha}(t)}{t^{\alpha/\beta}}\overset{d}{=} \left(Y_\alpha(1)\right)^{1/\beta}D_\beta(1)\sum_{j=1}^{k}j\left(\lambda_j-\mu_j\right). 
\end{equation}
Assume that $\mathcal{S}_{\beta}^{\alpha}(t)$ is infinitely divisible which implies that $\frac{\mathcal{S}_{\beta}^{\alpha}(t)}{t^{\alpha/\beta}}$ is infinitely divisible (see Proposition 2.1 of \citet{Steutel2004}). Since the limit of a sequence of infinitely divisible r.v.s is also infinitely divisible (see Proposition 2.2 of \citet{Steutel2004}), the RHS of \eqref{limit.GSTFSP} is infinitely divisible. Now $Y_\alpha(1)$ is not infinitely divisible (see \citet{Vellaisamy2018}) while $D_{\beta}(1)$, being a L\'{e}vy process, is infinitely divisible. Suppose $D_{\beta}(1)=X_1+\cdots + X_n$ where $X_1,\ldots,X_n$ are i.i.d. r.v.s. This implies $Y_{\alpha}(1)D_{\beta}(1)=Y_{\alpha}(1)X_1+\cdots + Y_{\alpha}(1)X_n$, which is not infinitely divisible since $Y_{\alpha}(1)X_1,\ldots, Y_{\alpha}(1)X_n$ are not independent r.v.s. Hence the RHS of \eqref{limit.GSTFSP} is not infinitely divisible, a contradiction. Thus the one-dimensional distributions of $\{\mathcal{S}_{\beta}^\alpha(t)\}_{t\geq 0}$ are not infinitely divisible. 
\end{proof}

The following result follows from Theorem \ref{id.GSTFSP} by taking $\lambda_j=0$ or $\mu_j=0$ for $j=1,\ldots,k$  in GFTFSP. 
\begin{corollary}
The one-dimensional distributions of GSTFCP are not infinitely divisible.
\end{corollary}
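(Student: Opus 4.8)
The plan is to realize the GSTFCP as a degenerate special case of the GSTFSP and then invoke Theorem \ref{id.GSTFSP} essentially verbatim. In the definition $\mathcal{S}_{\beta}^{\alpha}(t)=M_{1\beta}^{\alpha}(t)-M_{2\beta}^{\alpha}(t)$, I would set $\mu_1=\cdots=\mu_k=0$. Since the second GCP $\{M_2(t)\}_{t\ge 0}$ then has all jump rates equal to zero, it is identically zero, so $M_{2\beta}^{\alpha}(t)=M_2(D_\beta(Y_\alpha(t)))\equiv 0$ and hence $\mathcal{S}_{\beta}^{\alpha}(t)=M_{1\beta}^{\alpha}(t)$. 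Thus for this choice of parameters the one-dimensional distribution of the GSTFSP coincides exactly with that of a GSTFCP with parameters $\lambda_1,\ldots,\lambda_k$, and any distributional property of the former transfers to the latter.

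First I would record that the limiting result of Theorem \ref{GSTFSP.limit} collapses, under $\mu_j\equiv 0$, to
\begin{equation*}
\lim_{t\to\infty}\frac{M_{\beta}^{\alpha}(t)}{t^{\alpha/\beta}}\overset{d}{=}\left(Y_\alpha(1)\right)^{1/\beta}D_\beta(1)\sum_{j=1}^{k}j\lambda_j,
\end{equation*}
and that the right-hand side is nondegenerate because $\sum_{j=1}^{k}j\lambda_j>0$ for any nontrivial counting process. I would then repeat the contradiction argument of Theorem \ref{id.GSTFSP}: if $M_{\beta}^{\alpha}(t)$ were infinitely divisible then so would be $M_{\beta}^{\alpha}(t)/t^{\alpha/\beta}$ (Proposition 2.1 of \citet{Steutel2004}) and, as a distributional limit of infinitely divisible laws, so would be the displayed limit (Proposition 2.2 of \citet{Steutel2004}). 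But this limit is a nonzero multiple of $\left(Y_\alpha(1)\right)^{1/\beta}D_\beta(1)$, which is not infinitely divisible, since $Y_\alpha(1)$ is not infinitely divisible (\citet{Vellaisamy2018}) and multiplying an i.i.d.\ decomposition of $D_\beta(1)$ by the common factor $Y_\alpha(1)$ destroys independence. This contradiction establishes the claim.

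There is essentially no new obstacle here: the entire content is that the specialization $\mu_j\equiv 0$ is admissible and keeps the scaling constant nonzero. The only point worth verifying is precisely this nondegeneracy, since if one instead took $\lambda_j=\mu_j$ the limit would vanish and the scheme would fail; but a counting process has at least one positive rate, so $\sum_{j=1}^{k}j\lambda_j\neq 0$ and the proof of Theorem \ref{id.GSTFSP} transfers without modification. Most economically, I could phrase the whole corollary in one line: the law of $M_{\beta}^{\alpha}(t)$ equals the law of $\mathcal{S}_{\beta}^{\alpha}(t)$ when $\mu_j\equiv 0$, so the non-infinite-divisibility of the latter from Theorem \ref{id.GSTFSP} immediately forces non-infinite-divisibility of the former.
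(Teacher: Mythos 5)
Your proposal is correct and follows exactly the paper's route: the paper also obtains the corollary by taking $\mu_j=0$ (or $\lambda_j=0$) for $j=1,\ldots,k$ in the GSTFSP and invoking Theorem \ref{id.GSTFSP}. Your extra observation that the scaling constant $\sum_{j=1}^{k}j\lambda_j$ must be nonzero for the contradiction argument to apply is a worthwhile check that the paper leaves implicit, but it does not change the approach.
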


\begin{remark}\label{GSFSPlimit}
From Proposition \ref{limit.GSFSP.GFSP}, we observe that the one-dimensional distributions of GSFSP are infinitely divisible since $D_{\beta}(1)$ is infinitely divisible. Since GSFCP, SFPP and SFSPoK are special cases of GSFSP obtained by taking $\mu_j=0$ or $\lambda_j=0$; $k=1,\mu_j=0$ or $\lambda_j=0$ and $\lambda_j=\lambda,~\mu_j=\mu$ for $j=1,2\ldots,k$ respectively, the one-dimensional distributions of these processes are also infinitely divisible.  
\end{remark}

\begin{remark}\label{GFSPlimit}
From Proposition \ref{limit.GSFSP.GFSP}, we observe that the one-dimensional distributions of GFSP are not infinitely divisible since $Y_{\alpha}(1)$ is not infinitely divisible. Since GFCP, TFPP and FSPoK are special cases of GFSP obtained by taking $\mu_j=0$ or $\lambda_j=0$; $k=1,\mu_j=0$ or $\lambda_j=0$ and $\lambda_j=\lambda,~\mu_j=\mu$ for $j=1,2\ldots,k$ respectively, the one-dimensional distributions of these processes are also not infinitely divisible. 
\end{remark}

\section{Weighted sum representation}\label{sec 10}
In this section, we obtain the weighted sum representations of the GSTFSP and its special cases. Some characterization results for such representations are also provided for GSFSP and its special cases.  

\begin{theorem}\label{weighted.gstfsp}
    The GSTFSP is equal in distribution to the weighted sum of $k$ independent STFSPs
    \begin{equation*}
        \text{i.e.}\quad\mathcal{S}_{\beta}^{\alpha}(t)\overset{d}{=}\sum_{j=1}^{k}j S_{\beta j}^\alpha(t)\,,
    \end{equation*}
where $\{S_{\beta j}^\alpha(t)\}_{t \geq 0}$ for $j=1,2,\ldots,k$ are $k$ independent STFSPs.
\end{theorem}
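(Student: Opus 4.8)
The plan is to prove the identity at the level of probability generating functions, since for integer-valued processes equality of p.g.f.s on $\{|u|<1\}$ forces equality of the one-dimensional laws. I would take as the starting point the equivalent subordinated form $\mathcal{S}_{\beta}^{\alpha}(t)=\mathcal{S}(D_\beta(Y_\alpha(t)))$ from \eqref{definition.GSTFSP}, together with the closed form of $\mathcal{G}_{\mathcal{S}^{\alpha}_{\beta}}(u,t)$ recorded in Remark \ref{p.g.f._GSTFSP}.

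First I would isolate the weighted-sum representation of the underlying (non-fractional) GSP. Each GCP splits as $M_i(t)=\sum_{j=1}^{k} j\,N_{ij}(t)$ with $\{N_{ij}\}$ independent Poisson processes of rates $\lambda_j$ (for $i=1$) and $\mu_j$ (for $i=2$); subtracting gives $\mathcal{S}(t)=\sum_{j=1}^{k} j\,S_j(t)$, where $S_j=N_{1j}-N_{2j}$ are independent Skellam processes with parameters $\lambda_j,\mu_j$. Equivalently, at the generating-function level the GSP p.g.f. $\exp\big(-t\sum_{j}[(1-u^{j})\lambda_j+(1-u^{-j})\mu_j]\big)$ factors as $\prod_j \mathbb{E}\big((u^{j})^{S_j(t)}\big)$.

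Next I would time-change by the subordinator $W=D_\beta(Y_\alpha(t))$. Conditioning on $W$ and applying the displayed factorization at the random time $W$ gives $\mathbb{E}\big(u^{\mathcal{S}(W)}\mid W\big)=\prod_{j}\mathbb{E}\big((u^{j})^{S_j(W)}\mid W\big)$; integrating out $W$ with the identity $\mathbb{E}(e^{-sW})=E_{\alpha,1}(-s^{\beta}t^{\alpha})$ (see Subsection \ref{stable subordinator}) recovers $\mathcal{G}_{\mathcal{S}^{\alpha}_{\beta}}(u,t)$. Identifying each $S_j(D_\beta(Y_\alpha(t)))$ as the STFSP $S_{\beta j}^{\alpha}$, namely the $k=1$ instance of the GSTFSP with rates $\lambda_j,\mu_j$, then yields $\mathcal{S}_{\beta}^{\alpha}(t)\overset{d}{=}\sum_{j=1}^{k} j\,S_{\beta j}^{\alpha}(t)$.

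The main obstacle is the independence clause. The natural decomposition produces $k$ component STFSPs driven by a common time change $D_\beta(Y_\alpha(t))$, so the factorization of the conditional p.g.f. rests on the independence of the pre-subordination Skellam processes $S_j$ rather than on joint independence of the $S_{\beta j}^{\alpha}$ after the time change. The delicate step is therefore to make precise the sense in which the $S_{\beta j}^{\alpha}$ enter the representation and to verify that the conditional factorization is preserved upon integrating against the law of $W$. I would handle this by carrying out the whole argument after conditioning on $W$, where the independence of the $S_j$ is available, and only then integrating; this keeps the Mittag-Leffler structure with $\beta$ acting on the full inner sum intact and makes the generating-function match transparent.
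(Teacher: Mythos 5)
Your proposal follows essentially the same route as the paper: decompose the underlying GSP as $\mathcal{S}(t)\overset{d}{=}\sum_{j=1}^{k} j\,S_j(t)$ with independent Skellam components, then condition on the common time change $W=D_\beta(Y_\alpha(t))$, factor the conditional generating function, and integrate out $W$ against its density --- the paper does exactly this, using moment generating functions in place of p.g.f.s. Your closing observation is also correct and worth emphasizing: the components $S_{\beta j}^{\alpha}(t)=S_j(W)$ in the representation share a single subordinated clock and are therefore only \emph{conditionally} independent given $W$ (unconditional independence of the $S_{\beta j}^{\alpha}$, each with its own time change, would give a product of Mittag--Leffler functions rather than the Mittag--Leffler function of the $\beta$-power of the full inner sum), a subtlety the paper's proof and theorem statement silently elide.
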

\begin{proof}
    The weighted sum representation of a GCP $\{M(t)\}_{t\geq 0}$ (see \citet{KatariaGFCP}) is given by
    \begin{equation*}
       M(t)\overset{d}{=}\sum_{j=1}^{k}j N_j(t)\,,
    \end{equation*}
    where $\{N_j(t)\}_{t\geq 0}$ are $k$ independent Poisson processes with rates $\lambda_j$ for $j=1,2,\ldots,k$. It follows that the weighted sum representation of a GSP $\{\mathcal{S}(t)\}_{t\geq 0}$ is
    \begin{equation}\label{GSPsum}
       \mathcal{S}(t)\overset{d}{=}\sum_{j=1}^{k}j S_j(t)\,,
    \end{equation}
    where $\{S_j(t)\}_{t\geq 0}$ are $k$ independent Skellam processes.
    By definition, we have
    \begin{equation}\label{STFSP.definition}
    S_\beta^\alpha(t)=S\left(D_\beta\left(Y_\alpha(t)\right)\right)
    \end{equation}
    where $\{S(t)\}_{t\geq 0}$ is a Skellam process independent of $\{D_\beta\left(Y_\alpha(t)\right)\}_{t\ge 0}$. Let  $g(x,t)$ be the density of the latter process. Then the m.g.f. of the the weighted sum $\sum_{j=1}^{k}j S_{\beta j}^\alpha(t)$ is   
    \begin{align*}
        \mathbb{E}\left(\exp\left(u \sum_{j=1}^{k}j S_{\beta j}^\alpha(t)\right)\right)&=\mathbb{E}\left(\exp\left(u \sum_{j=1}^{k}j S_j\left(D_\beta\left(Y_\alpha(t)\right)\right)\right)\right) \qquad (\text{from}~ \eqref{STFSP.definition})\\
        &=\int_0^\infty\mathbb{E}\left(\exp\left(u \sum_{j=1}^{k}j S_j\left(x\right)\right)\bigg|D_\beta\left(Y_\alpha(t)\right)=x\right)g(x,t)dx \\
        &=\int_0^\infty\mathbb{E}\left(\exp\left(u \sum_{j=1}^{k}j S_j\left(x\right)\right)\right)g(x,t)dx\\
        &=\int_0^\infty\mathbb{E}\left(\exp\left(u \mathcal{S}(x)\right)\right)g(x,t)dx\qquad (\text{from}~\eqref{GSPsum})\\
        &=\int_0^\infty\mathbb{E}\left(\exp\left(u \mathcal{S}(x)\bigg| D_\beta\left(Y_\alpha(t)\right)=x\right)\right)g(x,t)dx\\
        &=\mathbb{E}\left(\exp\left(u \mathcal{S}\left(D_\beta\left(Y_\alpha(t)\right)\right)\right)\right)=\mathbb{E}\left(\exp\left(u \mathcal{S}_{\beta}^{\alpha}(t)\right)\right)
    \end{align*}
which is the m.g.f. of GSTFSP, thereby proving the result.
\end{proof}
The next proposition follows from Theorem \ref{weighted.gstfsp}.

\begin{proposition}\label{weighted.proposition}
The GSFSP, GFSP, GSTFCP, GSFCP and GFCP are equal in distribution to the weighted sum of $k$ independent SFSPs, FSPs, STFPPs, SFPPs and TFPPs respectively. In particular, if the rates of the processes are equal, then the STFSPoK, SFSPoK, FSPoK, SPoK, STFPPoK, SFPPoK, FPPoK and PPoK are equal in distribution to the weighted sum of $k$ independent STFSPs, SFSPs, FSPs, Skellam Processes, STFPPs, SFPPs, FSPs and Poisson Processes.
\end{proposition}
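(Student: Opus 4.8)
The plan is to derive every assertion as a specialization of Theorem \ref{weighted.gstfsp}, which already gives $\mathcal{S}_{\beta}^{\alpha}(t)\overset{d}{=}\sum_{j=1}^{k}j S_{\beta j}^{\alpha}(t)$ with the $S_{\beta j}^{\alpha}$ being $k$ independent STFSPs. Each named process in the statement arises from the GSTFSP by exactly the parameter substitutions catalogued in Section \ref{special cases_GSTFCP}, and the same substitution applied to the component processes $S_{\beta j}^{\alpha}$ turns them into the claimed summands. Thus the entire proposition reduces to checking that these substitutions act compatibly on both sides of the distributional identity.

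First I would treat the two fractional reductions. Setting $\alpha=1$ sends the GSTFSP to the GSFSP and simultaneously sends each STFSP $S_{\beta j}^{\alpha}$ to an SFSP $S_{\beta j}$, since $Y_1(t)=t$; hence $\mathcal{S}_{\beta}(t)\overset{d}{=}\sum_{j=1}^{k}j S_{\beta j}(t)$. Likewise $\beta=1$ sends the GSTFSP to the GFSP and each STFSP to an FSP, as $D_1(t)=t$, giving $\mathcal{S}^{\alpha}(t)\overset{d}{=}\sum_{j=1}^{k}j S_{j}^{\alpha}(t)$.

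Next I would handle the counting-process reductions by killing the second component. Taking $\mu_j=0$ for all $j$ forces $M_2\equiv 0$, so the GSP collapses to the GCP and the GSTFSP collapses to the GSTFCP; on the component side each Skellam summand $S_j=N_{1j}-N_{2j}$ collapses to the Poisson process $N_{1j}$, so each STFSP $S_{\beta j}^{\alpha}$ becomes an STFPP, yielding $M_{\beta}^{\alpha}(t)\overset{d}{=}\sum_{j=1}^{k}j N_{\beta j}^{\alpha}(t)$. Combining $\mu_j=0$ with $\alpha=1$ (respectively $\beta=1$) then gives the GSFCP as a weighted sum of SFPPs (respectively the GFCP as a weighted sum of TFPPs). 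The ``in particular'' statement follows by additionally imposing equal rates: under $\lambda_j=\lambda,\ \mu_j=\mu$ the GSTFSP becomes the STFSPoK while the $k$ independent STFSPs become identically distributed, and applying the $\alpha=1$, $\beta=1$, $\mu_j=0$ reductions on top produces the SFSPoK, FSPoK, SPoK, STFPPoK, SFPPoK, FPPoK and PPoK as the corresponding weighted sums.

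The computations are routine; the only point requiring care is the bookkeeping that the fractional indices $\alpha,\beta$ enter only through the common time change $D_\beta(Y_\alpha(t))$ while the rate vectors enter only through the outer process $\mathcal{S}$, so that reducing either on the left-hand side forces exactly the matching reduction on each summand $S_{\beta j}^{\alpha}$. Because the subordination structure underlying Theorem \ref{weighted.gstfsp} is preserved under all these substitutions, no genuine obstacle arises beyond verifying this compatibility against the definitions in Section \ref{special cases_GSTFCP}.
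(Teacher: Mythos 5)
Your proposal is correct and follows essentially the same route as the paper, which simply notes that the proposition follows from Theorem \ref{weighted.gstfsp} by the parameter substitutions of Section \ref{special cases_GSTFCP} (using $Y_1(t)=t$, $D_1(t)=t$, $\mu_j=0$, and equal rates); you have merely spelled out the compatibility check that the paper leaves implicit.
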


Now we provide some necessary and sufficient conditions for the weighted representation of a GSFSP. 

\begin{theorem}\label{martingale_thm1_skellam}
    Let $\{S_{1\beta}(t)\}_{t\geq 0}$, $\{S_{2\beta}(t)\}_{t\geq 0}$, $\dots$,$\{S_{k\beta}(t)\}_{t\geq 0}$ be $k$ independent counting processes such that $S_{j\beta}(0)=0$ for $j=1,2,\dots,k$. If each process $\{S_{j\beta}(t)\}_{t\geq 0}$ is a SFSP, then the weighted process $\{\sum_{j=1}^{k}jS_{j\beta}(t)\}_{t\ge 0}$ is a GSFSP. Conversely, if the weighted process $\{\sum_{j=1}^{k}jS_{j\beta}(t)\}_{t\geq 0}$ is a GSFSP with rates satisfying 
    \begin{align}\label{martingale_thm1_condition_skellam}
\left((1-e^{uk})\lambda_{k}\right)^\beta+\left((1-e^{-uk})\mu_{k}\right)^\beta\nonumber&=\left(\sum_{j=1}^{k}(1-e^{uj})\lambda_j\right)^\beta-\sum_{j=1}^{k-1}\left((1-e^{uj})\lambda_j\right)^\beta
+\left(\sum_{j=1}^{k}(1-e^{-uj})\mu_j\right)^\beta\nonumber\\
&-\sum_{j=1}^{k-1}\left((1-e^{-uj})\mu_j\right)^\beta,
    \end{align}
    then each $\{S_{j\beta}(t)\}_{t\geq 0}$ is a SFSP with rates $\lambda_j>0$ and $\mu_j>0$.
\end{theorem}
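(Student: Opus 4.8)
The plan is to argue entirely through moment generating functions, since the independence hypothesis turns the weighted sum into a product while the m.g.f. of an SFSP has an explicit closed form. Setting $\alpha=1$ and replacing $u$ by $e^{u}$ in Remark~\ref{p.g.f._GSTFSP} (equivalently in \eqref{p.g.f._GSFSP}) gives the GSFSP m.g.f.
\[
\mathbb{E}\!\left(e^{u\mathcal{S}_{\beta}(t)}\right)=\exp\!\left\{-t\Big(\sum_{j=1}^{k}(1-e^{uj})\lambda_j\Big)^{\beta}-t\Big(\sum_{j=1}^{k}(1-e^{-uj})\mu_j\Big)^{\beta}\right\},
\]
while the $k=1$ specialization yields the m.g.f. of an SFSP $\{S_{j\beta}(t)\}_{t\ge 0}$ with rates $\lambda_j,\mu_j$,
\[
\mathbb{E}\!\left(e^{vS_{j\beta}(t)}\right)=\exp\!\left\{-t\big((1-e^{v})\lambda_j\big)^{\beta}-t\big((1-e^{-v})\mu_j\big)^{\beta}\right\}.
\]
The first observation is that, after transposing the $j=1,\dots,k-1$ terms, condition \eqref{martingale_thm1_condition_skellam} is exactly the identity
\[
\sum_{j=1}^{k}\big((1-e^{uj})\lambda_j\big)^{\beta}+\sum_{j=1}^{k}\big((1-e^{-uj})\mu_j\big)^{\beta}=\Big(\sum_{j=1}^{k}(1-e^{uj})\lambda_j\Big)^{\beta}+\Big(\sum_{j=1}^{k}(1-e^{-uj})\mu_j\Big)^{\beta},
\]
namely that the sum of the $\beta$-powers equals the $\beta$-power of the sum, separately for the $\lambda$- and the $\mu$-part.

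First I would establish the forward implication. Assuming the $S_{j\beta}$ are independent SFSPs, the m.g.f. of $\sum_{j=1}^{k}jS_{j\beta}(t)$ factorizes; substituting $v=uj$ in the $j$-th factor gives
\[
\mathbb{E}\Big(e^{u\sum_{j=1}^{k}jS_{j\beta}(t)}\Big)=\exp\!\left\{-t\sum_{j=1}^{k}\big((1-e^{uj})\lambda_j\big)^{\beta}-t\sum_{j=1}^{k}\big((1-e^{-uj})\mu_j\big)^{\beta}\right\}.
\]
By the identity just noted this coincides with the GSFSP m.g.f. above, and uniqueness of m.g.f.s identifies the weighted sum in distribution with a GSFSP.

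For the converse I would run the same computation backwards. The hypothesis that $\{\sum_{j}jS_{j\beta}(t)\}$ is a GSFSP gives its m.g.f. explicitly; invoking \eqref{martingale_thm1_condition_skellam} I rewrite the two $\beta$-powers-of-sums as sums-of-$\beta$-powers, so the GSFSP m.g.f. becomes the product $\prod_{j=1}^{k}\exp\{-t((1-e^{uj})\lambda_j)^{\beta}-t((1-e^{-uj})\mu_j)^{\beta}\}$. Independence makes the left-hand m.g.f. equal to $\prod_{j=1}^{k}\mathbb{E}(e^{ujS_{j\beta}(t)})$, and matching these two products factor by factor shows that each $\mathbb{E}(e^{ujS_{j\beta}(t)})$ is the m.g.f. of an SFSP with rates $\lambda_j,\mu_j$, the positivity of these rates following from their role as genuine intensities in the GSFSP.

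The step I expect to be the main obstacle is precisely this factor-by-factor identification in the converse, since equality of two products of functions does not a priori force equality of the individual factors. The structural fact that rescues it is that $\mathbb{E}(e^{ujS_{j\beta}(t)})$ depends on $u$ only through $e^{\pm uj}$, so as a Laurent series in $z=e^{u}$ the $j$-th factor is supported on the sublattice $j\mathbb{Z}$ of exponents. I would exploit this together with the nonnegativity and unit mass of its coefficients (each factor being a bona fide generating function) to pin down the factors uniquely: comparing coefficients on the sublattices $j\mathbb{Z}$, peeling off the components from $j=k$ downward, and using \eqref{martingale_thm1_condition_skellam} to guarantee that the additive log-m.g.f. splits into exactly the $k$ prescribed SFSP pieces. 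Verifying rigorously that this lattice-plus-positivity structure forces the decomposition is the delicate part of the argument.
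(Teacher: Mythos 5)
Your forward direction is essentially the paper's, with one omission: matching the m.g.f.\ of $\sum_j jS_{j\beta}(t)$ to that of a GSFSP at each fixed $t$ only identifies the one-dimensional marginals, whereas being a GSFSP is a statement about the whole process. The paper closes this by observing that a weighted sum of independent L\'{e}vy processes (the SFSPs) is again a L\'{e}vy process, so stationary and independent increments come for free and the marginal identification suffices. That is a small fix. The real problem is in your converse.

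There you reduce everything to the claim that if $\prod_{j=1}^{k}\mathbb{E}(e^{ujS_{j\beta}(t)})$ equals the product $\prod_{j=1}^{k}\exp\{-t((1-e^{uj})\lambda_j)^{\beta}-t((1-e^{-uj})\mu_j)^{\beta}\}$, then the factors must agree individually. You flag this yourself as the delicate step, but the lattice-plus-positivity heuristic does not deliver it: factorizations of a probability generating function into factors supported on sublattices $j\mathbb{Z}$ are not unique in general (one can perturb, e.g., $f_1(z)\mapsto f_1(z)h(z^k)$, $f_k(w)\mapsto f_k(w)/h(w)$ while preserving the product, the lattice supports, and -- absent further argument -- positivity), and this non-uniqueness of decompositions of distributions is a classical obstruction. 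The paper avoids ever needing such a uniqueness statement: it argues by induction on $k$, so that at stage $m+1$ the first $m$ factors are \emph{already known explicitly} as SFSP m.g.f.s, and the single unknown factor $\mathbb{E}(e^{u(m+1)S_{(m+1)\beta}(t)})$ is obtained by legitimate division, with \eqref{martingale_thm1_condition_skellam} used to simplify the quotient. Moreover, your converse stops at identifying the one-dimensional law of each $S_{j\beta}(t)$; to conclude each component \emph{is} a SFSP you must also establish that it has stationary and independent increments, which is where the paper spends most of its effort (repeating the division argument for the m.g.f.\ of a single increment and for the joint m.g.f.\ over several disjoint intervals). As written, your proposal is missing both the key uniqueness-of-factorization argument and the increment-structure verification.
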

\begin{proof}
    If $\{S_{1\beta}(t)\}_{t\geq 0}$, $\{S_{2\beta}(t)\}_{t\geq 0}$, $\dots$,$\{S_{k\beta}(t)\}_{t\geq 0}$ are independent SFSPs, then by Proposition \ref{weighted.proposition}, the weighted process $\{\sum_{j=1}^{k}jS_{j\beta}(t)\}_{t\geq 0}$ is equal in distribution to a GSFSP. It is known that a linear combination of independent Lévy processes is also a Lévy process (see \citet{Asmussen2003}). So the increments of the weighted process are stationary and independent due to the merging of independent SFSPs, which are L\'{e}vy processes. This proves that the weighted process is a GSFSP.
    
    Next we will prove the converse part using the method of mathematical induction. For $k=1$, the result holds true as $\{S_{1\beta}(t)\}_{t\geq 0}$ is a SFSP with rates $\lambda_1$ and $\mu_1$. 
    
    Assume that the result holds true for $k=m$. That is, if the weighted process $\{\sum_{j=1}^{m}jS_{j\beta}(t)\}_{t\geq 0}$ is a GSFSP with rates satisfying \eqref{martingale_thm1_condition_skellam}, then each $\{S_{j\beta}(t)\}_{t\geq 0}$ is a SFSP.
    
    We will now prove the result for $k=m+1$. Consider $m+1$ independent counting processes $\{S_{1\beta}(t)\}_{t\geq 0}$, $\{S_{2\beta}(t)\}_{t\geq 0}$, $\dots$,$\{S_{(m+1)\beta}(t)\}_{t\geq 0}$ such that the weighted process $\{\sum_{j=1}^{m+1}jS_{j\beta}(t)\}_{t\geq 0}$ is a GSFSP with rates satisfying \eqref{martingale_thm1_condition_skellam}.
    We have
    \begin{equation}
        \mathbb{E}\left(e^{u\sum_{j=1}^{m+1}jS_{j\beta}(t)}\right)=\mathbb{E}\left(e^{u(m+1)S_{(m+1)\beta}(t)}\right)\prod_{j=1}^{m}e^{-t\left[\left((1-e^{uj})\lambda_j\right)^\beta+\left((1-e^{-uj})\mu_j\right)^\beta\right]}\label{martingale_thm1_skellam_eq1}
    \end{equation}
    since $\{S_{1\beta}(t)\}_{t\geq 0}$, $\{S_{2\beta}(t)\}_{t\geq 0}$, $\dots$,$\{S_{m\beta}(t)\}_{t\geq 0}$ are independent SFSPs by the induction hypothesis. The moment generating function of GSFSP is given by
    \begin{equation}
        \mathbb{E}\left(e^{u\sum_{j=1}^{m+1}jS_{j\beta}(t)}\right)=e^{-t\left[\left(\sum_{j=1}^{m+1}(1-e^{uj})\lambda_j\right)^\beta+\left(\sum_{j=1}^{m+1}(1-e^{-uj})\mu_j\right)^\beta\right]}\label{martingale_thm1_skellam_eq2}.
    \end{equation}
    Dividing \eqref{martingale_thm1_skellam_eq2} by \eqref{martingale_thm1_skellam_eq1}, and using \eqref{martingale_thm1_condition_skellam} for $k=m+1$, we get
    \begin{align}
        \mathbb{E}\left(e^{u(m+1)S_{(m+1)\beta}(t)}\right)&=e^{-t\left[\left(\sum_{j=1}^{m+1}(1-e^{uj})\lambda_j\right)^\beta-\sum_{j=1}^{m}\left((1-e^{uj})\lambda_j\right)^\beta+\left(\sum_{j=1}^{m+1}(1-e^{-uj})\mu_j\right)^\beta-\sum_{j=1}^{m}\left((1-e^{-uj})\mu_j\right)^\beta\right]}\nonumber\\
        &=e^{-t\left[\left((1-e^{u(m+1)})\lambda_{m+1}\right)^\beta+\left((1-e^{-u(m+1)})\mu_{m+1}\right)^\beta\right]}.\label{martingale_thm1_skellam_eq3}
    \end{align}
    Thus $\{S_{(m+1)\beta}(t)\}_{t\ge 0}$ is equal in distribution to a SFSP. We will now show that $\{S_{(m+1)\beta}(t)\}_{t\ge 0}$ has stationary and independent increments.

    Due to the independence of $m+1$ counting processes, we have
    \begin{align}
        \mathbb{E}\left(e^{u\sum_{j=1}^{m+1}j\left(S_{j\beta}(t)-S_{j\beta}(s)\right)}\right)&=\mathbb{E}\left(e^{u(m+1)\left(S_{j\beta}(t)-S_{j\beta}(s)\right)}\right)\prod_{j=1}^{m}\mathbb{E}\left(e^{uj\left(S_{j\beta}(t)-S_{j\beta}(s)\right)}\right),\quad 0<s\le t\nonumber\\
        &=\mathbb{E}\left(e^{u(m+1)\left(S_{j\beta}(t)-S_{j\beta}(s)\right)}\right)\prod_{j=1}^{m}e^{-(t-s)\left[\left((1-e^{uj})\lambda_j\right)^\beta+\left((1-e^{-uj)})\mu_j\right)^\beta\right]},\label{martingale_thm1_skellam_eq4}
    \end{align}
    using the stationary increments of $\{S_{j\beta}(t)\}_{t\ge 0}$ for $ 1\le j\le m$ (induction hypothesis) in  the last step. Next, using the stationary increments of GSFSP, we have
    \begin{equation}
        \mathbb{E}\left(e^{u\sum_{j=1}^{m+1}j\left(S_{j\beta}(t)-S_{j\beta}(s)\right)}\right)=e^{-(t-s)\left[\left(\sum_{j=1}^{m+1}(1-e^{uj})\lambda_j\right)^\beta+\left(\sum_{j=1}^{m+1}(1-e^{-uj})\mu_j\right)^\beta\right]}.\label{martingale_thm1_skellam_eq5}
    \end{equation}
    Dividing \eqref{martingale_thm1_skellam_eq5} by \eqref{martingale_thm1_skellam_eq4}, and using \eqref{martingale_thm1_condition_skellam} for $k=m+1$, we get
    \begin{equation*}
       \mathbb{E}\left(e^{u(m+1)\left(S_{(m+1)\beta}(t)-S_{(m+1)\beta}(s)\right)}\right)=e^{-(t-s)\left[\left((1-e^{u(m+1)})\lambda_{m+1}\right)^\beta+\left((1-e^{-u(m+1)})\mu_{m+1}\right)^\beta\right]}.
    \end{equation*}
    This proves that the counting process $\{S_{(m+1)\beta}(t)\}_{t\ge 0}$ has stationary increments.

    Due to the independence of $m+1$ counting processes, we have
    \begin{align}
        &\mathbb{E}\left(\exp\left(\sum_{i=1}^{r}u_i\sum_{j=1}^{m+1}j\left(S_{j\beta}(t_i)-S_{j\beta}(t_{i-1})\right)\right)\right)\nonumber\\
        &=\prod_{i=1}^{r}\mathbb{E}\left(\exp\left(u_i\sum_{j=1}^{m}j\left(S_{j\beta}(t_i)-S_{j\beta}(t_{i-1})\right)\right)\right)\mathbb{E}\left(e^{u_i(m+1)\left(S_{j\beta}(t_i)-S_{j\beta}(t_{i-1})\right)}\right),\quad 0\le t_0<t_1<\dots<t_r<\infty\nonumber\\
        &=\prod_{i=1}^{r}\mathbb{E}\left(e^{u_i(m+1)\left(S_{j\beta}(t_i)-S_{j\beta}(t_{i-1})\right)}\right)\prod_{j=1}^{m}e^{-(t_i-t_{i-1})\left[\left((1-e^{u_ij})\lambda_{j}\right)^\beta+\left((1-e^{-u_ij})\mu_{j}\right)^\beta\right]},\label{martingale_thm1_skellam_eq6}
    \end{align}
     where the last step is obtained using the stationary increments of $\{S_{j\beta}(t)\}_{t\ge 0}$ for $1\le j\le m$ (induction hypothesis). Now, using the independent increments of GSFSP, we have
     \begin{equation}
         \mathbb{E}\left(\exp\left(\sum_{i=1}^{r}u_i\sum_{j=1}^{m+1}j\left(S_{j\beta}(t_i)-S_{j\beta}(t_{i-1})\right)\right)\right)=\prod_{i=1}^{r}e^{-(t_i-t_{i-1})\left[\left(\sum_{j=1}^{m+1}(1-e^{u_ij})\lambda_j\right)^\beta+\left(\sum_{j=1}^{m+1}(1-e^{-u_ij})\mu_j\right)^\beta\right]}.\label{martingale_thm1_skellam_eq7}
     \end{equation}
     Dividing \eqref{martingale_thm1_skellam_eq7} by \eqref{martingale_thm1_skellam_eq6}, and using \eqref{martingale_thm1_condition_skellam} for $k=m+1$, we get
     \begin{equation*}
         \mathbb{E}\left(e^{\sum_{i=1}^{r}u_i(m+1)\left(S_{(m+1)\beta}(t_i)-S_{(m+1)\beta}(t_{i-1})\right)}\right)=\prod_{i=1}^{r}\mathbb{E}\left(e^{u_i(m+1)\left(S_{(m+1)\beta}(t_i)-S_{(m+1)\beta}(t_{i-1})\right)}\right).
     \end{equation*}
      This proves that the counting process $\{S_{(m+1)\beta}(t)\}_{t\ge 0}$ has independent increments. Thus $\{S_{(m+1)\beta}(t)\}_{t\ge 0}$ is a SFSP with rates $\lambda_{m+1}>0$ and $\mu_{m+1}>0$. This proves the theorem. 
\end{proof}

\begin{corollary}
    Let $\{S_{1\beta}(t)\}_{t\geq 0}$, $\{S_{2\beta}(t)\}_{t\geq 0}$, $\dots$,$\{S_{k\beta}(t)\}_{t\geq 0}$ be $k$ independent counting processes such that $S_{j\beta}(0)=0$ for $j=1,2,\dots,k$. If each process $\{S_{j\beta}(t)\}_{t\geq 0}$ is a SFSP, then the weighted process $\{\sum_{j=1}^{k}jS_{j\beta}(t)\}_{t\ge 0}$ is a SFSPoK. Conversely, if the weighted process $\{\sum_{j=1}^{k}jS_{j\beta}(t)\}_{t\geq 0}$ is a SFSPoK with rates satisfying 
    \begin{align}
\lambda^{\beta}\left(1-e^{uk}\right)^\beta+\mu^{\beta}\left(1-e^{-uk}\right)^\beta&=\lambda^\beta\left(\sum_{j=1}^{k}(1-e^{uj})\right)^\beta-\lambda^\beta\sum_{j=1}^{k-1}\left(1-e^{uj}\right)^\beta+\mu^\beta\left(\sum_{j=1}^{k}(1-e^{-uj})\right)^\beta\nonumber\\
&-\mu^\beta\sum_{j=1}^{k-1}\left(1-e^{-uj}\right)^\beta,
    \end{align}
    then each $\{S_{j\beta}(t)\}_{t\geq 0}$ is a SFSP with rates $\lambda>0$ and $\mu>0$.
\end{corollary}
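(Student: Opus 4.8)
The plan is to derive this corollary as the equal-rate specialization of Theorem \ref{martingale_thm1_skellam}. First I would observe that the SFSPoK is exactly the GSFSP obtained by setting $\lambda_j=\lambda$ and $\mu_j=\mu$ for every $j=1,2,\ldots,k$; this is precisely the reduction recorded in Section \ref{special cases_GSTFCP} and already exploited in Proposition \ref{weighted.proposition}. Consequently, the entire statement should follow by inserting these common rates into both the hypotheses and the conclusion of the theorem, so that no new probabilistic argument is needed.

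For the forward implication I would invoke Proposition \ref{weighted.proposition} directly: if each $\{S_{j\beta}(t)\}_{t\ge 0}$ is a SFSP with the common rates $\lambda$ and $\mu$, then the weighted process $\{\sum_{j=1}^{k}jS_{j\beta}(t)\}_{t\ge 0}$ is a GSFSP with equal rates, which is by definition a SFSPoK. Equivalently this is the forward half of Theorem \ref{martingale_thm1_skellam} evaluated at $\lambda_j=\lambda$, $\mu_j=\mu$, relying only on the fact that a finite linear combination of independent L\'evy processes is again L\'evy, so that stationarity and independence of increments are preserved under merging.

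For the converse, the key bookkeeping step is to verify that the rate condition displayed in the corollary is exactly the substitution $\lambda_j=\lambda$, $\mu_j=\mu$ into condition \eqref{martingale_thm1_condition_skellam}. Factoring $\lambda^{\beta}$ out of every term built from $(1-e^{uj})\lambda$ and $\mu^{\beta}$ out of every term built from $(1-e^{-uj})\mu$ converts $\left((1-e^{uk})\lambda_k\right)^{\beta}$ into $\lambda^{\beta}(1-e^{uk})^{\beta}$, converts $\left(\sum_{j=1}^{k}(1-e^{uj})\lambda_j\right)^{\beta}$ into $\lambda^{\beta}\left(\sum_{j=1}^{k}(1-e^{uj})\right)^{\beta}$, and likewise for the $\mu$-terms, reproducing the stated identity verbatim. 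With this verification in hand I would apply the converse direction of Theorem \ref{martingale_thm1_skellam}: the SFSPoK assumption supplies a GSFSP with equal rates, the rate condition is satisfied, so the theorem forces each $\{S_{j\beta}(t)\}_{t\ge 0}$ to be a SFSP, and since the common target rates are $\lambda$ and $\mu$, each such component carries exactly these rates. The proof presents no genuine obstacle beyond the factorization check above; all of the inductive work over $k$, including the stationarity and independence of increments of the extracted component, is inherited from Theorem \ref{martingale_thm1_skellam}.
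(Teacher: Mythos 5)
Your proposal is correct and matches how the paper treats this result: the paper states it as an unproved corollary of Theorem \ref{martingale_thm1_skellam}, and your argument simply makes explicit the intended specialization $\lambda_j=\lambda$, $\mu_j=\mu$, including the correct factorization of $\lambda^{\beta}$ and $\mu^{\beta}$ that turns condition \eqref{martingale_thm1_condition_skellam} into the displayed rate identity. Nothing further is needed.
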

Without loss of generality, by taking $\mu_j=0$ for $j=1,2,\ldots,k$ in Theorem \ref{martingale_thm1_skellam}, we obtain the next result for the weighted sum representation of a GSFCP.
\begin{theorem}\label{martingale_GSFCP}
    Let $\{N_{1\beta}(t)\}_{t\geq 0}$, $\{N_{2\beta}(t)\}_{t\geq 0}$, $\dots$,$\{N_{k\beta}(t)\}_{t\geq 0}$ be $k$ independent counting processes such that $N_{j\beta}(0)=0$ for $j=1,2,\dots,k$. If each process $\{N_{j\beta}(t)\}_{t\geq 0}$ is a SFPP, then the weighted process $\{\sum_{j=1}^{k}jN_{j\beta}(t)\}_{t\ge 0}$ is a GSFCP. Conversely, if the weighted process $\{\sum_{j=1}^{k}jN_{j\beta}(t)\}_{t\geq 0}$ is a GSFCP with rates satisfying 
    \begin{equation}\label{martingale_thm1_condition}
\left((1-e^{uk})\lambda_k\right)^\beta=\left(\sum_{j=1}^{k}(1-e^{uj})\lambda_j\right)^{\beta}-\sum_{j=1}^{k-1}\left((1-e^{uj})\lambda_j\right)^{\beta},
    \end{equation}
    then each $\{N_{j\beta}(t)\}_{t\geq 0}$ is a SFPP.
\end{theorem}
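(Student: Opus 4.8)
The plan is to obtain this result as a direct specialization of Theorem \ref{martingale_thm1_skellam} by setting $\mu_j = 0$ for all $j = 1, 2, \ldots, k$. First I would observe that a SFSP with downward rate $\mu_j = 0$ degenerates into a SFPP: the subtracted GCP-component in the Skellam construction has all its rates zero and is therefore identically zero, so the process coincides with its upward counting part, which is a SFPP. Likewise, a GSFSP with every $\mu_j = 0$ reduces to a GSFCP. Under this substitution the hypotheses and conclusions of Theorem \ref{martingale_thm1_skellam} translate verbatim into those of the present statement, so it suffices to check that the characterizing rate condition also specializes correctly.

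Next I would verify that \eqref{martingale_thm1_condition_skellam} collapses to \eqref{martingale_thm1_condition}. Putting $\mu_j = 0$ annihilates every term carrying a factor $(1-e^{-uj})\mu_j$, namely $\left((1-e^{-uk})\mu_k\right)^\beta$, $\left(\sum_{j=1}^{k}(1-e^{-uj})\mu_j\right)^\beta$ and $\sum_{j=1}^{k-1}\left((1-e^{-uj})\mu_j\right)^\beta$, leaving precisely $\left((1-e^{uk})\lambda_k\right)^\beta = \left(\sum_{j=1}^{k}(1-e^{uj})\lambda_j\right)^\beta - \sum_{j=1}^{k-1}\left((1-e^{uj})\lambda_j\right)^\beta$, which is \eqref{martingale_thm1_condition}.

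With these identifications the forward implication is immediate: if each $\{N_{j\beta}(t)\}_{t\ge 0}$ is a SFPP, then as a special case of Proposition \ref{weighted.proposition} the weighted sum is a GSFCP, equivalently because a merging of independent L\'evy processes is again L\'evy and hence has stationary and independent increments. For the converse, the inductive argument of Theorem \ref{martingale_thm1_skellam} — dividing the m.g.f. of the GSFSP (now GSFCP) by the product of the m.g.f.s of the first $m$ components and invoking the rate condition to isolate the $(m+1)$-st factor, then repeating this for the increment m.g.f.s to recover stationarity and independence — carries over line by line once the downward rates are set to zero, yielding that each $\{N_{j\beta}(t)\}_{t\ge 0}$ is a SFPP. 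The computations are routine; the only point requiring care is confirming that the zero-rate terms vanish consistently on both sides of the identity so that the degeneration $\mu_j = 0$ produces a genuine SFPP rather than a vacuous object, and this check is immediate, so no substantive obstacle arises.
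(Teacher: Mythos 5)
Your proposal is correct and coincides with the paper's own treatment: the paper explicitly states that the result is obtained "by taking $\mu_j=0$ for $j=1,2,\ldots,k$ in Theorem \ref{martingale_thm1_skellam}" and omits the proof as being similar to that of Theorem \ref{martingale_thm1_skellam}. Your verification that the rate condition \eqref{martingale_thm1_condition_skellam} collapses to \eqref{martingale_thm1_condition} and that the induction carries over with the downward rates annihilated is exactly the intended argument.
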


\begin{proof}
The proof is similar to that of Theorem \ref{martingale_thm1_skellam} and hence omitted.
\end{proof}
\begin{corollary}
    Let $\{N_{1\beta}(t)\}_{t\geq 0}$, $\{N_{2\beta}(t)\}_{t\geq 0}$, $\dots$,$\{N_{k\beta}(t)\}_{t\geq 0}$ be $k$ independent counting processes such that $N_{j\beta}(0)=0$ for $j=1,2,\dots,k$. If each process $\{N_{j\beta}(t)\}_{t\geq 0}$ is a SFPP with same rates $\lambda>0$, then the weighted process $\{\sum_{j=1}^{k}jN_{j\beta}(t)\}_{t\ge 0}$ is a SFPPoK. Conversely, if the weighted process $\{\sum_{j=1}^{k}jN_{j\beta}(t)\}_{t\geq 0}$ is a SFPPoK satisfying 
    \begin{align}
\left(1-e^{uk}\right)^\beta=\left(\sum_{j=1}^{k}(1-e^{uj})\right)^\beta-\sum_{j=1}^{k-1}\left(1-e^{uj}\right)^\beta,
    \end{align}
    then each $\{N_{j\beta}(t)\}_{t\geq 0}$ is a SFPP with same rates $\lambda>0$.
\end{corollary}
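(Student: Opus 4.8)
The plan is to derive this corollary as the equal-rate specialization of Theorem \ref{martingale_GSFCP}, since, as recorded in Section \ref{special cases_GSTFCP}, the SFPPoK is exactly the GSFCP obtained by setting $\lambda_j=\lambda$ for all $j=1,2,\ldots,k$. Both directions should then descend immediately from the corresponding GSFCP statements, with no new machinery required.

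For the forward implication, I would invoke Proposition \ref{weighted.proposition}: if each $\{N_{j\beta}(t)\}_{t\ge 0}$ is a SFPP with the common rate $\lambda>0$, then the weighted sum $\{\sum_{j=1}^{k}jN_{j\beta}(t)\}_{t\ge 0}$ is equal in distribution to a SFPPoK. To upgrade this one-dimensional identity to the full process-level conclusion, I would argue exactly as in the proof of Theorem \ref{martingale_thm1_skellam} that a linear combination of independent L\'{e}vy processes (here, independent SFPPs) is again a L\'{e}vy process, so the weighted sum inherits stationary and independent increments and is therefore a genuine SFPPoK.

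For the converse, I would apply Theorem \ref{martingale_GSFCP} with $\lambda_j=\lambda$ for every $j$. Under this substitution the hypothesis \eqref{martingale_thm1_condition} becomes
\[
\left((1-e^{uk})\lambda\right)^\beta=\left(\sum_{j=1}^{k}(1-e^{uj})\lambda\right)^\beta-\sum_{j=1}^{k-1}\left((1-e^{uj})\lambda\right)^\beta,
\]
and factoring the common $\lambda^\beta$ out of each term and cancelling it reproduces precisely the rate condition stated in the corollary. Theorem \ref{martingale_GSFCP} then guarantees that each $\{N_{j\beta}(t)\}_{t\ge 0}$ is a SFPP, and the equal-rate assumption forces the common rate $\lambda>0$.

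I do not anticipate any genuine obstacle, since the entire argument reduces to Theorem \ref{martingale_GSFCP} together with the bookkeeping of pulling $\lambda^\beta$ through the rate condition. The only mild point to verify is that the exponent $\beta$ applies uniformly to each summand carrying the same constant $\lambda$, so that the factorization of $\lambda^\beta$ is clean; this is immediate because every term appearing in \eqref{martingale_thm1_condition} is of the form $\left((1-e^{uj})\lambda\right)^\beta$.
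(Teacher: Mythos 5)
Your proposal is correct and follows exactly the route the paper intends: the corollary is the equal-rate specialization of Theorem \ref{martingale_GSFCP} (whose proof the paper itself reduces to that of Theorem \ref{martingale_thm1_skellam}), with the forward direction coming from Proposition \ref{weighted.proposition} plus the L\'{e}vy-merging argument, and the converse from substituting $\lambda_j=\lambda$ into condition \eqref{martingale_thm1_condition} and cancelling the common factor $\lambda^\beta$, which reproduces the stated rate condition verbatim. No gaps.
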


\section{Martingale Characterization}\label{sec 11}

Consider a complete probability space $(\Omega,\mathcal{F},\mathbb{P})$. Recall that a $\mathbb{P}$-integrable and $\mathcal{F}_t$-adapted stochastic process $\{X(t)\}_{t\ge 0}$
is a $\mathcal{F}_t$-martingale if $\mathbb{E}\left(X(t)|\mathcal{F}_s\right)=X(s),\,0\le s\le t$, a.s. where $\{\mathcal{F}_t\}$ is a non-decreasing sequence of sub-sigma fields of $\mathcal{F}$. Also a point process is simple if it has unit jumps and locally finite if all jumps are finite in a bounded region. In 1964, \citet{Watanabe1964} provided a martingale characterization of the homogeneous Poisson process called the Watanabe characterization as shown below.
\begin{theorem}\label{Watanabe_Poisson}
    Let $\{N(t)\}_{t\ge 0}$ be a $\mathcal{F}_t$-adapted simple locally finite point process. Then, $\{N(t)\}_{t\ge 0}$ is a homogeneous Poisson process with intensity $\lambda>0$ \textit{iff} $\{N(t)-\lambda t\}_{t\ge 0}$ is a $\mathcal{F}_t$-martingale for some $\lambda>0$.
\end{theorem}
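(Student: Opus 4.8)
The plan is to establish both directions of the equivalence separately, with the forward implication reducing to a one-line conditional-expectation computation and the converse requiring the construction of a complex exponential martingale.

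For the forward direction I would assume $\{N(t)\}_{t\ge 0}$ is a homogeneous Poisson process with intensity $\lambda$ and verify the defining martingale identity directly. For $0\le s\le t$, using that Poisson increments are independent of the past and have mean $\lambda(t-s)$,
\begin{equation*}
\mathbb{E}\left(N(t)-\lambda t\mid\mathcal{F}_s\right)=\mathbb{E}\left(N(t)-N(s)\right)+N(s)-\lambda t=\lambda(t-s)+N(s)-\lambda t=N(s)-\lambda s,
\end{equation*}
so $\{N(t)-\lambda t\}_{t\ge 0}$ is a $\mathcal{F}_t$-martingale, and $\mathbb{P}$-integrability follows from the finite Poisson mean.

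For the converse I would set $M(t):=N(t)-\lambda t$, assume it is a martingale, and show that $N$ has independent, stationary, Poisson-distributed increments. Fix $\theta\in\mathbb{R}$, put $g(t)=e^{-\lambda t(e^{i\theta}-1)}$, and define $Z(t)=e^{i\theta N(t)}g(t)$. The essential structural input is that $N$ is simple, so its jumps are of unit size; hence at a jump the factor $e^{i\theta N(t)}$ is multiplied by $e^{i\theta}$, giving $d\left(e^{i\theta N(t)}\right)=e^{i\theta N(t-)}(e^{i\theta}-1)\,dN(t)$. Writing $dN(t)=dM(t)+\lambda\,dt$ and applying the product rule with the continuous finite-variation factor $g$, whose derivative satisfies $g'(t)=-\lambda(e^{i\theta}-1)g(t)$, the drift terms cancel and leave
\begin{equation*}
dZ(t)=g(t)\,e^{i\theta N(t-)}(e^{i\theta}-1)\,dM(t),
\end{equation*}
exhibiting $Z$ as a stochastic integral against the martingale $M$, hence a local martingale.

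Having identified $Z$ as a local martingale, I would upgrade it to a genuine martingale on each bounded interval using $|e^{i\theta N(t)}|=1$ together with the local boundedness of $g$ (so $Z$ is bounded on compacts) and the locally finite jump structure of $N$. Taking conditional expectations then yields $\mathbb{E}(Z(t)\mid\mathcal{F}_s)=Z(s)$, that is,
\begin{equation*}
\mathbb{E}\left(e^{i\theta(N(t)-N(s))}\mid\mathcal{F}_s\right)=e^{\lambda(t-s)(e^{i\theta}-1)}.
\end{equation*}
The right-hand side is non-random and equals the characteristic function of a Poisson variable with parameter $\lambda(t-s)$, which simultaneously forces the increment $N(t)-N(s)$ to be independent of $\mathcal{F}_s$, stationary, and Poisson distributed; hence $N$ is a homogeneous Poisson process with intensity $\lambda$. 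The main obstacle is the converse: justifying the jump-calculus evaluation of $dZ$ from the simple, locally finite point-process structure and confirming that the resulting local martingale is a true martingale so that conditional expectations may legitimately be taken.
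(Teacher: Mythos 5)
The paper does not prove this statement: it is quoted as the classical Watanabe (1964) characterization and used as a black box, so there is no internal proof to compare against. Your argument is the standard one and is correct. The forward direction is the routine conditional-expectation computation (with the implicit but necessary hypothesis that increments of $N$ are independent of $\mathcal{F}_s$, which holds for the natural filtration). For the converse, the complex exponential martingale $Z(t)=e^{i\theta N(t)}e^{-\lambda t(e^{i\theta}-1)}$ is exactly the right device: simplicity of $N$ gives $d\bigl(e^{i\theta N(t)}\bigr)=e^{i\theta N(t-)}(e^{i\theta}-1)\,dN(t)$, the drift cancellation is legitimate because $N$ has at most countably many jumps so replacing $N(t)$ by $N(t-)$ inside the Lebesgue integral changes nothing, and the passage from local martingale to true martingale is justified since $|Z(t)|=e^{\lambda t(1-\cos\theta)}$ is bounded on compacts. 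The final step, that a deterministic conditional characteristic function for all $\theta$ forces independence from $\mathcal{F}_s$ and identifies the Poisson law of the increment, is standard (one handles the null sets by restricting to rational $\theta$ and using continuity). No gaps of substance.
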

\citet{Aletti2018} extended the Watanabe characterization of a Poisson process to the TFPP. Further, \citet{Dhillon2024} investigated the Watanabe characterizations of GCP and GFCP. However, note that this characterization does not apply to a SFPP since the moments of the SFPP do not exist. So we will investigate the Watanabe characterization of the Tempered Space Fractional Poisson process (TSFPP), which has finite moments and is defined as
\begin{equation*}
    N_{\beta,\theta}(t):=N\left(D_{\beta,\theta}(t)\right),
\end{equation*}
where $\{D_{\beta,\theta}(t)\}_{t\ge 0}$ is the tempered stable subordinator (TSS) (see section \ref{TSS}) independent of the Poisson process $\{N(t)\}_{t\ge 0}$. We will use the following lemma (see Lemma 1 of \citet{Aletti2018}).
\begin{lemma}\label{martingale_lemma}
    Let $X$ be a right-continuous martingale. If $T$ and $S$ are stopping times such that $P(T<\infty)=1$ and $\{X(t\wedge T),t\ge 0\}$ is uniformly integrable, then $\mathbb{E}(X(T)|\mathcal{F}_{S\wedge T})=X(S\wedge T)$.
\end{lemma}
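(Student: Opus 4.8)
The plan is to recognize this as an instance of Doob's optional sampling theorem, applied not to $X$ itself (which need not be uniformly integrable) but to the \emph{stopped} process $M_t := X(t \wedge T)$. First I would observe that since $X$ is a right-continuous martingale, the stopped process $\{M_t\}_{t \ge 0}$ is again a right-continuous martingale with respect to the same filtration $\{\mathcal{F}_t\}$; this is the standard fact that stopping preserves the martingale property. By hypothesis, $\{M_t\}_{t \ge 0} = \{X(t \wedge T)\}_{t \ge 0}$ is uniformly integrable.

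Next I would invoke the martingale convergence theorem: a right-continuous uniformly integrable martingale converges both almost surely and in $L^1$ to an integrable limit $M_\infty$, and is closed by it, meaning $\mathbb{E}(M_\infty \mid \mathcal{F}_t) = M_t$ for every $t \ge 0$. The crucial step is the identification $M_\infty = X(T)$ almost surely. Since $P(T < \infty) = 1$, on the almost-sure event $\{T < \infty\}$ we have $t \wedge T = T$ for all $t \ge T$, so that $M_t = X(t \wedge T) = X(T)$ eventually, forcing $\lim_{t \to \infty} M_t = X(T)$; combining this with the almost-sure convergence $M_t \to M_\infty$ gives $M_\infty = X(T)$ a.s.

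Finally I would apply the optional sampling theorem for a closed (uniformly integrable) martingale: for any stopping time $\rho$ one has $\mathbb{E}(M_\infty \mid \mathcal{F}_\rho) = M_\rho$. Taking $\rho = S \wedge T$ and using $M_{S \wedge T} = X\big((S \wedge T) \wedge T\big) = X(S \wedge T)$ together with $M_\infty = X(T)$ yields
\begin{equation*}
\mathbb{E}(X(T) \mid \mathcal{F}_{S \wedge T}) = X(S \wedge T),
\end{equation*}
which is the assertion. Note that $S$ enters only through $S \wedge T \le T < \infty$, so no separate integrability or finiteness condition on $S$ is required.

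The main obstacle is the careful identification of the closing random variable $M_\infty$ with $X(T)$; this is the one place where both hypotheses are simultaneously needed, namely uniform integrability (to guarantee $L^1$-convergence and closure of the stopped martingale) and $P(T < \infty) = 1$ (to pin the limit down to the value $X(T)$ rather than some abstract limiting variable). The remaining ingredients, preservation of the martingale property under stopping and the optional sampling theorem for closed martingales, are standard and require only right-continuity of the paths to remain valid in the continuous-time setting.
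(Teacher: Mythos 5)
The paper does not prove this lemma at all; it is quoted verbatim as Lemma 1 of \citet{Aletti2018} and used as a black box, so there is no internal proof to compare against. Your argument is the standard and correct one: stop the martingale at $T$, use uniform integrability to close the stopped martingale at $M_\infty$, identify $M_\infty=X(T)$ via $P(T<\infty)=1$ and right-continuity, and then apply optional sampling for closed martingales at the stopping time $S\wedge T$, noting $M_{S\wedge T}=X(S\wedge T)$. Each step is valid (the only implicit ingredients are progressive measurability of the right-continuous adapted process, which makes $X(S\wedge T)$ an $\mathcal{F}_{S\wedge T}$-measurable random variable, and the usual conditions on the filtration needed for the closed-martingale optional sampling theorem), and you correctly isolate where each hypothesis is used.
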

The following result provides the Watanabe characterization of a TSFPP.
\begin{theorem}\label{Watanabe_characterization}
    Let $\{Y(t)\}_{t\ge 0}$ be a simple locally finite point process. Then, $\{Y(t)\}_{t\ge 0}$ is a TSFPP \textit{iff} there exist a TSS $\{D_{\beta,\theta}(t)\}_{t\ge 0}$ and a constant $\lambda>0$ such that the process 
    \begin{equation*}
    \{X(t)\}_{t\ge 0}=\{Y(t)-\lambda D_{\beta,\theta}(t)\}_{t\ge 0}    
    \end{equation*} 
    is a right-continuous martingale with respect to the induced filtration $\mathcal{F}_t=\sigma\left(Y(s),s\le t\right)\vee\sigma\left(D_{\beta,\theta}(s), s\ge 0\right)$ and for any $T>0$,
    \begin{equation}\label{stopping_times_martingale_TSFPP}
        \{X(\tau),\,\tau\,\, \text{is stopping time such that}\,\,D_{\beta,\theta}(\tau)\le T\}
    \end{equation}
    is uniformly integrable.
\end{theorem}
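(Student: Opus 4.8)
The plan is to treat the two implications separately, exploiting the crucial feature of the filtration $\mathcal{F}_t=\sigma(Y(s),s\le t)\vee\sigma(D_{\beta,\theta}(s),s\ge 0)$: the \emph{entire} path of the tempered stable subordinator already lies in $\mathcal{F}_0$, so $D_{\beta,\theta}(t)$ is $\mathcal{F}_s$-measurable for every pair $s,t$. For necessity I would assume $Y=N_{\beta,\theta}=N\circ D_{\beta,\theta}$ for a Poisson process $N$ of intensity $\lambda$ independent of the TSS, and verify the two conditions by direct conditioning. For $0\le s\le t$, conditioning on the whole path of $D_{\beta,\theta}$ and using the independent Poisson increments of $N$ gives $\mathbb{E}[Y(t)\mid\mathcal{F}_s]=Y(s)+\lambda\bigl(D_{\beta,\theta}(t)-D_{\beta,\theta}(s)\bigr)$, whence $\mathbb{E}[X(t)\mid\mathcal{F}_s]=X(s)$ after subtracting the $\mathcal{F}_s$-measurable term $\lambda D_{\beta,\theta}(t)$; right-continuity of $X$ is inherited from that of $N$ and $D_{\beta,\theta}$. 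For the uniform integrability, on the event $\{D_{\beta,\theta}(\tau)\le T\}$ monotonicity of $N$ yields $|X(\tau)|\le N(D_{\beta,\theta}(\tau))+\lambda D_{\beta,\theta}(\tau)\le N(T)+\lambda T$, a single integrable dominating variable (as $\mathbb{E}\,N(T)=\lambda T<\infty$), so the family in \eqref{stopping_times_martingale_TSFPP} is uniformly integrable.

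\textbf{Sufficiency} is the substantive direction. Given that $X=Y-\lambda D_{\beta,\theta}$ is a right-continuous martingale obeying the stated uniform integrability, I would invert the time change through the first-passage times $\tau_u=\inf\{t\ge 0:D_{\beta,\theta}(t)>u\}$ of the TSS, which are $\mathcal{F}_t$-stopping times, and set $N(u):=Y(\tau_u)$ with the time-changed filtration $\mathcal{G}_u:=\mathcal{F}_{\tau_u}$. For $u<v$, the uniform integrability hypothesis (applied with $T$ exceeding $D_{\beta,\theta}(\tau_v)$) makes the stopped martingale $\{X(t\wedge\tau_v)\}$ uniformly integrable, so Lemma \ref{martingale_lemma} yields $\mathbb{E}[X(\tau_v)\mid\mathcal{G}_u]=X(\tau_u)$. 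Translating this through $X(\tau_u)=N(u)-\lambda D_{\beta,\theta}(\tau_u)$ should exhibit $\{N(u)-\lambda u\}_{u\ge 0}$ as a $\mathcal{G}_u$-martingale; since the continuous, nondecreasing inverse $u\mapsto\tau_u$ cannot merge distinct jumps, $N$ inherits simplicity and local finiteness from $Y$, so Theorem \ref{Watanabe_Poisson} identifies $\{N(u)\}_{u\ge 0}$ as a homogeneous Poisson process of intensity $\lambda$. Recombining via $Y(t)=N(D_{\beta,\theta}(t))$ then recovers that $Y$ is a TSFPP.

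\textbf{The main obstacle} I anticipate lies entirely in the sufficiency direction: controlling the \emph{overshoot} of the subordinator at its jumps, that is, passing from $D_{\beta,\theta}(\tau_u)\ge u$ to the clean compensator $\lambda u$ in the martingale $\{N(u)-\lambda u\}$, while simultaneously confirming that the hypotheses of Theorem \ref{Watanabe_Poisson} survive the time change (adaptedness of $N$ to $\{\mathcal{G}_u\}$, simplicity, and local finiteness). The uniform integrability clause \eqref{stopping_times_martingale_TSFPP} is exactly what licenses the optional-stopping step through Lemma \ref{martingale_lemma}, so the technical heart of the argument is verifying that this clause genuinely applies along the first-passage family $\{\tau_u\}$ and that the residual contribution of the overshoot is negligible in the limiting martingale identity.
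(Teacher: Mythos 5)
Your architecture coincides with the paper's in both directions: necessity by conditioning on the subordinator path together with Theorem \ref{Watanabe_Poisson}, and sufficiency by inverting the time change through the first-passage times of the TSS, applying Lemma \ref{martingale_lemma}, and then invoking Theorem \ref{Watanabe_Poisson} for the de-subordinated process. Your necessity half is complete, and your uniform-integrability step is actually cleaner than the paper's: you dominate $|X(\tau)|$ on $\{D_{\beta,\theta}(\tau)\le T\}$ by the single integrable variable $N(T)+\lambda T$, whereas the paper argues through a uniform $L^2$ bound on the stopped martingale. Either route is fine.

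The gap is precisely the one you flag and do not close: in the sufficiency direction you arrive at $\mathbb{E}[X(\tau_v)\mid\mathcal{G}_u]=X(\tau_u)$, i.e.\ a martingale identity for $N(u)-\lambda D_{\beta,\theta}(\tau_u)$, but you never convert the compensator $\lambda D_{\beta,\theta}(\tau_u)$ into $\lambda u$, which is what Theorem \ref{Watanabe_Poisson} requires. The paper disposes of this in one line by asserting that $D_{\beta,\theta}(\cdot)$ is continuous, so that $D_{\beta,\theta}(Z(t))=t$; you should be aware that this assertion is itself problematic, since the TSS (Bernstein function $(\theta+s)^{\beta}-\theta^{\beta}$) is a driftless pure-jump subordinator, hence not pathwise continuous, and a driftless subordinator does not creep: for every fixed $u>0$ one has $D_{\beta,\theta}(\tau_u)>u$ almost surely, so the overshoot you single out is genuinely present rather than negligible. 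A complete proof therefore needs an additional argument at exactly this point --- for instance showing that the overshoot is $\mathcal{G}_u$-measurable and cancels in the increment identity, or replacing the clean inversion by a direct computation of the compensator of $N(u)=Y(\tau_u)$ --- and as written neither your proposal nor the paper's continuity claim discharges it. Your diagnosis of where the technical weight sits is correct; the proof is simply not finished there.
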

\begin{proof}
    Let $\{Y(t)\}_{t\ge 0}$ be a TSFPP so that $Y(t)=N\left(D_{\beta,\theta}(t)\right)$, where $\{D_{\beta,\theta}(t)\}_{t\ge 0}$ is the TSS independent of the Poisson process $\{N(t)\}_{t\ge 0}$ with rate $\lambda>0$.
    From \citet{Gupta2020a}), we have
    \begin{align}\label{TSFPP_moments}
       \mathbb{E}\left[D_{\beta,\theta}(t)\right] &= \beta\theta^{\beta-1}t,\quad\mathbb{V}\left[(D_{\beta,\theta}(t)\right]=\beta(1-\beta)\theta^{\beta-2}t,~  \nonumber \\
       \mathbb{E}\left[N(D_{\beta,\theta}(t))\right]&=\lambda\beta \theta^{\beta-1}t,\quad\mathbb{V}\left[N\left(D_{\beta,\theta}(t)\right)\right]=\lambda\beta\theta^{\beta-1}t+\lambda^2\beta(1-\beta)\theta^{\beta-2}t,
    \end{align}
    which are all finite. Since $Y(t)$ and $D_{\beta,\theta}(t)$ are non-negative and monotone non-decreasing (by definition), and also bounded in $L^2$ (as implied by \eqref{TSFPP_moments}), the process $\{N\left(D_{\beta,\theta}(t)\right)-\lambda D_{\beta,\theta}(t),\,\,0\le t\le T\}$ is uniformly integrable. Therefore, by Theorem \ref{Watanabe_Poisson} and Lemma \ref{martingale_lemma}, $\{N\left(D_{\beta,\theta}(t)\right)-\lambda D_{\beta,\theta}(t)\}_{t\ge 0}$ is still a martingale.
   
   Next, let $\tau$ be a stopping time such that $D_{\beta,\theta}(\tau)\le T$ which implies $\lambda D_{\beta,\theta}(\tau)\le \lambda T$. Since $\{N(t)\}_{t\ge 0}$ is a Poisson process with intensity $\lambda>0$ and $\overset{\sim}{X}(t)=X(\tau\wedge t)$ is a martingale bounded in $L^2$ with $\overset{\sim}{X}(0)=0$ a.s., it follows that $\overset{\sim}{X}(t)$ converges to $X(\tau)$ in $L^2$ with variance bounded by
   \begin{equation*}
       \mathbb{E}\left(X^2(\tau)\right)=\lim_{t\to\infty}\mathbb{E}\left(X^2(\tau\wedge t)\right)\le\mathbb{V}\left(N(T)\right)+\mathbb{V}\left(D_{\beta,\theta}(\tau)\right)\le \text{constant}\times (1+T).
   \end{equation*}
   Therefore, the family \eqref{stopping_times_martingale_TSFPP} is uniformly bounded in $L^2$, and hence uniformly integrable.

   Conversely, it is enough to prove that $Y(t)=N\left(D_{\beta,\theta}(t)\right)$, where $\{N(t)\}_{t\ge 0}$ is a Poisson process, independent of the TSS $\{D_{\beta,\theta}(t)\}_{t\ge 0}$ with rate $\lambda>0$. Let $Z(t)=\inf\{s:D_{\beta,\theta}(s)\ge t\}$ be the inverse of $\{D_{\beta,\theta}(t)\}_{t\ge 0}$. It can be observed that $\{Z(t),t\ge 0\}$ forms a family of stopping times. Therefore, by Lemma \ref{martingale_lemma}, 
   \begin{equation*}
       X\left(Z(t)\right)=Y(Z(t))-\lambda D_{\beta,\theta}(Z(t))
   \end{equation*} is still a martingale. As $D_{\beta,\theta}(.)$ is continuous, we have $D_{\beta,\theta}(Z(t))=t$ which implies that $Y(Z(t))-\lambda t$ is a martingale. Furthermore, as $Z(t)$ is an increasing process, $Y(Z(t))$ is a simple point process. By Theorem \ref{Watanabe_Poisson}, it follows that $Y(Z(t))$ is a classical Poisson process with rate $\lambda>0$. If we define $N(t)=Y(Z(t))$, then the process $\{Y(t)=N(D_{\beta,\theta}(t))\}_{t\ge 0}$ represents a TSFPP. This proves the theorem.
\end{proof}

Next we extend the Watanabe characterization in Theorem \ref{Watanabe_characterization} to the Tempered Space Time Fractional Poisson process (TSTFPP) which has finite moments (unlike STFPP) and is defined as
\begin{equation*}
N^{\alpha}_{\beta,\theta}(t):=N\left(D_{\beta,\theta}(Y_{\alpha}(t)\right),
\end{equation*}
where the tempered stable subordinator $\{ D_{\beta,\theta}((t))\}_{t\ge 0}$, the inverse stable subordinator $\{Y_{\alpha}(t)\}_{t\ge 0}$ and the Poisson process $\{N(t)\}_{t\ge 0}$ are independent of each other.

\begin{theorem}\label{Watanabe_characterization_TSTFPP}
 Let $\{X(t)\}_{t\ge 0}$ be a simple locally finite point process. Then, $\{X(t)\}_{t\ge 0}$ is a TSTFPP \textit{iff} there exist an inverse stable subordinator $\{Y_{\alpha}(t))\}_{t\ge 0}$ and a constant $\lambda>0$ such that the process 
    \begin{equation*}
    \{A(t)\}_{t\ge 0}=\{X(t)-\lambda D_{\beta,\theta}(Y_{\alpha}(t)\}_{t\ge 0}    
    \end{equation*} 
    is a right-continuous martingale with respect to the induced filtration $\mathcal{F}_t=\sigma\left(X(s),s\le t\right)\vee\sigma\left(D_{\beta,\theta}(Y_{\alpha}(s)), s\ge 0\right)$ and for any $T>0$,
    \begin{equation}\label{stopping_times_martingale_TSTFPP}
        \{A(\tau),\,\tau\,\, \text{is stopping time such that}\,\,D_{\beta,\theta}(Y_{\alpha}(\tau)\le T\}
    \end{equation}
    is uniformly integrable.
\end{theorem}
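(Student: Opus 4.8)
The plan is to treat Theorem \ref{Watanabe_characterization_TSTFPP} as the inverse-stable-subordinator time change of the tempered space fractional characterization in Theorem \ref{Watanabe_characterization}, so that both implications reduce, after an optional-stopping step, to statements already available at the TSFPP level. A preliminary observation I would record first is that the composite subordinator $W(t):=D_{\beta,\theta}(Y_\alpha(t))$ has finite first two moments, so that $X(t)=N(W(t))$ is genuinely square integrable: conditioning on $Y_\alpha(t)$ and combining the tempered-stable moments used in \eqref{TSFPP_moments} with $\mathbb{E}(Y_\alpha(t))=t^\alpha/\Gamma(\alpha+1)$ and the variance of $Y_\alpha(t)$ recorded in Section \ref{stable subordinator}, the tower property and the conditional-variance formula give $\mathbb{E}(W(t))=\beta\theta^{\beta-1}t^\alpha/\Gamma(\alpha+1)$ together with a finite $\mathbb{V}(W(t))$, and the Poisson layer contributes only finite additional variance. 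This $L^2$-boundedness is what drives uniform integrability in both directions.

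For the forward implication I assume $X(t)=N(D_{\beta,\theta}(Y_\alpha(t)))$. Theorem \ref{Watanabe_characterization} already supplies that $\{M(s)\}_{s\ge 0}:=\{N(D_{\beta,\theta}(s))-\lambda D_{\beta,\theta}(s)\}_{s\ge 0}$ is a right-continuous martingale. Since $\{Y_\alpha(t)\}_{t\ge 0}$ is continuous, non-decreasing and independent of $(N,D_{\beta,\theta})$, enlarging the filtration by $\sigma(Y_\alpha(u):u\ge 0)$ preserves the martingale property of $M$ and makes each $Y_\alpha(t)$ a stopping time; as $s\le t$ forces $Y_\alpha(s)\le Y_\alpha(t)$, applying Lemma \ref{martingale_lemma} with the ordered stopping times $Y_\alpha(s)\le Y_\alpha(t)$ yields $\mathbb{E}(M(Y_\alpha(t))\mid\mathcal{F}_s)=M(Y_\alpha(s))$, i.e. $A(t)=M(Y_\alpha(t))$ is an $\mathcal{F}_t$-martingale. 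The uniform integrability of the family \eqref{stopping_times_martingale_TSTFPP} then follows exactly as in Theorem \ref{Watanabe_characterization}: for a stopping time $\tau$ with $D_{\beta,\theta}(Y_\alpha(\tau))\le T$, the stopped process $A(\tau\wedge t)$ is a mean-zero $L^2$-martingale with $\mathbb{E}(A^2(\tau))\le \mathbb{V}(N(T))+\mathbb{V}(D_{\beta,\theta}(Y_\alpha(\tau)))\le \mathrm{const}\times(1+T)$, so the family is bounded in $L^2$ and hence uniformly integrable.

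For the converse I undo the inverse stable subordinator by composing with the stable subordinator $\{D_\alpha(t)\}_{t\ge 0}$, which is strictly increasing and satisfies $Y_\alpha(D_\alpha(t))=t$ a.s. Applying Lemma \ref{martingale_lemma} to the given martingale $A$ along the ordered stopping times $D_\alpha(s)\le D_\alpha(t)$ and using this identity gives that $\{X(D_\alpha(t))-\lambda D_{\beta,\theta}(t)\}_{t\ge 0}$ is a right-continuous martingale, and the hypothesised uniform integrability transports to the corresponding stopped family. This is precisely the martingale condition of Theorem \ref{Watanabe_characterization} for the process $X\circ D_\alpha$, from which I conclude $X(D_\alpha(t))=N(D_{\beta,\theta}(t))$ for a Poisson process $\{N(t)\}_{t\ge 0}$ of rate $\lambda$ independent of the tempered stable subordinator. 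Reading this back through $Y_\alpha$ recovers $X(t)=N(D_{\beta,\theta}(Y_\alpha(t)))$: on the plateaus of $Y_\alpha$ (the overshoot intervals of $D_\alpha$) the compensator $\lambda D_{\beta,\theta}(Y_\alpha(\cdot))$ is constant, so the martingale property together with the monotonicity of the counting process $X$ forces $X$ to be constant there as well, matching the constancy of $N(D_{\beta,\theta}(Y_\alpha(\cdot)))$, while on the range of $D_\alpha$ the identity $X(D_\alpha(u))=N(D_{\beta,\theta}(u))$ applies directly. Hence $\{X(t)\}_{t\ge 0}$ is a TSTFPP.

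The main obstacle is the time-change bookkeeping in the converse rather than any single estimate. I must justify carefully that $\{D_\alpha(t)\}_{t\ge 0}$ furnishes a legitimate family of stopping times for the enlarged filtration so that Lemma \ref{martingale_lemma} applies, that the uniform integrability hypothesis really does transfer to $X\circ D_\alpha$, and above all that the reconstruction of $X$ from its values on the range of $D_\alpha$ is valid despite the plateaus of $Y_\alpha$ and the jumps of $D_\alpha$ and $D_{\beta,\theta}$. The key that makes this work is that $Y_\alpha$ is continuous, so its plateaus are exactly the overshoot intervals skipped by $D_\alpha$, and on those intervals the combination of the martingale property with the non-decreasing nature of the point process $X$ pins $X$ to a constant; the independence of the recovered $N$ from $W$ and the point-process regularity of $X\circ D_\alpha$ then have to be verified in order to invoke Theorem \ref{Watanabe_characterization} cleanly.
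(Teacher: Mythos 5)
Your proof is essentially correct, but it takes a genuinely different route from the paper's. The paper disposes of this theorem in one line: rerun the proof of Theorem \ref{Watanabe_characterization} with $D_{\beta,\theta}(t)$ replaced throughout by the composite subordinator $W(t)=D_{\beta,\theta}(Y_{\alpha}(t))$; in particular its converse inverts $W$ in a single step via $Z(t)=\inf\{s:W(s)\ge t\}$, uses $W(Z(t))=t$, and lands directly on Watanabe's Poisson characterization (Theorem \ref{Watanabe_Poisson}). You instead peel off only the outer time change: in the forward direction you optionally stop the already-established TSFPP martingale at the stopping times $Y_{\alpha}(t)$ (after enlarging the filtration by the independent $Y_{\alpha}$), and in the converse you compose with the stable subordinator $D_{\alpha}$, use $Y_{\alpha}(D_{\alpha}(t))=t$ to reduce to Theorem \ref{Watanabe_characterization} for $X\circ D_{\alpha}$, and then reconstruct $X$ on the plateaus of $Y_{\alpha}$. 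What your route buys is modularity -- the TSFPP theorem is used as a black box and your forward direction is arguably cleaner than a verbatim rerun -- at the cost of the plateau bookkeeping that the paper's one-step inversion never confronts. One ordering point needs fixing: Theorem \ref{Watanabe_characterization} requires its input to be a \emph{simple} locally finite point process, and simplicity of $X\circ D_{\alpha}$ fails exactly when $X$ has two or more jumps inside a single jump interval of $D_{\alpha}$; the plateau-constancy argument (non-negative increments of $X$ together with the vanishing conditional expectation of $A(t)-A(s)$ on the $\mathcal{F}_0$-measurable event that $W$ is constant on $[s,t]$) must therefore be run \emph{before} invoking that theorem, not after. Since that argument uses only the martingale hypothesis on $A$, this is a reordering rather than a gap. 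Your preliminary $L^2$ computation for $W(t)$ is what the paper's substitution implicitly requires, and the measurability caveats you flag (whether $D_{\alpha}(s)$ is a stopping time for the stated filtration) are genuine but no more delicate than the conventions the paper itself adopts.
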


\begin{proof}
The proof is similar to that of Theorem \ref{Watanabe_characterization} with $D_{\beta,\theta}(t)$ replaced by $D_{\beta,\theta}(Y_{\alpha}(t))$ and hence omitted.     
\end{proof}
We now provide a martingale characterization result for the Tempered Generalized Space Time Fractional Counting process (TGSFCP).   
\begin{theorem}\label{martingale characterization_TGSFCP}
If the process $\{M_{\beta,\theta}(t)\}_{t\ge 0}$ is a TGSFCP, then there exist $k$ independent simple locally finite point processes $\{N_{j}(t)\}_{t\ge 0}$ with rates $\lambda_j>0$ satisfying condition \eqref{martingale_thm1_condition} such that $M_{\beta,\theta}(t)=\sum_{j=1}^k jN_{j}(t)$ and $\{X_j(t)=N_{j}(t)-\lambda_jD_{\beta,\theta}(t)\}_{t\ge 0}$ is a right-continuous $\{\mathcal{F}^j_t\}_{t\ge 0}$-martingale with respect to the induced filtration $\mathcal{F}^j_t=\sigma(N_{j}(s),s\le t)\vee\sigma\left(D_{\beta,\theta}(s), s\ge 0\right)$. Moreover for any $T>0$, $ \{X_j(\tau):D_{\beta,\theta}(\tau)\le T\}$ with $\tau$ a stopping time
    is uniformly integrable. Conversely, if $\{X_j(t)=N_{j}(t)-\lambda_jD_{\beta,\theta}(t)\}_{t\ge 0}$ is a right-continuous $\{\mathcal{F}^j_t\}_{t\ge 0}$-martingale with respect to the induced filtration $\mathcal{F}^j_t=\sigma(N_{j}(s),s\le t)\vee\sigma\left(D_{\beta,\theta}(s), s\ge 0\right)$ and $\{X_j(\tau):D_{\beta,\theta}(\tau)\le T\}$, where $\tau$ is a stopping time, is uniformly integrable, then $M_{\beta,\theta}(t)=\sum_{j=1}^k jN_{j}(t)$ is a TGSFCP.    
\end{theorem}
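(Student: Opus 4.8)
The plan is to derive this result by threading together two characterizations already available in the tempered setting: the weighted-sum characterization of the TGSFCP in terms of its component TSFPPs (the tempered counterpart of Theorem \ref{martingale_GSFCP}, carrying the rate relation \eqref{martingale_thm1_condition}) and the Watanabe-type martingale characterization of a single TSFPP (Theorem \ref{Watanabe_characterization}). Tempering the stable subordinator is precisely what makes the argument go through: by \eqref{TSFPP_moments} the compensator $\lambda_j D_{\beta,\theta}(t)$ has finite mean and the process is bounded in $L^2$, so the martingale and uniform-integrability apparatus of Theorem \ref{Watanabe_characterization} is legitimately available, whereas it would collapse for the untempered GSTFCP whose moments diverge.

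For the forward implication I would start from a TGSFCP $\{M_{\beta,\theta}(t)\}_{t\ge 0}$ with parameters $\lambda_1,\dots,\lambda_k$ satisfying \eqref{martingale_thm1_condition}. Invoking the tempered analogue of the converse half of Theorem \ref{martingale_GSFCP}, I extract $k$ independent TSFPPs $\{N_j(t)\}_{t\ge 0}$ with rates $\lambda_j>0$ for which $M_{\beta,\theta}(t)=\sum_{j=1}^k j N_j(t)$. Since each $N_j$ is a TSFPP, Theorem \ref{Watanabe_characterization} applies componentwise and produces the right-continuous $\mathcal{F}^j_t$-martingale $\{X_j(t)=N_j(t)-\lambda_j D_{\beta,\theta}(t)\}_{t\ge 0}$ together with the uniform integrability of $\{X_j(\tau):D_{\beta,\theta}(\tau)\le T\}$ for every $T>0$, which is exactly the asserted structure.

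For the converse I would reverse each arrow. Assuming, for every $j$, that $\{X_j(t)=N_j(t)-\lambda_j D_{\beta,\theta}(t)\}_{t\ge 0}$ is a right-continuous $\mathcal{F}^j_t$-martingale whose stopped family is uniformly integrable, the converse half of Theorem \ref{Watanabe_characterization} identifies each $\{N_j(t)\}_{t\ge 0}$ as a TSFPP. As the $N_j$ are independent TSFPPs, hence independent L\'{e}vy processes, the forward half of the tempered analogue of Theorem \ref{martingale_GSFCP} guarantees that the merged process $\sum_{j=1}^k j N_j(t)$ has stationary and independent increments with the correct generating function, that is, it is a TGSFCP.

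The step I expect to demand the most care is establishing the tempered weighted-sum characterization on which both directions rest, and in particular checking that the rate relation \eqref{martingale_thm1_condition} remains the correct compatibility condition once the Bernstein function $s^\beta$ is replaced by the tempered exponent $(\theta+s)^\beta-\theta^\beta$. The delicate point is that the component TSFPPs arising from a single GCP are built from a common subordinator, so their joint independence, which is needed to multiply the individual martingales and moment generating functions, must be forced through \eqref{martingale_thm1_condition} rather than assumed; once that is secured, the uniform integrability follows routinely from the finiteness of the moments in \eqref{TSFPP_moments}, and the gluing of the filtrations $\mathcal{F}^j_t$ is immediate.
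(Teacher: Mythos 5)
Your proposal follows essentially the same route as the paper's proof: both directions are obtained by combining the (tempered analogue of the) weighted-sum characterization in Theorem \ref{martingale_GSFCP} with the componentwise Watanabe characterization of Theorem \ref{Watanabe_characterization}, exactly as you describe. Your closing remark about verifying that condition \eqref{martingale_thm1_condition} survives the replacement of $s^\beta$ by $(\theta+s)^\beta-\theta^\beta$ is a fair point of care that the paper itself passes over silently, but it does not change the argument's structure.
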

\begin{proof}
Suppose the process $\{M_{\beta,\theta}(t)\}_{t\ge 0}$ is a TGSFCP. Let $\{N_1(t)\}_{t\ge 0},\,\{N_2(t)\}_{t\ge 0},\,\ldots,\{N_k(t)\}_{t\ge 0}$ be $k$ independent locally finite point processes with rates $\lambda_1>0,\,\lambda_2>0,\ldots,\lambda_k>0$ respectively satisfying condition \eqref{martingale_thm1_condition} and $\{D_{\beta,\theta}(t)\}_{t\ge 0}$ be a TSS independent of these processes. Using Theorem \ref{martingale_GSFCP}, we obtain $N_j(t)=N_{j\beta,\theta}(t)$ and $M_{\beta,\theta}(t)=\sum_{j=1}^{k}jN_{j\beta,\theta}(t)$ where $\{N_{j\beta,\theta}(t)\}_{t\ge 0}$ are TSFPPs. Consequently, Theorem \ref{Watanabe_characterization} implies that each $\{X_j(t)\}_{t\ge 0}$ is a martingale with respect to $\{\mathcal{F}^j_t\}_{t\ge 0}$ and $\{X_j(\tau):\,D_{\beta,\theta}(\tau)<T\}$, where $\tau$ is a stopping time, is uniformly integrable. Conversely, it suffices to establish that  $\{N_{j}(t)\}_{t\ge 0}$ is a TSFPP for each $j=1,2,\dots,k$. However, this directly follows from Theorem \ref{Watanabe_characterization}. Therefore from Theorem \ref{martingale_GSFCP}, it follows that $M_{\beta,\theta}(t)=\sum_{j=1}^k jN_j(t)$ is a TGSFCP. This completes the proof.
\end{proof}

The above result leads to the martingale property of a Tempered Generalized Space Fractional Skellam Process (TGSFSP). 
\begin{corollary}\label{martingale characterization_TGSFSP}
Let $\{\mathcal{S}_{\beta,\theta}(t)\}_{t\ge 0}$ be a TGSFSP. Then the process $\{\mathcal{S}_{\beta,\theta}(t)-\sum_{j=1}^kj(\lambda-\mu_j)D_{\beta,\theta}(t)\}_{t\ge 0}$ is a $\mathcal{F}_t$-martingale where $\lambda_j
>0$, $\mu_j>0$ and $\mathcal{F}_t=\sigma(N_{1\beta,\theta}(s),\ldots,N_{k\beta,\theta}(s),N'_{1\beta,\theta}(s),\ldots,N'_{k\beta,\theta}(s),s\le t)\vee\sigma(D_{\beta,\theta}(s),s\ge 0)$.
\end{corollary}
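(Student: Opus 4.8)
The plan is to realize the TGSFSP as the difference of two independent TGSFCPs sharing the common tempered stable subordinator $\{D_{\beta,\theta}(t)\}_{t\ge 0}$, and then to assemble the asserted process as a finite linear combination of the componentwise martingales furnished by Theorem \ref{martingale characterization_TGSFCP}. Concretely, I would write $\mathcal{S}_{\beta,\theta}(t)=M_{1\beta,\theta}(t)-M_{2\beta,\theta}(t)$, where $M_{1\beta,\theta}(t)=\sum_{j=1}^{k}jN_{j\beta,\theta}(t)$ and $M_{2\beta,\theta}(t)=\sum_{j=1}^{k}jN'_{j\beta,\theta}(t)$ are TGSFCPs with rates $\lambda_1,\ldots,\lambda_k$ and $\mu_1,\ldots,\mu_k$ respectively, and all $2k$ constituents $N_{j\beta,\theta}(t)=N_j(D_{\beta,\theta}(t))$ and $N'_{j\beta,\theta}(t)=N'_j(D_{\beta,\theta}(t))$ are TSFPPs built from mutually independent Poisson processes $\{N_j\}$, $\{N'_j\}$ time-changed by the single TSS $\{D_{\beta,\theta}(t)\}_{t\ge 0}$.

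First I would invoke Theorem \ref{martingale characterization_TGSFCP}. Applied to $M_{1\beta,\theta}$ it shows that each $\{N_{j\beta,\theta}(t)-\lambda_j D_{\beta,\theta}(t)\}_{t\ge 0}$ is a right-continuous martingale with respect to $\mathcal{F}^j_t=\sigma(N_{j\beta,\theta}(s),s\le t)\vee\sigma(D_{\beta,\theta}(s),s\ge 0)$, and applied to $M_{2\beta,\theta}$ it gives the analogous statement for each $\{N'_{j\beta,\theta}(t)-\mu_j D_{\beta,\theta}(t)\}_{t\ge 0}$; integrability is guaranteed by the finite first moments recorded in \eqref{TSFPP_moments}. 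Next I would exhibit the target as a finite $\mathbb{Z}$-linear combination of these componentwise martingales via
\[
\mathcal{S}_{\beta,\theta}(t)-\sum_{j=1}^{k}j(\lambda_j-\mu_j)D_{\beta,\theta}(t)=\sum_{j=1}^{k}j\bigl(N_{j\beta,\theta}(t)-\lambda_j D_{\beta,\theta}(t)\bigr)-\sum_{j=1}^{k}j\bigl(N'_{j\beta,\theta}(t)-\mu_j D_{\beta,\theta}(t)\bigr).
\]

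The main obstacle is that each summand is a priori only a martingale with respect to its own small filtration $\mathcal{F}^j_t$, whereas the corollary asserts the martingale property with respect to the strictly larger common filtration $\mathcal{F}_t=\sigma(N_{1\beta,\theta}(s),\ldots,N'_{k\beta,\theta}(s),s\le t)\vee\sigma(D_{\beta,\theta}(s),s\ge 0)$. To upgrade the filtration I would condition on the entire trajectory of the shared subordinator, which is already contained in $\mathcal{F}_t$: given $\sigma(D_{\beta,\theta}(s),s\ge 0)$, each $N_{j\beta,\theta}(t)=N_j(D_{\beta,\theta}(t))$ is a Poisson process evaluated at a deterministic increasing time, so by independent increments
\[
\mathbb{E}\bigl(N_j(D_{\beta,\theta}(t))\mid N_j(u),\,u\le D_{\beta,\theta}(s)\bigr)=N_j(D_{\beta,\theta}(s))+\lambda_j\bigl(D_{\beta,\theta}(t)-D_{\beta,\theta}(s)\bigr).
\]
Since the remaining $2k-1$ constituent Poisson processes are independent of $N_j$, the extra information they contribute to $\mathcal{F}_s$ does not alter this conditional expectation, whence $\mathbb{E}(X_j(t)\mid\mathcal{F}_s)=X_j(s)$ for each componentwise martingale $X_j$ (and likewise for the primed ones). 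Summing over $j$ with the integer weights and using linearity of conditional expectation then shows that the whole combination is an $\mathcal{F}_t$-martingale, which is precisely the claim of the corollary.
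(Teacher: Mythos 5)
Your proposal is correct and follows essentially the same route as the paper: realize the TGSFSP as the difference of two TGSFCPs driven by the common tempered stable subordinator, invoke Theorem \ref{martingale characterization_TGSFCP} for the compensated components, and take the integer-weighted linear combination. The one place you go beyond the published argument is your explicit justification that each componentwise martingale remains a martingale after enlarging from its own filtration $\mathcal{F}^j_t$ to the common filtration $\mathcal{F}_t$ (by conditioning on the subordinator trajectory and using independence of the remaining Poisson components); the paper merely asserts that the two compensated TGSFCPs ``are defined on the common filtration'' and subtracts them, so your conditioning step supplies a justification the paper leaves implicit.
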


\begin{proof}
Let $\mathcal{S}_{\beta,\theta}(t)=M_{1\beta,\theta}(t)-M_{2\beta,\theta}(t)$ where $\{M_{1\beta,\theta}(t)\}_{t\ge 0}$ and $\{M_{2\beta,\theta}(t)\}_{t\ge 0}$ are two independent TGSFCPs with rates $\lambda_1,\ldots,\lambda_k>0$ and $\mu_1,\ldots,\mu_k>0$ respectively. Since the weighted sum of martingales is also a martingale, it follows from Theorem \ref{martingale characterization_TGSFCP} that the process $\{M_{1\beta,\theta}(t)-\sum_{j=1}^{k}j\lambda_j D_{\beta,\theta}(t)\}_{t\ge 0}$ is a $\mathcal{F}_{1t}$-martingale where $\mathcal{F}_{1t}=\sigma(N_{1\beta,\theta}(s), N_{2\beta,\theta}(s),\ldots,N_{k\beta,\theta}(s),s\le t)\vee\sigma(D_{\beta,\theta}(s),s\ge 0)$. Similarly the process $\{M_{2\beta,\theta}(t)-\sum_{j=1}^{k}j\mu_j D_{\beta,\theta}(t)\}_{t\ge 0}$ is a $\mathcal{F}_{2t}$-martingale where $\mathcal{F}_{2t}=\sigma(N'_{1\beta,\theta}(s), N'_{2\beta,\theta}(s),\ldots,N'_{k\beta,\theta}(s),s\le t)\vee\sigma(D_{\beta,\theta}(s),s\ge 0)$. Note that both the above processes are defined on the common filtration $\mathcal{F}_t=\sigma(N_{1\beta,\theta}(s),\ldots,N_{k\beta,\theta}(s),N'_{1\beta,\theta}(s),\ldots,N'_{k\beta,\theta}(s),s\le t)\vee\sigma(D_{\beta,\theta}(s),s\ge 0)$. Since a linear combination of martingales defined on the same filtration is also a martingale, the difference of the above processes, that is, $\{\mathcal{S}_{\beta,\theta}(t)-\sum_{j=1}^kj(\lambda-\mu_j)D_{\beta,\theta}(t)\}_{t\ge 0}$ is a $\mathcal{F}_t$-martingale.     
\end{proof}

The following result provides the martingale characterization of a Tempered Generalized Space Time Fractional Counting Process (TGSTFCP).
\begin{theorem}\label{martingale characterization_TGSTFCP}
The process $\{M^{\alpha}_{\beta,\theta}(t)\}_{t\ge 0}$ is a TGSTFCP {\it iff} there exist $k$ simple locally finite point processes $\{N_{j}(t)\}_{t\ge 0}$ with rates $\lambda_j>0$ such that $M^{\alpha}_{\beta,\theta}(t)\overset{d}{=}\sum_{j=1}^k jN_{j}(t)$ and $\{X_j(t)=N_{j}(t)-\lambda_jD_{\beta,\theta}(Y_{\alpha}(t))\}_{t\ge 0}$ is a right-continuous $\{\mathcal{F}^j_t\}_{t\ge 0}$-martingale with respect to the induced filtration $\mathcal{F}^j_t=\sigma(N_{j}(s),s\le t)\vee\sigma \left(D_{\beta,\theta}(Y_{\alpha}(s)), s\ge 0\right)$. Moreover for any $T>0$, $\{X_j(\tau),\,\tau\,\, \text{is stopping time such that}\,\,D_{\beta,\theta}(Y_{\alpha}(\tau))\le T\}$ is uniformly integrable. 
\end{theorem}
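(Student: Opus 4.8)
The plan is to mirror the proof of Theorem \ref{martingale characterization_TGSFCP}, with the tempered stable subordinator $D_{\beta,\theta}(t)$ replaced throughout by its time-changed version $D_{\beta,\theta}(Y_{\alpha}(t))$, and with the Watanabe characterization of the TSTFPP (Theorem \ref{Watanabe_characterization_TSTFPP}) used in place of that of the TSFPP (Theorem \ref{Watanabe_characterization}). The two ingredients I would combine are a weighted-sum representation of the generalized process and the componentwise Watanabe characterization.

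\textbf{Forward direction.} Suppose $\{M^{\alpha}_{\beta,\theta}(t)\}_{t\ge 0}$ is a TGSTFCP, so that $M^{\alpha}_{\beta,\theta}(t)=M(D_{\beta,\theta}(Y_{\alpha}(t)))$ for a GCP $M$ with rates $\lambda_1,\dots,\lambda_k$. First I would establish the tempered space-time analogue of Proposition \ref{weighted.proposition}, namely $M^{\alpha}_{\beta,\theta}(t)\overset{d}{=}\sum_{j=1}^{k}j\,N_{j}(t)$, where $\{N_j(t)\}_{t\ge 0}$ are independent TSTFPPs with rates $\lambda_j>0$. This follows by conditioning on the common subordinator $D_{\beta,\theta}(Y_{\alpha}(t))$ and invoking the weighted-sum representation \eqref{GSPsum} of the underlying generalized process, exactly as in the moment generating function computation used to prove Theorem \ref{weighted.gstfsp}. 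Having identified each $N_j$ as a TSTFPP, I would apply Theorem \ref{Watanabe_characterization_TSTFPP} to conclude that each $\{X_j(t)=N_j(t)-\lambda_j D_{\beta,\theta}(Y_{\alpha}(t))\}_{t\ge 0}$ is a right-continuous $\mathcal{F}^j_t$-martingale and that, for every $T>0$, the family $\{X_j(\tau):D_{\beta,\theta}(Y_{\alpha}(\tau))\le T\}$ is uniformly integrable.

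\textbf{Converse direction.} Assume that for each $j$ the process $\{X_j(t)=N_j(t)-\lambda_j D_{\beta,\theta}(Y_{\alpha}(t))\}_{t\ge 0}$ is a right-continuous $\mathcal{F}^j_t$-martingale and the associated stopped family is uniformly integrable. Applying Theorem \ref{Watanabe_characterization_TSTFPP} componentwise identifies each $\{N_j(t)\}_{t\ge 0}$ as a TSTFPP with rate $\lambda_j$. Feeding these independent TSTFPPs back into the weighted-sum representation then yields $\sum_{j=1}^{k}j\,N_j(t)\overset{d}{=}M^{\alpha}_{\beta,\theta}(t)$, so the weighted process is a TGSTFCP, completing the equivalence.

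\textbf{Main obstacle.} The delicate point is that, because $Y_{\alpha}$ has neither independent nor stationary increments, the subordinated process is not a Lévy process and the decomposition $M^{\alpha}_{\beta,\theta}(t)=\sum_{j=1}^{k}j\,N_j(t)$ can only be asserted in distribution (hence the $\overset{d}{=}$ in the statement), not pathwise. Consequently the whole argument must be phrased at the level of one- and finite-dimensional distributions, and the martingale and uniform-integrability statements must be read with respect to the enlarged filtration $\mathcal{F}^j_t=\sigma(N_j(s),s\le t)\vee\sigma(D_{\beta,\theta}(Y_{\alpha}(s)),s\ge 0)$ that explicitly contains the shared time change. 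Verifying that Theorem \ref{Watanabe_characterization_TSTFPP} applies verbatim under this enlarged filtration — in particular that $D_{\beta,\theta}(Y_{\alpha}(\cdot))$ furnishes an admissible family of stopping times and that the $L^2$ bounds guaranteeing uniform integrability survive the inverse-stable time change — is the step requiring the most care.
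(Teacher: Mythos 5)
Your proposal follows essentially the same route as the paper, which proves this result by mirroring Theorem \ref{martingale characterization_TGSFCP} with the weighted-sum representation (Proposition \ref{weighted.proposition}) and the Watanabe characterization of the TSTFPP (Theorem \ref{Watanabe_characterization_TSTFPP}), identifying each $N_j$ as a TSTFPP $N^{\alpha}_{j\beta,\theta}$. Your closing remarks on why the decomposition holds only in distribution (the time-changed process is no longer L\'{e}vy) are a sound elaboration of a point the paper leaves implicit.
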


 \begin{proof}
 The result can be proved similarly as in Theorem \ref{martingale characterization_TGSFCP} by using Proposition \ref{weighted.proposition} and Theorem \ref{Watanabe_characterization_TSTFPP} along with observing that $N_{j}(t)=N^{\alpha}_{j\beta,\theta}(t)$ here.
 \end{proof}

Finally, we provide below the martingale property of a Tempered Generalized Space Time Fractional Skellam Process (TGSTFSP).
\begin{corollary}\label{martingale characterization_TGSTFSP}
Let $\{\mathcal{S}^{\alpha}_{\beta,\theta}(t)\}_{t\ge 0}$ be a TGSTFSP. Then the process $\{\mathcal{S}^{\alpha}_{\beta,\theta}(t)-\sum_{j=1}^kj(\lambda-\mu_j)D_{\beta,\theta}(Y_{\alpha}(t)\}_{t\ge 0}$ is a $\mathcal{F}_t$-martingale where $\lambda_j
>0$, $\mu_j>0$ and $\mathcal{F}_t=\sigma(N^{\alpha}_{1\beta,\theta}(s),\ldots,N^{\alpha}_{k\beta,\theta}(s),N^{'\alpha}_{1\beta,\theta}(s),\ldots,N^{'\alpha}_{k\beta,\theta}(s),s\le t)\vee\sigma(D_{\beta,\theta}(Y_{\alpha}(s),s\ge 0)$.    
\end{corollary}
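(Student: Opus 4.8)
The plan is to exploit the decomposition of the TGSTFSP as a difference of two independent TGSTFCPs and then invoke the martingale characterization already established in Theorem \ref{martingale characterization_TGSTFCP}, exactly mirroring the argument used for the TGSFSP in Corollary \ref{martingale characterization_TGSFSP}. First I would write $\mathcal{S}^{\alpha}_{\beta,\theta}(t)=M^{\alpha}_{1\beta,\theta}(t)-M^{\alpha}_{2\beta,\theta}(t)$, where $\{M^{\alpha}_{1\beta,\theta}(t)\}_{t\ge 0}$ and $\{M^{\alpha}_{2\beta,\theta}(t)\}_{t\ge 0}$ are two independent TGSTFCPs with rates $\lambda_1,\ldots,\lambda_k>0$ and $\mu_1,\ldots,\mu_k>0$ respectively, both subordinated through the common process $\{D_{\beta,\theta}(Y_{\alpha}(t))\}_{t\ge 0}$. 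By the weighted-sum representation I would record $M^{\alpha}_{1\beta,\theta}(t)=\sum_{j=1}^{k}jN^{\alpha}_{j\beta,\theta}(t)$ and $M^{\alpha}_{2\beta,\theta}(t)=\sum_{j=1}^{k}jN^{'\alpha}_{j\beta,\theta}(t)$.

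Next, I would apply Theorem \ref{martingale characterization_TGSTFCP} to each factor separately. This yields that $\{M^{\alpha}_{1\beta,\theta}(t)-\sum_{j=1}^{k}j\lambda_j D_{\beta,\theta}(Y_{\alpha}(t))\}_{t\ge 0}$ is a right-continuous martingale with respect to $\mathcal{F}_{1t}=\sigma(N^{\alpha}_{1\beta,\theta}(s),\ldots,N^{\alpha}_{k\beta,\theta}(s),s\le t)\vee\sigma(D_{\beta,\theta}(Y_{\alpha}(s)),s\ge 0)$, and analogously that $\{M^{\alpha}_{2\beta,\theta}(t)-\sum_{j=1}^{k}j\mu_j D_{\beta,\theta}(Y_{\alpha}(t))\}_{t\ge 0}$ is a right-continuous martingale with respect to the corresponding filtration $\mathcal{F}_{2t}$ generated by the primed point processes and the same subordinator.

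Finally I would pass both centred processes to the common filtration $\mathcal{F}_t=\sigma(N^{\alpha}_{1\beta,\theta}(s),\ldots,N^{\alpha}_{k\beta,\theta}(s),N^{'\alpha}_{1\beta,\theta}(s),\ldots,N^{'\alpha}_{k\beta,\theta}(s),s\le t)\vee\sigma(D_{\beta,\theta}(Y_{\alpha}(s)),s\ge 0)$. Since the two collections of point processes are independent and share the same subordinating process, enlargement of each of $\mathcal{F}_{1t}$ and $\mathcal{F}_{2t}$ by the independent sigma-field does not alter the relevant conditional expectations, so each centred process remains an $\mathcal{F}_t$-martingale. As a linear combination, in particular the difference, of martingales on a fixed filtration is again a martingale, it follows that $\{\mathcal{S}^{\alpha}_{\beta,\theta}(t)-\sum_{j=1}^{k}j(\lambda_j-\mu_j)D_{\beta,\theta}(Y_{\alpha}(t))\}_{t\ge 0}$ is an $\mathcal{F}_t$-martingale, which is the assertion.

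I expect the main subtlety to lie precisely in this filtration-enlargement step: a process that is a martingale with respect to the smaller $\mathcal{F}_{1t}$ is not automatically a martingale with respect to the larger $\mathcal{F}_t$, so one must explicitly invoke the independence of the second family of point processes to justify that conditioning on the additional information leaves the martingale property intact. The remaining manipulations are routine, and the entire argument is the TGSTFSP analogue of the TGSFSP proof with $D_{\beta,\theta}(t)$ replaced throughout by $D_{\beta,\theta}(Y_{\alpha}(t))$.
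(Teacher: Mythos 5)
Your proposal is correct and follows essentially the same route as the paper: the paper proves this corollary by declaring it "similar to" Corollary \ref{martingale characterization_TGSFSP}, whose argument is precisely your decomposition into two independent TGSTFCPs, an application of Theorem \ref{martingale characterization_TGSTFCP} to each, and closure of martingales under differences on a common filtration. Your explicit justification of the filtration-enlargement step via independence is in fact slightly more careful than the paper, which simply asserts that both centred processes "are defined on the common filtration" without addressing why the martingale property survives the enlargement.
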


\begin{proof}
The proof is similar to that of Corollary \ref{martingale characterization_TGSFSP} and hence omitted.
\end{proof}

\section{Running average processes of GSFSP and GSFCP}\label{sec 12}
In this section, we introduce the running average processes of GSFSP and GSFCP and study their properties.
\subsection{Running Average of GSFSP}
Let $\{\mathcal{S}_{\beta}(t)\}_{t\ge 0}$ be a GSFSP such that $\mathcal{S}_{\beta}(t)=M_{1\beta}(t)-M_{2\beta}(t)$ where $\{M_{1\beta}(t)\}_{t\ge 0}$ and $\{M_{2\beta}(t)\}_{t\ge 0}$ are independent GSFCPs with intensity parameters $\lambda_1,\lambda_2,\ldots,\lambda_k$ and $\mu_1,\mu_2,\ldots,\mu_k$ respectively.
\begin{definition}
We define the running average process of a GSFSP by taking the time-scaled integral of its path (see \citet{Xia2018, Gupta2020}) as follows:
\begin{equation*}
\mathcal{S}^A_{\beta}(t)=\frac{1}{t}\int_{0}^{t}\mathcal{S}_{\beta}(s)ds. \label{78}
\end{equation*}
\end{definition}
Note that $\{\mathcal{S}^A_{\beta}(t)\}_{t \geq 0}$ satisfies the following differential equation with initial condition $\mathcal{S}^A_{\beta}(0)=0$:
\begin{equation*}
    \frac{d}{dt}\left(\mathcal{S}^A_{\beta}(t)\right)=\frac{1}{t}\mathcal{S}^A_{\beta}(t)-\frac{1}{t^2}\int_{0}^{t}\mathcal{S}^A_{\beta}(s)ds    
\end{equation*}
which shows that it has continuous sample paths of bounded total variation. 
The following result provides the distribution of the running average process of a GSFSP.
\begin{theorem}\label{compound.gsfsp}
Let $\{Y(t)\}_{t\ge 0}$ be a compound Poisson process given by
\begin{equation*}
   Y(t)=\sum_{i=1}^{N(D_{\beta}(t))}X_i
\end{equation*}
where $\{N(D_{\beta}(t))\}_{t\ge 0}$ is a SFPP with intensity parameter $(\Lambda+T)t$ and $X_i$s are i.i.d. random variables, independent of $N(D_{\beta}(t))$, with density
\begin{equation*}
f(x) = \displaystyle\frac{1}{2\pi}\displaystyle\int_{-\infty}^{\infty}e^{-iux}\left[1-\frac{1}{\Lambda+T}\left(\int_{0}^{1}\left\{\left(\sum_{j=1}^{k}(1-e^{iuzj})\lambda_j\right)^\beta + \left(\sum_{j=1}^{k}(1-e^{-iuzj})\mu_j\right)^\beta\right\}dz\right)^{1/\beta}\right]du.
\end{equation*}
Then
\begin{equation*}
Y(t) \overset{d}{=} \mathcal{S}_A(t).
\end{equation*}
\end{theorem}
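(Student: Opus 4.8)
The plan is to prove the distributional identity by showing that the running average $\mathcal{S}^A_\beta(t)=\frac{1}{t}\int_0^t\mathcal{S}_\beta(s)\,ds$ and the compound Poisson process $Y(t)$ share the same characteristic function. The starting observation is that the GSFSP $\{\mathcal{S}_\beta(t)=\mathcal{S}(D_\beta(t))\}_{t\ge 0}$ is itself a L\'evy process, being the subordination of the L\'evy process $\{\mathcal{S}(t)\}$ (GSP) by the independent stable subordinator $\{D_\beta(t)\}$. Hence the running-average lemma of \citet{Xia2018} (Subsection 2.5) applies with $X(s)=\mathcal{S}_\beta(s)$. From Remark \ref{p.g.f._GSTFSP} with $\alpha=1$ (equivalently from \eqref{incre2}), substituting $u=e^{i\xi}$ in the p.g.f.\ gives the characteristic function of the unit-time marginal:
\begin{equation*}
\phi_{\mathcal{S}_\beta(1)}(\xi)=\exp\left\{-\left[\left(\sum_{j=1}^{k}(1-e^{i\xi j})\lambda_j\right)^\beta+\left(\sum_{j=1}^{k}(1-e^{-i\xi j})\mu_j\right)^\beta\right]\right\}.
\end{equation*}

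Next I would apply the lemma. Writing $W(t)=\int_0^t\mathcal{S}_\beta(s)\,ds$ so that $\mathcal{S}^A_\beta(t)=W(t)/t$, the lemma gives $\phi_{W(t)}(u)=\exp\{t\int_0^1\log\phi_{\mathcal{S}_\beta(1)}(tuz)\,dz\}$, and therefore
\begin{equation*}
\phi_{\mathcal{S}^A_\beta(t)}(u)=\phi_{W(t)}(u/t)=\exp\left\{-t\int_0^1\left[\left(\sum_{j=1}^{k}(1-e^{iuzj})\lambda_j\right)^\beta+\left(\sum_{j=1}^{k}(1-e^{-iuzj})\mu_j\right)^\beta\right]dz\right\},
\end{equation*}
where the factor $t$ inside the argument cancels against the external $1/t$ scaling, leaving $\int_0^1\log\phi_{\mathcal{S}_\beta(1)}(uz)\,dz$ in the exponent.

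Then I would compute the characteristic function of $Y(t)=\sum_{i=1}^{N(D_\beta(t))}X_i$. Conditioning on the SFPP $M(t)=N(D_\beta(t))$ gives $\phi_{Y(t)}(u)=G_{M(t)}(\phi_X(u))$, where $G_{M(t)}(v)=\exp\{-t(\Lambda+T)^\beta(1-v)^\beta\}$ is the p.g.f.\ of the SFPP. Reading off $\phi_X$ from the prescribed density by Fourier inversion (the convention $\phi_X(u)=\int e^{iux}f(x)\,dx$, so $f(x)=\frac{1}{2\pi}\int e^{-iux}\phi_X(u)\,du$) identifies
\begin{equation*}
\phi_X(u)=1-\frac{1}{\Lambda+T}\left(\int_0^1\left\{\left(\sum_{j=1}^{k}(1-e^{iuzj})\lambda_j\right)^\beta+\left(\sum_{j=1}^{k}(1-e^{-iuzj})\mu_j\right)^\beta\right\}dz\right)^{1/\beta},
\end{equation*}
so that $(\Lambda+T)^\beta(1-\phi_X(u))^\beta$ equals precisely the inner integral. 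Substituting into $G_{M(t)}(\phi_X(u))$ then yields exactly the expression for $\phi_{\mathcal{S}^A_\beta(t)}(u)$ obtained above, whence $Y(t)\overset{d}{=}\mathcal{S}_A(t)$.

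The main obstacle I anticipate is the clean identification of $\phi_X$ from the density $f$: one must confirm that $f$ is a legitimate probability density, i.e.\ that the bracketed Fourier transform is a bona fide characteristic function (with $\phi_X(0)=1$ and $|\phi_X|\le 1$), and that the chosen inversion convention matches the one used in reading off $\phi_X$. A secondary point requiring care is justifying the stated p.g.f.\ form of the SFPP consistent with its intensity parameter and the validity of the conditioning step in the compound Poisson computation; the cancellation of the scaling factor $t$ in the running-average step should be stated explicitly to avoid the common off-by-$t$ error.
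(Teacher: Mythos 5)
Your proposal is correct and follows essentially the same route as the paper's own proof: apply the running-average lemma of \citet{Xia2018} to the L\'evy process $\{\mathcal{S}_\beta(t)\}_{t\ge 0}$ to get the characteristic function of $\mathcal{S}^A_\beta(t)$, then compute the characteristic function of $Y(t)$ by reading off $\phi_{X_1}$ from the given density via Fourier inversion and composing with the SFPP p.g.f.\ $\exp\{-t(\Lambda+T)^\beta(1-v)^\beta\}$, and match the two expressions. The additional care you flag (checking that $f$ is a bona fide density and making the cancellation of the scaling factor $t$ explicit) goes slightly beyond what the paper records but does not change the argument.
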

\begin{proof}
Let us denote $K(t)=\int_{0}^{t}\mathcal{S}_{\beta}(s)ds$. Since $\{\mathcal{S}_{\beta}(t)\}_{t\ge 0}$ is a L\'{e}vy process, using Lemma \ref{running average lemma}, the characteristic function of $K(t)$ is given as
\begin{equation*}
    \phi_{K(t)}(u)=e^{t\left(\int_{0}^{1}\log \phi_{\mathcal{S}_{\beta}(1)}(tuz)dz\right)}.
\end{equation*}
Substituting $\alpha=1$ in the p.g.f. of GSTFSP (see Remark \ref{p.g.f._GSTFSP}), the characteristic function of $\{\mathcal{S}_{\beta}(t)\}_{t\ge 0}$ is
\begin{equation*}
\phi_{\mathcal{S}_{\beta}(t)}(u) = \exp\left[-t\left\{\left(\sum_{j=1}^{k}(1-e^{iuj})\lambda_j\right)^\beta + \left(\sum_{j=1}^{k}(1-e^{-iuj})\mu_j\right)^\beta\right\}\right]
\end{equation*}
which implies
\begin{equation*}
\int_{0}^{1}\log \phi_{\mathcal{S}_{\beta}(1)}(tuz)dz=-\int_{0}^{1}\left\{\left(\sum_{j=1}^{k}(1-e^{ituzj})\lambda_j\right)^\beta + \left(\sum_{j=1}^{k}(1-e^{-ituzj})\mu_j\right)^\beta\right\}dz.
\end{equation*}
Hence the characteristic function of $\{\mathcal{S}_A(t)\}_{t\ge 0}$ is 
\begin{equation*}
\phi_{\mathcal{S}_{A}(t)}(u)=\phi_{K(t)/t}(u)=\phi_{K(t)}(u/t)=e^{t\left(\int_{0}^{1}\log \phi_{\mathcal{S}(1)}(tuz)dz\right)}=e^{-t\int_{0}^{1}\left\{\left(\sum_{j=1}^{k}(1-e^{iuzj})\lambda_j\right)^\beta + \left(\sum_{j=1}^{k}(1-e^{-iuzj})\mu_j\right)^\beta\right\}dz}.
\end{equation*}
Since $X_i$s are i.i.d. random variables with density given by
\begin{equation*}
f(x) = \displaystyle\frac{1}{2\pi}\displaystyle\int_{-\infty}^{\infty}e^{-iux}\left[1-\frac{1}{\Lambda+T}\left(\int_{0}^{1}\left\{\left(\sum_{j=1}^{k}(1-e^{iuzj})\lambda_j\right)^\beta + \left(\sum_{j=1}^{k}(1-e^{-iuzj})\mu_j\right)^\beta\right\}dz\right)^{1/\beta}\right]du,
\end{equation*}
the characteristic function of $X_1$ using inversion theorem is
\begin{equation*}
\phi_{X_1}(u) = 1-\frac{1}{\Lambda+T}\left(\int_{0}^{1}\left\{\left(\sum_{j=1}^{k}(1-e^{iuzj})\lambda_j\right)^\beta + \left(\sum_{j=1}^{k}(1-e^{-iuzj})\mu_j\right)^\beta\right\}dz\right)^{1/\beta}. 
\end{equation*}
Next, the characteristic function of $Y(t)$ is
\begin{align*}
    \phi_{Y(t)}(u)
    =\mathbb{E}\left[\exp\left(iu\sum_{i=1}^{N(D_{\beta}(t))}X_i\right)\right]
    &= \mathbb{E}\left[\mathbb{E}\left(\exp\left(iu\sum_{i=1}^{N(D_{\beta}(t))}X_i\right)\big|N(D_{\beta}(t))\right)\right]\\
    &= \sum_{n=0}^{\infty}\mathbb{E}\left(\exp\left(iu\sum_{i=1}^{n}X_i\right)\right)P\left(N(D_{\beta}(t))=n\right) \\
    &=\sum_{n=0}^{\infty}\left(\phi_{X_1}(u)\right)^n P\left(N(D_{\beta}(t))=n\right) \,\,= P^*\left(\phi_{X_1}(u)\right)   
\end{align*}
where $P^*(.)$ is the p.g.f. of $\{N(D_{\beta}(t))\}_{t\ge 0}$ with intensity parameter $(\Lambda+T)t$. Substituting $\alpha=1$ and $k=1$ in the p.g.f. of GSTFCP (see \citet{Katariaarxiv}), we have $P^*(s)=\exp\left(-t((1-s(\Lambda+T))^\beta\right)$. Hence
\begin{equation*}
\phi_{Y(t)}(u)=P^*\left(\phi_{X_1}(u)\right) = e^{-t((1-\phi_{X_1}(u))(\Lambda+T))^\beta}=e^{-t\int_{0}^{1}\left\{\left(\sum_{j=1}^{k}(1-e^{iuzj})\lambda_j\right)^\beta + \left(\sum_{j=1}^{k}(1-e^{-iuzj})\mu_j\right)^\beta\right\}dz}
    = \phi_{\mathcal{S}_A(t)}(u),
\end{equation*}
thereby proving the result.
\end{proof} 

\begin{corollary}
If $\beta=1$, then the running average process of a GSFSP and hence its compound Poisson representation reduce to those of a GSP (see \citet{Tathe2024}). If $\beta=k=1$, then the running average process of a GSFSP and hence its compound Poisson representation reduce to those of a Skellam process. However, if $\beta=1$, $\lambda_1=\cdots=\lambda_k$ and $\mu_1=\cdots=\mu_k$, then we obtain the the running average process and its compound Poisson representation of a Skellam process of order $k$ (see \citet{Gupta2020}). 
\end{corollary}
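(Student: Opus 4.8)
The plan is to derive each of the three reductions directly from Theorem \ref{compound.gsfsp}, which already expresses the running average $\mathcal{S}_A(t)$ of a GSFSP as a compound Poisson process $Y(t)=\sum_{i=1}^{N(D_\beta(t))}X_i$. The three objects that fully determine this representation are the driving count process $\{N(D_\beta(t))\}_{t\ge 0}$, its p.g.f. $P^*(s)=\exp(-t((1-s)(\Lambda+T))^\beta)$, and the jump density $f$. For each special case I would substitute the stated parameter values into these objects and match the resulting expressions with the known forms in the cited references.

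First I would treat $\beta=1$. Since $D_1(t)=t$, the SFPP $\{N(D_\beta(t))\}_{t\ge 0}$ collapses to an ordinary Poisson process $\{N(t)\}_{t\ge 0}$ with intensity $(\Lambda+T)t$, and its p.g.f. becomes $P^*(s)=\exp(-t(1-s)(\Lambda+T))$. Substituting $\beta=1$ in the characteristic function of $\mathcal{S}_A(t)$ gives
\begin{equation*}
\phi_{\mathcal{S}_A(t)}(u)=\exp\left(-t\int_0^1\left\{\sum_{j=1}^k(1-e^{iuzj})\lambda_j+\sum_{j=1}^k(1-e^{-iuzj})\mu_j\right\}dz\right),
\end{equation*}
and the jump density $f$ loses the exponent $1/\beta$. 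These are exactly the running-average quantities of a GSP obtained in \citet{Tathe2024}, which establishes the first claim.

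For the remaining two cases I would specialize the $\beta=1$ expressions further. Taking $k=1$ in addition collapses each sum over $j$ to its single $j=1$ term with $\Lambda=\lambda_1$ and $T=\mu_1$, reducing the representation to that of a Skellam process. Taking instead $\lambda_1=\cdots=\lambda_k=\lambda$ and $\mu_1=\cdots=\mu_k=\mu$ gives $\Lambda=k\lambda$, $T=k\mu$ and simplifies the integrand to $\lambda\sum_{j=1}^k(1-e^{iuzj})+\mu\sum_{j=1}^k(1-e^{-iuzj})$, yielding the running average and its compound Poisson representation for the Skellam process of order $k$ of \citet{Gupta2020}. The only non-mechanical step — and hence the main point to verify — is that, after each substitution, the resulting characteristic function, intensity, and jump law coincide term-by-term with the published forms; since Theorem \ref{compound.gsfsp} already packages everything through the single characteristic function $\phi_{\mathcal{S}_A(t)}$, this matching is a direct comparison rather than a fresh computation, so I expect no genuine obstacle beyond careful bookkeeping of the collapsing sums.
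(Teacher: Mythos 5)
Your proposal is correct and is exactly the argument the paper intends: the corollary is stated without an explicit proof, being a direct consequence of Theorem \ref{compound.gsfsp} upon substituting $\beta=1$ (so that $D_1(t)=t$ and the SFPP collapses to a Poisson process), and then further specializing to $k=1$ or to equal rates, with the characteristic function, intensity, and jump density matching the cited forms term by term. No gaps; the bookkeeping of the collapsing sums is the whole content.
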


\begin{remark}
As GSTFSP is not a L\'{e}vy process due to dependent increments, Lemma \ref{running average lemma} can't be used to find the distribution of its running average process.
\end{remark}

\subsection{Running Average of GSFCP}
Let $\{M_{\beta}(t)\}_{t\ge 0}$ be a GSFCP with intensity parameters $\lambda_1,\lambda_2,\ldots,\lambda_k$.
Its running average process $\{M^A_{\beta}(t)\}_{t \geq 0}$ can be defined by taking the time-scaled integral of its path similar to that of a GSFSP. It follows that $\{M^A_{\beta}(t)\}_{t \geq 0}$ has continuous sample paths of bounded total variation.   

The next result shows the distribution of the running average process of a GSFCP.

\begin{theorem}\label{th.gcp}
The running average process of GSFCP has a compound Poisson representation given by
\begin{equation*}
M^A_{\beta}(t) \overset{d}{=}\sum_{i=1}^{N(D_{\beta}(t))}X_i
\end{equation*}
where $\{N(D_{\beta}(t))\}_{t\ge 0}$ is a SFPP with intensity parameter $\Lambda t $ and $X_i$s are i.i.d. random variables, independent of $N(D_{\beta}(t))$, with density given by
\begin{equation*}
f(x) = \displaystyle\frac{1}{2\pi}\displaystyle\int_{-\infty}^{\infty}e^{-iux}\left[1-\frac{1}{\Lambda}\left(\int_{0}^{1}\left(\sum_{j=1}^{k}(1-e^{iuzj})\lambda_j\right)^\beta dz\right)^{1/\beta}\right]du.
\end{equation*}
\end{theorem}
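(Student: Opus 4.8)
The plan is to mirror the proof of Theorem \ref{compound.gsfsp}, the key structural observation being that the GSFCP $\{M_{\beta}(t)\}_{t\ge 0}=\{M(D_{\beta}(t))\}_{t\ge 0}$ is itself a L\'{e}vy process, since it is a GCP subordinated by an independent stable subordinator. This L\'{e}vy property is precisely what licenses Lemma \ref{running average lemma}, and verifying it is really the only non-mechanical ingredient.

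First I would set $K(t)=\int_{0}^{t}M_{\beta}(s)\,ds$ and apply Lemma \ref{running average lemma} to write $\phi_{K(t)}(u)=\exp\left(t\int_{0}^{1}\log\phi_{M_{\beta}(1)}(tuz)\,dz\right)$. The characteristic function of $M_{\beta}(t)$ is obtained by putting $\alpha=1$ (and $\mu_j=0$) in Remark \ref{p.g.f._GSTFSP}, namely $\phi_{M_{\beta}(t)}(u)=\exp\left[-t\left(\sum_{j=1}^{k}(1-e^{iuj})\lambda_j\right)^{\beta}\right]$. Substituting and using $\phi_{M^A_{\beta}(t)}(u)=\phi_{K(t)}(u/t)$ then yields
\begin{equation*}
\phi_{M^A_{\beta}(t)}(u)=e^{-t\int_{0}^{1}\left(\sum_{j=1}^{k}(1-e^{iuzj})\lambda_j\right)^{\beta}dz}.
\end{equation*}

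Next I would turn to the compound Poisson side. By the Fourier inversion theorem applied to the prescribed density $f$, the summands satisfy $\phi_{X_1}(u)=1-\Lambda^{-1}\left(\int_{0}^{1}\left(\sum_{j=1}^{k}(1-e^{iuzj})\lambda_j\right)^{\beta}dz\right)^{1/\beta}$. Conditioning on $N(D_{\beta}(t))$ gives $\phi_{Y(t)}(u)=P^{*}(\phi_{X_1}(u))$, where $P^{*}$ is the p.g.f. of the SFPP $\{N(D_{\beta}(t))\}_{t\ge 0}$ of intensity $\Lambda t$; this p.g.f. is identified as $P^{*}(s)=\exp\left(-t((1-s)\Lambda)^{\beta}\right)$ by setting $\alpha=1$, $k=1$ in the GSTFCP p.g.f. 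Since $(1-\phi_{X_1}(u))\Lambda$ equals the $1/\beta$-th root appearing in $\phi_{X_1}$, raising it to the power $\beta$ recovers the integral in the exponent of $\phi_{M^A_{\beta}(t)}$, so $\phi_{Y(t)}(u)=\phi_{M^A_{\beta}(t)}(u)$ and the two processes agree in distribution.

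The main obstacle is conceptual rather than computational: one must confirm that $\{M_{\beta}(t)\}_{t\ge 0}$ is a genuine L\'{e}vy process so that Lemma \ref{running average lemma} applies, a property that fails for the space-time fractional version because of its dependent increments. Once this is secured, every remaining step is a mechanical specialization of the GSFSP computation obtained by setting $\mu_j=0$ (so that $T=0$ and $\Lambda+T$ collapses to $\Lambda$); in fact the result could simply be read off from Theorem \ref{compound.gsfsp} under that substitution.
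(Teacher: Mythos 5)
Your proposal is correct and follows exactly the route the paper intends: the paper's own proof of this theorem is simply ``similar to that of Theorem \ref{compound.gsfsp} and hence omitted,'' and you have carried out precisely that specialization (setting $\mu_j=0$ so that $T=0$), correctly flagging that the only substantive point is the L\'{e}vy property of $\{M_{\beta}(t)\}_{t\ge 0}$ needed to invoke Lemma \ref{running average lemma}. No gaps.
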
 

\begin{proof}
The proof is similar to that of Theorem \ref{compound.gsfsp} and hence omitted.
\end{proof}



\begin{corollary}
If $\beta=1$, then the running average process of a GSFCP and hence its compound Poisson representation  reduce to those of a GCP (see \citet{Tathe2024}). If $\beta=k=1$, then the running average process of a GSFCP and hence its compound Poisson representation reduce to those of a standard Poisson process (see \citet{Xia2018}), and to those of a Poisson process of order $k$ (see \citet{Gupta2020}) if $\lambda_1=\cdots=\lambda_k$. 
\end{corollary}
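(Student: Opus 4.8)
The plan is to obtain all three reductions directly from the compound Poisson representation of Theorem \ref{th.gcp} by substituting the indicated parameter values and then identifying the resulting Poisson-plus-jump pair with the representations in the cited references; no fresh analytic estimate is needed, since existence and convergence of the representation are already supplied by Theorem \ref{th.gcp}.

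First, for $\beta=1$, I would invoke the fact recorded in Remark \ref{moment non-existance_GSTFCP} that $D_1(t)=t$. This collapses the subordinated process $\{N(D_\beta(t))\}$ of Theorem \ref{th.gcp}, which is a SFPP of intensity $\Lambda t$, into an ordinary homogeneous Poisson process $\{N(t)\}$ of intensity $\Lambda t$. Simultaneously, with $\beta=1$ the inner exponent $\beta$ and the outer power $1/\beta$ are both unity, so the jump law reduces to the characteristic function
\begin{equation*}
\phi_{X_1}(u) = 1-\frac{1}{\Lambda}\int_0^1\sum_{j=1}^k(1-e^{iuzj})\lambda_j\,dz.
\end{equation*}
I would then check that this pair is exactly the compound Poisson representation of the GCP running average obtained by \citet{Tathe2024}; equivalently, setting $\beta=1$ in the running-average characteristic function underlying Theorem \ref{th.gcp} gives $\exp\bigl(-t\int_0^1\sum_{j=1}^k(1-e^{iuzj})\lambda_j\,dz\bigr)$, which is their formula.

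Next I would carry out the two further substitutions. Putting $k=1$ in addition (so that $\lambda_1=\lambda$ and $\Lambda=\lambda$) collapses the single-term sum and reduces $\phi_{X_1}$ to the jump law of the standard Poisson running average, matching \citet{Xia2018}. Alternatively, keeping $k$ arbitrary but setting $\lambda_1=\cdots=\lambda_k=\lambda$ (so $\Lambda=k\lambda$) yields
\begin{equation*}
\phi_{X_1}(u) = 1-\frac{1}{k}\int_0^1\sum_{j=1}^k(1-e^{iuzj})\,dz
\end{equation*}
together with a Poisson process of intensity $k\lambda t$, which is precisely the compound Poisson representation of the Poisson-process-of-order-$k$ running average in \citet{Gupta2020}.

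The only obstacle is bookkeeping rather than analysis: the verification rests entirely on recognizing that each specialized characteristic function coincides term by term with the corresponding published expression, so one must have the exact forms from \citet{Tathe2024}, \citet{Xia2018} and \citet{Gupta2020} at hand for the comparison. Since each reduction follows by a single parameter substitution into an already-established representation, there is no nontrivial analytic content to overcome, and the proof can reasonably be stated as following immediately from Theorem \ref{th.gcp}.
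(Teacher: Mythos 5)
Your proposal is correct and follows essentially the same route as the paper, which states the corollary without proof as an immediate consequence of Theorem \ref{th.gcp}: one substitutes $\beta=1$ (using $D_1(t)=t$), then additionally $k=1$ or $\lambda_1=\cdots=\lambda_k=\lambda$, and matches the resulting Poisson intensity and jump characteristic function with the representations in \citet{Tathe2024}, \citet{Xia2018} and \citet{Gupta2020}. Your explicit computations of $\phi_{X_1}$ in each case are accurate, and you correctly read the order-$k$ case as $\beta=1$ with equal rates rather than $k=1$.
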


Finally, we discuss the importance of running average processes in the following remark.
\begin{remark}
From Lemma \ref{running average lemma}, note that $Y(t)=\int_{0}^{t}X(s)ds=\int_{a}^{a+t}X(s-a)ds$ where $\{X(s-a)\}_{s\ge 0}$ is the process $\{X(s)\}_{s\geq 0}$ shifted back in time by `$a$' units for the interval $(a,a+t)$. So we can examine the running average of the original process for any time interval (of same length) by time shifting. The microscopic structure of the process during various time intervals is revealed by studying the corresponding running averages. For example, we can study the mean behavior of share price data during various time periods of trading activity on a particular day, which differ in practice. 
\end{remark}

\section{Simulation}\label{sec 13}
In this section, we present plots of simulated sample paths for GSFSP and GSTFSP along with the p.m.f. of GSTFSP. We need to simulate two GSFCPs and two GSTFCPs first, and then take their differences to simulate GSFSP and GSTFSP respectively. 

For simulation of GSFCP, we use its compound Poisson representation given by \citet{Katariaarxiv}:
\begin{equation*}
M_{\beta}(t)\overset{d}{=}\sum_{i=1}^{N(D_{\beta}(t))}X_i 
\end{equation*}
where $X_i$s are i.i.d. random variables such that $P\{X_1=j\}=\frac{\lambda_j}{\Lambda}$
and $N(D_\beta(t))$ is an independent SFPP with rate $\Lambda=\sum_{j=1}^{k}\lambda_j$. It follows that GSTFCP can be simulated using the following representation:
\begin{equation*}
M^{\alpha}_{\beta}(t)\overset{d}{=}\sum_{i=1}^{N(D_{\beta}(Y_{\alpha}(t))}X_i 
\end{equation*}
where $X_i$s are as above and $N(D_{\beta}(Y_{\alpha}(t))$ is an independent STFPP with parameter $\Lambda t$. Note that for a fixed time point $t$, the GSFCP has to be simulated at time point $D_\beta(t)$ while the GSTFCP has to be simulated at time point $D_\beta(Y_{\alpha}(t))$. \\

1.{\bf Algorithm for simulation of GSFSP} \\

(a) Algorithm for simulation of stable subordinator (see \citet{Maheshwari2019}) : 

\begin{table}[H]
\resizebox{17.0cm}{!}{
\begin{tabular}{@{}llll@{}}
\toprule
\multicolumn{4}{l}{\textbf{Input:} Choose the space fractional index $\beta$ and $n$ uniformly spaced time points $0=\tau_1,\ldots,\tau_n=T^*$ such that} \\ { $\tau_{j}-\tau_{j-1}=h$ for $2\le j\le n$.} \\

\,\,\textbf{Output:} $D_\beta(t)$, the stable subordinator value. \\ 
\midrule
\multicolumn{4}{l}{\begin{tabular}[c]{@{}l@{}}
\textit{Initialization:} $i=0$, $t_0=0$, $Q_0=0$.\\
1: \textbf{while} $t_i<T^*$ do\\

2: Generate increment $Q_{i+1}$ of an independent stable subordinator $D_\beta(t)$ as follows. \\
    \quad (a) Generate $U\sim U[0,\pi]$ and $V\sim \exp(1)$. \\
    \quad(b) Compute $Q_{i+1}$ for the time interval $(\tau_{j-1}, \tau_{j})$ as \\ 
    \quad $Q_{i+1}=D_\beta(\tau_j)-D_\beta(\tau_{j-1})\stackrel{d}{=}D_\beta(h)=h^{1/\beta}\frac{\left(\sin \beta U\right)\left(\sin (1-\beta)U\right)^{(1-\beta)/\beta}}{\left(\sin U\right)^{1/\beta}V^{(1-\beta)/\beta}}$.\\
3:\, $t_{i+1}=t_i+Q_{i+1}$, $i=i+1$.\\
4: \textbf{end while}\\
5: \textbf{return} $D_\beta(t_i)$.\\
\quad The discretized sample path of $D_\beta(t)$ at $t_i$ is $\sum_{k=0}^i Q_k$. \\
\end{tabular}}       \\
\bottomrule 
\end{tabular}
}
\end{table}  

(b) Algorithm for simulation of sample path :
\begin{table}[H]
\resizebox{17.0cm}{!}{
\begin{tabular}{@{}llll@{}}
\toprule
\multicolumn{4}{l}{\begin{tabular}[c]{@{}l@{}}
\textbf{Input}: $D_{\beta}(t_i)$, the simulated stable subordinator values. Choose rate parameters $\lambda_1, \lambda_2,...,\lambda_k$ for $\{M_{1\beta}(t)\}_{t\geq 0}$ \\ and $\mu_1, \mu_2,...,\mu_k$ for $\{M_{2\beta}(t)\}_{t\geq 0}$ such that $\Lambda=T$ where $\Lambda=\sum_{j=1}^k\lambda_j$ and $T=\sum_{j=1}^k\mu_j$. \\

\textbf{Output:} $\mathcal{S}_{\beta}(t)$, simulated sample path for GSFSP.\\
\midrule
\textit{\quad Initialization: $t=0$, $Z_{1i}=Z_{2i}=0$, $M_{1\beta}(t)=M_{2\beta}(t)=0$.}\\

1: \textbf{while} $t<D_{\beta}(t_i)$ do\\



2: Using Monte-Carlo simulation, generate independent discrete random variables $X_{1i}$ and $X_{2i}$ with \\

\quad $P(X_{1i}=j)=\frac{\lambda_j}{\Lambda}$ and $P(X_{2i}=j)=\frac{\mu_j}{T}$ for $1\le j\le k$.\\



3: Set $Z_{1i}=Z_{1i}+X_{1i}$ and $Z_{2i}=Z_{2i}+X_{2i}$. \\



4: Generate a uniform random variable $U \sim U(0,1) $.\\

5. Set $t= t + \left(\frac{-1}{\Lambda}\ln{U}\right) $.\\ 

6: \textbf{end while} \\

7: Set $M_{1\beta}(t_i)=\sum_{j=1}^{i}Z_{1j}$ and $ M_{2\beta}(t_i)=\sum_{j=1}^{i}Z_{2j} $\,. \\

8: Compute $\mathcal{S}_{\beta}(t_i)= M_{1\beta}(t_i)-M_{2\beta}(t_i)$. \\

9: \textbf{return} $\mathcal{S}_{\beta}(t_i)$.\\

\end{tabular}}       \\
\bottomrule 
\end{tabular}
}
\end{table}

\begin{figure}[H] 
\centering
\includegraphics[scale=0.8]{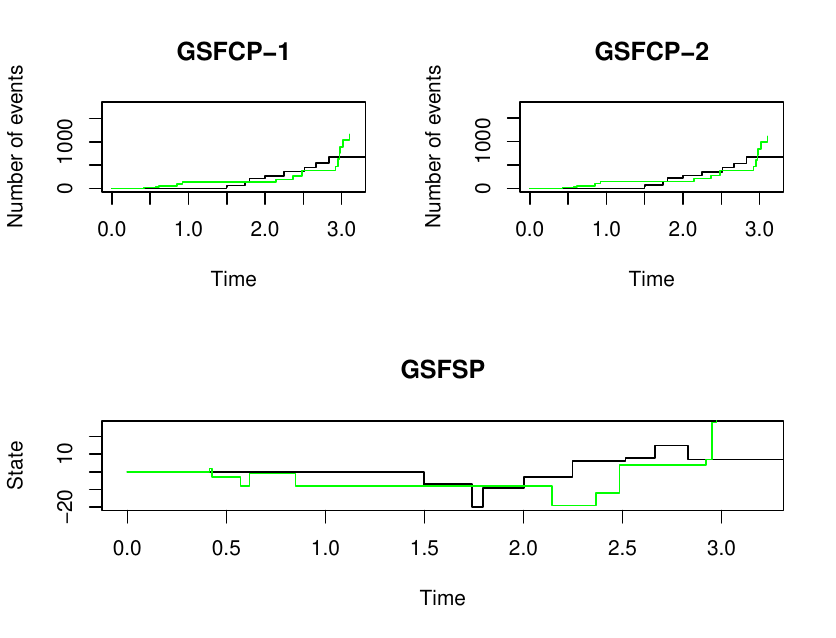}
\caption{Sample path of GSFSP for $\beta=0.4$ (black) and $\beta=0.7$ (green), $\lambda_1=1$, $\lambda_2=3$, $\lambda_3=2$, $\lambda_4=2$, $\lambda_5=2$, $\mu_1=2$, $\mu_2=2$, $\mu_3=3$, $\mu_4=3$, $\mu_5=2$.}
\end{figure}



2. {\bf Algorithm for simulation of GSTFSP} \\ 

\noindent
(a) Algorithm for simulation of inverse stable subordinator (see \citet{Maheshwari2019}) : \\
The input consists of the time fractional index $\alpha$ and the time points in Algorithm 1 while the output is $Y_\alpha(t)$, the inverse stable subordinator value. The remaining algorithm remain the same as Algorithm 1(a) with the following extra step between steps 2 and 3 $-$ set $Y_i=Y(\ceil{t_i/h} + 1)=\cdots= Y(\lfloor (t_i+Q_{i+1})/h\rfloor+1)=h*i$. The algorithm returns $Y_i=Y_\alpha(t_i)$, the discretized sample path of $Y_\alpha(t)$ at $t_i$. \\



\noindent
(b) Algorithm for simulation of time-changed stable subordinator : \\
The input consists of $Y_{\alpha}(t_i)$, the simulated inverse stable subordinator values from part (a) and the space fractional index $\beta$ while the output is $D_\beta(Y_{\alpha}(t))$, the time-changed stable subordinator value. The remaining algorithm is the same as Algorithm 1(a) with step 2(b) replaced by the following : \\
Compute $Q_{i+1}$ for the time interval $(Y_{\alpha}(t_{i-1}), Y_{\alpha}(t_{i}))$ as \\ 
    \quad $Q_{i+1}=D_\beta(Y_{\alpha}(t_i))-D_\beta(Y_{\alpha}(t_{i-1}))\stackrel{d}{=}D_\beta(h_i)=h_i^{1/\beta}\frac{\left(\sin \beta U\right)\left(\sin (1-\beta)U\right)^{(1-\beta)/\beta}}{\left(\sin U\right)^{1/\beta}V^{(1-\beta)/\beta}}$;~~ $h_i=Y_{\alpha}(t_i)-Y_{\alpha}(t_{i-1})$. \\
The algorithm returns $D_\beta(Y_{\alpha}(t_i))$ and the discretized sample path of $D_\beta(Y_{\alpha}(t))$ at $t_i$ is $\sum_{k=0}^i Q_k$. \\
\noindent
(c) Algorithm for simulation of sample path : \\
The algorithm remains the same as Algorithm 1(b) with $M_{1\beta}(t)$, $M_{2\beta}(t)$ and $\mathcal{S}_{\beta}(t)$ replaced by $M^{\alpha}_{1\beta}(t)$, $M^{\alpha}_{2\beta}(t)$ and $\mathcal{S}^{\alpha}_{\beta}(t)$ respectively. 
\begin{figure}[H] 
\centering
\includegraphics[scale=0.9]{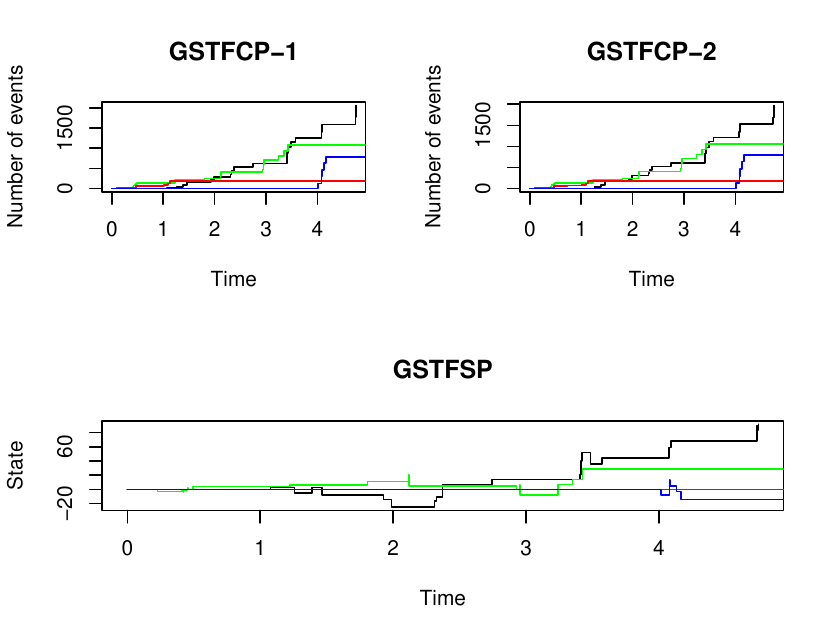}
\caption{Sample paths of GSTFSP for $\alpha=0.3$, $\beta=0.4$ (black); $\alpha=0.6$, $\beta=0.4$ (green); $\alpha=0.3$, $\beta=0.7$ (red); $\alpha=0.6$, $\beta=0.7$ (blue), $\lambda_1=1$, $\lambda_2=3$, $\lambda_3=2$, $\lambda_4=2$, $\lambda_5=2$, $\mu_1=2$, $\mu_2=2$, $\mu_3=3$, $\mu_4=3$, $\mu_5=2$.}
\end{figure}



\begin{figure}[H] 
\centering
\includegraphics[scale=0.7]{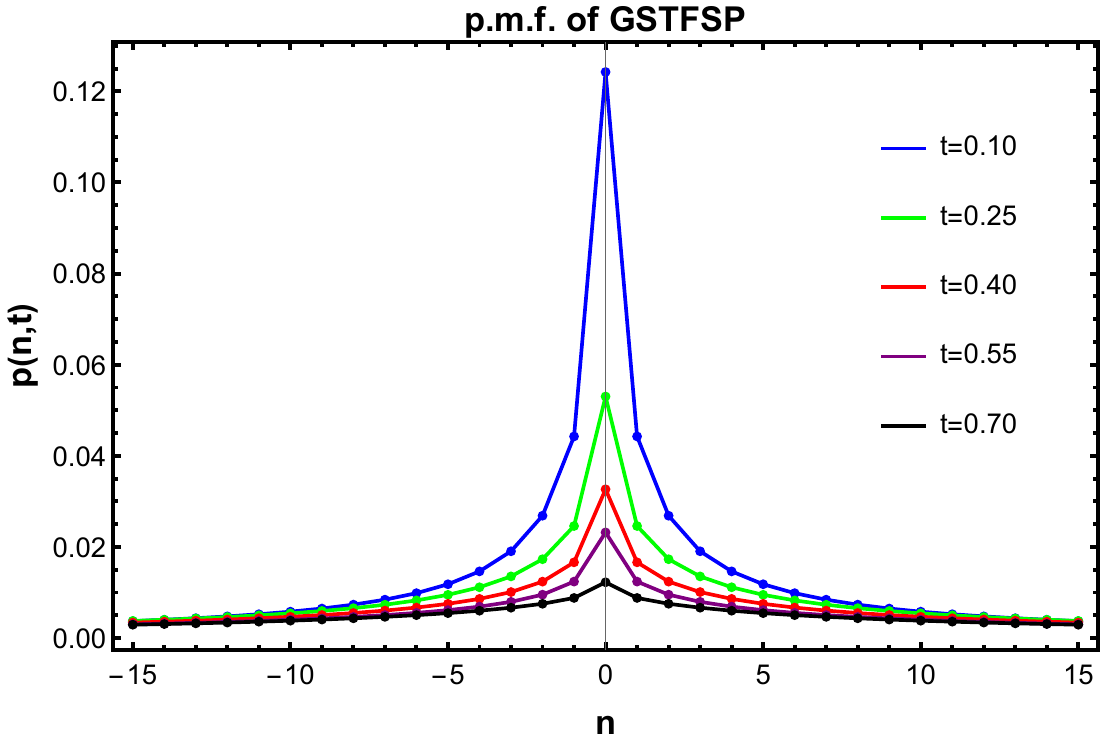 }
\caption{p.m.f. of GSTFSP at times $t = 0.10$, $0.25$, $0.40$, $0.55$, $0.70$ for $\alpha=0.6$, $\beta=0.4$, $\Lambda=50$, $T=100$.}\label{GSTFSP_pmf_plot}
\end{figure}

\section*{Declarations}
\noindent 
{\bf Conflict of Interest}. The authors declare that they have no conflict of interest.

\bibliographystyle{plainnat}
\bibliography{GSTFSP}

\begin{thebibliography}{38}
\providecommand{\natexlab}[1]{#1}
\providecommand{\url}[1]{\texttt{#1}}
\expandafter\ifx\csname urlstyle\endcsname\relax
  \providecommand{\doi}[1]{doi: #1}\else
  \providecommand{\doi}{doi: \begingroup \urlstyle{rm}\Url}\fi

\bibitem[Aletti et~al.(2018)Aletti, Leonenko, and Merzbach]{Aletti2018}
Giacomo Aletti, Nikolai Leonenko, and Ely Merzbach.
\newblock Fractional poisson fields and martingales.
\newblock \emph{Journal of Statistical Physics}, 170\penalty0 (4):\penalty0 700--730, 2018.

\bibitem[Asmussen(2003)]{Asmussen2003}
Soeren Asmussen.
\newblock \emph{Applied Probability and Queues}.
\newblock Springer New York, 2nd edition, 2003.

\bibitem[Beghin and Orsingher(2009)]{Beghin2009}
Luisa Beghin and Enzo Orsingher.
\newblock Fractional poisson process and related planar random motions.
\newblock \emph{Electronic Journal of Probability}, 14:\penalty0 1790--1826, 2009.

\bibitem[Bertoin(1996)]{Bertoin2004}
James Bertoin.
\newblock \emph{L\'{e}vy Processes}.
\newblock Cambridge University Press, Cambridge, 1996.

\bibitem[Dhillon and Kataria(2024)]{Dhillon2024}
Manisha Dhillon and Kuldeep~Kumar Kataria.
\newblock On martingale characterizations of generalized counting process and its time-changed variants.
\newblock \emph{Journal of Mathematical Analysis and Applications}, 540\penalty0 (2):\penalty0 128749, 2024.

\bibitem[Di~Crescenzo et~al.(2016)Di~Crescenzo, Martinucci, and Meoli]{Crescenzo2016}
Antonio Di~Crescenzo, Barbara Martinucci, and Alessandra Meoli.
\newblock A fractional counting process and its connection with the poisson process.
\newblock \emph{ALEA, Latin American Journal Probability and Mathematical Statistics}, 13:\penalty0 291–307, 2016.

\bibitem[D’Ovidio and Nane(2014)]{DOVIDIO2014}
Mirko D’Ovidio and Erkan Nane.
\newblock Time dependent random fields on spherical non-homogeneous surfaces.
\newblock \emph{Stochastic Processes and their Applications}, 124\penalty0 (6):\penalty0 2098--2131, 2014.

\bibitem[Feller(2008)]{WilliamF2008}
William Feller.
\newblock \emph{An introduction to probability theory and its applications}, volume~2.
\newblock John Wiley \& Sons, 2008.

\bibitem[Gupta and Kumar(2023)]{Gupta2023}
Neha Gupta and Arun Kumar.
\newblock Fractional poisson processes of order k and beyond.
\newblock \emph{Journal of Theoretical Probability}, 36\penalty0 (4):\penalty0 2165--2191, 2023.

\bibitem[Gupta et~al.(2020{\natexlab{a}})Gupta, Kumar, and Leonenko]{Gupta2020}
Neha Gupta, Arun Kumar, and Nikolai Leonenko.
\newblock Skellam type processes of order k and beyond.
\newblock \emph{Entropy}, 22:\penalty0 1193(1--21), 2020{\natexlab{a}}.

\bibitem[Gupta et~al.(2020{\natexlab{b}})Gupta, Kumar, and Leonenko]{Gupta2020a}
Neha Gupta, Arun Kumar, and Nikolai Leonenko.
\newblock Tempered fractional poisson processes and fractional equations with z-transform.
\newblock \emph{Stochastic Analysis and Applications}, 38\penalty0 (5):\penalty0 939--957, 2020{\natexlab{b}}.

\bibitem[Hwang et~al.(2007)Hwang, Kim, and Kweon]{Hwang2007}
Youngbae Hwang, Jun-Sik Kim, and In-So Kweon.
\newblock Sensor noise modeling using the skellam distribution: Application to the color edge detection.
\newblock In \emph{{proceedings of Computer Society Conference on Computer Vision and Pattern Recognition}}, pages 1--8, 2007.

\bibitem[Irwin(2018)]{Irwin2018}
J.~O. Irwin.
\newblock {The Frequency Distribution of the Difference between Two Independent Variates Following the Same Poisson Distribution}.
\newblock \emph{Journal of the Royal Statistical Society}, 100\penalty0 (3):\penalty0 415--416, 2018.

\bibitem[Karlis and Ntzoufras(2008)]{Karlis2008}
Dimitris Karlis and Ioannis Ntzoufras.
\newblock Bayesian modelling of football outcomes: Using the skellam's distribution for the goal difference.
\newblock \emph{IMA Journal of Management Mathematics}, 20\penalty0 (2):\penalty0 133--145, 2008.

\bibitem[Kataria and Khandakar(2022{\natexlab{a}})]{Kataria2022b}
Kuldeep~Kumar Kataria and Mostafizar Khandakar.
\newblock Skellam and time-changed variants of the generalized fractional counting process.
\newblock \emph{Fractional Calculus and Applied Analysis}, 25\penalty0 (5):\penalty0 1873--1907, 2022{\natexlab{a}}.

\bibitem[Kataria and Khandakar(2022{\natexlab{b}})]{KatariaGFCP}
Kuldeep~Kumar Kataria and Mostafizar Khandakar.
\newblock Generalized fractional counting process.
\newblock \emph{Journal of Theoretical Probability}, 35\penalty0 (4):\penalty0 2784--2805, 2022{\natexlab{b}}.

\bibitem[Kataria and Khandakar(2022{\natexlab{c}})]{KatariaTCSTFPP2022}
Kuldeep~Kumar Kataria and Mostafizar Khandakar.
\newblock Time-changed space-time fractional poisson process.
\newblock \emph{Stochastic Analysis and Applications}, 40\penalty0 (2):\penalty0 246--267, 2022{\natexlab{c}}.

\bibitem[Kataria and Khandakar(2024)]{kataria2024}
Kuldeep~Kumar Kataria and Mostafizar Khandakar.
\newblock Fractional skellam process of order k.
\newblock \emph{Journal of Theoretical Probability}, 37\penalty0 (2):\penalty0 1333--1356, 2024.

\bibitem[Kataria et~al.(2022)Kataria, Khandakar, and Palaniappan]{Katariaarxiv}
Kuldeep~Kumar Kataria, Mostafizar Khandakar, and Vellaisamy Palaniappan.
\newblock Generalized counting process: its non-homogeneous and time-changed versions.
\newblock \emph{{}}, 2022.
\newblock \doi{10.48550/arXiv.2210.03981}.

\bibitem[Kerss et~al.(2014)Kerss, Leonenko, and Sikorskii]{Kerss2014}
Alexander Kerss, Nikolai Leonenko, and Alla Sikorskii.
\newblock Fractional skellam processes with applications to finance.
\newblock \emph{Fractional Calculus and Applied Analysis}, 17\penalty0 (2):\penalty0 532--551, 2014.

\bibitem[Kumar et~al.(2019)Kumar, Gajda, Wylomanska, and Poloczanski]{Kumar2019}
Arun Kumar, Janusz Gajda, A~Wylomanska, and R~Poloczanski.
\newblock Fractional brownian motion delayed by tempered and inverse tempered stable subordinators.
\newblock \emph{Methodology and Computing in Applied Probability}, 21\penalty0 (1):\penalty0 185--202, 2019.

\bibitem[Laskin(2003)]{LASKIN2003}
Nick Laskin.
\newblock Fractional poisson process.
\newblock \emph{Communications in Nonlinear Science and Numerical Simulation}, 8\penalty0 (3):\penalty0 201--213, 2003.

\bibitem[Leonenko et~al.(2014)Leonenko, Meerschaert, Schilling, and Sikorskii]{Leonenko2014}
Nikolai Leonenko, Mark Meerschaert, René Schilling, and Alla Sikorskii.
\newblock Correlation structure of time-changed lévy processes.
\newblock \emph{Communications in Applied and Industrial Mathematics}, 6, 2014.

\bibitem[Leonenko et~al.(2017)Leonenko, Scalas, and Trinh]{Leonenko2017}
Nikolai Leonenko, Enrico Scalas, and Mailan Trinh.
\newblock The fractional non-homogeneous poisson process.
\newblock \emph{Statistics and Probability Letters}, 120:\penalty0 147--156, 2017.

\bibitem[Maheshwari and Vellaisamy(2019)]{Maheshwari2019}
Aditya Maheshwari and Palaniappan Vellaisamy.
\newblock Non-homogeneous space-time fractional poisson processes.
\newblock \emph{Stochastic Analysis and Applications}, 37\penalty0 (2):\penalty0 137--154, 2019.

\bibitem[Mainardi et~al.(2004)Mainardi, Gorenflo, and Scalas]{Mainardi2004}
Francesco Mainardi, Rudolf Gorenflo, and Enrico Scalas.
\newblock A fractional generalization of the poisson processes.
\newblock \emph{Vietnam Journal of Mathematics}, 32:\penalty0 53--64, 2004.

\bibitem[Meerschaert and Straka(2013)]{Meerschaert2013}
Mark Meerschaert and Peter Straka.
\newblock Inverse stable subordinators.
\newblock \emph{Mathematical modelling of natural phenomena}, 8\penalty0 (2):\penalty0 1--16, 2013.

\bibitem[Meerschaert et~al.(2011)Meerschaert, Nane, and Palaniappan]{Meerschaert2011}
Mark Meerschaert, Erkan Nane, and Vellaisamy Palaniappan.
\newblock The fractional poisson process and the inverse stable subordinator.
\newblock \emph{Electronic Journal of Probability}, 16:\penalty0 1600--1620, 2011.

\bibitem[Orsingher and Polito(2012)]{Orsingher2012}
Enzo Orsingher and Federico Polito.
\newblock The space-fractional poisson process.
\newblock \emph{Statistics \& Probability Letters}, 82\penalty0 (4):\penalty0 852--858, 2012.

\bibitem[Prajapati and Shukla(2012)]{Prajapati2012}
Jyotindra~C. Prajapati and Ajay~Kumar Shukla.
\newblock Decomposition of generalized mittag-leffler function and its properties.
\newblock \emph{Advances in Pure Mathematics}, 2\penalty0 (1):\penalty0 8--14, 2012.

\bibitem[Rosinski(2007)]{Rosinski2007}
Jan Rosinski.
\newblock Tempering stable processes.
\newblock \emph{Stochastic Processes and their Applications}, 117:\penalty0 677--707, 2007.

\bibitem[Skellam(1946)]{Skellam1946}
J.~G. Skellam.
\newblock The frequency distribution of the difference between two poisson variates belonging to different populations.
\newblock \emph{Journal of Royal Statistical Society, Series A}, 109\penalty0 (3):\penalty0 296--296, 1946.

\bibitem[Steutel and Harn(2004)]{Steutel2004}
Fred~W. Steutel and Klaas~Van Harn.
\newblock \emph{Infinite Divisibility of Probability Distributions on the Real Line}.
\newblock Marcel Dekker, New York, 2004.

\bibitem[Tathe and Ghosh(2024)]{Tathe2024}
Kartik Tathe and Sayan Ghosh.
\newblock Non-homogeneous generalized fractional skellam process.
\newblock \emph{{}}, 2024.
\newblock \doi{10.48550/arXiv.2407.19227}.

\bibitem[Vellaisamy and Kumar(2018)]{Vellaisamy2018}
Palaniappan Vellaisamy and A.~Kumar.
\newblock First-exit times of an inverse gaussian process.
\newblock \emph{Stochastics}, 90\penalty0 (1):\penalty0 29--48, 2018.

\bibitem[Vellaisamy and Maheshwari(2018)]{Maheshwari2016}
Palaniappan Vellaisamy and Aditya Maheshwari.
\newblock Fractional negative binomial and polya processes.
\newblock \emph{Probability and Mathematical Statistics}, 38\penalty0 (1):\penalty0 77--101, 2018.

\bibitem[Watanabe(1964)]{Watanabe1964}
Shinzo Watanabe.
\newblock On discontinuous additive functionals and l{\'e}vy measures of a markov process.
\newblock \emph{Japanese Journal of Mathematics: Transactions and Abstracts}, 34:\penalty0 53--70, 1964.

\bibitem[Xia(2018)]{Xia2018}
Weixuan Xia.
\newblock On the distribution of the running average of a skellam process.
\newblock \emph{International Journal of Pure and Applied Mathematics}, 119\penalty0 (3):\penalty0 461--473, 2018.

\end{thebibliography}

\end{document}